\DeclareSymbolFont{greek}{U}{eur}{m}{n}
\DeclareSymbolFontAlphabet{\gr}{greek}
\DeclareMathSymbol{\alpha}{\mathord}{greek}{"0B}
\DeclareMathSymbol{\beta}{\mathord}{greek}{"0C}
\DeclareMathSymbol{\gamma}{\mathord}{greek}{"0D}
\DeclareMathSymbol{\delta}{\mathord}{greek}{"0E}
\DeclareMathSymbol{\epsilon}{\mathord}{greek}{"0F}
\DeclareMathSymbol{\zeta}{\mathord}{greek}{"10}
\DeclareMathSymbol{\eta}{\mathord}{greek}{"11}
\DeclareMathSymbol{\theta}{\mathord}{greek}{"12}
\DeclareMathSymbol{\iota}{\mathord}{greek}{"13}
\DeclareMathSymbol{\kappa}{\mathord}{greek}{"14}
\DeclareMathSymbol{\lambda}{\mathord}{greek}{"15}
\DeclareMathSymbol{\mu}{\mathord}{greek}{"16}
\DeclareMathSymbol{\nu}{\mathord}{greek}{"17}
\DeclareMathSymbol{\xi}{\mathord}{greek}{"18}
\DeclareMathSymbol{\pi}{\mathord}{greek}{"19}
\DeclareMathSymbol{\rho}{\mathord}{greek}{"1A}
\DeclareMathSymbol{\sigma}{\mathord}{greek}{"1B}
\DeclareMathSymbol{\tau}{\mathord}{greek}{"1C}
\DeclareMathSymbol{\upsilon}{\mathord}{greek}{"1D}
\DeclareMathSymbol{\phi}{\mathord}{greek}{"1E}
\DeclareMathSymbol{\chi}{\mathord}{greek}{"1F}
\DeclareMathSymbol{\psi}{\mathord}{greek}{"20}
\DeclareMathSymbol{\omega}{\mathord}{greek}{"21}
\DeclareMathSymbol{\varepsilon}{\mathord}{greek}{"22}
\DeclareMathSymbol{\vartheta}{\mathord}{greek}{"23}
\DeclareMathSymbol{\varpi}{\mathord}{greek}{"24}
\let\varrho\rho
\let\varsigma\sigma
\DeclareMathSymbol{\varphi}{\mathord}{greek}{"27}
\theoremstyle{definition}
\newtheorem{Def}{Definition}[subsection]
\newtheorem{Def-Prop}[Def]{Definition-Proposition}
\newtheorem{Ex}[Def]{Example}
\newtheorem{remark}[Def]{Remark}
\newtheorem{Prop}[Def]{Proposition}
\newtheorem{Lemma}[Def]{Lemma}
\newtheorem{Cor}[Def]{Corollary}
\newtheorem{Assum}{Assumption}
\newtheorem{Const}{Construction}
\DeclareMathOperator*{\hlim}{\text{holim}}
\DeclareMathOperator*{\hclim}{\text{hocolim}}
\DeclareMathOperator*{\clim}{\text{colim}}
\newcommand{\hdot}{{\:\raisebox{3pt}{\text{\circle*{1.5}}}}}
\newcommand{\hdotc}{{\:\raisebox{1pt}{\text{\circle*{1.5}}}}}
\fi\ProvidesPackage{diagrams}[2014/12/31 v3.94 Paul Taylor's commutative
diagrams]
\else\message{WARNING: the \string\diagram\space
command is already defined and will not be loaded again}\expandafter\endinput
\edef\cdrestoreat{
\noexpand\catcode`\noexpand\@=\the\catcode`\@
\noexpand\catcode`\noexpand\#=\the\catcode`\#
\noexpand\catcode`\noexpand\$=\the\catcode`\$
\noexpand\catcode`\noexpand\<=\the\catcode`\<
\noexpand\catcode`\noexpand\>=\the\catcode`\>
\noexpand\catcode`\noexpand\:=\the\catcode`\:
\noexpand\catcode`\noexpand\;=\the\catcode`\;
\noexpand\catcode`\noexpand\!=\the\catcode`\!
\noexpand\catcode`\noexpand\?=\the\catcode`\?
\noexpand\catcode`\noexpand\+=\the\catcode'53
}\catcode`\@=11 \catcode`\#=6 \catcode`\<=12 \catcode`\>=12 \catcode'53=12
\let\diagram@help@messages y\fi
\def\cdps@Rokicki#1{\special{ps:#1}}\let\cdps@dvips\cdps@Rokicki\let
\let\CD@HB\cdps@Rokicki\let\CD@IK\cdps@Rokicki
\let\CD@HB\cdps@Rokicki
\def\cdps@Bechtolsheim#1{\special{dvitps: Literal "#1"}}%
\let\cdps@dvitps\cdps@Bechtolsheim\let\cdps@IntegratedComputerSystems
\def\cdps@Clark#1{\special{dvitops: inline #1}}
\let\cdps@dvitops\cdps@Clark
\let\cdps@OzTeX\empty\let\cdps@oztex\empty\let\cdps@Trevorrow\empty
\def\cdps@Coombes#1{\special{ps-string #1}}
\def\CD@DE{\global\let}\def\CD@RH{\outer\def}
\xdef\CD@o{\string\{}\xdef\CD@yC{\string\}}
\xdef\CD@S{\string\&}
\xdef\CD@nC{\string\$}\gdef\CD@LG{$$}
\gdef\CD@uG{^^J}
\gdef\CD@uG{^^M}
\gdef\CD@uG{^^J}
\mathchardef\lessthan='30474 \mathchardef\greaterthan='30476
\font\tenln=line10\relax
\let\tenlnw\nullfont\else
\font\tenlnw=linew10\relax
\def\cd@shouldnt#1{\CD@KB{* THIS (#1) SHOULD NEVER HAPPEN! *}}
\def\get@round@pair#1(#2,#3){#1{#2}{#3}}
\def\get@square@arg#1[#2]{#1{#2}}
\def\CD@AE#1{\CD@PK\let\CD@DH\CD@@E\CD@@E#1,],}
\def\CD@m{[}\def\CD@RD{]}\def\commdiag#1{{\let\enddiagram\relax\diagram[]#1%
\enddiagram}}
\def\CD@BF{{\ifx\CD@EH[\aftergroup\get@square@arg\aftergroup\CD@YH\else
\aftergroup\CD@JH\fi}}
\def\CD@CF#1#2{\def\CD@YH{#1}\def\CD@JH{#2}\futurelet\CD@EH\CD@BF}
\def\CD@KK{|}
\def\CD@PB{
\tokcase\CD@DD:\CD@y\break@args;\catcase\@super:\upper@label;\catcase\CD@lJ:%
\lower@label;\tokcase{~}:\middle@label;
\tokcase<:\CD@iF;
\tokcase>:\CD@iI;
\tokcase(:\CD@BC;
\tokcase[:\optional@;
\tokcase.:\CD@JJ;
\catcase\space:\eat@space;\catcase\bgroup:\positional@;\default:\CD@@A
\break@args;\endswitch}
\def\switch@arg{
\catcase\@super:\upper@label;\catcase\CD@lJ:\lower@label;\tokcase[:\optional@
;
\tokcase.:\CD@JJ;
\catcase\space:\eat@space;\catcase\bgroup:\positional@;\tokcase{~}:%
\middle@label;
\default:\CD@y\break@args;\endswitch}
\let\CD@tJ\relax\ifx\protect\CD@qK\let\protect\relax\fi\ifx\AtEndDocument
\def\CD@PG{\CD@gB}\def\CD@GF#1#2{}\else\def\CD@PG#1{\edef\CD@CH{#1}%
\expandafter\CD@oC\CD@CH\CD@OD}\def\CD@oC#1\CD@OD{\AtEndDocument{\typeout{%
\CD@tA: #1}}}\def\CD@GF#1#2{\gdef#1{#2}\AtEndDocument{#1}}\fi\def\CD@ZA#1#2{%
\def#1{\CD@PG{#2\CD@mD\CD@W}\CD@DE#1\relax}}\def\CD@uF#1\repeat{\def\CD@p{#1}%
\CD@OF}\def\CD@OF{\CD@p\relax\expandafter\CD@OF\fi}\def\CD@sF#1\repeat{\def
\CD@q{#1}\CD@PF}\def\CD@PF{\CD@q\relax\expandafter\CD@PF\fi}\def\CD@tF#1%
\def\CD@QF{\CD@r\relax\expandafter\CD@QF\fi}\def
\def\CD@rG#1#2{\csname newtoks\endcsname#1#1=%
\expandafter{\csname#2\endcsname}}\else\csname newtoks\endcsname\no@cd@help
\def\CD@rG#1#2{\let#1\no@cd@help}\fi\chardef\CD@lF
\chardef\CD@lI=2 \chardef\CD@MH=5 \chardef\CD@tH=6 \chardef\CD@sH=7
\chardef\CD@PC=9 \dimendef\CD@hI=2 \dimendef\CD@hF=3 \dimendef\CD@mF=4
\def\sdef#1#2{\def#1{#2}%
}\def\CD@L#1{\expandafter\aftergroup\csname#1\endcsname}\def\CD@RC#1{%
\expandafter\def\csname#1\endcsname}\def\CD@sD#1{\expandafter\gdef\csname#1%
\endcsname}\def\CD@vC#1{\expandafter\edef\csname#1\endcsname}\def\CD@nF#1#2{%
\expandafter\let\csname#1\expandafter\endcsname\csname#2\endcsname}\def\CD@EE
\def\CD@AK#1{\csname#1\endcsname}\def\CD@XJ#1{\expandafter\show\csname#1%
\endcsname}\def\CD@ZJ#1{\expandafter\showthe\csname#1\endcsname}\def\CD@WJ#1{%
\expandafter\showbox\csname#1\endcsname}\def\CD@tA{Commutative Diagram}\edef
\edef\CD@dC{\string\diagram}\edef\CD@HD{\string\enddiagram
}\edef\CD@EC{\string\\}\def\CD@eF{LaTeX}\ifx\@ignoretrue\CD@qK\expandafter
\def\@ignoretrue{%
\global\ignore@true}\def\@ignorefalse{\global\ignore@false}\fi
\def\CD@g{{\ifnum0=`}\fi}\def\CD@wC{\ifnum0=`{\fi}}\def\catcase#1:{\ifcat
\noexpand\CD@EH#1\CD@tJ\expandafter\CD@kC\else\expandafter\CD@dJ\fi}\def
\def\CD@kC#1;#2\endswitch{#1}\def\CD@dJ#1;{}\let\endswitch\relax\def\default:%
\def\at@{@}\fi\edef\CD@P{\CD@o pt\CD@yC}%
\lTo\sp{#1}\sb{#2}\CD@z}\CD@RC{\CD@P)}#1)#2){\CD@z\rTo\sp{#1}\sb{#2}\CD@z}%
\def\CD@O{\def\endCD{\enddiagram}\CD@RC{\CD@P A}##1A##2A{\uTo<{##1}>{##2}%
\CD@z\CD@z}\CD@RC{\CD@P V}##1V##2V{\dTo<{##1}>{##2}\CD@z\CD@z}\CD@RC{\CD@P=}{%
\CD@z\hEq\CD@z}\CD@RC{\CD@P\CD@KK}{\vEq\CD@z\CD@z}\CD@RC{\CD@P\string\vert}{%
\vEq\CD@z\CD@z}\CD@RC{\CD@P.}{\CD@z\CD@z}\let\CD@z\CD@Q}\def\CD@IE{\let\tmp
\CD@JE\ifcat A\noexpand\CD@CH\else\ifcat=\noexpand\CD@CH\else\ifcat\relax
\noexpand\CD@CH\else\let\tmp\at@\fi\fi\fi\tmp}\def\CD@JE#1{\CD@nF{tmp}{\CD@P
\string#1}\ifx\tmp\relax\def\tmp{\at@#1}\fi\tmp}\def\CD@z{}\begingroup
\def\aftergroup\CD@T\aftergroup{\aftergroup\def\catcode`\@\active
\aftergroup @\endgroup{\futurelet\CD@CH\CD@IE}}\newcount\CD@uA\newcount\CD@vA
\newdimen\CD@OA\newdimen\CD@PA\CD@tG\CD@gE
\newdimen\CD@RA\newdimen\CD@SA\newcount
\newdimen\CD@QA\newbox\CD@DA\CD@tG\CD@lE\CD@dA\CD@bA
\def\CD@V#1#2{\ifdim#1<#2\relax#1=#2\relax\fi}%
\def\CD@X#1#2{\ifdim#1>#2\relax#1=#2\relax\fi}\newdimen\CD@XH\CD@XH=1sp
\newdimen\CD@zC\CD@zC\z@\def\CD@cJ{\ifdim\CD@zC=1em\else\CD@nJ\fi}\def\CD@nJ{%
\CD@zC1em\def\CD@NC{\fontdimen8\textfont3 }\CD@@J\CD@NJ\setbox0=\vbox{\CD@t
\noindent\CD@k\null\penalty-9993\null\CD@ND\null\endgraf\setbox0=\lastbox
\unskip\unpenalty\setbox1=\lastbox\global\setbox\CD@IG=\hbox{\unhbox0\unskip
\unskip\unpenalty\setbox0=\lastbox}\global\setbox\CD@KG=\hbox{\unhbox1\unskip
\unpenalty\setbox1=\lastbox}}}\newdimen\CD@@I\CD@@I=1true in \divide\CD@@I300
\def\CD@zH#1{\multiply#1\tw@\advance#1\ifnum#1<\z@-\else+\fi\CD@@I\divide#1%
\tw@\divide#1\CD@@I\multiply#1\CD@@I}\def\MapBreadth{\afterassignment\CD@gI
\CD@LF}\newdimen\CD@LF\newdimen\CD@oI\def\CD@gI{\CD@oI\CD@LF\CD@V\CD@@I{4%
\CD@XH}\CD@X\CD@@I\p@\CD@zH\CD@oI\ifdim\CD@LF>\z@\CD@V\CD@oI\CD@@I\fi\CD@cJ}%
\def\CD@RJ#1{\CD@zD\count@\CD@@I#1\ifnum\count@>\z@\divide\CD@@I\count@\fi
\CD@gI\CD@NJ}\def\CD@NJ{\dimen@\CD@QC\count@\dimen@\divide\count@5\divide
\count@\CD@@I\edef\CD@OC{\the\count@}}\def\CD@AJ{\CD@QJ\z@}\def\CD@QJ#1{%
\CD@tI\axisheight\advance\CD@tI#1\relax\advance\CD@tI-.5\CD@oI\CD@zH\CD@tI
\CD@sI-\CD@tI\advance\CD@tI\CD@LF}\newdimen\CD@DC\CD@DC\z@\newdimen\CD@eJ
\def\CD@CJ#1{\CD@sI#1\relax\CD@tI\CD@sI\advance\CD@tI\CD@LF\relax}%
\def\horizhtdp{height\CD@tI depth\CD@sI}\def\axisheight{\fontdimen22\the
\textfont\tw@}\def\script@axisheight{\fontdimen22\the\scriptfont\tw@}\def
\def\CD@NC{0.4pt}\def
\def\CD@UK{\fontdimen3\textfont\z@}\newdimen
\newdimen\CD@nA\CD@nA\z@\def\CD@RG{\ifincommdiag1.3em\else2em\fi}%
\newdimen\CD@YB\def\CellSize{\afterassignment\CD@kB\DiagramCellHeight}%
\newdimen\DiagramCellHeight\DiagramCellHeight-\maxdimen\newdimen
\def\CD@kB{\DiagramCellWidth
\DiagramCellHeight}\def\CD@QC{3em}\newdimen\MapShortFall\def\MapsAbut{%
\MapShortFall\z@\objectheight\z@\objectwidth\z@}\newdimen\CD@iA\CD@iA\z@
\fi\CD@nF{%
ifUglyObsoleteDiagrams}{relax}\newif\ifUglyObsoleteDiagrams\def\CD@nK{\CD@aB
\UglyObsoleteDiagramsfalse}\def\CD@oK{\CD@ZB\UglyObsoleteDiagramstrue}\CD@vE
\def\CD@sK{\ifx\pdfoutput
\CD@qK\else\ifx\pdfoutput\relax\else\ifnum\pdfoutput>\z@\CD@pK\fi\fi\fi} \def
\global\let\CD@oK\relax\global\let\CD@pK\relax\global\let\CD@sK
\def\CD@tK#1{}\ifx\pdfliteral\CD@qK\else\ifx
\let\CD@tK\pdfliteral\fi\fi\ifx\XeTeXrevision\CD@qK
\def\newarrowhead{\CD@mG h\CD@BG\CD@GG>}%
\def\newarrowtail{\CD@mG t\CD@BG\CD@GG>}\def\newarrowmiddle{\CD@mG m\CD@BG
\hbox@maths\empty}\def\newarrowfiller{\CD@mG f\CD@bE\CD@MK-}\def\CD@mG#1#2#3#%
\CD@ZA\CD@MC{\CD@eF\space diagonals are used unless
PostScript is set}\def\defaultarrowhead#1{\edef\CD@sJ{#1}\CD@@J}\def\CD@@J{%
\CD@IJ\CD@sJ<>ht\CD@IJ\CD@sJ<>th}\def\CD@IJ#1#2#3#4#5{\CD@HJ{r#4}{#3}{l#5}{#2%
}{r#4:#1}\CD@HJ{r#5}{#2}{l#4}{#3}{l#4:#1}\CD@HJ{d#4}{#3}{u#5}{#2}{d#4:#1}%
\CD@HJ{d#5}{#2}{u#4}{#3}{u#4:#1}}\def\CD@HJ#1#2#3#4#5{\begingroup\aftergroup
\CD@GJ\CD@L{#1+:#2}\CD@L{#1:#2}\CD@L{#3:#4}\CD@L{#5}\endgroup}\def\CD@GJ#1#2#%
\def\CD@sJ{}\CD@@J\def\CD@GJ#1#2#3#4{\setbox#1=#4}\ifx\tenln
\def\CD@sJ{vee}\else\let\CD@sJ\CD@eF\fi\def\CD@xF#1#2#3{\begingroup
\aftergroup\CD@wF\CD@L{#1#2:#3#3}\CD@L{#1#2:#3}\aftergroup\CD@yF\CD@L{#1#2:#3%
-#3}\CD@L{#1#2:#3}\endgroup}\def\CD@wF#1#2{\def#1{\hbox{\rlap{#2}\kern.4%
\CD@zC#2}}}\def\CD@yF#1#2{\def#1{\hbox{\rlap{#2}\kern.4\CD@zC#2\kern-.4\CD@zC
}}}\CD@xF lh>\CD@xF rt>\CD@xF rh<\CD@xF rt<\def\CD@yF#1#2{\def#1{\hbox{\kern-%
.4\CD@zC\rlap{#2}\kern.4\CD@zC#2}}}\CD@xF rh>\CD@xF lh<\CD@xF lt>\CD@xF lt<%
\def\CD@wF#1#2{\def#1{\vbox{\vbox to\z@{#2\vss}\nointerlineskip\kern.4\CD@zC#%
2}}}\def\CD@yF#1#2{\def#1{\vbox{\vbox to\z@{#2\vss}\nointerlineskip\kern.4%
\CD@zC#2\kern-.4\CD@zC}}}\CD@xF uh>\CD@xF dt>\CD@xF dh<\CD@xF dt<\def\CD@yF#1%
\def\CD@BG#1{\hbox{%
\mathsurround\z@\offinterlineskip\CD@k\mkern-1.5mu{#1}\mkern-1.5mu\CD@ND}}%
\def\hbox@maths#1{\hbox{\CD@k#1\CD@ND}}\def\CD@GG#1{\hbox to\CD@LF{\setbox0=%
\hbox{\offinterlineskip\mathsurround\z@\CD@k{#1}\CD@ND}\dimen0.5\wd0\advance
\dimen0-.5\CD@oI\CD@zH{\dimen0}\kern-\dimen0\unhbox0\hss}}\def\CD@sB#1{\hbox
to2\CD@LF{\hss\offinterlineskip\mathsurround\z@\CD@k{#1}\CD@ND\hss}}\def
\def\CD@bE#1{\hbox{\kern-.15%
\CD@zC\CD@k{#1}\CD@ND\kern-.15\CD@zC}}\def\CD@MK#1{\vbox{\offinterlineskip
\kern-.2ex\CD@GG{#1}\kern-.2ex}}\def\@fillh{\xleaders\vrule\horizhtdp}\def
\def\CD@@D{\hbox{\vrule height 1pt
depth-1pt width 1pt}}\CD@RC{rf:}{\CD@@D}\CD@nF{lf:}{rf:}\CD@nF{+f:}{rf:}%
\def\CD@BD{\CD@U\null
\CD@@D\null\CD@@D\null}\edef\CD@lG{\string\newarrow}\def\newarrow#1#2#3#4#5#6%
\edef\@name{#1}\edef\CD@oJ{#2}\edef\CD@iD{#3}\edef\CD@QG{#4}\edef
\edef\CD@LE{#6}\let\CD@HE\CD@sG\let\CD@FK\CD@BH\let\@x\CD@AH\ifx
\let\CD@oJ\empty\fi\ifx\CD@LE\CD@jD\let\CD@LE\empty\fi\def\CD@LI{%
r}\def\CD@SF{l}\def\CD@IC{d}\def\CD@yJ{u}\def\CD@gH{+}\def\@m{-}\ifx\CD@iD
\let\CD@QG\empty\fi\ifx\CD@LE\empty\ifx\CD@iD\CD@aE\let
\let\@x\CD@zG\fi\fi\else\edef\CD@a{\CD@iD\CD@oJ}\ifx\CD@a\empty
\let\CD@QG\empty\fi\fi\fi\ifmmode\aftergroup\CD@kG\else\CD@@A
\CD@b\CD@L{r\@name}\fi\fi\endgroup}\def\CD@sG{\CD@vG\CD@LI
\CD@SF rl\Horizontal@Map}\def\CD@BH{\CD@vG\CD@IC\CD@yJ du\Vertical@Map}\def
\def\CD@yG{\CD@vG\CD@gH\@m+-\Slant@Map}%
\def\CD@zG{\CD@vG\CD@gH\@m+-\Slope@Map}\catcode`\/=\active\def\CD@vG#1#2#3#4#%
\def\CD@jG#1#2#3#4//{\edef\CD@fG
{#2}\aftergroup\sdef\CD@L{#1\@name}\aftergroup{\aftergroup#3\CD@M#4//%
\aftergroup}}\def\CD@M#1/{\edef\CD@EH{#1}\ifx\CD@EH\empty\else\CD@L{\CD@fG#1}%
\expandafter\CD@M\fi}\catcode`\/=12 \def\CD@nG#1#2#3#4#5#6#7#8{\aftergroup
\sdef\CD@L{#6\@name}\aftergroup{\CD@L{#2\@name}\if#2#4\aftergroup\CD@CI\else
\aftergroup\CD@BI\fi\CD@L{#1\@name}%
\aftergroup(\aftergroup#3\aftergroup,\aftergroup#5\aftergroup)\aftergroup}}%
\def\CD@oB#1#2#3#4{\expandafter\ifx\csname#1#2:#4\endcsname\relax\CD@y\CD@gB{%
arrow#3 "#4" undefined}\fi}\CD@rG\CD@VE{All five components must be defined
before an arrow.}\CD@rG\CD@SE{\CD@lG, unlike \string\HorizontalMap, is a
declaration.}\def\CD@b#1{\CD@YA{Arrows \string#1 etc could not be defined}%
\CD@VE}\def\CD@kG{\CD@YA{misplaced \CD@lG}\CD@SE}\def\newdiagramgrid#1#2#3{%
\CD@RC{cdgh@#1}{#2,],}
\CD@RC{cdgv@#1}{#3,],}}
\def\CD@yH{\CD@VA6 }\def\CD@OB{\CD@VA1 \global\CD@yA1
\CD@DE\CD@YF\empty}\def\CD@YF{}\def\CD@nB#1{\relax\CD@MD\edef\CD@vJ{#1}%
\begingroup\CD@rE\else\ifcase\CD@VA\ifmmode\else\CD@YG\CD@E0\fi\or\CD@cE5\or
\CD@YG\CD@F5\or\CD@YG\CD@B5\or\CD@YG\CD@B5\or\CD@YG\CD@C5\or\CD@cE7\or\CD@YG
\CD@D7\fi\fi\endgroup\xdef\CD@YF{#1}}\def\CD@pB#1#2#3#4#5{\relax\CD@MD\xdef
\CD@vJ{#4}\begingroup\ifnum\CD@VA<#1 \expandafter\CD@cE\ifcase\CD@VA0\or#2\or
#3\else#2\fi\else\ifnum\CD@VA<6 \CD@tJ\CD@YG\CD@B#2\else\CD@YG\CD@G#2\fi\fi
\endgroup\CD@DE\CD@YF\CD@vJ\ifincommdiag\let\CD@ZD#5\else\let\CD@ZD\CD@LK\fi}%
\def\CD@yI{\global\CD@yA=\ifnum\CD@VA<5 1\else2\fi\relax}\def\CD@OI{\CD@VA
\CD@yA}\def\CD@cE#1{\aftergroup\CD@VA\aftergroup#1\aftergroup\relax}\def
\let\CD@yI\relax\let\CD@OI\relax}\def\CD@FH#1#2#3#4#5{\ifincommdiag\let\CD@ZD
#5\else\xdef\CD@vJ{#4}\let\CD@ZD\CD@LK\fi}\def\CD@YG#1{\aftergroup#1%
\aftergroup\relax\CD@cE}\def\CD@B{\CD@YE\CD@S\CD@ME\CD@Q}\def\CD@G{\CD@YE{%
\CD@yC\CD@S}\CD@XE\CD@QD\CD@Q}\def\CD@F{\CD@YE{*\CD@S}\CD@RE\clubsuit\CD@Q}%
\def\CD@C{\CD@YE{\CD@S*\CD@S}\CD@RE\CD@Q\clubsuit\CD@Q}\def\CD@D{\CD@YE\CD@EC
\CD@TE\\}\def\CD@E{\CD@YE\CD@nC\CD@QE\CD@k}\def\CD@LK{\CD@YA{\CD@vJ\space
ignored \CD@dH}\CD@WE}\def\CD@FE{}\def\CD@d{\CD@YA{maps must never be enclosed
in braces}\CD@OE}\def\CD@dH{outside diagram}\def\CD@FC{\string\HonV, \string
\VonH\space and \string\HmeetV}\CD@rG\CD@ME{The way that horizontal and
vertical arrows are terminated implicitly means\CD@uG that they cannot be
mixed with each other or with \CD@FC.}\CD@rG\CD@XE{\string\pile\space is for
parallel horizontal arrows; verticals can just be put together in\CD@uG a cell%
. \CD@FC\space are not meaningful in a \string\pile.}\CD@rG\CD@RE{The
horizontal maps must point to an object, not each other (I've put in\CD@uG one
which you're unlikely to want). Use \string\pile\space if you want them
parallel.}\CD@rG\CD@TE{Parallel horizontal arrows must be in separate layers
of a \string\pile.}\CD@rG\CD@QE{Horizontal arrows may be used \CD@dH s, but
must still be in maths.}\CD@rG\CD@WE{Vertical arrows, \CD@FC\space\CD@dH s don%
't know where\CD@uG where to terminate.}\CD@rG\CD@OE{This prevents them from
stretching correctly.}\def\CD@YE#1{\CD@YA{"#1" inserted \ifx\CD@YF\empty
before \CD@vJ\else between \CD@YF\ifx\CD@YF\CD@vJ s\else\space and \CD@vJ\fi
\fi}}\count@=\year\multiply\count@12 \advance\count@\month\ifnum\count@>24247
\def
\def\CD@TJ{\CD@GB-%
9999 \let\CD@ZD\CD@XD\ifincommdiag\else\CD@cJ\ifinpile\else\skip2\z@ plus 1.5%
\CD@VK minus .5\CD@UK\skip4\skip2 \fi\fi\let\CD@kD\@fillh\CD@nF{fill@dot}{rf:%
.}}\def\Vector@Map{\CD@HK4}\def\Slant@Map{\CD@HK{\CD@EF255\else6\fi}}\def
\def\CD@HK#1#2#3#4#5#6{\CD@LC\def\CD@WK{2}\def\CD@aK{%
2}\def\CD@ZK{1}\def\CD@bK{1}\let\Horizontal@Map\CD@nI\def\CD@OG{#1}\def\CD@NI
{\CD@U#2#3#4#5#6}}\def\CD@nI{\CD@TJ\CD@JB\let\CD@ZD\CD@TD\CD@qD}\CD@tG\CD@pE
\def\cds@missives{\CD@rA}\def\CD@TD{\CD@vE\let\CD@OG\CD@OC
\CD@x\CD@zE\CD@WF\fi\setbox0\hbox{\incommdiagfalse\CD@HI}\CD@pE\CD@aD\else
\global\CD@YC\CD@bD\fi\ifvoid6 \ifvoid7 \CD@eE\fi\fi\CD@zE\else\CD@BD\global
\CD@YC\let\CD@CG\CD@IH\CD@YD\fi\else\CD@NI\CD@MI\global\CD@YC\CD@YD\fi}\def
\def\CD@U#1#2#3#4#5{\let\CD@oJ#1\let\CD@iD#2\let\CD@QG#3%
\let\CD@jD#4\let\CD@LE#5\CD@TB\ifx\CD@iD\CD@jD\CD@UB\fi}\def\CD@qD#1#2#3#4#5{%
\CD@U#1#2#3#4#5\CD@tD}\def\Vertical@Map{\CD@pB433{vertical map}\CD@cD\CD@LC
\CD@GB-9995 \let\CD@kD\@fillv\CD@nF{fill@dot}{df:.}\CD@qD}\def\break@args{%
\def\CD@tD{\CD@ZD}\CD@ZD\endgroup\aftergroup\CD@FE}\def\CD@MJ{\setbox1=\CD@oJ
\setbox5=\CD@LE\ifvoid3 \ifx\CD@QG\null\else\setbox3=\CD@QG\fi\fi\CD@@G2%
\CD@iD\CD@@G4\CD@jD}\def\CD@pF#1{\ifvoid1\else\CD@oF1#1\fi\ifvoid2\else\CD@oF
2#1\fi\ifvoid3\else\CD@oF3#1\fi\ifvoid4\else\CD@oF4#1\fi\ifvoid5\else\CD@oF5#%
1\fi} \def\CD@oF#1#2{\setbox#1\vbox{\offinterlineskip\box#1\dimen@\prevdepth
\advance\dimen@-#2\relax\setbox0\null\dp0\dimen@\ht0-\dimen@\box0}}\def\CD@@G
\CD@ZA\CD@BK{\string\HorizontalMap, \string\VerticalMap\space and
\string\DiagonalMap\CD@uG are obsolete - use \CD@lG\space to pre-define maps}%
\def\HorizontalMap#1#2#3#4#5{\CD@BK\CD@nB{old horizontal map}\CD@LC\CD@TJ\def
\CD@oJ{\CD@UH{#1}}\CD@SH\CD@iD{#2}\def\CD@QG{\CD@UH{#3}}\CD@SH\CD@jD{#4}\def
\CD@LE{\CD@UH{#5}}\CD@tD}\def\VerticalMap#1#2#3#4#5{\CD@BK\CD@pB433{vertical
map}\CD@cD\CD@LC\CD@GB-9995 \let\CD@kD\@fillv\def\CD@oJ{\CD@GG{#1}}\CD@VH
\CD@iD{#2}\def\CD@QG{\CD@GG{#3}}\CD@VH\CD@jD{#4}\def\CD@LE{\CD@GG{#5}}\CD@tD}%
\def\DiagonalMap#1#2#3#4#5{\CD@BK\CD@LC\def\CD@OG{4}\let\CD@kD\CD@qK\let
\CD@ZD\CD@YD\def\CD@WK{2}\def\CD@aK{2}\def\CD@ZK{1}\def\CD@bK{1}\def\CD@QG{%
\CD@vF{#3}}\ifPositiveGradient\let\mv\raise\def\CD@oJ{\CD@vF{#5}}\def\CD@iD{%
\CD@vF{#4}}\def\CD@jD{\CD@vF{#2}}\def\CD@LE{\CD@vF{#1}}\else\let\mv\lower\def
\CD@oJ{\CD@vF{#1}}\def\CD@iD{\CD@vF{#2}}\def\CD@jD{\CD@vF{#4}}\def\CD@LE{%
\CD@vF{#5}}\fi\CD@tD}\def\CD@aE{-}\def\CD@AD{\empty}\def\CD@SH{\CD@EG\CD@bE
\CD@aE\@fillh}\def\CD@VH{\CD@EG\CD@MK\CD@KK\@fillv}\def\CD@EG#1#2#3#4#5{\def
\CD@CH{#5}\ifx\CD@CH#2\let#4#3\else\let#4\null\ifx\CD@CH\empty\else\ifx\CD@CH
\CD@AD\else\let#4\CD@CH\fi\fi\fi}\def\CD@UH#1{\hbox{\mathsurround\z@
\offinterlineskip\def\CD@CH{#1}\ifx\CD@CH\empty\else\ifx\CD@CH\CD@AD\else
\CD@k\mkern-1.5mu{\CD@CH}\mkern-1.5mu\CD@ND\fi\fi}}\def\CD@yD#1#2{\setbox#1=%
\hbox\bgroup\setbox0=\hbox{\CD@k\labelstyle()\CD@ND}
\setbox1=\null\ht1\ht0\dp1\dp0\box1 \kern.1\CD@zC\CD@k\bgroup\labelstyle
\aftergroup\CD@LD\CD@xD}\def\CD@LD{\CD@ND\kern.1\CD@zC\egroup\CD@tD}\def
\def\CD@mJ{
\catcase\bgroup:\CD@v;\catcase\egroup:\missing@label;\catcase\space:\CD@TF;%
\tokcase[:\CD@XF;
\default:\CD@zJ;\endswitch}\def\CD@v{\let\CD@MD\CD@c\let\CD@CH}\def\CD@zJ#1{%
\let\CD@UF\egroup{\let\actually@braces@missing@around@macro@in@label\CD@ZH
\let\CD@MD\CD@xC\let\CD@UF\CD@VF#1%
\actually@braces@missing@around@macro@in@label}\CD@UF}\def
\def\missing@label
\egroup\CD@YA{missing label}\CD@PE}\def\CD@xC{\egroup\missing@label}\outer
\def\CD@ZH{}\def\CD@UF{}\def\CD@VF{\CD@wC\CD@UF}\def\CD@MD{}\def\CD@XF{\let
\CD@N\CD@xD\get@square@arg\CD@AE}\CD@rG\CD@PE{The text which has just been
read is not allowed within map labels.}\def\CD@c{\egroup\CD@YA{missing \CD@yC
\space inserted after label}\CD@PE}\def\upper@label{\CD@oD\CD@yD6}\def
\def\middle@label{%
\CD@yD3}\CD@tG\CD@yE\CD@pD\CD@oD\def\CD@iF{\ifPositiveGradient\CD@tJ
\expandafter\upper@label\else\expandafter\lower@label\fi}\def\CD@iI{%
\ifPositiveGradient\CD@tJ\expandafter\lower@label\else\expandafter
\upper@label\fi}\def\positional@{\CD@gB{labels as positional arguments are
obsolete}\CD@yE\CD@tJ\expandafter\upper@label\else\expandafter\lower@label\fi
-}\def\CD@tD{\futurelet\CD@EH\switch@arg}\def\eat@space{\afterassignment
\CD@tD\let\CD@EH= }\def\CD@TF{\afterassignment\CD@xD\let\CD@EH= }\def\CD@BC{%
\get@round@pair\CD@uD}\def\CD@uD#1#2{\def\CD@WK{#1}\def\CD@aK{#2}\CD@tD}\def
\def\CD@JJ.{\CD@sC\CD@tD}\def
\def\CD@MI{}\def\CD@@E#1,{\CD@nH#1,\begingroup\ifx\@name\CD@RD
\CD@FF\aftergroup\CD@e\fi\aftergroup\CD@jC\else\expandafter\def\expandafter
\CD@RF\expandafter{\csname\@name\endcsname}\expandafter\CD@vD\CD@RF\CD@KD\ifx
\CD@RF\empty\aftergroup\CD@pC\expandafter\aftergroup\csname\CD@FB\@name
\endcsname\expandafter\aftergroup\csname\CD@FB @\@name\endcsname\else\gdef
\CD@GE{#1}\CD@gB{\string\relax\space inserted before `[\CD@GE'}\message{(I was
trying to read this as a \CD@tA\ option.)}\aftergroup\CD@H\fi\fi\endgroup}%
\def\CD@vD#1#2\CD@KD{\def\CD@RF{#2}}\def\CD@jC{\let\CD@CH\CD@N\let\CD@N\relax
\CD@CH}\def\CD@H#1],{
\CD@jC\relax\def\CD@RF{#1}\ifx\CD@RF\empty\def\CD@RF{[\CD@GE]}%
\else\def\CD@RF{[\CD@GE,#1]}
\fi\CD@RF}\def\CD@pC#1#2{\ifx#2\CD@qK\ifx#1\CD@qK\CD@gB{option `\@name'
undefined}\else#1\fi\else\CD@FF\expandafter#2\CD@GK\CD@PK\else\CD@QK\fi\fi
\CD@DH}\CD@tG\CD@FF\CD@QK\CD@PK\def\CD@nH#1,{\CD@FF\ifx\CD@GK\CD@qK\CD@e\else
\expandafter\CD@oH\CD@GK,#1,(,),(,)[]%
\fi\fi\CD@FF\else\CD@mH#1==,\fi}\def\CD@e{\CD@gB{option `\@name' needs (x,y)
value}\CD@PK\let\@name\empty}\def\CD@mH#1=#2=#3,{\def\@name{#1}\def\CD@GK{#2}%
\def\CD@RF{#3}\ifx\CD@RF\empty\let\CD@GK\CD@qK\fi}%
\def\CD@oH#1(#2,#3)#4,(#5,#6)#7[]{\def\CD@GK{{#2}{#3}}\def\CD@RF{#1#4#5#6}%
\ifx\CD@RF\empty\def\CD@RF{#7}\ifx\CD@RF\empty\CD@e\fi\else\CD@e\fi}\def
\let\CD@N\relax\def\CD@zD#1{\ifx\CD@GK\CD@qK\CD@gB{option `\@name
' needs a value}\else#1\CD@GK\relax\fi}\def\CD@BE#1#2{\ifx\CD@GK\CD@qK#1#2%
\relax\else#1\CD@GK\relax\fi}\def\cds@@showpair#1#2{\message{x=#1,y=#2}}\def
\def\CD@DI#1{\def\CD@CH
{#1}\CD@nF{@x}{cdps@#1}\ifx\CD@CH\empty\CD@f\CD@CH{cannot be used}\else\ifx
\CD@CH\relax\CD@f\CD@CH{unknown}\else\let\CD@IK\@x\fi\fi}\def\CD@f#1#2{\CD@gB
{PostScript translator `#1' #2}}\def\CD@PH{}\def\CD@PJ{\CD@fA\edef\CD@PH{%
\noexpand\CD@KB{\@name\space ignored within maths}}}\def\diagramstyle{\CD@cJ
\let\CD@N\relax\CD@CF\CD@AE\CD@AE}\CD@tG\CD@sE
\CD@hG\CD@RC{cds@ }{}\CD@RC{cds@}{}\CD@RC
\def\cds@abut{\MapsAbut\dimen1\z@
\dimen5\z@}\def\cds@alignlabels{\CD@IA\CD@KA}\def\cds@amstex{\ifincommdiag
\CD@O\else\def\CD{\diagram[amstex]}
\fi\CD@T\catcode`\@\active}\def\cds@b{\let\CD@dB\CD@bB}\def\cds@balance{\let
\CD@hA\CD@AA}\let\cds@bottom\cds@b\def\cds@center{\cds@vcentre\cds@nobalance}%
\let\cds@centre\cds@center\def\cds@centerdisplay{\CD@HA\CD@PJ\cds@balance}%
\let\cds@centredisplay\cds@centerdisplay\def\cds@crab{\CD@BE\CD@DC{.5%
\PileSpacing}}\CD@RC{cds@crab-}{\CD@DC-.5\PileSpacing}\CD@RC{cds@crab+}{%
\def\cds@defaultsize{\CD@BE{\let\CD@QC}{3em}\CD@NJ
}\def\cds@displayoneliner{\CD@DB}\let\cds@dotted\CD@sC\def\cds@dpi{\CD@RJ{1%
truein}}\def\cds@dpm{\CD@RJ{100truecm}}\let\CD@XA\CD@qK\def\cds@eqno{\let
\CD@XA\CD@GK\let\CD@EJ\empty}\def\cds@fixed{\CD@qA}\CD@tG\CD@fE\CD@J\CD@I\def
\def\cds@gap
\CD@sI\CD@BE{\wd3=}\MapShortFall} \def
\relax\CD@gB{%
unknown grid `\CD@GK'}\else\CD@WB\fi\fi}\let\h@grid\relax\let\v@grid\relax
\def\cds@gridx{\ifx\CD@GK\CD@qK\else\cds@grid\fi\let\CD@CH\h@grid\let\h@grid
\v@grid\let\v@grid\CD@CH}\def\cds@h{\CD@zD\DiagramCellHeight}\def\cds@hcenter
\let\CD@hA\CD@aA}\let\cds@hcentre\cds@hcenter\def\cds@heads{\CD@BE{\let
\CD@sJ}\CD@sJ\CD@@J\CD@vE\else\ifx\CD@sJ\CD@eF\else\CD@MC\fi\fi}\let
\let\cds@hmiddle\cds@balance\def\cds@htriangleheight{\CD@BE
\DiagramCellHeight\DiagramCellHeight\DiagramCellWidth1.73205%
\DiagramCellHeight}\def\cds@htrianglewidth{\CD@BE\DiagramCellWidth
\DiagramCellWidth\DiagramCellHeight.57735\DiagramCellWidth}\CD@tG\CD@zE\CD@eE
\def\cds@hug{\CD@eE} \def\cds@inline{\CD@gA\let\CD@PH\empty}\def
\def\cds@labelstyle{\CD@zD{\let\labelstyle}}\def\cds@landscape{\CD@kA}\def
\let\CD@EJ\empty\def\CD@FJ{\refstepcounter{%
equation}\def\CD@XA{\hbox{\@eqnnum}}}\def\cds@LaTeXeqno{\let\CD@EJ\CD@FJ}\def
\def\cds@leftflush{\cds@flushleft\CD@J}\def
\def\cds@lowershortfall{%
\ifPositiveGradient\cds@leftshortfall\else\cds@rightshortfall\fi}\def
\def\cds@midhshaft{\CD@JA}\def\cds@midshaft{\CD@JA}\def
\def\cds@moreoptions{\CD@@A}\let\cds@nobalance
\def\cds@nohcheck{\CD@HH}\def\cds@nohug{\CD@dE} \def
\let\cds@noorigin\cds@nobalance\def
\def\cds@UO{\CD@oK\global\let\CD@n\empty}%
\def\cds@UglyObsolete{\cds@UO\let\cds@PS\empty}\def\CD@rK#1{\CD@gB{option `#1%
' renamed as `UglyObsolete'}}\def\cds@noPostScript{\CD@rK{noPostScript}}\def
\def\cds@notextflow{\CD@RB}\def\cds@noTPIC{%
\CD@CK}\def\cds@objectstyle{\CD@zD{\let\objectstyle}}\def\cds@origin{\let
\CD@hA\CD@iB}\def\cds@p{\CD@zD\PileSpacing}\let\cds@pilespacing\cds@p\def
\def\cds@portrait{\CD@jA}\def
\def\cds@PS{%
\CD@nK\global\let\CD@n\empty}\CD@GF\CD@n{\typeout{\CD@tA: try the PostScript
option for better results}}\def\cds@repositionpullbacks{\let\make@pbk\CD@fH
\let\CD@qH\CD@pH}\def\cds@righteqno{\CD@oA}\def\cds@rightshortfall{\CD@zD{%
\dimen5 }}\def\cds@ruleaxis{\CD@zD{\let\axisheight}}\def\cds@cmex{\let\CD@GG
\CD@sB\let\CD@QJ\CD@CJ}\def\cds@s{\cds@height\DiagramCellWidth
\DiagramCellHeight}\def\cds@scriptlabels{\let\labelstyle\scriptstyle}\def
\def\cds@showfirstpass{\CD@BE{\let\CD@nD}\z@}\def\cds@silent{\def\CD@KB##1{}%
\def\CD@gB##1{}}\let\cds@size\cds@s\def\cds@small{\CellSize2\CD@zC}\def
\def\cds@t{\let\CD@dB\CD@fB}\def\cds@textflow{%
\CD@SB\CD@PJ}\def\cds@thick{\let\CD@rF\tenlnw\CD@LF\CD@NC\CD@BE\MapBreadth{2%
\CD@LF}\CD@@J}\def\cds@thin{\let\CD@rF\tenln\CD@BE\MapBreadth{\CD@NC}\CD@@J}%
\def\cds@tight{\CD@WB}\let\cds@top\cds@t\def\cds@TPIC{\CD@DK}\def
\def\cds@vcenter{\let\CD@dB\CD@cB}\let\cds@vcentre
\def\cds@vtriangleheight{\CD@BE\DiagramCellHeight
\DiagramCellHeight\DiagramCellWidth.577035\DiagramCellHeight}\def
\def\cds@vmiddle{\let\CD@dB\CD@eB}%
\def\cds@w{\CD@zD\DiagramCellWidth}\let\cds@width\cds@w\def\diagram{\relax
\protect\CD@bC}\def\enddiagram{\protect\CD@SG}\def\CD@bC{\CD@g\CD@uI
\incommdiagtrue\edef\CD@wI{\the\CD@NB}\global\CD@NB\z@\boxmaxdepth\maxdimen
\everycr{}\CD@sK\everymath{}\everyhbox{}\ifx\pdfsyncstop\CD@qK\else
\pdfsyncstop\fi\CD@aC}\def\CD@aC{\CD@y\let\CD@N\CD@ZC\CD@CF\CD@AE\CD@WD}\def
\def\CD@WD{\let
\CD@EH\relax\CD@nE\CD@vE\else\CD@hK\else\CD@KB{landscape ignored without
PostScript}\CD@jA\fi\fi\fi\CD@EJ\setbox2=\vbox\bgroup\CD@JF\CD@VD}\def\CD@cH{%
\CD@nE\CD@fB\else\CD@dB\fi\CD@hA\nointerlineskip\setbox0=\null\ht0-\CD@pI\dp0%
\CD@pI\wd0\CD@kI\box0 \global\CD@QA\CD@kF\global\CD@yA\CD@XB\ifx\CD@NK\CD@qK
\global\CD@RA\CD@kF\else\global\CD@RA\CD@NK\fi\egroup\CD@zF\CD@nE\setbox2=%
\hbox to\dp2{\vrule height\wd2 depth\CD@QA width\z@\global\CD@QA\ht2\ht2\z@
\dp2\z@\wd2\z@\CD@hK\CD@tK{q 0 1 -1 0 0 0 cm}\else\global\CD@iG\CD@IK{0 1
bturn}\fi\box2\CD@gK\hss}\CD@DB\fi\ifnum\CD@yA=1 \else\CD@DB\fi\global
\@ignorefalse\CD@mE\leavevmode\fi\ifvmode\CD@TA\else\ifmmode\CD@PH\CD@GI\else
\CD@qE\CD@gA\fi\ifinner\CD@gA\fi\CD@mE\CD@GI\else\CD@sE\CD@QB\else\CD@TA\fi
\fi\fi\fi\CD@dD}\def\CD@dD{\global\CD@NB\CD@wI\relax\CD@xE\global\CD@ID\else
\aftergroup\CD@mC\fi\if@ignore\aftergroup\ignorespaces\fi\CD@wC\ignorespaces}%
\def\CD@fB{\advance\CD@pI\dimen1\relax}\def\CD@eB{\advance\CD@pI.5\dimen1%
\relax}\def\CD@bB{}\def\CD@cB{\CD@fB\advance\CD@pI\CD@YB\divide\CD@pI2
\advance\CD@pI-\axisheight\relax}\def\CD@aA{}\def\CD@iB{\CD@kF\z@}\def\CD@AA{%
\ifdim\dimen2>\CD@kF\CD@kF\dimen2 \else\dimen2\CD@kF\CD@kI\dimen0 \advance
\CD@kI\dimen2 \fi}\def\CD@QB{\skip0\z@\relax\loop\skip1\lastskip\ifdim\skip1>%
\z@\unskip\advance\skip0\skip1 \repeat\vadjust{\prevdepth\dp\strutbox\penalty
\predisplaypenalty\vskip\abovedisplayskip\CD@UA\penalty\postdisplaypenalty
\vskip\belowdisplayskip}\ifdim\skip0=\z@\else\hskip\skip0 \global\@ignoretrue
\fi}\def\CD@TA{\CD@LG\kern-\displayindent\CD@UA\CD@LG\global\@ignoretrue}\def
\z@\CD@KB{wider than the page by \the
\dimen0 }\CD@HA\fi\CD@iE\hss\else\CD@V\CD@QA\CD@nA\fi\CD@GI\hss\kern-\wd1\box
\def\CD@GI{\CD@AF\CD@@F\else\CD@SC\global\CD@hG\fi\fi\kern\CD@QA\box2 }%
\def\CD@JF{\CD@cJ\ifdim\DiagramCellHeight=-\maxdimen
\DiagramCellHeight\CD@QC\fi\ifdim\DiagramCellWidth=-\maxdimen
\DiagramCellWidth\CD@QC\fi\global\CD@XC\CD@IF\let\CD@FE\empty\let\CD@z\CD@Q
\let\overprint\CD@eH\let\CD@s\CD@rJ\let\enddiagram\CD@ED\let\\\CD@cC\let\par
\CD@jH\let\CD@MD\empty\let\switch@arg\CD@PB\let\shift\CD@iA\baselineskip
\DiagramCellHeight\lineskip\z@\lineskiplimit\z@\mathsurround\z@\tabskip\z@
\CD@OB}\def\CD@VD{\penalty-123 \begingroup\CD@jA\aftergroup\CD@K\halign
\bgroup\global\advance\CD@NB1 \vadjust{\penalty1}\global\CD@FA\z@\CD@OB\CD@j#%
#\CD@DD\CD@Q\CD@Q\CD@OI\CD@j##\CD@DD\cr}\def\CD@ED{\CD@MD\CD@GD\crcr\egroup
\global\CD@JD\endgroup}\def\CD@j{\global\advance\CD@FA1 \futurelet\CD@EH\CD@i
}\def\CD@i{\ifx\CD@EH\CD@DD\CD@tJ\hskip1sp plus 1fil \relax\let\CD@DD\relax
\CD@vI\else\hfil\CD@k\objectstyle\let\CD@FE\CD@d\fi}\def\CD@DD{\CD@MD\relax
\CD@yI\CD@vI\global\CD@QA\CD@iA\penalty-9993 \CD@ND\hfil\null\kern-2\CD@QA
\null}\def\CD@cC{\cr}\def\across#1{\span\omit\mscount=#1 \global\advance
\CD@FA\mscount\global\advance\CD@FA\m@ne\CD@sF\ifnum\mscount>2 \CD@fJ\repeat
\ignorespaces}\def\CD@fJ{\relax\span\omit\advance\mscount\m@ne}\def\CD@qJ{%
\ifincommdiag\ifx\CD@iD\@fillh\ifx\CD@jD\@fillh\ifdim\dimen3>\z@\else\ifdim
\dimen2>93\CD@@I\ifdim\dimen2>18\p@\ifdim\CD@LF>\z@\count@\CD@bJ\advance
\count@\m@ne\ifnum\count@<\z@\count@20\let\CD@aJ\CD@uJ\fi\xdef\CD@bJ{\the
\count@}\fi\fi\fi\fi\fi\fi\fi}\def\CD@cG#1{\vrule\horizhtdp width#1\dimen@
\kern2\dimen@}\def\CD@uJ{\rlap{\dimen@\CD@@I\CD@V\dimen@{.182\p@}\CD@zH
\dimen@\advance\CD@tI\dimen@\CD@cG0\CD@cG0\CD@cG2\CD@cG6\CD@cG6\CD@cG2\CD@cG0%
\CD@cG0\CD@cG2\CD@cG6\CD@cG0\CD@cG0\CD@cG2\CD@cG2\CD@cG6\CD@cG0\CD@cG0\CD@cG2%
\CD@cG6\CD@cG2\CD@cG2\CD@cG0\CD@cG0}}\def\CD@bJ{10}\def\CD@aJ{}\def\CD@XD{%
\CD@gE\CD@TB\fi\CD@x\CD@WF\CD@HI}\def\CD@x{\CD@QJ\CD@DC\CD@MJ\ifdim\CD@DC=\z@
\else\CD@pF\CD@DC\fi\ifvoid3 \setbox3=\null\ht3\CD@tI\dp3\CD@sI\else\CD@V{\ht
3}\CD@tI\CD@V{\dp3}\CD@sI\fi\dimen3=.5\wd3 \ifdim\dimen3=\z@\CD@tE\else\dimen
3-\CD@XH\fi\else\CD@TB\fi\CD@V{\dimen2}{\wd7}\CD@V{\dimen2}{\wd6}\CD@qJ
\advance\dimen2-2\dimen3 \dimen4.5\dimen2 \dimen2\dimen4 \advance\dimen2%
\CD@eJ\advance\dimen4-\CD@eJ\advance\dimen2-\wd1 \advance\dimen4-\wd5 \ifvoid
2 \else\CD@V{\ht3}{\ht2}\CD@V{\dp3}{\dp2}\CD@V{\dimen2}{\wd2}\fi\ifvoid4 \else
\CD@V{\ht3}{\ht4}\CD@V{\dp3}{\dp4}\CD@V{\dimen4}{\wd4}\fi\advance\skip2\dimen
2 \advance\skip4\dimen4 \CD@tE\advance\skip2\skip4 \dimen0\dimen5 \advance
\dimen0\wd5 \skip3-\skip4 \advance\skip3-\dimen0 \let\CD@jD\empty\else\skip3%
\z@\relax\dimen0\z@\fi}\def\CD@WF{\offinterlineskip\lineskip.2\CD@zC\ifvoid6
\else\setbox3=\vbox{\hbox to2\dimen3{\hss\box6\hss}\box3}\fi\ifvoid7 \else
\setbox3=\vtop{\box3 \hbox to2\dimen3{\hss\box7\hss}}\fi}\def\CD@HI{\kern
\dimen1 \box1 \CD@aJ\CD@iD\hskip\skip2 \kern\dimen0 \ifincommdiag\CD@jE
\penalty1\fi\kern\dimen3 \penalty\CD@GB\hskip\skip3 \null\kern-\dimen3 \else
\hskip\skip3 \fi\box3 \CD@jD\hskip\skip4 \box5 \kern\dimen5}\def\CD@MF{\ifnum
\CD@LH>\CD@TC\CD@V{\dimen1}\objectheight\CD@V{\dimen5}\objectheight\else\CD@V
{\dimen1}\objectwidth\CD@V{\dimen5}\objectwidth\fi}\def\CD@Y{\begingroup
\ifdim\dimen7=\z@\kern\dimen8 \else\ifdim\dimen6=\z@\kern\dimen9 \else\dimen5%
\dimen6 \dimen6\dimen9 \CD@KJ\dimen4\dimen2 \CD@dG{\dimen4}\dimen6\dimen5
\dimen7\dimen8 \CD@KJ\CD@iC{\dimen2}\ifdim\dimen2<\dimen4 \kern\dimen2 \else
\kern\dimen4 \fi\fi\fi\endgroup}\def\CD@jJ{\CD@JI\setbox\z@\hbox{\lower
\axisheight\hbox to\dimen2{\CD@DF\ifPositiveGradient\dimen8\ht\CD@MH\dimen9%
\CD@mI\else\dimen8\dp3 \dimen9\dimen1 \fi\else\dimen8 \ifPositiveGradient
\objectheight\else\z@\fi\dimen9\objectwidth\fi\advance\dimen8
\ifPositiveGradient-\fi\axisheight\CD@Y\unhbox\z@\CD@DF\ifPositiveGradient
\dimen8\dp3 \dimen9\dimen0 \else\dimen8\ht\CD@MH\dimen9\CD@mF\fi\else\dimen8
\ifPositiveGradient\z@\else\objectheight\fi\dimen9\objectwidth\fi\advance
\dimen8 \ifPositiveGradient\else-\fi\axisheight\CD@Y}}}\def\CD@bD{\dimen6
\CD@aK\DiagramCellHeight\dimen7 \CD@WK\DiagramCellWidth\CD@jJ
\ifPositiveGradient\advance\dimen7-\CD@ZK\DiagramCellWidth\else\dimen7 \CD@ZK
\DiagramCellWidth\dimen6\z@\fi\advance\dimen6-\CD@bK\DiagramCellHeight\CD@mK
\setbox0=\rlap{\kern-\dimen7 \lower\dimen6\box\z@}\ht0\z@\dp0\z@\raise
\axisheight\box0 }\def\CD@mK{\setbox0\hbox{\ht\z@\z@\dp\z@\z@\wd\z@\z@\CD@hK
\expandafter\CD@tK{q \CD@eK\space\CD@lK\space\CD@kK\space\CD@eK\space0 0 cm}%
\else\global\CD@iG\CD@eD{\the\CD@TC\space\ifPositiveGradient\else-\fi\the
\CD@LH\space bturn}\fi\box\z@\CD@gK}}\def\CD@vB{\advance\CD@hF-\CD@mI\CD@wJ
\CD@hF\advance\CD@wJ\CD@hI\ifvoid\CD@sH\ifdim\CD@wJ<.1em\ifnum\CD@gD=\@m\else
\CD@aG h\CD@wJ<.1em:objects overprint:\CD@FA\CD@gD\fi\fi\else\ifhbox\CD@sH
\CD@SK\else\CD@TK\fi\advance\CD@wJ\CD@mI\CD@bH{-\CD@mI}{\box\CD@sH}{\CD@wJ}%
\z@\fi\CD@hF-\CD@mF\CD@gD\CD@FA\CD@hI\z@}\def\CD@SK{\setbox\CD@sH=\hbox{%
\unhbox\CD@sH\unskip\unpenalty}\setbox\CD@tH=\hbox{\unhbox\CD@tH\unskip
\unpenalty}\setbox\CD@sH=\hbox to\CD@wJ{\CD@OA\wd\CD@sH\unhbox\CD@sH\CD@PA
\lastkern\unkern\ifdim\CD@PA=\z@\CD@UB\advance\CD@OA-\wd\CD@tH\else\CD@TB\fi
\ifnum\lastpenalty=\z@\else\CD@JA\unpenalty\fi\kern\CD@PA\ifdim\CD@hF<\CD@OA
\CD@JA\fi\ifdim\CD@hI<\wd\CD@tH\CD@JA\fi\CD@jE\CD@hI\CD@wJ\advance\CD@hI-%
\CD@OA\advance\CD@hI\wd\CD@tH\ifdim\CD@hI<2\wd\CD@tH\CD@aG h\CD@hI<2\wd\CD@tH
:arrow too short:\CD@FA\CD@gD\fi\divide\CD@hI\tw@\CD@hF\CD@wJ\advance\CD@hF-%
\CD@hI\fi\CD@tE\kern-\CD@hI\fi\hbox to\CD@hI{\unhbox\CD@tH}\CD@HG}}\CD@tG
\def\pile{\protect\CD@UJ\protect
\CD@uH}\def\CD@uH#1{\CD@l#1\CD@QD}\def\CD@UJ{\CD@nB{pile}\setbox0=\vtop
\bgroup\aftergroup\CD@lD\inpiletrue\let\CD@FE\empty\let\pile\CD@KF\let\CD@QD
\CD@PD\let\CD@GD\CD@FD\CD@yH\baselineskip.5\PileSpacing\lineskip.1\CD@zC
\relax\lineskiplimit\lineskip\mathsurround\z@\tabskip\z@\let\\\CD@wH}\def
\CD@rG\CD@NE{pile only allows one column.}%
\CD@rG\CD@UE{you left it out!}\def\CD@R{\CD@QD\CD@Q\relax\CD@YA{missing \CD@yC
\space inserted after \string\pile}\CD@NE}\def\CD@PD{\CD@MD\crcr\egroup
\egroup}\def\CD@GD{\CD@MD}\def\CD@FD{\CD@MD\relax\CD@QD\CD@YA{missing \CD@yC
\space inserted between \string\pile\space and \CD@HD}\CD@UE}\def\CD@QD{%
\CD@MD}\def\CD@lD{\vbox{\dimen1\dp0 \unvbox0 \setbox0=\lastbox\advance\dimen1%
\dp0 \nointerlineskip\box0 \nointerlineskip\setbox0=\null\dp0.5\dimen1\ht0-%
\dp0 \box0}\ifincommdiag\CD@tJ\penalty-9998 \fi\xdef\CD@YF{pile}}\def\CD@vH{%
\cr}\def\CD@wH{\noalign{\skip@\prevdepth\advance\skip@-\baselineskip
\prevdepth\skip@}}\def\CD@KF#1{#1}\def\CD@TK{\setbox\CD@sH=\vbox{\unvbox
\CD@sH\setbox1=\lastbox\setbox0=\box\voidb@x\CD@tF\setbox\CD@sH=\lastbox
\ifhbox\CD@sH\CD@rC\repeat\unvbox0 \global\CD@QA\CD@ZE}\CD@ZE\CD@QA}\def
\def\CD@gJ{\penalty7
\noindent\unhbox\CD@sH\unskip\setbox\CD@sH=\lastbox\unskip\unhbox\CD@sH
\endgraf\setbox\CD@tH=\lastbox\unskip\setbox\CD@tH=\hbox{\CD@JG\unhbox\CD@tH
\unskip\unskip\unpenalty}\ifcase\prevgraf\cd@shouldnt P\or\ifdim\CD@wJ<\wd
\CD@tH\CD@aG h\CD@wJ<\wd\CD@sH:object in pile too wide:\CD@FA\CD@gD\setbox
\CD@sH=\hbox to\CD@wJ{\hss\unhbox\CD@tH\hss}\else\setbox\CD@sH=\hbox to\CD@wJ
{\hss\kern\CD@hF\unhbox\CD@tH\kern\CD@hI\hss}\fi\or\setbox\CD@sH=\lastbox
\unskip\CD@SK\else\cd@shouldnt Q\fi\unskip\unpenalty}\def\CD@cD{\CD@MJ\ifvoid
3 \setbox3=\null\ht3\axisheight\dp3-\ht3 \dimen3.5\CD@LF\else\dimen4\dp3
\dimen3.5\wd3 \setbox3=\CD@GG{\box3}\dp3\dimen4 \ifdim\ht3=-\dp3 \else\CD@TB
\fi\fi\dimen0\dimen3 \advance\dimen0-.5\CD@LF\setbox0\null\ht0\ht3\dp0\dp3\wd
0\wd3 \ifvoid6\else\setbox6\hbox{\unhbox6\kern\dimen0\kern2pt}\dimen0\wd6 \fi
\ifvoid7\else\setbox7\hbox{\kern2pt\kern\dimen3\unhbox7}\dimen3\wd7 \fi
\setbox3\hbox{\ifvoid6\else\kern-\dimen0\unhbox6\fi\unhbox3 \ifvoid7\else
\unhbox7\kern-\dimen3\fi}\ht3\ht0\dp3\dp0\wd3\wd0 \CD@tE\dimen4=\ht\CD@MH
\advance\dimen4\dp5 \advance\dimen4\dimen1 \let\CD@jD\empty\else\dimen4\ht3
\fi\setbox0\null\ht0\dimen4 \offinterlineskip\setbox8=\vbox spread2ex{\kern
\dimen5 \box1 \CD@iD\vfill\CD@tE\else\kern\CD@eJ\fi\box0}\ht8=\z@\setbox9=%
\vtop spread2ex{\kern-\ht3 \kern-\CD@eJ\box3 \CD@jD\vfill\box5 \kern\dimen1}%
\dp9=\z@\hskip\dimen0plus.0001fil \box9 \kern-\CD@LF\box8 \CD@kE\penalty2 \fi
\CD@tE\penalty1 \fi\kern\PileSpacing\kern-\PileSpacing\kern-.5\CD@LF\penalty
\CD@GB\null\kern\dimen3}\def\CD@cI{\ifhbox\CD@VA\CD@KB{clashing verticals}\ht
\CD@MH.5\dp\CD@VA\dp\CD@MH-\ht5 \CD@yB\ht\CD@MH\z@\dp\CD@MH\z@\fi\dimen1\dp
\CD@VA\CD@xA\prevgraf\unvbox\CD@VA\CD@wA\lastpenalty\unpenalty\setbox\CD@VA=%
\null\setbox\CD@lI=\hbox{\CD@JG\unhbox\CD@lI\unskip\unpenalty\dimen0\lastkern
\unkern\unkern\unkern\kern\dimen0 \CD@HG}\setbox\CD@lF=\hbox{\unhbox\CD@lF
\dimen0\lastkern\unkern\unkern\global\CD@QA\lastkern\unkern\kern\dimen0 }%
\CD@tF\ifnum\CD@xA>4 \CD@zI\repeat\unskip\unskip\advance\CD@mF.5\wd\CD@VA
\advance\CD@mF\wd\CD@lF\advance\CD@mI.5\wd\CD@VA\advance\CD@mI\wd\CD@lI\ifnum
\CD@FA=\CD@lA\CD@OA.5\wd\CD@VA\edef\CD@NK{\the\CD@OA}\fi\setbox\CD@VA=\hbox{%
\kern-\CD@mF\box\CD@lF\unhbox\CD@VA\box\CD@lI\kern-\CD@mI\penalty\CD@wA
\penalty\CD@NB}\ht\CD@VA\dimen1 \dp\CD@VA\z@\wd\CD@VA\CD@tB\CD@vB}\def\CD@zI{%
\ifdim\wd\CD@lF<\CD@QA\setbox\CD@lF=\hbox to\CD@QA{\CD@JG\unhbox\CD@lF}\fi
\advance\CD@xA\m@ne\setbox\CD@VA=\hbox{\box\CD@lF\unhbox\CD@VA}\unskip\setbox
\CD@lF=\lastbox\setbox\CD@lF=\hbox{\unhbox\CD@lF\unskip\unpenalty\dimen0%
\lastkern\unkern\unkern\global\CD@QA\lastkern\unkern\kern\dimen0 }}\def\CD@yB
\def\CD@zB{\unvbox\CD@VA
\CD@wA\lastpenalty\unpenalty\ifdim\dimen1<\ht\CD@MH\CD@aG v\dimen1<\ht\CD@MH:%
rows overprint:\CD@NB\CD@wA\fi}\def\CD@xB{\dimen0=\ht\CD@VA\setbox\CD@VA=%
\hbox\bgroup\advance\dimen1-\ht\CD@MH\unhbox\CD@VA\CD@xA\lastpenalty
\unpenalty\CD@wA\lastpenalty\unpenalty\global\CD@RA-\lastkern\unkern\setbox0=%
\lastbox\CD@tF\setbox\CD@VA=\hbox{\box0\unhbox\CD@VA}\setbox0=\lastbox\ifhbox
0 \CD@kJ\repeat\global\CD@SA-\lastkern\unkern\global\CD@QA\CD@JK\unhbox\CD@VA
\egroup\CD@JK\CD@QA\CD@bH{\CD@SA}{\box\CD@VA}{\CD@RA}{\dimen1}}\def\CD@kJ{%
\setbox0=\hbox to\wd0\bgroup\unhbox0 \unskip\unpenalty\dimen7\lastkern\unkern
\ifnum\lastpenalty=1 \unpenalty\CD@UB\else\CD@TB\fi\ifnum\lastpenalty=2
\unpenalty\dimen2.5\dimen0\advance\dimen2-.5\dimen1\advance\dimen2-%
\axisheight\else\dimen2\z@\fi\setbox0=\lastbox\dimen6\lastkern\unkern\setbox1%
=\lastbox\setbox0=\vbox{\unvbox0 \CD@tE\kern-\dimen1 \else\ifdim\dimen2=\z@
\else\kern\dimen2 \fi\fi}\ifdim\dimen0<\ht0 \CD@aG v\dimen0<\ht0:upper part of
vertical too short:{\CD@tE\CD@NB\else\CD@wA\fi}\CD@xA\else\setbox0=\vbox to%
\dimen0{\unvbox0}\fi\setbox1=\vtop{\unvbox1}\ifdim\dimen1<\dp1 \CD@aG v\dimen
1<\dp1:lower part of vertical too short:\CD@NB\CD@wA\else\setbox1=\vtop to%
\dimen1{\ifdim\dimen2=\z@\else\kern-\dimen2 \fi\unvbox1 }\fi\box1 \kern\dimen
6 \box0 \kern\dimen7 \CD@HG\global\CD@QA\CD@JK\egroup\CD@JK\CD@QA\relax}%
\let\CD@LB
\let\CD@mA\CD@XB\newcount\CD@MB\CD@tG
\def\CD@nD{-1}\def\CD@K{\ifnum\CD@nD<\z@\else
\begingroup\scrollmode\showboxdepth\CD@nD\showboxbreadth\maxdimen\showlists
\endgroup\fi\CD@bI\CD@zF\CD@CA=\CD@u\advance\CD@CA1 \CD@XB=\CD@CA\ifnum\CD@NB
=1 \CD@JA\fi\advance\CD@XB\CD@NB\dimen1\z@\skip0\z@\count@=\insc@unt\advance
\count@\CD@u\divide\count@2 \ifnum\CD@XB>\count@\CD@KB{The diagram has too
many rows! It can't be reformatted.}\else\CD@NG\CD@WI\fi\CD@cH}\def\CD@NG{%
\CD@NB\CD@CA\CD@uF\ifnum\CD@NB<\CD@XB\setbox\CD@NB\box\voidb@x\advance\CD@NB1%
\relax\repeat\CD@NB\CD@CA\skip\z@\z@\CD@uF\CD@GB\lastpenalty\unpenalty\ifnum
\CD@GB>\z@\CD@KE\repeat\ifnum\CD@GB=-123 \CD@tJ\unpenalty\else\cd@shouldnt D%
\fi\ifx\v@grid\relax\else\CD@NB\CD@XB\advance\CD@NB\m@ne\expandafter\CD@VJ
\v@grid\fi\CD@MB\CD@mA\CD@tB\z@\CD@XG\ifx\h@grid\relax\else\expandafter\CD@LJ
\h@grid\fi\count@\CD@XB\advance\count@\m@ne\CD@YB\ht\count@}\def\CD@KE{%
\ifcase\CD@GB\or\CD@MG\else\CD@uA-\lastpenalty\unpenalty\CD@vA\lastpenalty
\unpenalty\setbox0=\lastbox\CD@WG\fi\CD@wD}\def\CD@wD{\skip1\lastskip\unskip
\advance\skip0\skip1 \ifdim\skip1=\z@\else\expandafter\CD@wD\fi}\def\CD@MG{%
\setbox0=\lastbox\CD@pI\dp0 \advance\CD@pI\skip\z@\skip\z@\z@\advance\CD@NF
\CD@pI\CD@uE\ifnum\CD@NB>\CD@CA\CD@NF\DiagramCellHeight\CD@pI\CD@NF\advance
\CD@pI-\CD@qI\fi\fi\CD@qI\ht0 \CD@NF\CD@qI\setbox\CD@NB\hbox{\unhbox\CD@NB
\unhbox0}\dp\CD@NB\CD@pI\ht\CD@NB\CD@qI\advance\CD@NB1 }\def\CD@WG{\ifnum
\CD@uA<\z@\advance\CD@uA\CD@XB\ifnum\CD@uA<\CD@CA\CD@UG\else\CD@OA\dp\CD@uA
\CD@PA\ht\CD@uA\setbox\CD@uA\hbox{\box\z@\penalty\CD@vA\penalty\CD@GB\unhbox
\CD@uA}\dp\CD@uA\CD@OA\ht\CD@uA\CD@PA\fi\else\CD@UG\fi}\def\CD@UG{\CD@KB{%
diagonal goes outside diagram (lost)}}\def\CD@fI{\advance\CD@uA\CD@XB\ifnum
\CD@uA<\CD@CA\CD@UG\else\ifnum\CD@uA=\CD@NB\CD@VG\else\ifnum\CD@uA>\CD@NB
\cd@shouldnt M\else\CD@OA\dp\CD@uA\CD@PA\ht\CD@uA\setbox\CD@uA\hbox{\box\z@
\penalty\CD@vA\penalty\CD@GB\unhbox\CD@uA}\dp\CD@uA\CD@OA\ht\CD@uA\CD@PA\fi
\fi\fi}\def\CD@WI{\CD@t\CD@AJ\setbox\CD@PC=\hbox{\CD@k A\@super f\CD@lJ f%
\CD@ND}\CD@ZE\z@\CD@JK\z@\CD@kI\z@\CD@kF\z@\CD@NB=\CD@XB\CD@NF\z@\CD@uB\z@
\CD@uF\ifnum\CD@NB>\CD@CA\advance\CD@NB\m@ne\CD@qI\ht\CD@NB\CD@pI\dp\CD@NB
\advance\CD@NF\CD@qI\CD@rI\advance\CD@uB\CD@NF\CD@KC\CD@ZI\CD@w\ht\CD@NB
\CD@qI\dp\CD@NB\CD@pI\nointerlineskip\box\CD@NB\CD@NF\CD@pI\setbox\CD@NB\null
\ht\CD@NB\CD@uB\repeat\CD@wB\nointerlineskip\box\CD@NB\CD@gG\CD@ZE
\DiagramCellWidth{width}\CD@gG\CD@JK\DiagramCellHeight{height}\CD@VA\CD@LB
\advance\CD@VA-\CD@lA\advance\CD@VA\m@ne\advance\CD@VA\CD@mA\dimen0\wd\CD@VA
\CD@tI\axisheight\dimen1\CD@uB\advance\dimen1-\CD@YB\dimen2\CD@kI\advance
\dimen2-\dimen0 \advance\CD@XB-\CD@CA\advance\CD@LB-\CD@lA}\count@\year
\def\CD@wB{\CD@qI-\CD@NF\CD@pI\CD@NF
\setbox\CD@MH=\null\dp\CD@MH\CD@NF\ht\CD@MH-\CD@NF\CD@mF\z@\CD@mI\z@\CD@lA
\CD@LB\advance\CD@lA-\CD@MB\advance\CD@lA\CD@mA\CD@FA\CD@LB\CD@VA\CD@MB\CD@sF
\ifnum\CD@FA>\CD@lA\advance\CD@FA\m@ne\advance\CD@VA\m@ne\CD@tB\wd\CD@VA
\setbox\CD@FA=\box\voidb@x\CD@yB\repeat\CD@w\ht\CD@NB\CD@qI\dp\CD@NB\CD@pI}%
\def\CD@gG#1#2#3{\ifdim#1>.01\CD@zC\CD@PA#2\relax\advance\CD@PA#1\relax
\advance\CD@PA.99\CD@zC\count@\CD@PA\divide\count@\CD@zC\CD@KB{increase cell #%
3 to \the\count@ em}\fi}\def\CD@rI{\CD@FA=\CD@LB\penalty4 \noindent\unhbox
\CD@NB\CD@sF\unskip\setbox0=\lastbox\ifhbox0 \advance\CD@FA\m@ne\setbox\CD@FA
\hbox to\wd0{\null\penalty-9990\null\unhbox0}\repeat\CD@lA\CD@FA\advance
\CD@FA\CD@MB\advance\CD@FA-\CD@mA\ifnum\CD@FA<\CD@LB\count@\CD@FA\advance
\count@\m@ne\dimen0=\wd\count@\count@\CD@MB\advance\count@\m@ne\CD@tB\wd
\count@\CD@sF\ifnum\CD@FA<\CD@LB\CD@DJ\CD@XG\dimen0\wd\CD@FA\advance\CD@FA1
\repeat\fi\CD@sF\CD@GB\lastpenalty\unpenalty\ifnum\CD@GB>\z@\CD@vA
\lastpenalty\unpenalty\CD@VG\repeat\endgraf\unskip\ifnum\lastpenalty=4
\unpenalty\else\cd@shouldnt S\fi}\def\CD@VG{\advance\CD@vA\CD@lA\advance
\CD@vA\m@ne\setbox0=\lastbox\ifnum\CD@vA<\CD@LB\setbox\CD@vA\hbox{\box0%
\penalty\CD@GB\unhbox\CD@vA}\else\CD@UG\fi}\def\CD@bG{}\CD@tG\CD@uE\CD@WB
\def\CD@DJ{\advance\dimen0\wd\CD@FA\divide\dimen0\tw@\CD@uE\dimen0%
\DiagramCellWidth\else\CD@V{\dimen0}\DiagramCellWidth\CD@pJ\fi\advance\CD@tB
\dimen0 }\def\CD@XG{\setbox\CD@MB=\vbox{}\dp\CD@MB=\CD@uB\wd\CD@MB\CD@tB
\advance\CD@MB1 }\def\CD@LJ#1,{\def\CD@GK{#1}\ifx\CD@GK\CD@RD\else\advance
\CD@tB\CD@GK\DiagramCellWidth\CD@XG\expandafter\CD@LJ\fi}\def\CD@VJ#1,{\def
\CD@GK{#1}\ifx\CD@GK\CD@RD\else\ifnum\CD@NB>\CD@CA\CD@NF\CD@GK
\DiagramCellHeight\advance\CD@NF-\dp\CD@NB\advance\CD@NB\m@ne\ht\CD@NB\CD@NF
\fi\expandafter\CD@VJ\fi}\def\CD@pJ{\CD@wE\CD@OA\dimen0 \advance\CD@OA-%
\DiagramCellWidth\ifdim\CD@OA>2\MapShortFall\CD@KB{badly drawn diagonals (see
manual)}\let\CD@pJ\empty\fi\else\let\CD@pJ\empty\fi}\def\CD@KC{\CD@VA\CD@mA
\CD@sF\ifnum\CD@VA<\CD@MB\dimen0\dp\CD@VA\advance\dimen0\CD@NF\dp\CD@VA\dimen
0 \advance\CD@VA1 \repeat}\def\CD@bH#1#2#3#4{\ifnum\CD@FA<\CD@LB\CD@OA=#1%
\relax\setbox\CD@FA=\hbox{\setbox0=#2\dimen7=#4\relax\dimen8=#3\relax\ifhbox
\CD@FA\unhbox\CD@FA\advance\CD@OA-\lastkern\unkern\fi\ifdim\CD@OA=\z@\else
\kern-\CD@OA\fi\raise\dimen7\box0 \kern-\dimen8 }\ifnum\CD@FA=\CD@lA\CD@V
\CD@kF\CD@OA\fi\else\cd@shouldnt O\fi}\def\CD@w{\setbox\CD@NB=\hbox{\CD@FA
\CD@lA\CD@VA\CD@mA\CD@PA\z@\relax\CD@sF\ifnum\CD@FA<\CD@LB\CD@tB\wd\CD@VA
\relax\CD@eI\advance\CD@FA1 \advance\CD@VA1 \repeat}\CD@V\CD@kI{\wd\CD@NB}\wd
\CD@NB\z@}\def\CD@eI{\ifhbox\CD@FA\CD@OA\CD@tB\relax\advance\CD@OA-\CD@PA
\relax\ifdim\CD@OA=\z@\else\kern\CD@OA\fi\CD@PA\CD@tB\advance\CD@PA\wd\CD@FA
\relax\unhbox\CD@FA\advance\CD@PA-\lastkern\unkern\fi}\def\CD@ZI{\setbox
\CD@sH=\box\voidb@x\CD@VA=\CD@MB\CD@FA\CD@LB\CD@VA\CD@mA\advance\CD@VA\CD@FA
\advance\CD@VA-\CD@lA\advance\CD@VA\m@ne\CD@tB\wd\CD@VA\count@\CD@LB\advance
\count@\m@ne\CD@hF.5\wd\count@\advance\CD@hF\CD@tB\CD@A\m@ne\CD@gD\@m\CD@sF
\ifnum\CD@FA>\CD@lA\advance\CD@FA\m@ne\advance\CD@hF-\CD@tB\CD@PI\wd\CD@VA
\CD@tB\advance\CD@hF\CD@tB\advance\CD@VA\m@ne\CD@tB\wd\CD@VA\repeat\CD@mF
\CD@kF\CD@mI-\CD@mF\CD@vB}\newcount\CD@GB\def\CD@s{}\def\CD@t{\mathsurround
\z@\hsize\z@\rightskip\z@ plus1fil minus\maxdimen\parfillskip\z@\linepenalty
9000 \looseness0 \hfuzz\maxdimen\hbadness10000 \clubpenalty0 \widowpenalty0
\displaywidowpenalty0 \interlinepenalty0 \predisplaypenalty0
\postdisplaypenalty0 \interdisplaylinepenalty0 \interfootnotelinepenalty0
\floatingpenalty0 \brokenpenalty0 \everypar{}\leftskip\z@\parskip\z@
\parindent\z@\pretolerance10000 \tolerance10000 \hyphenpenalty10000
\exhyphenpenalty10000 \binoppenalty10000 \relpenalty10000 \adjdemerits0
\doublehyphendemerits0 \finalhyphendemerits0 \baselineskip\z@\CD@IA\prevdepth
\z@}\newbox\CD@KG\newbox\CD@IG\def\CD@JG{\unhcopy\CD@KG}\def\CD@HG{\unhcopy
\CD@IG}\def\CD@iJ{\hbox{}\penalty1\nointerlineskip}\def\CD@PI{\penalty5
\noindent\setbox\CD@MH=\null\CD@mF\z@\CD@mI\z@\ifnum\CD@FA<\CD@LB\ht\CD@MH\ht
\CD@FA\dp\CD@MH\dp\CD@FA\unhbox\CD@FA\skip0=\lastskip\unskip\else\CD@OK\skip0%
=\z@\fi\endgraf\ifcase\prevgraf\cd@shouldnt Y \or\cd@shouldnt Z \or\CD@RI\or
\CD@XI\else\CD@QI\fi\unskip\setbox0=\lastbox\unskip\unskip\unpenalty\noindent
\unhbox0\setbox0\lastbox\unpenalty\unskip\unskip\unpenalty\setbox0\lastbox
\CD@tF\CD@GB\lastpenalty\unpenalty\ifnum\CD@GB>\z@\setbox\z@\lastbox\CD@lB
\repeat\endgraf\unskip\unskip\unpenalty}\def\CD@YJ{\CD@uA\CD@XB\advance\CD@uA
-\CD@NB\CD@vA\CD@FA\advance\CD@vA-\CD@lA\advance\CD@vA1 \expandafter\message{%
prevgraf=\the\prevgraf at (\the\CD@uA,\the\CD@vA)}}\def\CD@XI{\CD@CE\setbox
\CD@lI=\lastbox\setbox\CD@lI=\hbox{\unhbox\CD@lI\unskip\unpenalty}\unskip
\ifdim\ht\CD@lI>\ht\CD@PC\setbox\CD@MH=\copy\CD@lI\else\ifdim\dp\CD@lI>\dp
\CD@PC\setbox\CD@MH=\copy\CD@lI\else\CD@FG\CD@lI\fi\fi\advance\CD@mF.5\wd
\CD@lI\advance\CD@mI.5\wd\CD@lI\setbox\CD@lI=\hbox{\unhbox\CD@lI\CD@HG}\CD@bH
\CD@mF{\box\CD@lI}\CD@mI\z@\CD@yB\CD@vB}\def\CD@CE{\ifnum\CD@A>0 \advance
\dimen0-\CD@tB\CD@iA-.5\dimen0 \CD@A-\CD@A\else\CD@A0 \CD@iA\z@\fi\setbox
\CD@MH=\lastbox\setbox\CD@MH=\hbox{\unhbox\CD@MH\unskip\unskip\unpenalty
\setbox0=\lastbox\global\CD@QA\lastkern\unkern}\advance\CD@iA-.5\CD@QA\unskip
\setbox\CD@MH=\null\CD@mI\CD@iA\CD@mF-\CD@iA}\def\CD@Z{\ht\CD@MH\CD@tI\dp
\CD@MH\CD@sI}\def\CD@FG#1{\setbox\CD@MH=\hbox{\CD@V{\ht\CD@MH}{\ht#1}\CD@V{%
\dp\CD@MH}{\dp#1}\CD@V{\wd\CD@MH}{\wd#1}\vrule height\ht\CD@MH depth\dp\CD@MH
width\wd\CD@MH}}\def\CD@QI{\CD@CE\CD@Z\setbox\CD@lI=\lastbox\unskip\setbox
\CD@lF=\lastbox\unskip\setbox\CD@lF=\hbox{\unhbox\CD@lF\unskip\global\CD@yA
\lastpenalty\unpenalty}\advance\CD@yA9999 \ifcase\CD@yA\CD@VI\or\CD@YI\or
\CD@TI\or\CD@dI\or\CD@cI\or\CD@SI\else\cd@shouldnt9\fi}\def\CD@VI{\CD@FG
\CD@lI\CD@UI\setbox\CD@sH=\box\CD@lF\setbox\CD@tH=\box\CD@lI}\def\CD@YI{%
\CD@FG\CD@lF\setbox\CD@lI\hbox{\penalty8 \unhbox\CD@lI\unskip\unpenalty\ifnum
\lastpenalty=8 \else\CD@xH\fi}\CD@UI\setbox\CD@lF=\hbox{\unhbox\CD@lF\unskip
\unpenalty\global\setbox\CD@DA=\lastbox}\ifdim\wd\CD@lF=\z@\else\CD@xH\fi
\setbox\CD@sH=\box\CD@DA}\def\CD@xH{\CD@KB{extra material in \string\pile
\space cell (lost)}}\def\CD@UI{\CD@yB\ifvoid\CD@sH\else\CD@KB{Clashing
horizontal arrows}\CD@mI.5\CD@hF\CD@mF-\CD@mI\CD@vB\CD@mI\z@\CD@mF\z@\fi
\CD@hI\CD@hF\advance\CD@hI-\CD@mI\CD@hF-\CD@mF\CD@JC\CD@FA}\def\CD@RI{\setbox
0\lastbox\unskip\CD@iA\z@\CD@Z\ifdim\skip0>\z@\CD@tJ\CD@A0 \else\ifnum\CD@A<1
\CD@A0 \dimen0\CD@tB\fi\advance\CD@A1 \fi}\def\VonH{\CD@MA46\VonH{.5\CD@LF}}%
\def\HonV{\CD@MA57\HonV{.5\CD@LF}}\def\HmeetV{\CD@MA44\HmeetV{-\MapShortFall}%
}\def\CD@MA#1#2#3#4{\CD@pB34#1{\string#3}\CD@SD\CD@GB-999#2 \dimen0=#4\CD@tI
\dimen0\advance\CD@tI\axisheight\CD@sI\dimen0\advance\CD@sI-\axisheight\CD@CF
\CD@HC\CD@ZD}\def\CD@HC#1{\setbox0=\hbox{\CD@k#1\CD@ND}\dimen0.5\wd0 \CD@tI
\ht0 \CD@sI\dp0 \CD@ZD}\def\CD@SD{\setbox0=\null\ht0=\CD@tI\dp0=\CD@sI\wd0=%
\dimen0 \copy0\penalty\CD@GB\box0 }\def\CD@TI{\CD@GC\CD@yB}\def\CD@dI{\CD@GC
\CD@vB}\def\CD@SI{\CD@GC\CD@yB\CD@vB}\def\CD@GC{\setbox\CD@lI=\hbox{\unhbox
\CD@lI}\setbox\CD@lF=\hbox{\unhbox\CD@lF\global\setbox\CD@DA=\lastbox}\ht
\CD@MH\ht\CD@DA\dp\CD@MH\dp\CD@DA\advance\CD@mF\wd\CD@DA\advance\CD@mI\wd
\CD@lI}\CD@tG\ifPositiveGradient\CD@CI\CD@BI\CD@CI\CD@tG\ifClimbing\CD@rB
\def\CD@qF{\CD@KH\ifPositiveGradient/\else\CD@k\backslash\CD@ND\fi}%
\def\CD@qF{\CD@rF\char\count@}\fi\let\CD@rF\tenln\def\Use@line@char#1{%
\hbox{#1\CD@rF\ifPositiveGradient\else\advance\count@64 \fi\char\count@}}\def
\def\CD@ZF{\Use@line@char{\ifcase\DiagonalChoice\CD@gF\or
\CD@fF\or\CD@fF\else\CD@gF\fi}}\def\CD@gF{\ifnum\CD@TC=\z@\count@'33 \else
\count@\CD@TC\multiply\count@\sixt@@n\advance\count@-9\advance\count@\CD@LH
\advance\count@\CD@LH\fi}\def\CD@fF{\count@'\ifcase\CD@LH55\or\ifcase\CD@TC66%
\or22\or52\or61\or72\fi\or\ifcase\CD@TC66\or25\or22\or63\or52\fi\or\ifcase
\CD@TC66\or16\or36\or22\or76\fi\or\ifcase\CD@TC66\or27\or25\or67\or22\fi\fi
\relax}\def\CD@uC#1{\hbox{#1\setbox0=\Use@line@char{#1}\ifPositiveGradient
\else\raise.3\ht0\fi\copy0 \kern-.7\wd0 \ifPositiveGradient\raise.3\ht0\fi
\box0}}\def\CD@jF#1{\hbox{\setbox0=#1\kern-.75\wd0 \vbox to.25\ht0{%
\ifPositiveGradient\else\vss\fi\box0 \ifPositiveGradient\vss\fi}}}\def\CD@jI#%
\def\CD@tC#1#2{\vbox to#1{\vss\hbox to#%
2{\hss.\hss}\vss}}\def\hfdot{\CD@tC{2\axisheight}{.5em}}%
\def\vfdot{\CD@tC{1ex}\z@}
\def\CD@bF{\hbox{\dimen0=.3\CD@zC\dimen1\dimen0 \ifnum\CD@LH>\CD@TC\CD@iC{%
\dimen1}\else\CD@dG{\dimen0}\fi\CD@tC{\dimen0}{\dimen1}}}\newarrowfiller{.}%
\def\dfdot{\CD@bF\CD@CK}\CD@RC{+f:.}{\dfdot}\CD@RC{-f%
\def\CD@@K#1{\hbox\bgroup\def\CD@CH{#1\egroup}\afterassignment
\CD@CH
\count@='}\def\lnchar{\CD@@K\CD@qF}\def\CD@dF#1{\setbox#1=\hbox{\dimen5\dimen
#1 \setbox8=\box#1 \dimen1\wd8 \count@\dimen5 \divide\count@\dimen1 \ifnum
\count@=0 \box8 \ifdim\dimen5<.95\dimen1 \CD@gB{diagonal line too short}\fi
\else\dimen3=\dimen5 \advance\dimen3-\dimen1 \divide\dimen3\count@\dimen4%
\dimen3 \CD@dG{\dimen4}\ifPositiveGradient\multiply\dimen4\m@ne\fi\dimen6%
\dimen1 \advance\dimen6-\dimen3 \loop\raise\count@\dimen4\copy8 \ifnum\count@
>0 \kern-\dimen6 \advance\count@\m@ne\repeat\fi}}\def\CD@CG#1{\CD@EF\CD@xJ{#1%
}\else\CD@dF{#1}\fi}\def\CD@IH#1{}\newdimen\objectheight\objectheight1.8ex
\newdimen\objectwidth\objectwidth1em \def\CD@YD{\dimen6=\CD@aK
\DiagramCellHeight\dimen7=\CD@WK\DiagramCellWidth\CD@KJ\ifnum\CD@LH>0 \ifnum
\CD@TC>0 \CD@aF\else\aftergroup\CD@VC\fi\else\aftergroup\CD@UC\fi}\def\CD@VC{%
\CD@YA{diagonal map is nearly vertical}\CD@NA}\def\CD@UC{\CD@YA{diagonal map
is nearly horizontal}\CD@NA}\CD@rG\CD@NA{Use an orthogonal map instead}\def
\axisheight\CD@iC{\dimen8%
}\CD@X{\dimen8}{.5\wd3}\dimen9\dp3\advance\dimen9\axisheight\CD@iC{\dimen9}%
\else\CD@CG{2}\CD@CG{4}\ifPositiveGradient\dimen2-\dimen0%
\fi\rlap{\unhbox1}\fi\raise
\def\NorthWest{\CD@BI
\CD@rB\DiagonalChoice0 }\def\NorthEast{\CD@CI\CD@rB\DiagonalChoice1 }\def
\def\SouthEast{\CD@BI\CD@qB
\DiagonalChoice2 }\def\CD@aD{\vadjust{\CD@uA\CD@FA\advance\CD@uA
\ifPositiveGradient\else-\fi\CD@ZK\relax\CD@vA\CD@NB\advance\CD@vA-\CD@bK
\relax\hbox{\advance\CD@uA\ifPositiveGradient-\fi\CD@WK\advance\CD@vA\CD@aK
\hbox{\box6 \kern\CD@DC\kern\CD@eJ\penalty1 \box7 \box\z@}\penalty\CD@uA
\penalty\CD@vA}\penalty\CD@uA\penalty\CD@vA\penalty104}}\def\CD@eH#1{\relax
\vadjust{\hbox@maths{#1}\penalty\CD@FA\penalty\CD@NB\penalty\tw@}}\def\CD@lB{%
\ifcase\CD@GB\or\or\CD@bH{.5\wd0}{\box0}{.5\wd0}\z@\or\unhbox\z@\setbox\z@
\lastbox\CD@bH{.5\wd0}{\box0}{.5\wd0}\z@\unpenalty\unpenalty\setbox\z@
\lastbox\or\CD@TG\else\advance\CD@GB-100 \ifnum\CD@GB<\z@\cd@shouldnt B\fi
\setbox\z@\hbox{\kern\CD@mF\copy\CD@MH\kern\CD@mI\CD@uA\CD@XB\advance\CD@uA-%
\CD@NB\penalty\CD@uA\CD@uA\CD@FA\advance\CD@uA-\CD@lA\penalty\CD@uA\unhbox\z@
\global\CD@yA\lastpenalty\unpenalty\global\CD@zA\lastpenalty\unpenalty}\CD@uA
-\CD@yA\CD@vA\CD@zA\CD@fI\fi}\def\CD@TG{\unhbox\z@\setbox\z@\lastbox\CD@uA
\lastpenalty\unpenalty\advance\CD@uA\CD@mA\CD@vA\CD@XB\advance\CD@vA-%
\lastpenalty\unpenalty\dimen1\lastkern\unkern\setbox3\lastbox\dimen0\lastkern
\unkern\setbox0=\hbox to\z@{\unhbox0\setbox0\lastbox\setbox7\lastbox
\unpenalty\CD@eJ\lastkern\unkern\CD@DC\lastkern\unkern\setbox6\lastbox\dimen7%
\CD@tB\advance\dimen7-\wd\CD@uA\ifdim\dimen7<\z@\CD@CI\multiply\dimen7\m@ne
\let\mv\empty\else\CD@BI\def\mv{\raise\ht1}\kern-\dimen7 \fi\ifnum\CD@vA>%
\CD@NB\dimen6\CD@uB\advance\dimen6-\ht\CD@vA\else\dimen6\z@\fi\CD@jJ\CD@mK
\setbox1\null\ht1\dimen6\wd1\dimen7 \dimen7\dimen2 \dimen6\wd1 \CD@KJ\CD@uA
\CD@LH\CD@vA\CD@TC\dimen6\ht1 \CD@KJ\setbox2\null\divide\dimen2\tw@\advance
\dimen2\CD@eJ\CD@eG{\dimen2}\wd2\dimen2 \dimen0.5\dimen7 \advance\dimen0%
\ifPositiveGradient\else-\fi\CD@eJ\CD@dG{\dimen0}\advance\dimen0-\axisheight
\ht2\dimen0 \dimen0\CD@DC\CD@eG{\dimen0}\advance\dimen0\ht2\ht2\dimen0 \dimen
0\ifPositiveGradient-\fi\CD@DC\CD@dG{\dimen0}\advance\dimen0\wd2\wd2\dimen0
\setbox4\null\dimen0 .6\CD@zC\CD@eG{\dimen0}\ht4\dimen0 \dimen0 .2\CD@zC
\CD@dG{\dimen0}\wd4\dimen0 \dimen0\wd2 \ifvoid6\else\dimen1\ht4 \advance
\dimen1\ht2 \CD@CC6+-\raise\dimen1\rlap{\ifPositiveGradient\advance\dimen0-%
\wd6\advance\dimen0-\wd4 \else\advance\dimen0\wd4 \fi\kern\dimen0\box6}\fi
\dimen0\wd2 \ifvoid7\else\dimen1\ht4 \advance\dimen1-\ht2 \CD@CC7-+\lower
\dimen1\rlap{\ifPositiveGradient\advance\dimen0\wd4 \else\advance\dimen0-\wd7%
\advance\dimen0-\wd4 \fi\kern\dimen0\box7}\fi\mv\box0\hss}\ht0\z@\dp0\z@
\CD@bH{\z@}{\box\z@}{\z@}{\axisheight}}\def\CD@CC#1#2#3{\dimen4.5\wd#1 \ifdim
\dimen4>.25\dimen7\dimen4=.25\dimen7\fi\ifdim\dimen4>\CD@zC\dimen4.4\dimen4
\advance\dimen4.6\CD@zC\fi\CD@eG{\dimen4}\dimen5\axisheight\CD@dG{\dimen5}%
\advance\dimen4-\dimen5 \dimen5\dimen4\CD@eG{\dimen5}\advance\dimen0%
\ifPositiveGradient#2\else#3\fi\dimen5 \CD@dG{\dimen4}\advance\dimen1\dimen4 }
\def\CD@eD#1{\expandafter\CD@IK{#1}}\CD@ZA\CD@EK{output is PostScript
dependent}\def\CD@SC{\CD@IK{/bturn {gsave currentpoint currentpoint translate
4 2 roll neg exch atan rotate neg exch neg exch translate } def /eturn {%
currentpoint grestore moveto} def}}\def\CD@gK{\relax\CD@hK\CD@tK{Q}\else
\CD@IK{eturn}\fi} \def\CD@OJ#1{\count@#1\relax\multiply\count@7\advance
\count@16577\divide\count@33154 }\def\CD@fD#1{\expandafter\special{#1}} \def
\CD@LF\CD@fD{pn \the\count@}\CD@fD{pa 0 0}\CD@OJ{\dimen#%
\def\CD@JI{%
\CD@KJ\begingroup\ifdim\dimen7<\dimen6 \dimen2=\dimen6 \dimen6=\dimen7 \dimen
7=\dimen2 \count@\CD@LH\CD@LH\CD@TC\CD@TC\count@\else\dimen2=\dimen7 \fi
\ifdim\dimen6>.01\p@\CD@KI\global\CD@QA\dimen0 \else\global\CD@QA\dimen7 \fi
\endgroup\dimen2\CD@QA\CD@iK\CD@lK{\ifPositiveGradient\else-\fi\dimen6}\CD@iK
\CD@kK{\ifPositiveGradient-\fi\dimen6}\CD@iK\CD@eK{\dimen7}}\def\CD@KI{\CD@hJ
\ifdim\dimen7>1.73\dimen6 \divide\dimen2 4 \multiply\CD@TC2 \else\dimen2=0.%
353553\dimen2 \advance\CD@LH-\CD@TC\multiply\CD@TC4 \fi\dimen0=4\dimen2 \CD@ZG
4\CD@ZG{-2}\CD@ZG2\CD@ZG{-2.5}}\def\CD@AI{\begingroup\count@\dimen0 \dimen2 45%
pt \divide\count@\dimen2 \ifdim\dimen0<\z@\advance\count@\m@ne\fi\ifodd
\count@\advance\count@1\CD@@A\else\CD@y\fi\advance\dimen0-\count@\dimen2
\CD@gE\multiply\dimen0\m@ne\fi\ifnum\count@<0 \multiply\count@-7 \fi\dimen3%
\dimen1 \dimen6\dimen0 \dimen7 3754936sp \ifdim\dimen0<6\p@\def\CD@OG{4000}%
\fi\CD@KJ\dimen2\dimen3\CD@dG{\dimen2}\CD@hJ\multiply\CD@TC-6 \dimen0\dimen2
\CD@ZG1\CD@ZG{0.3}\dimen1\dimen0 \dimen2\dimen3 \dimen0\dimen3 \CD@ZG3\CD@ZG{%
1.5}\CD@ZG{0.3}\divide\count@2 \CD@gE\multiply\dimen1\m@ne\fi\ifodd\count@
\dimen2\dimen1\dimen1\dimen0\dimen0-\dimen2 \fi\divide\count@2 \ifodd\count@
\multiply\dimen0\m@ne\multiply\dimen1\m@ne\fi\global\CD@QA\dimen0\global
\CD@RA\dimen1\endgroup\dimen6\CD@QA\dimen7\CD@RA}\def\CD@OC{255}\let\CD@OG
\def\CD@KJ{\begingroup\ifdim\dimen7<\dimen6 \dimen9\dimen7\dimen7\dimen
6\dimen6\dimen9\CD@@A\else\CD@y\fi\dimen2\z@\dimen3\CD@XH\dimen4\CD@XH\dimen0%
\z@\dimen8=\CD@OG\CD@XH\CD@lC\global\CD@yA\dimen\CD@gE0\else3\fi\global\CD@zA
\dimen\CD@gE3\else0\fi\endgroup\CD@LH\CD@yA\CD@TC\CD@zA}\def\CD@lC{\count@
\dimen6 \divide\count@\dimen7 \advance\dimen6-\count@\dimen7 \dimen9\dimen4
\advance\dimen9\count@\dimen0 \ifdim\dimen9>\dimen8 \CD@@C\else\CD@AC\ifdim
\dimen6>\z@\dimen9\dimen6 \dimen6\dimen7 \dimen7\dimen9 \expandafter
\expandafter\expandafter\CD@lC\fi\fi}\def\CD@@C{\ifdim\dimen0=\z@\ifdim\dimen
9<2\dimen8 \dimen0\dimen8 \fi\else\advance\dimen8-\dimen4 \divide\dimen8%
\dimen0 \ifdim\count@\CD@XH<2\dimen8 \count@\dimen8 \dimen9\dimen4 \advance
\dimen9\count@\dimen0 \CD@AC\fi\fi}\def\CD@AC{\dimen4\dimen0 \dimen0\dimen9
\advance\dimen2\count@\dimen3 \dimen9\dimen2 \dimen2\dimen3 \dimen3\dimen9 }%
\def\CD@ZG#1{\CD@dG{\dimen2}\advance\dimen0 #1\dimen2 }\def\CD@dG#1{\divide#1%
\CD@TC\multiply#1\CD@LH}\def\CD@eG#1{\divide#1\CD@vA\multiply#1\CD@uA}\def
\def\CD@hJ{\dimen6\CD@LH\CD@XH
\multiply\dimen6\CD@LH\dimen7\CD@TC\CD@XH\multiply\dimen7\CD@TC\CD@KJ}\def
\let\CD@GH
\def\CD@GH{\errorcontextlines\m@ne}\fi\ifnum\inputlineno<0 \let
\let\CD@W\empty\let\CD@mD\relax\let\CD@uI\relax\let\CD@vI\relax
\let\CD@zF\relax\message{! Why not upgrade to TeX version 3? (available since
1990)}\else\def\CD@W{ at line \number\inputlineno}\def\CD@mD{ - first occurred%
}\def\CD@uI{\edef\CD@h{\the\inputlineno}\global\let\CD@jB\CD@h}\def\CD@h{9999%
}\def\CD@vI{\xdef\CD@jB{\the\inputlineno}}\def\CD@jB{\CD@h}\def\CD@zF{\ifnum
\CD@h<\inputlineno\edef\CD@CD{\space at lines \CD@h--\the\inputlineno}\else
\edef\CD@CD{\CD@W}\fi}\fi\let\CD@CD\empty\def\CD@YA#1#2{\CD@GH\errhelp=#2%
\expandafter\errmessage{\CD@tA: #1}}\def\CD@KB#1{\begingroup\expandafter
\message{! \CD@tA: #1\CD@CD}\ifnum\CD@XB>\CD@NB\ifnum\CD@CA>\CD@NB\else\ifnum
\CD@lA>\CD@FA\else\ifnum\CD@LB>\CD@FA\advance\CD@XB-\CD@NB\advance\CD@FA-%
\CD@lA\advance\CD@FA1\relax\expandafter\message{! (error detected at row \the
\CD@XB, column \the\CD@FA, but probably caused elsewhere)}\fi\fi\fi\fi
\endgroup}\def\CD@gB#1{{\expandafter\message{\CD@tA\space Warning: #1\CD@W}}}%
\def\CD@CB#1#2{\CD@gB{#1 \string#2 is obsolete\CD@mD}}\def\CD@AB#1{\CD@CB{%
Dimension}{#1}\CD@DE#1\CD@BB\CD@BB}\def\CD@BB{\CD@OA=}\def\CD@@B#1{\CD@CB{%
Count}{#1}\CD@DE#1\CD@OH\CD@OH}\def\CD@OH{\count@=}\def\HorizontalMapLength{%
\CD@AB\HorizontalMapLength}\def\VerticalMapHeight{\CD@AB\VerticalMapHeight}%
\def\VerticalMapDepth{\CD@AB\VerticalMapDepth}\def\VerticalMapExtraHeight{%
\CD@AB\VerticalMapExtraHeight}\def\VerticalMapExtraDepth{\CD@AB
\VerticalMapExtraDepth}\def\DiagonalLineSegments{\CD@@B\DiagonalLineSegments}%
\CD@ZA\CD@KH{\CD@eF\space diagonal line and arrow font not
available}\else\let\CD@KH\relax\fi\def\CD@aG#1#2<#3:#4:#5#6{\begingroup\CD@PA
#3\relax\advance\CD@PA-#2\relax\ifdim.1em<\CD@PA\CD@uA#5\relax\CD@vA#6\relax
\ifnum\CD@uA<\CD@vA\count@\CD@vA\advance\count@-\CD@uA\CD@KB{#4 by \the\CD@PA
}\if#1v\let\CD@CH\CD@JK\edef\tmp{\the\CD@uA--\the\CD@vA,\the\CD@FA}\else
\advance\count@\count@\if#1l\advance\count@-\CD@A\else\if#1r\advance\count@
\CD@A\fi\fi\advance\CD@PA\CD@PA\let\CD@CH\CD@ZE\edef\tmp{\the\CD@NB,\the
\CD@uA--\the\CD@vA}\fi\divide\CD@PA\count@\ifdim\CD@CH<\CD@PA\global\CD@CH
\CD@PA\fi\fi\fi\endgroup}\CD@tG\CD@xE\CD@JD\CD@ID\CD@rG\CD@xI{See the message
above.}\CD@rG\CD@lH{Perhaps you've forgotten to end the diagram before
resuming the text, in\CD@uG which case some garbage may be added to the
diagram, but we should be ok now.\CD@uG Alternatively you've left a blank line
in the middle - TeX will now complain\CD@uG that the remaining \CD@S s are
misplaced - so please use comments for layout.}\CD@rG\CD@hD{You have already
closed too many brace pairs or environments; an \CD@HD\CD@uG command was (%
over)due.}\CD@rG\CD@hH{\CD@dC\space and \CD@HD\space commands must match.}%
\def\CD@jH{\ifnum\inputlineno=0 \else\expandafter\CD@iH\fi}\def\CD@iH{\CD@MD
\CD@GD\crcr\CD@YA{missing \CD@HD\space inserted before \CD@kH- type "h"}%
\CD@lH\enddiagram\CD@AG\CD@kH\par}\def\CD@AG#1{\edef\enddiagram{\noexpand
\CD@rD{#1\CD@W}}}\def\CD@rD#1{\CD@YA{\CD@HD\space(anticipated by #1) ignored}%
\CD@xI\let\enddiagram\CD@SG}\def\CD@SG{\CD@YA{misplaced \CD@HD\space ignored}%
\CD@hH}\def\CD@mC{\CD@YA{missing \CD@HD\space inserted.}\CD@hD\CD@AG{closing
group}}\ifx\DeclareOption\CD@qK\else\ifx\DeclareOption\@notprerr\else
\def\vboxtoz{\vbox to\z@}
\def\scriptaxis#1{\@scriptaxis{$\scriptstyle#1$}}
\def\ssaxis#1{\@ssaxis{$\scriptscriptstyle#1$}}
\def\@scriptaxis#1{\dimen0\axisheight\advance\dimen0-\ss@axisheight\raise
\dimen0\hbox{#1}}\def\@ssaxis#1{\dimen0\axisheight\advance\dimen0-%
\ss@axisheight\raise\dimen0\hbox{#1}}
\let\boldscriptaxis\scriptaxis
\def\boldscript#1{\hbox{$\scriptstyle#1$}}
\def\boldscriptaxis#1{\@scriptaxis{\boldmath$\scriptstyle#1$}}
\def\boldscript#1{\hbox{\boldmath$\scriptstyle#1$}}
\def\raisehook#1#2#3{\hbox{\setbox3=\hbox{#1$\scriptscriptstyle#3$}%
\dimen0\ss@axisheight
\dimen1\axisheight\advance\dimen1-\dimen0
\dimen2\ht3\advance\dimen2-\dimen0%
\advance\dimen2-0.021em\advance\dimen1 #2\dimen2%
\raise\dimen1\box3}}
\def\shifthook#1#2#3{\setbox1=\hbox{#1$\scriptscriptstyle#3$}\dimen0\wd1%
\divide\dimen0 12\CD@zH{\dimen0}
\dimen1\wd1\advance\dimen1-2\dimen0 \advance\dimen1-2\CD@oI\CD@zH{\dimen1}%
\kern#2\dimen1\box1}
\def\@cmex{\mathchar"03}
\def\make@pbk#1{\setbox\tw@\hbox to\z@{#1}\ht\tw@\z@\dp\tw@\z@\box\tw@}\def
\def\CD@qH{\kern0.11em}\def\CD@pH{\kern0%
.35em}
\def\dblvert{\def\CD@rH{\kern.5\PileSpacing}}\def\CD@rH{}
\def\SEpbk{\make@pbk{\CD@qH\CD@rH\vrule depth 2.87ex height -2.75ex width 0.%
95em \vrule height -0.66ex depth 2.87ex width 0.05em \hss}}
\def\SWpbk{\make@pbk{\hss\vrule height -0.66ex depth 2.87ex width 0.05em
\vrule depth 2.87ex height -2.75ex width 0.95em \CD@qH\CD@rH}}
\def\NEpbk{\make@pbk{\CD@qH\CD@rH\vrule depth -3.81ex height 4.00ex width 0.%
95em \vrule height 4.00ex depth -1.72ex width 0.05em \hss}}
\def\NWpbk{\make@pbk{\hss\vrule height 4.00ex depth -1.72ex width 0.05em
\vrule depth -3.81ex height 4.00ex width 0.95em \CD@qH\CD@rH}}
\def\puncture{{\setbox0\hbox{A}\vrule height.53\ht0 depth-.47\ht0 width.35\ht
0 \kern.12\ht0 \vrule height\ht0 depth-.65\ht0 width.06\ht0 \kern-.06\ht0
\vrule height.35\ht0 depth0pt width.06\ht0 \kern.12\ht0 \vrule height.53\ht0
depth-.47\ht0 width.35\ht0 }}
\def\NEclck{\overprint{\raise2.5ex\rlap{ \CD@rH$\scriptstyle\searrow$}}}
\def\NEanti{\overprint{\raise2.5ex\rlap{ \CD@rH$\scriptstyle\nwarrow$}}}
\def\NWclck{\overprint{\raise2.5ex\llap{$\scriptstyle\nearrow$ \CD@rH}}}
\def\NWanti{\overprint{\raise2.5ex\llap{$\scriptstyle\swarrow$ \CD@rH}}}
\def\SEclck{\overprint{\lower1ex\rlap{ \CD@rH$\scriptstyle\swarrow$}}}
\def\SEanti{\overprint{\lower1ex\rlap{ \CD@rH$\scriptstyle\nearrow$}}}
\def\SWclck{\overprint{\lower1ex\llap{$\scriptstyle\nwarrow$ \CD@rH}}}
\def\SWanti{\overprint{\lower1ex\llap{$\scriptstyle\searrow$ \CD@rH}}}
\def\rhvee{\mkern-10mu\greaterthan}
\def\lhvee{\lessthan\mkern-10mu}
\def\dhvee{\vboxtoz{\vss\hbox{$\vee$}\kern0pt}}
\def\uhvee{\vboxtoz{\hbox{$\wedge$}\vss}}
\def\dhlvee{\vboxtoz{\vss\hbox{$\scriptstyle\vee$}\kern0pt}}
\def\uhlvee{\vboxtoz{\hbox{$\scriptstyle\wedge$}\vss}}
\def\dhblvee{\vboxtoz{\vss\boldscript\vee\kern0pt}}
\def\uhblvee{\vboxtoz{\boldscript\wedge\vss}}
\def\rhcvee{\mkern-10mu\succ}
\def\lhcvee{\prec\mkern-10mu}
\def\dhcvee{\vboxtoz{\vss\hbox{$\curlyvee$}\kern0pt}}
\def\uhcvee{\vboxtoz{\hbox{$\curlywedge$}\vss}}
\def\rhvvee{\mkern-13mu\gg}
\def\lhvvee{\ll\mkern-13mu}
\def\dhvvee{\vboxtoz{\vss\hbox{$\vee$}\kern-.6ex\hbox{$\vee$}\kern0pt}}
\def\uhvvee{\vboxtoz{\hbox{$\wedge$}\kern-.6ex \hbox{$\wedge$}\vss}}
\def\rhtriangle{\triangleright\mkern1.2mu}
\def\lhtriangle{\triangleleft\mkern.8mu}
\def\uhtriangle{\vbox{\kern-.2ex \hbox{$\scriptscriptstyle\bigtriangleup$}%
\kern-.25ex}}
\def\dhtriangle{\vbox{\kern-.28ex \hbox{$\scriptscriptstyle\bigtriangledown$}%
\kern-.1ex}}
\def\dhblack{\vbox{\kern-.25ex\nointerlineskip\hbox{$\blacktriangledown$}}}%
\def\uhblack{\vbox{\kern-.25ex\nointerlineskip\hbox{$\blacktriangle$}}}%
\def\dhlblack{\vbox{\kern-.25ex\nointerlineskip\hbox{$\scriptstyle
\blacktriangledown$}}}
\def\uhlblack{\vbox{\kern-.25ex\nointerlineskip\hbox{$\scriptstyle
\blacktriangle$}}}
\uhblack\newarrowhead{littleblack}{\mkern-1mu%
\scriptaxis\blacktriangleright}{\scriptaxis\blacktriangleleft\mkern-2mu}%
\def\rhla{\hbox{\setbox0=\lnchar55\dimen0=\wd0\kern-.6\dimen0\ht0\z@\raise
\axisheight\box0\kern.1\dimen0}}
\def\lhla{\hbox{\setbox0=\lnchar33\dimen0=\wd0\kern.05\dimen0\ht0\z@\raise
\axisheight\box0\kern-.5\dimen0}}
\def\dhla{\vboxtoz{\vss\rlap{\lnchar77}}}
\def\uhla{\vboxtoz{\setbox0=\lnchar66 \wd0\z@\kern-.15\ht0\box0\vss}}
\def\lhlala{\lhla\kern.3em\lhla}
\def\rhlala{\rhla\kern.3em\rhla}
\def\uhlala{\hbox{\uhla\raise-.6ex\uhla}}
\def\dhlala{\hbox{\dhla\lower-.6ex\dhla}}
\def\hhO{\scriptaxis\bigcirc\mkern.4mu} \def\hho{{\circ}\mkern1.2mu}%
\hhO\hhO{\scriptstyle\bigcirc}{\scriptstyle\bigcirc}
\def\rhtimes{\mkern-5mu{\times}\mkern-.8mu}\def\lhtimes{\mkern-.8mu{\times}%
\mkern-5mu}\def\uhtimes{\setbox0=\hbox{$\times$}\ht0\axisheight\dp0-\ht0%
\lower\ht0\box0 }\def\dhtimes{\setbox0=\hbox{$\times$}\ht0\axisheight\box0 }%
\Rightarrow\Leftarrow{\@cmex7F}{\@cmex7E}
\def\twoheaddownarrow{\rlap{$\downarrow$}\raise-.5ex\hbox{$\downarrow$}}
\def\twoheaduparrow{\rlap{$\uparrow$}\raise.5ex\hbox{$\uparrow$}}
\def\ltvee{\mkern-1mu{\lessthan}\mkern.4mu}
\else\newarrowtail{%
boldlittlevee}{\boldscriptaxis\greaterthan}{\mkern-1mu\boldscriptaxis
\lessthan}{\boldscript\vee}{\boldscript\wedge}\fi
\def\rttriangle{\mkern1.2mu\triangleright}
\uhblack\newarrowtail{littleblack}{\scriptaxis
\blacktriangleright\mkern-2mu}{\mkern-1mu\scriptaxis\blacktriangleleft}%
\def\rtla{\hbox{\setbox0=\lnchar55\dimen0=\wd0\kern-.5\dimen0\ht0\z@\raise
\axisheight\box0\kern-.2\dimen0}}
\def\ltla{\hbox{\setbox0=\lnchar33\dimen0=\wd0\kern-.15\dimen0\ht0\z@\raise
\axisheight\box0\kern-.5\dimen0}}
\def\dtla{\vbox{\setbox0=\rlap{\lnchar77}\dimen0=\ht0\kern-.7\dimen0\box0%
\kern-.1\dimen0}}
\def\utla{\vbox{\setbox0=\rlap{\lnchar66}\dimen0=\ht0\kern-.1\dimen0\box0%
\kern-.6\dimen0}}
\def\rtvvee{\gg\mkern-3mu}
\def\ltvvee{\mkern-3mu\ll}
\def\dtvvee{\vbox{\hbox{$\vee$}\kern-.6ex \hbox{$\vee$}\vss}}
\def\utvvee{\vbox{\vss\hbox{$\wedge$}\kern-.6ex \hbox{$\wedge$}\kern\z@}}
\def\ltlala{\ltla\kern.3em\ltla}
\def\rtlala{\rtla\kern.3em\rtla}
\def\utlala{\hbox{\utla\raise-.6ex\utla}}
\def\dtlala{\hbox{\dtla\lower-.6ex\dtla}}
\def\utbar{\vrule height 0.093ex depth0pt width 0.4em}
\let\dtbar\utbar
\def\rtbar{\mkern1.5mu\vrule height 1.1ex depth.06ex width .04em\mkern1.5mu}%
\let\ltbar\rtbar
\def\rthooka{\raisehook{}+\subset\mkern-1mu}
\def\lthooka{\mkern-1mu\raisehook{}+\supset}
\def\rthookb{\raisehook{}-\subset\mkern-2mu}
\def\lthookb{\mkern-1mu\raisehook{}-\supset}
\def\dthooka{\shifthook{}+\cap}
\def\dthookb{\shifthook{}-\cap}
\def\uthooka{\shifthook{}+\cup}
\def\uthookb{\shifthook{}-\cup}
\uthooka\newarrowtail{hookb}%
\CD@qK\newarrowtail{boldhooka}\rthooka\lthooka\dthooka\uthooka
\uthookb\newarrowtail{%
boldhook}\rthooka\lthooka\dthookb\uthooka\else\def\rtbhooka{\raisehook
\boldmath+\subset\mkern-1mu}
\def\ltbhooka{\mkern-1mu\raisehook\boldmath+\supset}
\def\rtbhookb{\raisehook\boldmath-\subset\mkern-2mu}
\def\ltbhookb{\mkern-1mu\raisehook\boldmath-\supset}
\def\dtbhooka{\shifthook\boldmath+\cap}
\def\dtbhookb{\shifthook\boldmath-\cap}
\def\utbhooka{\shifthook\boldmath+\cup}
\def\utbhookb{\shifthook\boldmath-\cup}
\utbhooka\newarrowtail{%
boldhookb}\rtbhookb\ltbhookb\dtbhookb\utbhookb\newarrowtail{boldhook}%
\def\dtsqhooka{\shifthook{}+\sqcap}
\def\ltsqhooka{\mkern-1mu\raisehook{}+\sqsupset}
\def\rtsqhooka{\raisehook{}+\sqsubset\mkern-1mu}
\def\utsqhooka{\shifthook{}+\sqcup}
\uthooka\newarrowtail{C}\rthooka
\hhO\hhO{\scriptstyle\bigcirc}{\scriptstyle\bigcirc}
\Leftarrow\Rightarrow{\@cmex7E}{\@cmex7F}
\def\vfthree{\mid\!\!\!\mid\!\!\!\mid}
\def\vfdashstrut{\vrule width0pt height1.3ex depth0.7ex}
\def\vfthedash{\vrule width\CD@LF height0.6ex depth 0pt}
\def\hfthedash{\CD@AJ\vrule\horizhtdp width 0.26em}
\def\hfdash{\mkern5.5mu\hfthedash\mkern5.5mu}
\def\vfdash{\vfdashstrut\vfthedash}
\def\rightBrace{\d@brace[thick,cmex]}
\def\leftBrace{\u@brace[thick,cmex]}
\def\upperBrace{\r@brace[thick,cmex]}
\def\lowerBrace{\l@brace[thick,cmex]}
\def\rightParenth{\d@parenth[thick,cmex]}
\def\leftParenth{\u@parenth[thick,cmex]}
\def\upperParenth{\r@parenth[thick,cmex]}
\def\lowerParenth{\l@parenth[thick,cmex]}
\let\hEq\rEq
\let\vEq\uEq
\def\labelstyle{
\ifincommdiag
\textstyle
\else
\scriptstyle
\fi}
\let\objectstyle\displaystyle
\CD@hK\message{| running in pdf mode -- diagonal arrows will work
automatically |}\else\message{| >>>>>>>> POSTSCRIPT MODE (DVIPS) IS NOW THE
DEFAULT <<<<<<<<<<<<|}\message{|(DVI mode has not been supported since 1992
\else\message{| >>>>>>>> USING UGLY
OBSOLETE DVI CODE - PLEASE STOP <<<<<<<<<<<<|}\message{|(DVI mode has not been
\begin{document}

\address{Independent University of Moscow,
B. Vlasievsky per., 11,
Moscow 121002,
Russia}
\email{alxklg@gmail.com}

\author{Alexey Kalugin}
\title{On categorical approach to Verdier Duality}

\dedicatory{To the memory of my grandmother and my grand grandmother}
\maketitle
\begin{abstract}In present paper we develop categorical formalism of Verdier duality.
\end{abstract}

\tableofcontents

\section{Introduction}

\subsection{Classical picture} To the scheme $\mathrm X$ one can associate $\'{e}$tale topos $\mathrm X_{\'{e}\mathrm t}$ with corresponding derived category $\mathrm D^{\mathrm b}_{\mathrm c}(\mathrm X,\Lambda)$ of constructible sheaves on $\mathrm X.$ Classical approach \cite{SGA4(3)} to six functor formalism states that if we have morphism of schemes $\mathrm f\colon \mathrm X\longrightarrow \mathrm Y$ we have associated morphism of topoi
\begin{equation*}
\mathrm f\colon \mathrm X_{\'{e}\mathrm t} \longrightarrow \mathrm Y_{\'{e} \mathrm t},
\end{equation*}
with corresponding derived functors:
\begin{equation*}
\underline{\mathrm L}^*(\mathrm f^*)\colon \mathrm D^{\mathrm b}_{\mathrm c}(\mathrm Y,\Lambda)\longleftrightarrow \mathrm D^{\mathrm b}_{\mathrm c}(\mathrm X,\Lambda)\colon\underline{\mathrm R}^*(\mathrm f_*)
\end{equation*}
and $!$-operations:

\begin{equation*}
\underline{\mathrm R}^*(\mathrm f_!)\colon \mathrm D^{\mathrm b}_{\mathrm c}(\mathrm Y,\Lambda)\longleftrightarrow \mathrm D^{\mathrm b}_{\mathrm c}(\mathrm X,\Lambda)\colon\mathrm f^!.
\end{equation*}
Also we have tensor product $\otimes$ on category $\mathrm D^{\mathrm b}_{\mathrm c}(\mathrm X,\Lambda)$ and inner hom functor. These functors satisfies natural compatibilities, such as base change property \cite{SGA5}. Starting from such data, Verdier duality functor $\mathbb D$ for category $\mathrm D^{\mathrm b}_{\mathrm c}(\mathrm X,\Lambda)$ can be defined \cite{SGA5}:
 \begin{equation*}
  \mathbb D\colon \mathrm D^{\mathrm b}_{\mathrm c}(\mathrm X,\Lambda)\longrightarrow\mathrm D^{\mathrm b}_{\mathrm c}(\mathrm X,\Lambda)^{\circ}
 \end{equation*}
The same approach works well in setting of constructible sheaves on topological space \cite{KS}, however, there are some situations when this approach fails. For example, when $\mathrm X$ is a smooth scheme we can consider derived category $\mathrm D^{\mathrm b}(\mathrm {Mod}(\EuScript D_{\mathrm X}))$ of right $\EuScript D$-modules on $\mathrm X.$ In general classical six functor formalism for $\EuScript D$-modules does not exists, but Verdier duality functor can be defined \cite{Ber}.
We present axiomatic approach to the problem of construction Verdier duality and six functors formalism.

\subsection{Covariant duality} Object for which we propose construction of Verdier duality is the \textit{ringed $D$-topos} $(\mathrm E,\mathrm A)$ in the sense of \cite{SGA4(2)}($\mathrm D$-topos $(\mathrm E,\mathrm A)$ is the category bifibered in topoi, over category $\mathrm D,$ with a ringed object $\mathrm A).$ Our framework for construction of Verdier duality functor is so called \textit{cross functor} $(\mathrm H_{\star},\mathrm H^{\star},\mathrm H_!,\mathrm H^!)$ in the sense of Voevodsky and Deligne \cite{Voe}. That is quadruple of $2$-functors:
\begin{equation*}
\mathrm H_{\star},\mathrm H_{!}\colon \mathrm D\longrightarrow \mathrm {Cat},\quad \mathrm H^{\star},\mathrm H^{!}\colon \mathrm D^{\circ}\longrightarrow \mathrm {Cat},
\end{equation*}
where $\mathrm H_{\star}$ and $\mathrm H^{\star}$ and $\mathrm H_{!}$ and $\mathrm H^{!}$ are adjoint and isomorphic on objects. Moreover we have natural coherence relations like base change property (Definition \ref{cr.d.5}). By the \textit{Grothendieck cross functor} associated with the ringed $\mathrm D$-topos $(\mathrm E,\mathrm A)$ we understand cross functor $(\mathrm H_{\star},\mathrm H^{\star},\mathrm H_!,\mathrm H^!),$ such that pseudo-functors $\mathrm H_!$ and $\mathrm H_{\star}$ are isomorphic and bifibration, which corresponds to $(\mathrm H_{\star},\mathrm H^{\star})$ is equivalent to the category of modules over ringed $\mathrm D$-topos $(\mathrm E,\mathrm A)$ (Definition \ref{d4.3.1}).

To every cross functor we can associate pair of pseudo-functors $(\mathrm H^{\star!},\mathrm H_{!\star}),$ where $\mathrm H^{\star!}$ is called\textit{ Mackey $\star!$-functor} and $\mathrm H_{!\star}$ is called \textit{Mackey $!\star$-functor}. These pseudo-functors acts from the category of correspondences $\mathrm {cospan}(\mathrm D)$ to the category $\mathrm {Cat}:$
\begin{equation*}
\mathrm H_{!\star},\mathrm H^{\star!}\colon \mathrm {cospan}(\mathrm D)\longrightarrow \mathrm {Cat},
\end{equation*}
such that, when restricting to $\mathrm D$ Mackey pseudo-functor $\mathrm H^{\star!}$ (resp. $\mathrm H_{!\star}$) is isomorphic to $\mathrm H^{\star}$ (resp. $\mathrm H_{\star}$) and when restricting to $\mathrm D^{\circ}$ is isomorphic to $\mathrm H^!$ (resp. $\mathrm H^!$):
\begin{equation*}
\mathrm i^*(\mathrm H^{\star!})\overset{\sim}{\longrightarrow} \mathrm H_!\quad  \mathrm i^*(\mathrm H_{!\star})\overset{\sim}{\longrightarrow} \mathrm H^!\quad \mathrm j^*(\mathrm H^{\star!})\overset{\sim}{\longrightarrow} \mathrm H^{\star}\quad \mathrm j^*(\mathrm H_{!\star})\overset{\sim}{\longrightarrow} \mathrm H_{\star},
\end{equation*}
where $\mathrm i$ and $\mathrm j$ are natural embedding functors:
\begin{equation*}
\mathrm i\colon \mathrm D\hookrightarrow \mathrm {cospan}(\mathrm D)\hookleftarrow \mathrm D^{\circ}\colon \mathrm j.
\end{equation*}
Denote by $\mathrm D(\underline{\Gamma}(\mathrm {H}^{\star}))$ and $\mathrm D^*(\underline{\Gamma}(\mathrm {H}^{!})$ derived categories of sections of corresponding fibrations. Covariant Verdier duality functors $\mathbb V_{\star\mapsto!}$ and $\mathbb V_{!\mapsto\star}$ act between categories $\mathrm D(\underline{\Gamma}(\mathrm {H}^{\star}))$ and $\mathrm D^*(\underline{\Gamma}(\mathrm {H}^{!}):$
\begin{equation}
\mathbb V_{\star\mapsto!}\colon \mathrm D(\underline{\Gamma}(\mathrm H^{\star}))\longleftrightarrow\mathrm D(\underline{\Gamma}(\mathrm H^{!}))\colon \mathbb V_{!\mapsto\star},
\end{equation}
We define \textit{covariant Verdier duality functors }as composition of Kan extension to the category $\mathrm D(\underline{\Gamma}(\mathrm {H}^{\star!})$ (resp. $\mathrm D(\underline{\Gamma}(\mathrm {H}_{!\star})$) and restriction functor (Definition \ref{d4.3.3}).  \par\medskip

We mostly interested in subcategories $\mathrm D_{\mathrm {cocart}}(\underline{\Gamma}(\mathrm {H}^{\star}))$ and $\mathrm D_{\mathrm {cart}}(\underline{\Gamma}(\mathrm {H}^{!}))$ consisting of objects with cocartesian (resp. cartesian) cohomology and generally Verdier duality functors do not preserve these subcategories. To fix this we introduce additional pair of pseudo-functors $(\mathrm H^{\star\star},\mathrm H_{!!}),$ (Definition \ref{d4.3.4} and Definition \ref{d4.3.5}):
\begin{equation*}
\mathrm H_{!!},\mathrm H^{\star\star}\colon \mathrm {cospan}(\mathrm D)\longrightarrow \mathrm {Cat},
\end{equation*}
with properties:
\begin{equation*}
\mathrm i^*(\mathrm H^{\star\star})\overset{\sim}{\longrightarrow}  \mathrm H^{\star\circ}\quad   \mathrm i^*(\mathrm H_{!!})\overset{\sim}{\longrightarrow}  \mathrm H^{!!}\quad \mathrm j^*(\mathrm H^{\star\star})\overset{\sim}{\longrightarrow}  \mathrm H^{\star}\quad \mathrm j^*(\mathrm H_{!!})\overset{\sim}{\longrightarrow} \mathrm H^{!\circ},
\end{equation*}
where by $\circ$ we denote the opposite pseudo-functor. These pseudo-functors are called \textit{Mackey $\star\star$-functor} and \textit{Mackey $!!$-functor}. Analogically to the definition of pair $(\mathrm H^{\star!},\mathrm H_{!\star})$, these pseudo-functors can be canonically associated with the Grothendieck cross functor. We define functors $\Xi_{\star}$ and $\Xi_{!}$ analogically to Verdier duality functors. Functor $\Xi_{\star}$ is defined as compositions of Kan extension functor to category $\mathrm D(\underline{\Gamma}(\mathrm H_{\star\star}))$ and restriction functor:
\begin{equation*}
\Xi_{\star}\colon \mathrm D(\underline{\Gamma}(\mathrm {H}^{\star}))\longrightarrow\mathrm D_{\mathrm {cocart}}(\underline{\Gamma}(\mathrm {H}^{\star})),
\end{equation*}
And functor $\Xi_{!}$ is defined as compositions of Kan extension functor to category $\mathrm D(\underline{\Gamma}(\mathrm H^{!!}))$ and restriction functor:
\begin{equation*}
\Xi_{!}\colon \mathrm D(\underline{\Gamma}(\mathrm {H}^{!}))\longrightarrow\mathrm D_{\mathrm {cart}}(\underline{\Gamma}(\mathrm {H}^{!})),
\end{equation*}
Functors $\Xi_{\star}$ and $\Xi_{!}$ are adjoint to inclusion functors $\Xi^{!}\colon \mathrm D_{\mathrm {cart}}(\underline{\Gamma}(\mathrm {H}^{!}))\rightarrow \mathrm D(\underline{\Gamma}(\mathrm {H}^{!}))$ and $\Xi^{\star}\colon \mathrm D_{\mathrm {cocart}}(\underline{\Gamma}(\mathrm {H}^{!}))\rightarrow \mathrm D(\underline{\Gamma}(\mathrm {H}^{\star})).$ Then we define Verdier duality functors:
\begin{equation}
\mathbb V_{\star\mapsto!}^{\mathrm {cart}}\colon \mathrm D_{\mathrm {cocart}}(\underline{\Gamma}(\mathrm H^{\star}))\longleftrightarrow\mathrm D_{{\mathrm {cart}}}(\underline{\Gamma}(\mathrm H^{!}))\colon \mathbb V_{!\mapsto\star}^{\mathrm {cocart}},
\end{equation}
As compositions of Verdier duality functors and functors $\Xi_!$ and $\Xi_{\star}.$
\subsection{Contravariant duality} In order to define contravariant Verdier duality functors we introduce notion of \textit{Grothendieck cross functor with $!$-dualizing object} as the cross functor $(\mathrm H_{\star},\mathrm H^{\star},\mathrm H_!,\mathrm H^!)$ with distinguished object $\mathrm w_{\mathrm D}$ in $\mathrm D(\underline{\Gamma}(\mathrm {H}^{!})$ (Definition \ref{d4.3.8}). With every Grothendieck cross functor with $!$-dualizing object we can associate \textit{duality functors} $\mathbf D_{!\mapsto \star}$ and $\mathbf D_{\star\mapsto!}$ which are functors acting between categories $\mathrm D(\underline{\Gamma}(\mathrm {H}^{!})$ and $\mathrm D(\underline{\Gamma}(\mathrm {H}^{\star})$ (Definition \ref{d4.3.10}):
\begin{equation*}
\mathbf D_{\star\rightarrow !}\colon \mathrm D(\underline{\Gamma}(\mathrm {H}^{\star})\longleftrightarrow  \mathrm D(\underline{\Gamma}(\mathrm {H}^{!})  \colon \mathbf D_{!\mapsto\star}
\end{equation*}
For Grothendieck cross functor $(\mathrm H_{\star},\mathrm H^{\star},\mathrm H_!,\mathrm H^!)$ with $!$-dualizing object $\mathrm w_{\mathrm D}$ we can define \textit{contravariant Verdier duality functors} $\mathbb D_{(\mathrm E,\mathrm A)}$ and $\mathbb D_{\mathrm H^{!}}$ as composition of above duality functors with covariant Verdier duality functors (Definition \ref{d4.3.10}):
\begin{equation}
\mathbb D_{(\mathrm E,\mathrm A)}\colon \mathrm D_{\mathrm {cocart}}(\underline{\Gamma}((\mathrm E,\mathrm A)))\longrightarrow\mathrm D_{\mathrm {cocart}}(\underline{\Gamma}((\mathrm E,\mathrm A)))^{\circ}
\end{equation}
\begin{equation}
\mathbb D_{\mathrm H^{!}}\colon \mathrm D_{\mathrm {cart}}(\underline{\Gamma}(\mathrm H^{!}))\longrightarrow\mathrm D_{\mathrm {cart}}(\underline{\Gamma}(\mathrm H^{!}))^{\circ}
\end{equation}
From classical point of view (where we consider category of sheaves on locally compact space or scheme in $\'{e}$tale topology) Verdier duality functor $\mathbb V_{\star\mapsto !}$ plays role of functor, which takes sheaf $\EuScript F$ to corresponding cosheaf of compact support sections \cite{Lu}. Functor $\mathbf D_{!\mapsto\star}$ is the linear duality functor, which takes cosheaf to corresponding linear dual sheaf and functor $\mathbb D_{(\mathrm E,\mathrm A)}$ corresponds to original Verdier duality functor \cite{Ved} \cite{SGA5}.

\subsection{Mackey functors} Our definition of Mackey pseudo-functors can be considered as $2$-categorical analog of the classical notion of Mackey functor. By a \textit{Mackey functor} \cite{Lin} over category $\mathrm D$ we understand functor $\mathrm M$ from the category $\mathrm {cospan}(\mathrm D)$ to the category of abelian groups $\mathrm {Ab}$ (more generally with values in the category of spectra $\EuScript S\mathrm p$). It is well known \cite{Lin} that one can consider Mackey functor $\mathrm M$ as a pair of functors $(\mathrm i^*\mathrm M,\mathrm j^*\mathrm M)$, which satisfies base-change property. It is interesting problem try to construct functor from colimit of functor $\mathrm i^*\mathrm M$ to the limit of functor $\mathrm j^*\mathrm M:$
\begin{equation}
{}^1\mathbb V_{!\mapsto \star}^{\mathrm{full}}\colon \clim_{\mathrm D^{\circ}}\mathrm i^*\mathrm M\longrightarrow \lim_{{\mathrm D}}\mathrm j^*\mathrm M
\end{equation}
Examples of such morphisms appear in different contexts (See \cite{GM},\cite{LH} and \cite{HKR}) and perhaps the simplest one is given by \textit{Norm map} between homology and cohomology of finite group $\mathrm G$ with coefficient in representation $\mathrm L:$
\begin{equation*}
 \mathrm {Nm}\colon \mathrm C_{\hdotc}(\mathrm G,\mathrm L)\longrightarrow \mathrm C^{\hdot}(\mathrm G,\mathrm L)
\end{equation*}
Covariant Verdier duality functors can be considered as categorical analog of morphism ${}^1\mathbb V_{!\mapsto \star}^{\mathrm{full}}.$ By correspondence between higher Picard groupoids and connective spectra we will explain how our formalism can be applied to construct such morphisms (Subsection \ref{decat}).

\subsection{Six operations} Let $\mathrm f\colon (\mathrm E,\mathrm A)\longrightarrow (\mathrm E',\mathrm A')$ be a morphism of $\mathrm D$-topoi, we have corresponding functors:
\begin{equation*}
\mathrm f_{\star}\colon \mathrm D_{\mathrm {cocart}}(\underline{\Gamma}(\mathrm E,\mathrm A))\longleftrightarrow\mathrm D_{\mathrm {cocart}}(\underline{\Gamma}(\mathrm E',\mathrm A'))\colon \mathrm f^{\star},
\end{equation*}
where functor $\mathrm f_{\star}$ is defined as composition of derived pushforward functor $\underline{\mathrm R}(\mathrm f_*)$ and functor $\Xi_{\star}.$ Note that functors $\mathrm f_{\star}$ and $\mathrm f^{\star}$ are adjoint in the standard way. We can also define \textit{$!$-operations}:
\begin{equation*}
\mathrm f_{!}\colon \mathrm D_{\mathrm {cocart}}(\underline{\Gamma}(\mathrm E,\mathrm A))\longleftrightarrow\mathrm D_{\mathrm {cocart}}(\underline{\Gamma}(\mathrm E',\mathrm A'))\colon \mathrm f^{!},
\end{equation*}
by the rule:
\begin{equation*}
\mathrm f_{!}:=\mathbb D_{(\mathrm E,\mathrm A)}\circ\mathrm f_{\star}\circ\mathbb D_{(\mathrm E,\mathrm A)},\qquad \mathrm f^{!}:=\mathbb D_{(\mathrm E,\mathrm A)}\circ\mathrm f^{\star}\circ\mathbb D_{(\mathrm E,\mathrm A)}.
\end{equation*}
We also have tensor product $\otimes^{\star},$ defined componentwise and inner hom functor $\underline{\mathrm {Hom}}^{\star},$ which make $\mathrm D_{\mathrm {cocart}}(\underline{\Gamma}(\mathrm E,\mathrm A))$ into closed monoidal category. When $\mathrm D$-topos $(\mathrm E,\mathrm A)$ is associated with Artin stack $\EuScript X$, our construction gives six functors formalism for $\EuScript X$ \cite{LO}. I hope to elaborate on this elsewhere.

\subsection{Applications} Apart from applications, which were already mentioned we have following. Consider diagram\footnote{By diagram we understand functor where all morphisms $\mathrm X_{\mathrm i}\longrightarrow \mathrm X_{\mathrm j}$ are proper, such diagrams are called pseudo-schemes in \cite{Gai1}, we can also consider diagrams of more generally type, but constructions become more involved see Remark \ref{r4.4.1}} of topological spaces (schemes) $\mathrm X_{\EuScript J},$ where $\EuScript J$ is some category. We have natural $\mathrm {Top}^{\circ}$-topos $\mathbf {Sh}_{\mathrm {top}},$ whose fiber over object $\mathrm X\in \mathrm {Top}$ is given by category of sheaves on $\mathrm X,$ with corresponding pseudo-functors $(\mathrm H_{\star\mathrm{top}},\mathrm H^{\star}_{\mathrm {top}}).$ We can restrict this bifibration to category $\EuScript J,$ to obtain $\EuScript J$-topos $\mathbf {Sh}_{\mathrm {top}}^{\EuScript J}$. In this situation triangulated category $\mathrm D_{\mathrm {cocart}}(\underline{\Gamma}(\mathrm {H}_{\EuScript J}^{\star}))$ is the category of so called \textit{admissible $\star$-sheaves} on diagram $\mathrm X_{\EuScript J}.$ This category has natural $\mathrm t$-structure, whose heart is the category of sheaves on the colimit of diagram $\mathrm X_{\EuScript J}.$ Triangulated category $\mathrm D_{\mathrm {cart}}(\underline{\Gamma}(\mathrm {H}_{\EuScript J}^{!}))$ is the category of \textit{admissible $!$-sheaves} in the sense of Beilinson and Drinfeld \cite{BD}. By Proposition \ref{pts} category $\mathrm D_{\mathrm {cart}}(\underline{\Gamma}(\mathrm {H}_{\EuScript J}^{!}))$ can be equipped with perverse $\mathrm t$-structure (see also Remark \eqref{rps}).   And category $\mathrm D(\underline{\Gamma}(\mathrm {H}_{!\star}^{\EuScript J}))$ is what we call derived category of \textit{$!\star$-sheaves} on diagram $\mathrm X_{\EuScript J}$. Category $\mathrm D_{\mathrm {cart}}(\underline{\Gamma}(\mathrm {H}_{\EuScript J}^{!}))$ has unital tensor structure $\otimes^!,$ defined by componentwise $!$-product. With every morphism $\mathrm f\colon \mathrm X_{\EuScript J}\longrightarrow \mathrm Y_{\EuScript J}$ of diagrams we associate functors:
\begin{equation*}
\mathrm f_{!}\colon  \mathrm D_{\mathrm {cart}}(\underline{\Gamma}(\mathrm {H}_{\mathrm X_{\EuScript J}}^{!}))\longleftrightarrow \mathrm D_{\mathrm {cart}}(\underline{\Gamma}(\mathrm {H}_{\mathrm Y_{\EuScript J}}^{!}))\colon \mathrm f^{!},
\end{equation*}
defined analogically to the case of $\star$-sheaves and we can also associate $\star$-operations for $!$-sheaves:
\begin{equation*}
\mathrm f_{\star}\colon  \mathrm D_{\mathrm {cart}}(\underline{\Gamma}(\mathrm {H}_{\mathrm X_{\EuScript J}}^{!}))\longleftrightarrow \mathrm D_{\mathrm {cart}}(\underline{\Gamma}(\mathrm {H}_{\mathrm Y_{\EuScript J}}^{!}))\colon \mathrm f^{\star},
\end{equation*}
In addition to above operations one can also define Kan extension functors. Let $\mathrm j\colon \EuScript I\longrightarrow \EuScript J$ be a functor, then we have a pairs of adjoint functors:
\begin{equation*}
\mathrm j_{!}\colon  \mathrm D_{\mathrm {cart}}(\underline{\Gamma}(\mathrm {H}_{\mathrm X_{\EuScript I}}^{!}))\longleftrightarrow \mathrm D_{\mathrm {cart}}(\underline{\Gamma}(\mathrm {H}_{\mathrm X_{\EuScript J}}^{!}))\colon \mathrm j^{*},
\end{equation*}
where $\mathrm j^*$ is the restriction functor and $\mathrm j_!$ is defined as composition of left Kan extension functor and functor $\Xi_!.$ When diagram $\mathrm X_{\EuScript J}$ is associated with Ran prestack our constructions give Koszul duality for factorizable sheaves (chiral algebras, $\mathrm n$-algebras). Perverse $\mathrm t$-structure on $!$-sheaves is important in construction of Hopf algebras from factorizable sheaves and quantization of Lie bialgebras. When diagram $\mathrm X_{\EuScript J}$ is associated with de Rham prestack our constructions give six operations for $\EuScript D$-modules. I hope to elaborate on these examples elsewhere.

\subsection{Content} In the Section $2$ we recall basic facts about topoi and $\mathrm D$-topoi, mostly following \cite{SGA4(2)}, \cite{SGA4(2)} and \cite{Beh}. \par\medskip

Section $3$ is a main part of this paper. First we recall definition of a cross functor, following \cite{Voe} and some definitions of morphisms in monoidal categories, which were introduced in \cite{May}. Then we give definition of Grothendieck cross functor as well as definitions of Mackey pseudo-functors and Verdier duality functors. We also study properties of these functors. In Subsection \ref{tst} we construct interesting $\mathrm t$-structures on categories  $\mathrm D_{\mathrm {cocart}}(\underline{\Gamma}(\mathrm E,\mathrm A))$ and  $\mathrm D_{\mathrm {cart}}(\underline{\Gamma}(\mathrm {H}_{\mathrm X_{\EuScript J}}^{!})).$ In Subsection \ref{sixtop} we construct six operations formalism for ringed $\mathrm D$-topos $(\mathrm E,\mathrm A)$ with Grothendieck cross functor.  In Subsection \ref{decat} we explain how our definition of Mackey pseudo-functors is related to original definition of Mackey functors and how categorical Verdier duality can be useful to construct analogs of Norm morphism \cite{GM} \cite{LH}.
In Section $4$ we construct Grothendieck cross functor for diagrams of topological spaces (schemes) and define Verdier duality functors. We also construct six functors formalism for $!$-sheaves on diagrams of topological spaces.\par\medskip

\subsection{Acknowledgments} We wish to thank Boris Feigin, Michael Finkelberg, Dennis Gaitsgory and Dmitry Kaledin for fruitful discussions. Special thanks are due to Nikita Markarian for countless illuminating discussions on this subject.

\subsection{Preliminaries} For a category $\mathrm C$ we denote by $\mathrm C^{\circ}$ opposite category, for a morphism $\mathrm f\colon \mathrm x \rightarrow \mathrm y$ in $\mathrm C$ we denote by $\mathrm f^{\circ}$ corresponding morphism in opposite category $\mathrm C^{\circ}.$ By $[\mathrm n]$ we denote finite ordinal category $[\mathrm n]:=\{0\rightarrow 1\rightarrow\cdots\rightarrow\mathrm n\}.$ By $\Delta$ we denote simplex category. It is category whose objects are nonempty finite ordinals $[\mathrm n]$ and order preserving maps between them. We have subcategory $\Delta_+\subset \Delta,$ with same objects and morphisms given by order preserving maps, which send maximal element to maximal. For every $\mathrm n\in \mathbb N$ we denote by $\Delta_{\leq \mathrm n}$ (resp. $\Delta_{+\leq \mathrm n}$) full subcategory of $\Delta$ (resp. $\Delta_+$) on the objects $[0],[1],\ldots[\mathrm n].$ For a category $\mathrm C$ with fiber products we denote by $\mathrm {span}(\mathrm C)$ category of correspondences in $\mathrm C.$ Objects of this category are the same as in $\mathrm C$ and morphisms $\mathrm f\colon \mathrm x\rightarrow \mathrm y$ given by equivalence classes of "roofs" $\mathrm x\leftarrow\mathrm z\rightarrow\mathrm y,$ where $\mathrm z$ is an object of $\mathrm C,$ such morphisms are denoted by triples $(\mathrm f_{\mathrm l},\mathrm z,\mathrm f_{\mathrm r}),$ two roofs $\mathrm x\leftarrow \mathrm z\rightarrow \mathrm y,$ and $\mathrm x\leftarrow \mathrm z'\rightarrow \mathrm y,$ are said to be equivalent if we have an isomorphism $\mathrm z \cong \mathrm z',$ which is compatible with maps of roofs. We have embedding of categories $\mathrm C \hookrightarrow \mathrm {span}(\mathrm C)\hookleftarrow \mathrm C^{\circ}.$ For the dual category $\mathrm C^{\circ}$ we will use a notation $\mathrm {cospan}(\mathrm C):=\mathrm {span}(\mathrm C^{\circ}).$ Let $\mathrm E\longrightarrow \mathrm B$ be a fibration (resp. cofibration) in the sense of ~\cite{SGA1}, between categories $\mathrm E$ and $\mathrm B.$ For every morphism $\mathrm m\colon \mathrm i \rightarrow \mathrm j$ in $\mathrm B$ we have a functor $\mathrm m^*\colon  \mathrm E_{\mathrm j} \longrightarrow \mathrm E_{\mathrm i}$ ($\mathrm m_*\colon  \mathrm E_{\mathrm i} \longrightarrow \mathrm E_{\mathrm j}$), where $\mathrm E_{\mathrm i}$ is a fiber over $\mathrm i.$ For a $\mathrm B$-functor $\varphi\colon \mathrm E \longrightarrow \mathrm E'$ between two fibrations (resp. cofibrations) we denote by $\varphi_{\mathrm i}\colon \mathrm E_{\mathrm i} \longrightarrow \mathrm E'_{\mathrm i}$ restriction of a functor $\varphi$ to a fiber over $\mathrm i.$ Category of $\mathrm B$-functors will be denoted by $\underline{\mathrm{Hom}}_{\mathrm B}(\mathrm E,\mathrm E').$ Category of sections of fibration (resp. cofibration) $\mathrm E$ will be denoted by $\underline{\Gamma}(\mathrm E):=\underline{\mathrm{Hom}}_{\mathrm B}(\mathrm B,\mathrm E),$ we also have a subcategory $\underline{\Gamma}^{\mathrm {cart}}(\mathrm E),$ (resp. $\underline{\Gamma}^{\mathrm {cocart}}(\mathrm E))$ of cartesian (resp. cocartesian) sections. Fibration (resp. cofibration) is called abelian if every fiber is an abelian category and corresponding functors between fibers are right exact. If $\mathrm B^{\circ}$ is a monoidal category and $\mathrm E$ is a symmetric monoidal category, such that the underlying tensor product functor in $\mathrm E$ preserves colimits in each variable, then category of sections $\underline{\Gamma}(\mathrm E)$ can be endowed with a monoidal structure via Day convolution product, denoted by $\boxtimes.$ For fibration (resp. cofibration) $\mathrm E\longrightarrow\mathrm B$ we denote by $\mathrm E^{\circ}\longrightarrow\mathrm B$ cartesian (resp. cocartesian) dual fibration (resp. cofibration).

Let $\mathrm B\longrightarrow \mathrm D$ be an abelian fibration over $\mathrm D.$ We have an abelian category $\underline{\Gamma}(\mathrm B)$ of sections of the fibration $\mathrm B\longrightarrow \mathrm D.$ We can associate with it triangulated category $\mathrm D^*(\underline{\Gamma}(\mathrm B))$ where $*=+,\emptyset.$ Denote by $\mathrm D_{\mathrm{cart}}^*(\underline{\Gamma}(\mathrm B)),$ where $*=+,\emptyset,$ triangulated subcategory consisting of complexes with cartesian cohomology. If category $\mathrm B$ was a cofibered over $\mathrm D$ we denote by $\mathrm D_{\mathrm{cocart}}^*(\underline{\Gamma}(\mathrm B)),$ where $*=+,\emptyset,$ triangulated subcategory consisting of complexes with cocartesian cohomology.
\section{Topoi}
\subsection{Notations} First of all we want to recollect some facts about topoi, following \cite{SGA4(1)}. Let $\mathrm T$ and $\mathrm T'$ be topoi. Then a morphism of topoi $\varphi\colon \mathrm T\longrightarrow \mathrm T'$ is given by a pair of functors $\varphi=(\varphi_*,\varphi^*),$ where $\varphi_*\colon \mathrm T\longrightarrow \mathrm T'$ is right adjoint to functor $\varphi^*\colon \mathrm T'\longrightarrow \mathrm T.$ Functor $\varphi^*$ must preserves finite limits. Morphism $\varphi\colon \mathrm T\longrightarrow \mathrm T'$ of topoi is called an embedding if $\varphi_*$ is fully-faithful. Embedding $\varphi$ is called closed (resp. open) if essential image of $\varphi_*$ is closed (resp. open) subtopos of $\mathrm T'.$ Ringed topos $(\mathrm T,\EuScript O_{\mathrm T})$ is a pair, where $\mathrm T$ is a topos and $\EuScript O_{\mathrm T}$ is a ring object in $\mathrm T.$ Morphism between ringed topoi $(\mathrm T,\EuScript O_{\mathrm T})$ and $(\mathrm T',\EuScript O_{\mathrm T'})$ is a pair $(\varphi,\theta)$, where $\varphi$ is a morphism of topoi and $\theta\colon \EuScript O_{\mathrm T'}\longrightarrow \varphi_*\EuScript O_{\mathrm T}$ is a morphism of rings. With a ringed topos $(\mathrm T,\EuScript O_{\mathrm T})$ we can associate an abelian category of modules over a topos ${\mathrm {Mod}}(\mathrm T,\EuScript O_{\mathrm T}).$ This category will be complete and cocomplete. Also filtered colimits are exact in ${\mathrm {Mod}}(\mathrm T,\EuScript O_{\mathrm T}).$ Morphism of ringed topoi $(\varphi,\theta)$ is called a closed embedding if $\varphi$ is a closed embedding of topoi and morphism $\theta\colon \EuScript O_{\mathrm T'}\longrightarrow \varphi_*\EuScript O_{\mathrm T}$ is surjective. In this case functor $\varphi_*\colon {\mathrm {Mod}}(\mathrm T,\EuScript O_{\mathrm T})\longrightarrow {\mathrm{Mod}}(\mathrm T',\EuScript O_{\mathrm T'})$ is exact.

A stratification $\EuScript S$ of a topos $\mathrm T$ is a finite number of locally closed subtopoi $\mathrm i_{\mathrm s}\colon\mathrm T_{\mathrm s}\rightarrow \mathrm T,$ where $\mathrm s\in \EuScript  S,$ called strata, such that $\mathrm T$ is a disjoint union of strata and closure of each stratum is an union of strata. Pair $(\mathrm T,\EuScript S)$ will be called a stratified topos. An object $\EuScript K\in \mathrm T$ is called a locally constant sheaf if it's locally isomorphic to a constant object associated with a finite set. An object $\EuScript K$ in a stratified topos $(\mathrm T,\EuScript  S)$ called constructible with respect to a stratification $\EuScript S$ if each restriction $\mathrm i_{\mathrm s}^*\EuScript K$ is a locally constant sheaf. An object $\EuScript K\in {\mathrm{Mod}}(\mathrm T,\EuScript O_{\mathrm T})$ is a locally constant if it's locally constant as a sheaf of sets. We will denote a weak Serre subcategory of modules of locally constant sheaves  by ${\mathrm {Mod}}_{\mathrm {lcc}}(\mathrm T,\EuScript O_{\mathrm T}).$ We also have a weak Serre subcategory ${\mathrm {Mod}}_{\mathrm c}(\mathrm T,\EuScript O_{\mathrm T})$ of modules of constructible sheaves.

\subsection{$\mathrm D$-topoi} We are going to recollect some facts about fibered topoi, partially following ~\cite{SGA4(2)}. Let $\mathrm D$ be a small category.

\begin{Def}\label{d3.2.1} Category $\mathrm E$ is called a \underline{$\mathrm D$-topos} if we have a bifibration $\mathrm E\longrightarrow\mathrm D$, such that for every $\mathrm i\in \mathrm D$ corresponding fiber $\mathrm{E}_{\mathrm i}$ is a topos and for every morphism $\mathrm m\colon \mathrm i \longrightarrow \mathrm j$ we have $\mathrm m^*=\mathrm f_*$ $\mathrm m_*=\mathrm f^*,$ where $\mathrm f=(\mathrm f_*,\mathrm f^*)\colon \mathrm{E}_{\mathrm j} \longrightarrow \mathrm{E}_{\mathrm i}$ is a morphism of topoi.
\end{Def}
\begin{Lemma}\label{l3.2.1} Let $\mathrm A$ be a bifibration over $\mathrm J,$ suppose that we have functor $\mathrm f\colon \mathrm I \longrightarrow \mathrm J.$ We can define pullback of bifibration by the rule $\mathrm I\times_{\mathrm J}\mathrm A.$ We have forgetful functor:
\begin{equation}\label{e3.2.1}
\mathrm f^*\colon \underline{\Gamma}(\mathrm A)\longrightarrow \underline{\Gamma}(\mathrm I\times_{\mathrm J}\mathrm A)
\end{equation}
If each fiber of bifibration $\mathrm A$ is complete and cocomplete then functor $\mathrm f^*$ posses a right adjoint $\mathrm f_*$ (resp. a left adjoint $\mathrm f_!$). Functor $\mathrm f_*$ is called \underline{right Kan extension} and functor $\mathrm f_!$ is called \underline{left Kan extension}.
\end{Lemma}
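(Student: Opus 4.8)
The plan is to construct $\mathrm f_*$ and $\mathrm f_!$ explicitly by pointwise Kan extension formulas, evaluated fibrewise. First I would make the two section categories concrete. Writing $\mathrm p\colon \mathrm A\longrightarrow \mathrm J$ for the structure functor, a section is a functor $\mathrm s\colon \mathrm J\longrightarrow \mathrm A$ with $\mathrm p\,\mathrm s=\mathrm{id}_{\mathrm J}$, so $\mathrm s(\mathrm j)\in \mathrm A_{\mathrm j}$ and each $\mathrm m\colon \mathrm i\to \mathrm j$ gives a morphism $\mathrm s(\mathrm m)$ over $\mathrm m$; by the cartesian (resp. cocartesian) property this is equivalent to a vertical map $\mathrm s(\mathrm i)\to \mathrm m^*\mathrm s(\mathrm j)$ (resp. $\mathrm m_!\mathrm s(\mathrm i)\to \mathrm s(\mathrm j)$), using the bifibration adjunction $\mathrm m_!\dashv \mathrm m^*$. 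A section of the pullback $\mathrm I\times_{\mathrm J}\mathrm A\longrightarrow \mathrm I$ is exactly a functor $\mathrm t\colon \mathrm I\longrightarrow \mathrm A$ lifting $\mathrm f$, and under this identification $\mathrm f^*$ is precomposition $\mathrm s\longmapsto \mathrm s\circ\mathrm f$.

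For the right adjoint, given a section $\mathrm t$ of the pullback and $\mathrm j\in \mathrm J$, I would set
\begin{equation*}
(\mathrm f_*\mathrm t)(\mathrm j):=\lim_{(\mathrm i,\phi)\in(\mathrm j\downarrow\mathrm f)}\phi^*\mathrm t(\mathrm i),
\end{equation*}
the limit taken in the complete fibre $\mathrm A_{\mathrm j}$ over the comma category whose objects are pairs $(\mathrm i,\phi\colon \mathrm j\to \mathrm f(\mathrm i))$. A morphism $\mathrm g\colon (\mathrm i,\phi)\to(\mathrm i',\phi')$ is a map $\mathrm g\colon \mathrm i\to \mathrm i'$ with $\mathrm f(\mathrm g)\circ\phi=\phi'$; the transition map $\phi^*\mathrm t(\mathrm i)\to (\phi')^*\mathrm t(\mathrm i')$ is obtained by applying $\phi^*$ to the adjunct $\mathrm t(\mathrm i)\to \mathrm f(\mathrm g)^*\mathrm t(\mathrm i')$ of $\mathrm t(\mathrm g)$ and using the cleavage isomorphism $\phi^*\mathrm f(\mathrm g)^*\cong(\phi')^*$. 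Functoriality in $\mathrm j$ comes from the reindexing of comma categories along morphisms $\mathrm j\to \mathrm j'$, which exhibits $\mathrm f_*\mathrm t$ as a genuine section of $\mathrm A\longrightarrow\mathrm J$.

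The adjunction $\underline{\Gamma}(\mathrm I\times_{\mathrm J}\mathrm A)(\mathrm f^*\mathrm s,\mathrm t)\cong \underline{\Gamma}(\mathrm A)(\mathrm s,\mathrm f_*\mathrm t)$ then follows by unwinding universal properties: a morphism $\mathrm s\to \mathrm f_*\mathrm t$ is a compatible family of vertical maps $\mathrm s(\mathrm j)\to\lim_{(\mathrm i,\phi)}\phi^*\mathrm t(\mathrm i)$, which by the universal property of the limit together with the cartesian adjunction corresponds bijectively and naturally to a family of maps $\mathrm s(\mathrm f(\mathrm i))\to \mathrm t(\mathrm i)$ over $\mathrm f$, that is, a morphism $\mathrm f^*\mathrm s\to \mathrm t$; the coherence conditions match term by term. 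The left adjoint is constructed dually, using the cocomplete fibres and the pushforward functors of the cofibration:
\begin{equation*}
(\mathrm f_!\mathrm t)(\mathrm j):=\clim_{(\mathrm i,\psi)\in(\mathrm f\downarrow\mathrm j)}\psi_!\mathrm t(\mathrm i),
\end{equation*}
with the analogous colimit/cocartesian argument yielding $\underline{\Gamma}(\mathrm I\times_{\mathrm J}\mathrm A)(\mathrm t,\mathrm f^*\mathrm s)\cong\underline{\Gamma}(\mathrm A)(\mathrm f_!\mathrm t,\mathrm s)$.

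The main obstacle is not abstract existence but bookkeeping: one must verify that the pointwise formula assembles into an honest functor over all of $\mathrm J$ and that the triangle identities hold, which amounts to tracking the pseudofunctorial coherence isomorphisms $\phi^*\mathrm f(\mathrm g)^*\cong(\phi')^*$ (resp. the dual comparisons for $(\,\cdot\,)_!$) of the chosen cleavage through the \mbox{(co)}limits. If the bifibration is split these comparisons are identities and the checks are routine; in general they are the canonical isomorphisms whose compatibility is guaranteed by the coherence of the Grothendieck construction. One also needs the comma categories $(\mathrm j\downarrow\mathrm f)$ and $(\mathrm f\downarrow\mathrm j)$ to be small so that the \mbox{(co)}limits exist in the fibres, which holds here since all base categories in play are small.
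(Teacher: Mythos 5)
Your construction is correct and is essentially the proof the paper relies on: the paper's own ``proof'' is just a citation to SGA~4, Expos\'{e} $\mathrm V^{\mathrm{bis}}$, Lemma~1.2.10.1, where the adjoints to the restriction functor are built by exactly these fibrewise pointwise (co)limit formulas over the comma categories $(\mathrm j\downarrow\mathrm f)$ and $(\mathrm f\downarrow\mathrm j)$. Nothing further is needed beyond the smallness and cleavage-coherence checks you already flag.
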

\begin{proof} See \cite[Expos\'{e} $\mathrm V^{\mathrm{bis}}$, Lemma~1.2.10.1]{SGA4(2)}.

\end{proof}
Let $\mathrm f\colon \mathrm D'\longrightarrow \mathrm D$ be a functor, we can define category $\mathrm D'\times_{\mathrm D} \mathrm E$ which will be a $\mathrm D'$-topos. Following lemma will be very useful to us:
\begin{Cor}\label{l3.2.1} Let $\mathrm E$ be a $\mathrm D$-topos, a restriction functor $\mathrm f^*$ has a right adjoint $\mathrm f_*$ (resp. a left adjoint $\mathrm f_!$).
\end{Cor}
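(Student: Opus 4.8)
The plan is to deduce the statement directly from the preceding Lemma \ref{l3.2.1} by specializing its hypotheses to the case of a $\mathrm D$-topos. Concretely, I would set $\mathrm J := \mathrm D$ and $\mathrm I := \mathrm D'$, and take for the bifibration $\mathrm A$ the structure map $\mathrm E \longrightarrow \mathrm D$ that exhibits $\mathrm E$ as a $\mathrm D$-topos (Definition \ref{d3.2.1}). Under these identifications the pullback $\mathrm I \times_{\mathrm J} \mathrm A$ of the Lemma is exactly the category $\mathrm D' \times_{\mathrm D} \mathrm E$ introduced just above, which the text already notes is itself a $\mathrm D'$-topos, and the forgetful functor of the Lemma becomes the restriction functor
\[
\mathrm f^*\colon \underline{\Gamma}(\mathrm E) \longrightarrow \underline{\Gamma}(\mathrm D' \times_{\mathrm D} \mathrm E).
\]

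First I would verify the single hypothesis required by the Lemma, namely that every fiber of the bifibration $\mathrm E \longrightarrow \mathrm D$ is complete and cocomplete. By Definition \ref{d3.2.1} each such fiber $\mathrm E_{\mathrm i}$ is a topos, and every Grothendieck topos admits all small limits and all small colimits; hence each fiber is simultaneously complete and cocomplete. This is precisely what is needed to obtain both adjoints at once: completeness of the fibers yields the right adjoint, while cocompleteness yields the left adjoint. It is worth emphasizing that only the fibers of $\mathrm E$ itself need to be checked, not those of the pullback, so no further work on $\mathrm D' \times_{\mathrm D} \mathrm E$ is required.

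Having checked this, the conclusion is immediate: Lemma \ref{l3.2.1} produces a right adjoint $\mathrm f_*$, the right Kan extension, and a left adjoint $\mathrm f_!$, the left Kan extension, of the restriction functor $\mathrm f^*$, and these are exactly the functors asserted in the Corollary. I do not expect any genuine obstacle here; the only point requiring attention is the fiberwise completeness and cocompleteness, and this is a standard structural property of topoi rather than something one must establish by hand. In this sense the Corollary is essentially a translation of the general Kan-extension Lemma into the geometric language of $\mathrm D$-topoi, and the entire content of the proof is the remark that topoi are bicomplete.
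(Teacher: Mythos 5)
Your proposal is correct and is exactly the argument the paper intends: the Corollary is stated without proof precisely because it is the specialization of the preceding Lemma to the bifibration $\mathrm E\longrightarrow\mathrm D$, whose fibers are topoi and hence complete and cocomplete. Your explicit verification of that hypothesis is the only content required, and you have supplied it.
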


Let $\mathrm e_{\mathrm i}\colon \mathrm i\longrightarrow \mathrm D,$ be an inclusion of an object $\mathrm i$ as a category with a unique object. Then we a have functor $\mathrm e_{\mathrm i}^*\colon \underline{\Gamma}(\mathrm E)\longrightarrow \mathrm E_{\mathrm i},$ called evaluation at point $\mathrm i.$ This functor has a left adjoint $\mathrm e_{\mathrm i!}$ (resp. a right adjoint $\mathrm e_{\mathrm i*}).$

\begin{Prop}\label{p3.2.1} Let $\mathrm E$ be a $\mathrm D$-topos, then  category $\underline{\Gamma}(\mathrm E)$ and a subcategory of cocartesian sections $\underline{\Gamma}^{\mathrm {cocart}}(\mathrm E)$ are topoi. We have a morphism of topoi
\begin{equation}\label{e3.2.2}
\Xi\colon  \underline{\Gamma}^{}(\mathrm E)\longrightarrow \underline{\Gamma}^{\mathrm {cocart}}(\mathrm E),
\end{equation}
where $\Xi^*$ is an inclusion functor.
\end{Prop}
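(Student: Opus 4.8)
The plan is to reduce everything to two facts: each fibre $\mathrm E_{\mathrm i}$ is a topos, and every transition functor $\mathrm m_*=\mathrm f^*$ is the inverse-image part of a morphism of topoi, hence preserves all colimits (being a left adjoint, since $\mathrm m_*\dashv\mathrm m^*$) and all finite limits (being left exact). First I would record that limits and colimits in $\underline{\Gamma}(\mathrm E)$ are computed pointwise. The evaluation functors $\mathrm e_{\mathrm i}^*\colon\underline{\Gamma}(\mathrm E)\to\mathrm E_{\mathrm i}$ are jointly conservative and create (co)limits: writing a section as objects $s_{\mathrm i}\in\mathrm E_{\mathrm i}$ together with structure maps $\alpha_{\mathrm m}\colon\mathrm m_* s_{\mathrm i}\to s_{\mathrm j}$, the pointwise colimit of a diagram of sections is again a section because $\mathrm m_*(\mathrm{colim}\,s^k_{\mathrm i})\cong\mathrm{colim}\,\mathrm m_* s^k_{\mathrm i}$, and the pointwise finite limit is again a section because $\mathrm m_*(\lim s^k_{\mathrm i})\cong\lim\mathrm m_* s^k_{\mathrm i}$ by left-exactness. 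This is precisely where the $\mathrm D$-topos hypothesis enters.

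Next I would verify Giraud's axioms for $\underline{\Gamma}(\mathrm E)$. Cocompleteness is the previous step; universality of colimits, disjointness of coproducts, and effectivity of equivalence relations all reduce, via the pointwise description of colimits and of finite pullbacks, to the corresponding properties in the fibres $\mathrm E_{\mathrm i}$, which hold because each is a topos. For a small generating set I would take $\{\mathrm e_{\mathrm i!}G : \mathrm i\in\mathrm D,\ G\in\mathcal G_{\mathrm i}\}$, where $\mathcal G_{\mathrm i}$ generates $\mathrm E_{\mathrm i}$ and $\mathrm e_{\mathrm i!}\dashv\mathrm e_{\mathrm i}^*$ is the left adjoint furnished by the Kan-extension corollary; the isomorphism $\mathrm{Hom}(\mathrm e_{\mathrm i!}G,s)\cong\mathrm{Hom}(G,s_{\mathrm i})$ together with joint conservativity of the $\mathrm e_{\mathrm i}^*$ makes this family generating, and it is small since $\mathrm D$ is small. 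Hence $\underline{\Gamma}(\mathrm E)$ is a Grothendieck topos.

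For the cocartesian sections I would first show that $\underline{\Gamma}^{\mathrm{cocart}}(\mathrm E)$ is closed in $\underline{\Gamma}(\mathrm E)$ under all colimits and under finite limits: if every $\alpha_{\mathrm m}$ is invertible, then so is the comparison map on the pointwise colimit (colimit-preservation of $\mathrm m_*$) and on the pointwise finite limit (left-exactness of $\mathrm m_*$). Consequently the exactness clauses of Giraud's theorem transfer verbatim from $\underline{\Gamma}(\mathrm E)$, the relevant (co)limits being the same in both categories. Being a full subcategory of a presentable category that is closed under colimits and cut out by inverting a small family of maps (one per morphism of the small category $\mathrm D$), $\underline{\Gamma}^{\mathrm{cocart}}(\mathrm E)$ is itself presentable---equivalently, the pseudolimit of the fibres in the $2$-category of presentable categories and left adjoints---which supplies a small generator. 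Therefore it too is a topos.

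Finally, the inclusion $\iota\colon\underline{\Gamma}^{\mathrm{cocart}}(\mathrm E)\hookrightarrow\underline{\Gamma}(\mathrm E)$ preserves finite limits by the previous paragraph and preserves all colimits, so---$\underline{\Gamma}^{\mathrm{cocart}}(\mathrm E)$ being presentable---the adjoint functor theorem produces a right adjoint. Setting $\Xi^*:=\iota$ and taking this right adjoint as $\Xi_*$ yields exactly the data of a geometric morphism $\Xi\colon\underline{\Gamma}(\mathrm E)\to\underline{\Gamma}^{\mathrm{cocart}}(\mathrm E)$ with $\Xi^*$ the inclusion, as asserted. The main obstacle throughout is the pointwise computation of \emph{finite limits} and its compatibility with the cocartesian condition, since this is the one place that genuinely requires left-exactness of the inverse-image functors $\mathrm m_*=\mathrm f^*$ rather than mere colimit-preservation; the remainder is the routine fibrewise verification of Giraud's axioms and the presentability bookkeeping needed to invoke the adjoint functor theorem.
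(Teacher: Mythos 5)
Your proof is correct and follows essentially the same route as the paper: the paper simply cites SGA4, Expos\'{e} $\mathrm V^{\mathrm{bis}}$, Proposition 1.2.12 for $\underline{\Gamma}(\mathrm E)$ (whose argument is the fibrewise Giraud verification you carry out), asserts that the same argument handles $\underline{\Gamma}^{\mathrm{cocart}}(\mathrm E)$, and obtains $\Xi_*$ from the adjoint functor theorem exactly as you do. You have merely supplied the details --- pointwise computation of colimits and finite limits via colimit-preservation and left-exactness of the $\mathrm m_*=\mathrm f^*$, the generators $\mathrm e_{\mathrm i!}G$, and closure of the cocartesian sections under colimits and finite limits --- that the paper leaves to the reference.
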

\begin{proof} See \cite[Expos\'{e} $\mathrm V^{\mathrm{bis}}$, Proposition~1.2.12]{SGA4(2)} for $\underline{\Gamma}(\mathrm E).$ The same argument can be applied to proof that $\underline{\Gamma}^{\mathrm {cocart}}(\mathrm E)$ is a topos. Existence of a functor $\Xi_*$ follows from an adjoint functor theorem.

\end{proof}

\begin{Def}\label{d3.2.2} \underline{Morphism $\Phi$ of $\mathrm D$-topoi $\mathrm E$ and $\mathrm E'$} is a couple $(\Phi_*,\Phi^*)$ of adjoint $\mathrm D$-functors ($\Phi_*$ is a right adjoint to $\Phi^*$), such that for an every $\mathrm i\in \mathrm D$ we have a morphism $(\Phi_{*\mathrm i},\Phi^*_{\mathrm i})$ of topoi $\mathrm E_{\mathrm i}$ and $\mathrm E'_{\mathrm i}.$
\end{Def}

\begin{Prop}\label{p3.2.2} Morphism of $\mathrm D$-topoi $\Phi=(\Phi_*,\Phi^*)\colon \mathrm E\longrightarrow \mathrm {E'},$ induces a morphism $(\Gamma(\Phi_*),\Gamma(\Phi^*))\colon \underline{\Gamma}(\mathrm E)\longrightarrow \underline{\Gamma}(\mathrm E')$ of associated total topoi.

\end{Prop}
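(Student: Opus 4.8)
The plan is to produce the pair $(\Gamma(\Phi_*),\Gamma(\Phi^*))$ by whiskering sections with $\Phi_*$ and $\Phi^*$, and then to verify the two conditions that make it a morphism of topoi in the sense of Section~2: that $\Gamma(\Phi_*)$ is right adjoint to $\Gamma(\Phi^*)$, and that the inverse-image part $\Gamma(\Phi^*)$ is left exact. Both categories of sections are topoi by Proposition~\ref{p3.2.1}, so in particular they have finite limits, and it remains only to exhibit the adjoint pair and the left-exactness.

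First I would set $\Gamma(\Phi_*)(s):=\Phi_*\circ s$ for $s\in\underline{\Gamma}(\mathrm E)$ and $\Gamma(\Phi^*)(t):=\Phi^*\circ t$ for $t\in\underline{\Gamma}(\mathrm E')$, and similarly on morphisms. Since by Definition~\ref{d3.2.2} the functors $\Phi_*$ and $\Phi^*$ are $\mathrm D$-functors, post-composition carries a section of $\mathrm E\to\mathrm D$ to a section of $\mathrm E'\to\mathrm D$ (the projection to $\mathrm D$ is unchanged), so these assignments are well defined. A morphism of sections is a natural transformation whose components lie in the fibres; whiskering such a transformation with $\Phi_*$ (resp. $\Phi^*$) again produces one, so $\Gamma(\Phi_*)$ and $\Gamma(\Phi^*)$ are genuine functors between the categories of sections. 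For the adjunction I would transport $\Phi^*\dashv\Phi_*$ along post-composition: the unit $\eta\colon\mathrm{id}\Rightarrow\Phi_*\Phi^*$ and counit $\varepsilon\colon\Phi^*\Phi_*\Rightarrow\mathrm{id}$ are natural transformations of $\mathrm D$-functors, so their components already project to identities on $\mathrm D$, and whiskering with a section $t$ (resp. $s$) yields morphisms of sections $\eta\ast t\colon t\to\Gamma(\Phi_*)\Gamma(\Phi^*)(t)$ and $\varepsilon\ast s\colon\Gamma(\Phi^*)\Gamma(\Phi_*)(s)\to s$. These are natural in $t$ and $s$, and the triangle identities for $\Gamma(\Phi^*)\dashv\Gamma(\Phi_*)$ follow componentwise from those of $\Phi^*\dashv\Phi_*$. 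Hence $\Gamma(\Phi^*)$ is left adjoint to $\Gamma(\Phi_*)$, which is the adjunction required of a morphism of topoi.

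The substantive point is that $\Gamma(\Phi^*)$ preserves finite limits. Here I would use the evaluation functors $\mathrm e_{\mathrm i}^*$ at the objects $\mathrm i\in\mathrm D$, available on $\underline{\Gamma}(\mathrm E)$ and on $\underline{\Gamma}(\mathrm E')$ alike. Each $\mathrm e_{\mathrm i}^*$ has a left adjoint $\mathrm e_{\mathrm i!}$, so it preserves all limits, and the family $\{\mathrm e_{\mathrm i}^*\}_{\mathrm i\in\mathrm D}$ is jointly conservative, because a morphism of sections is invertible precisely when each of its fibrewise components is; consequently this family reflects limit cones, as one sees by applying each $\mathrm e_{\mathrm i}^*$ to the canonical comparison map into the actual limit. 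Since evaluation commutes with post-composition, $\mathrm e_{\mathrm i}^*\circ\Gamma(\Phi^*)=\Phi^*_{\mathrm i}\circ\mathrm e_{\mathrm i}^*$, and $\Phi^*_{\mathrm i}$ is the inverse image of the morphism of topoi $(\Phi_{*\mathrm i},\Phi^*_{\mathrm i})$, hence left exact. Given a finite limit cone in $\underline{\Gamma}(\mathrm E')$, applying $\mathrm e_{\mathrm i}^*$ produces a finite limit cone in $\mathrm E'_{\mathrm i}$, which $\Phi^*_{\mathrm i}$ carries to a limit cone in $\mathrm E_{\mathrm i}$; thus $\mathrm e_{\mathrm i}^*\Gamma(\Phi^*)$ of the original cone is a limit cone for every $\mathrm i$, and reflection by the family on $\underline{\Gamma}(\mathrm E)$ shows $\Gamma(\Phi^*)$ preserves the limit.

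I expect the only real work to lie in justifying that the evaluation functors jointly reflect finite limits, that is, that limits of sections may be computed fibrewise. This rests on the fact built into Definition~\ref{d3.2.1} that the transition functors $\mathrm m^*=\mathrm f_*$ are direct images, hence right adjoints that preserve limits; this is what guarantees that a fibrewise limit assembles into a section and is the limit there. Everything else in the argument is formal naturality and adjunction bookkeeping, so once the fibrewise computation of limits is in place, both defining properties of a morphism of topoi follow at once.
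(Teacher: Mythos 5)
Your argument is correct, and it takes the only reasonable route here: the paper itself gives no proof of this statement, delegating it entirely to \cite[Expos\'{e} $\mathrm V^{\mathrm{bis}}$, Proposition~1.2.15]{SGA4(2)}, and what you have written is a correct self-contained version of that standard argument. The three ingredients — post-composition defines $\Gamma(\Phi_*)$ and $\Gamma(\Phi^*)$ because $\Phi_*,\Phi^*$ are $\mathrm D$-functors; the adjunction transports along whiskering because the unit and counit are vertical; and $\Gamma(\Phi^*)$ is left exact because finite limits of sections are computed fibrewise (the evaluation functors $\mathrm e_{\mathrm i}^*$ preserve limits, having the left adjoints $\mathrm e_{\mathrm i!}$ the paper records, and are jointly conservative, hence jointly reflect limit cones) — are exactly the content of the cited result, and your identification of the fibrewise computation of limits as the one substantive point is accurate.
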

\begin{proof} See \cite[Expos\'{e} $\mathrm V^{\mathrm{bis}}$, Proposition~1.2.15 ]{SGA4(2)}.

\end{proof}
\begin{Ex}\label{ex3.2.1} Let $(\Phi_*,\Phi^*)$ be a morphism of $\mathrm D$-topoi. We call it open (resp. closed) if for every $\mathrm i\in \mathrm D$ a morphism of topoi $(\Phi_{*\mathrm i},\Phi^*_{\mathrm i})\colon \mathrm E_{\mathrm i} \longrightarrow \mathrm E_{\mathrm i}'$ is open (resp. closed). In this case morphism $(\Gamma(\Phi_*),\Gamma(\Phi^*))\colon \underline{\Gamma}(\mathrm E)\longrightarrow \underline{\Gamma}(\mathrm E')$ will also be open (resp. closed).

\end{Ex}

\begin{Def}\label{d3.2.3} \underline{Ringed $\mathrm D$-topos} is a couple $ (\mathrm E,\mathrm A),$ where $\mathrm E$ is a $\mathrm D$-topos and $\mathrm A$ is a ring object in monoidal category $\underline{\Gamma}(\mathrm E).$

\end{Def}
Ring object $\mathrm A$ is defined as ring $\mathrm A_{\mathrm i}$ in $\mathrm E_{\mathrm i}$ for every $\mathrm i$ and for every morphism $\mathrm m\colon \mathrm i\rightarrow \mathrm j$ we have morphism of rings $\mathrm A_{\mathrm j}\rightarrow \mathrm m_*(\mathrm A_{\mathrm i}).$

\begin{Def}\label{d3.2.4} Let $(\mathrm E,\mathrm A)$ and $(\mathrm E',\mathrm A')$ be ringed $\mathrm D$-topoi. Pair $(\Phi,\theta),$ where $\Phi\colon \mathrm E\longrightarrow \mathrm E'$ is a morphism of $\mathrm D$-topoi and $\theta\colon \mathrm A'\longrightarrow \Gamma(\Phi_*)(\mathrm A)$ is homomorphism of rings called \underline{Morphism  of ringed topoi $(\mathrm E,\mathrm A)$ and $(\mathrm E',\mathrm A')$}
\end{Def}

\begin{Prop}\label{p3.2.3} Morphism of ringed $\mathrm D$-topoi $(\Phi,\theta)$ induces a morphism of total ringed topoi $(\Gamma(\Phi),\theta)\colon (\underline{\Gamma}(\mathrm E),\mathrm A)\longrightarrow (\underline{\Gamma}(\mathrm E'),\mathrm A').$
\end{Prop}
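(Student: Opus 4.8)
The plan is to reduce everything to the classical definition of a morphism of ringed topoi recalled in the notation subsection of Section~2: such a morphism $(\underline{\Gamma}(\mathrm E),\mathrm A)\to(\underline{\Gamma}(\mathrm E'),\mathrm A')$ is precisely a morphism of topoi together with a ring homomorphism $\EuScript O_{\mathrm T'}\to\varphi_*\EuScript O_{\mathrm T}$ from the target structure ring to the pushforward of the source structure ring. I would observe that both pieces of data are already furnished by $(\Phi,\theta)$: the morphism of topoi is the one produced by Proposition~\ref{p3.2.2}, and the ring homomorphism is $\theta$ itself. Thus the proposition is really an assembly statement, and the only point requiring argument is that these two pieces are compatible, i.e. that $\theta$ is a homomorphism of \emph{ring objects in $\underline{\Gamma}(\mathrm E')$} whose target is a genuine ring object.

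First I would invoke Proposition~\ref{p3.2.2}: since $\Phi=(\Phi_*,\Phi^*)$ is a morphism of $\mathrm D$-topoi, it induces a morphism $\Gamma(\Phi)=(\Gamma(\Phi_*),\Gamma(\Phi^*))\colon\underline{\Gamma}(\mathrm E)\longrightarrow\underline{\Gamma}(\mathrm E')$ of the associated total topoi, so that $\Gamma(\Phi^*)$ is left exact and $\Gamma(\Phi_*)$ is its right adjoint. The key observation is that the pushforward part of $\Gamma(\Phi)$ is exactly the functor $\Gamma(\Phi_*)$ occurring as the target of $\theta$ in Definition~\ref{d3.2.4}; hence no transport of data is needed and the variance already matches the classical convention $\theta\colon\EuScript O_{\mathrm T'}\to\varphi_*\EuScript O_{\mathrm T}$ with $\varphi=\Gamma(\Phi)$.

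The crucial step is to check that $\Gamma(\Phi_*)(\mathrm A)$ is a ring object in $\underline{\Gamma}(\mathrm E')$. By the description following Definition~\ref{d3.2.3}, $\mathrm A$ is a ring object for the cartesian (fiberwise) structure, namely a compatible family of rings $\mathrm A_{\mathrm i}$ in the fibers $\mathrm E_{\mathrm i}$ together with ring transition maps $\mathrm A_{\mathrm j}\to\mathrm m_*(\mathrm A_{\mathrm i})$. Being a right adjoint, $\Gamma(\Phi_*)$ preserves all limits, in particular finite products and the terminal object, and therefore carries the ring object $\mathrm A$ to a ring object $\Gamma(\Phi_*)(\mathrm A)$. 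Since $\theta\colon\mathrm A'\to\Gamma(\Phi_*)(\mathrm A)$ is a ring homomorphism by hypothesis (Definition~\ref{d3.2.4}), the pair $(\Gamma(\Phi),\theta)$ is exactly the data of a morphism of the total ringed topoi, which is the assertion.

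The main, and essentially only, obstacle is this ring-preservation step, which rests on the elementary fact that a right adjoint preserves finite products; the remaining verification is the mild bookkeeping that $\Gamma(\Phi_*)$ is computed fiberwise by the functors $\Phi_{*\mathrm i}$, so that the fiberwise rings and transition homomorphisms of $\mathrm A$ are carried to those of $\Gamma(\Phi_*)(\mathrm A)$ and $\theta$ restricts to ring homomorphisms $\mathrm A'_{\mathrm i}\to\Phi_{*\mathrm i}(\mathrm A_{\mathrm i})$ at each $\mathrm i$. Granting the unringed statement (Proposition~\ref{p3.2.2}, following SGA~4, Exposé~$\mathrm V^{\mathrm{bis}}$), this reduces the whole proposition to routine checking.
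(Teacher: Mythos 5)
Your proof is correct. The paper itself offers no argument beyond a citation to \cite[Expos\'{e} $\mathrm V^{\mathrm{bis}}$, Remark~1.3.3]{SGA4(2)}, and what you have written is exactly the definitional unwinding that reference performs: the underlying morphism of total topoi comes from Proposition~\ref{p3.2.2}, $\theta$ is already a ring homomorphism with the right variance by Definition~\ref{d3.2.4}, and the only point needing verification --- that $\Gamma(\Phi_*)(\mathrm A)$ is a ring object because a right adjoint preserves finite products and is computed fiberwise --- is the one you supply.
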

\begin{proof} See \cite[Expos\'{e} $\mathrm V^{\mathrm{bis}}$, Remark~1.3.3 ]{SGA4(2)}.

\end{proof}
With a ringed $\mathrm D$-topos $(\mathrm E,\mathrm A)$ we can associate category $\underline{\mathrm {Mod}}(\mathrm E,\mathrm A),$ bifibered in abelian categories over a category $\mathrm D,$ whose fiber over $\mathrm i\in \mathrm D$ given by a category of modules $\mathrm {Mod}(\mathrm E_{\mathrm i},\mathrm A_{\mathrm i})$ over ringed topos $(\mathrm E_{\mathrm i},\mathrm A_{\mathrm i}).$ We denote abelian category of modules over total topos $\Gamma(\mathrm E)$ by $\mathrm {Mod}(\underline{\Gamma}(\mathrm E),\mathrm A).$ This category is identified with a category $\underline{\mathrm {Hom}}_{\mathrm D}(\underline{\mathrm {Mod}}(\mathrm E,\mathrm A))$ of sections of an abelian bifibration $\underline{\mathrm {Mod}}(\mathrm E,\mathrm A).$

\begin{Ex}\label{ex3.2.2} Let $(\mathrm S,\EuScript O_{\mathrm S})$ be ringed topos. We can associate with it ringed $\mathrm D$-topos $(\mathrm S\times \mathrm D,\EuScript O_{\mathrm S})$ called constant ringed $\mathrm D$-topos. By definition ${\mathrm {Mod}}(\underline{\Gamma}(\mathrm S\times \mathrm D),\EuScript O_{\mathrm S})$ is equivalent to a category of functors from $\mathrm D$ to $\underline{\mathrm {Mod}}(\mathrm S\times \mathrm D,\EuScript O_{\mathrm S}).$
\end{Ex}

Morphism $(\Phi,\theta)$ of ringed $\mathrm D$-topoi $(\mathrm E,\mathrm A)$ and $(\mathrm E',\mathrm A')$ induces additive functors $\varphi_*\colon  \underline{\mathrm {Mod}}(\mathrm E,\mathrm A)\longrightarrow \underline{\mathrm {Mod}}(\mathrm E',\mathrm A')$ and $\varphi^*\colon  \underline{\mathrm {Mod}}(\mathrm E',\mathrm A')\longrightarrow \underline{\mathrm {Mod}}(\mathrm E,\mathrm A),$ called direct and inverse image functors. When functor $\varphi^*$ is exact we call morphism $(\Phi,\theta)$ flat.
\begin{Ex}\label{ex3.2.3} $\mathrm{ (i)}$ For a ringed $\mathrm D$-topos $(\mathrm E,\mathrm A)$ an augmentation is a map of $\mathrm D$-topoi $\theta \colon \mathrm E \longrightarrow \mathrm S\times \mathrm D.$ Let $(\mathrm S\times \mathrm D,\EuScript O_{\mathrm S})$ be a constant ringed $\mathrm D$-topos, we have exact functor $\varepsilon^*\colon {\mathrm{Mod}}(\mathrm S,\EuScript O_{\mathrm S})\longrightarrow {\mathrm{Mod}}(\underline{\Gamma}(\mathrm S\times \mathrm D),\EuScript O_{\mathrm S}),$ which is defined by attaching to an object of ${\mathrm{Mod}}(\mathrm S,\EuScript O_{\mathrm S})$ a constant functor. Functor $\varepsilon^*$ has a right adjoint denoted by $\varepsilon_*.$ Functor $\varepsilon_*$ associates to an element of ${\mathrm{Mod}}(\underline{\Gamma}(\mathrm S\times \mathrm D),\EuScript O_{\mathrm S})$ it's limit over category $\mathrm D.$ We have functors:
\begin{equation}\label{e3.2.3}
\bar{\theta}^*:=\Gamma(\theta^*)\circ\varepsilon^*\colon \mathrm{Mod}(\mathrm S,\EuScript O_{\mathrm S})\longrightarrow \mathrm{Mod}(\underline{\Gamma}(\mathrm E),\mathrm A)
\end{equation}
and
\begin{equation}\label{e3.2.4}
\bar{\theta}_*:=\varepsilon_*\circ\Gamma(\theta_*)\colon \mathrm{Mod}(\underline{\Gamma}(\mathrm E),\mathrm A)\longrightarrow \mathrm{Mod}(\mathrm S,\EuScript O_{\mathrm S})
\end{equation}
Image of functor $\bar{\theta}^*$ lies in subcategory $\underline{\Gamma}^{\mathrm {cocart}}(\mathrm D,\mathrm {Mod}(\mathrm E,\mathrm A)).$

\end{Ex}

\begin{Prop}\label{p3.2.4} Let $\mathrm f\colon \mathrm D'\longrightarrow \mathrm D$ be a functor and $(\mathrm E,\mathrm A)$ be a ringed $\mathrm D$-topos. Restriction functor
\begin{equation}\label{e3.2.6}
\mathrm f^*\colon \mathrm {Mod}(\underline{\Gamma}(\mathrm E),\mathrm A)\longrightarrow \mathrm {Mod}(\underline{\Gamma}(\mathrm E\times_{\mathrm D} \mathrm D'),\mathrm A\circ\mathrm f)
\end{equation}
has a right and left adjoint functors denoted by $\mathrm f_*$ and by $\mathrm f_!,$ respectively.
\end{Prop}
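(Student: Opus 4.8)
The plan is to deduce this Proposition directly from Lemma \ref{l3.2.1}, in exactly the way the corresponding statement for $\mathrm{D}$-topoi (sections of $\mathrm{E}$ itself) was obtained; the present assertion is simply the module-category analog. Recall from the discussion preceding Example \ref{ex3.2.2} that $\mathrm{Mod}(\underline{\Gamma}(\mathrm{E}),\mathrm{A})$ is identified with the category of sections $\underline{\Gamma}(\underline{\mathrm{Mod}}(\mathrm{E},\mathrm{A}))$ of the abelian bifibration $\underline{\mathrm{Mod}}(\mathrm{E},\mathrm{A})\longrightarrow \mathrm{D}$, whose fiber over $\mathrm{i}\in \mathrm{D}$ is the category of modules $\mathrm{Mod}(\mathrm{E}_{\mathrm{i}},\mathrm{A}_{\mathrm{i}})$. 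First I would set $\mathrm{J}=\mathrm{D}$ and $\mathrm{I}=\mathrm{D}'$ and take for the bifibration of Lemma \ref{l3.2.1} precisely $\underline{\mathrm{Mod}}(\mathrm{E},\mathrm{A})$; then the forgetful functor of that Lemma coincides with the restriction functor $\mathrm{f}^*$ of \eqref{e3.2.6}, since restricting a section along $\mathrm{f}$ is nothing but precomposition with $\mathrm{f}$.

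The key step is the base-change identification of bifibrations
\[
\mathrm{D}'\times_{\mathrm{D}}\underline{\mathrm{Mod}}(\mathrm{E},\mathrm{A})\;\cong\;\underline{\mathrm{Mod}}(\mathrm{E}\times_{\mathrm{D}}\mathrm{D}',\mathrm{A}\circ\mathrm{f}).
\]
I would verify this fiberwise: over $\mathrm{i}'\in \mathrm{D}'$ the left-hand fiber is, by definition of the pullback, the fiber of $\underline{\mathrm{Mod}}(\mathrm{E},\mathrm{A})$ over $\mathrm{f}(\mathrm{i}')$, namely $\mathrm{Mod}(\mathrm{E}_{\mathrm{f}(\mathrm{i}')},\mathrm{A}_{\mathrm{f}(\mathrm{i}')})$, while the right-hand fiber is $\mathrm{Mod}((\mathrm{E}\times_{\mathrm{D}}\mathrm{D}')_{\mathrm{i}'},(\mathrm{A}\circ\mathrm{f})_{\mathrm{i}'})$; these agree because $(\mathrm{E}\times_{\mathrm{D}}\mathrm{D}')_{\mathrm{i}'}=\mathrm{E}_{\mathrm{f}(\mathrm{i}')}$ and $(\mathrm{A}\circ\mathrm{f})_{\mathrm{i}'}=\mathrm{A}_{\mathrm{f}(\mathrm{i}')}$. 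The transition functors along a morphism $\mathrm{m}'\colon \mathrm{i}'\to \mathrm{j}'$ of $\mathrm{D}'$ coincide with those along $\mathrm{f}(\mathrm{m}')$ on both sides, so the identification respects the bifibration structure, and under it the forgetful functor of Lemma \ref{l3.2.1} becomes the functor $\mathrm{f}^*$ of the Proposition.

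It then remains only to check the hypothesis of the Lemma, that each fiber be complete and cocomplete. This was recorded earlier: the category $\mathrm{Mod}(\mathrm{T},\EuScript O_{\mathrm{T}})$ of modules over any ringed topos is complete and cocomplete, so applying this to $(\mathrm{E}_{\mathrm{i}},\mathrm{A}_{\mathrm{i}})$ gives the requirement for every $\mathrm{i}\in \mathrm{D}$. Lemma \ref{l3.2.1} then furnishes a right adjoint $\mathrm{f}_*$ (the right Kan extension) and a left adjoint $\mathrm{f}_!$ (the left Kan extension) to $\mathrm{f}^*$, which is the claim.

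The only genuinely non-formal point, and hence the main thing to be careful about, is the base-change identification above: it expresses the fact that the formation of the module bifibration $(\mathrm{E},\mathrm{A})\mapsto \underline{\mathrm{Mod}}(\mathrm{E},\mathrm{A})$ is natural in the $\mathrm{D}$-topos and therefore commutes with pullback along $\mathrm{f}$. One must confirm this both on fibers (done above) and on the cartesian and cocartesian transport functors, using the functoriality of the assignment $(\mathrm{T},\EuScript O_{\mathrm{T}})\mapsto \mathrm{Mod}(\mathrm{T},\EuScript O_{\mathrm{T}})$ with respect to morphisms of ringed topoi. Once this compatibility is in place, the rest is a formal invocation of the Kan-extension lemma.
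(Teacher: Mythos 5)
Your proposal is correct. The paper itself offers no internal argument here: its ``proof'' is a bare citation to \cite[Expos\'{e} $\mathrm V^{\mathrm{bis}}$, Proposition~1.3.7]{SGA4(2)}, so there is nothing to compare step by step. What you do instead is derive the statement from the paper's own Lemma \ref{l3.2.1} (itself cited from SGA4) via the identification $\mathrm {Mod}(\underline{\Gamma}(\mathrm E),\mathrm A)\cong\underline{\Gamma}(\underline{\mathrm {Mod}}(\mathrm E,\mathrm A))$, the base-change isomorphism $\mathrm D'\times_{\mathrm D}\underline{\mathrm{Mod}}(\mathrm E,\mathrm A)\cong\underline{\mathrm{Mod}}(\mathrm E\times_{\mathrm D}\mathrm D',\mathrm A\circ\mathrm f)$, and the completeness and cocompleteness of module categories over ringed topoi --- all facts the paper records earlier. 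This is exactly the reduction the author gestures at in the remark immediately following the Proposition (``the same construction for bifibrations, where fibers are complete and cocomplete''), so your route is the intended one, merely made explicit and self-contained. Your identification of the key non-formal point --- that forming the module bifibration commutes with pullback along $\mathrm f$, checked on fibers and on the (co)cartesian transport functors --- is the right thing to isolate, and your fiberwise verification of it is sound. No gap.
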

\begin{proof} See \cite[Expos\'{e} $\mathrm V^{\mathrm{bis}}$, Proposition~1.3.7]{SGA4(2)}.

\end{proof}

\begin{remark} In the same spirit one can consider the same construction for bifibrations, where fibers are complete and cocomplete.

\end{remark}

For the ringed topos $(\mathrm T,\EuScript O_{\mathrm T})$ with stratification $\EuScript S$ we denote by $\mathrm D_{\mathrm c}(\mathrm T,\EuScript O_{\mathrm T})$ triangulated subcategory of $\mathrm D(\mathrm T,\mathrm A),$ consisting of complexes $\EuScript K$ such that $\mathrm H^{\mathrm n}(\EuScript K^{\hdot})\in \mathrm {Mod}_{\mathrm c}(\mathrm T,\mathrm A)$ for all $\mathrm n.$ This category has natural $\mathrm t$-structure, whose heart is equivalent to $\mathrm {Mod}_{\mathrm c}(\mathrm T,\mathrm A).$ We have natural triangulated comparison functor $\mathrm D(\mathrm {Mod}_{\mathrm c}(\mathrm T,\mathrm A))\longrightarrow \mathrm D_{\mathrm c}(\mathrm T,\mathrm A).$

\begin{Def}\label{d3.4.1} Let $(\mathrm E,\mathrm A)$ be ringed the $\mathrm D$-topos, \underline{a stratification $\EuScript S$} on $(\mathrm E,\mathrm A)$ is a stratification $\EuScript S_{\mathrm i}$ on $\mathrm E_{\mathrm i}$ for every $\mathrm i\in \mathrm D,$ such that for every $\mathrm m\colon \mathrm i\rightarrow \mathrm j\in \mathrm D$ functors
\begin{equation}\label{e3.4.1}
\mathrm f^*\colon {\mathrm{Mod}}(\mathrm E_{\mathrm i},\mathrm A) \longleftrightarrow{\mathrm{Mod}}(\mathrm E_{\mathrm j},\mathrm A)\colon \mathrm f_*,
\end{equation}
where $\mathrm f_*:=\mathrm m^*$ and $\mathrm f^*:=\mathrm m_*$ preserves constructible subcategories.
\end{Def}
Object $\EuScript K\in \mathrm{Mod}(\underline{\Gamma}(\mathrm E),\mathrm A)$ is called constructible with respect to $\EuScript S$ if for every $\mathrm i\in \mathrm D$ object $\EuScript K_{\mathrm i}$ is constructible. For a ringed $\mathrm D$-topos $(\mathrm E,\mathrm A)$ we have Serre subcategory ${\mathrm {Mod}}_{\mathrm c}(\mathrm E,\mathrm A)$ of constructible objects of topos ${\mathrm{Mod}}(\underline{\Gamma}(\mathrm E),\mathrm A).$
By $\mathrm D_{\mathrm c}(\Gamma(\mathrm E),\mathrm A),$ we denote triangulated subcategory of $\mathrm D(\Gamma(\mathrm E),\mathrm A),$ consisting of objects $\EuScript K,$ such that for every $\mathrm i\in \mathrm D$ complex $\EuScript K_{\mathrm i}$ has constructible cohomology. This category has obvious $\mathrm t$-structure, whose heart is equivalent to $\mathrm{Mod}_{\mathrm c}(\underline{\Gamma}(\mathrm E),\mathrm A).$ We also have triangulated subcategories $\mathrm D^{}_{\mathrm {cocart},\mathrm c}(\underline{\Gamma}(\mathrm E),\mathrm A)$ and $\mathrm D^{}_{\mathrm {cart},\mathrm c}(\underline{\Gamma}(\mathrm E),\mathrm A).$

\section{Cross functors and Verdier duality}
\subsection{Notations} Let $\mathrm D$ and $\mathrm C$ be a pair of $2$-categories, we can consider the $2$-category of $2$-functors (pseudo-functors) between $\mathrm D$ and $\mathrm C.$ When $\mathrm C$ is the $2$-category of small categories $\mathrm{Cat},$ and $\mathrm D$ is an ordinary category we have an equivalence between the $2$-category of $2$-functors $\underline{\mathrm {Hom}}(\mathrm D^{\circ},\mathrm {Cat})$ and the $2$-category of fibrations over $\mathrm D.$ This equivalence is given by the Grothendieck construction ~\cite{SGA1}. We always denote fibration and corresponding pseudo-functor by the same symbol. Given a pseudo-functor $\Psi\colon \mathrm D^{\circ}\longrightarrow \mathrm {Cat}$ we can consider it is colimit over category $\mathrm D^{\circ}.$ We have equivalence of categories:
\begin{equation}\label{87}
\clim_{\mathrm D^{\circ}}\Psi\overset{\sim}{\longrightarrow} \Psi[\mathrm S^{-1}],
\end{equation}
where $\Psi[\mathrm S^{-1}]$ is the localization of  total category of a fibration $\Psi$, with respect to the class $\mathrm S$ of cocartesian morphisms \cite[Expos\'{e} $\mathrm {VI}$, Section~6]{SGA4(2)}. We can also consider limit of pseudo-functor $\Psi,$ over category $\mathrm D^{\circ}.$ We have following equivalence:
\begin{equation}\label{87}
\lim_{\mathrm D^{\circ}}\Psi\overset{\sim}{\longrightarrow} \underline{\Gamma}_{\mathrm {cart}}(\Psi),
\end{equation}
where $\underline{\Gamma}_{\mathrm {cart}}(\Psi)$ is the category of cartesian sections of corresponding fibration. If fibers of fibration are cocomplete and for every morphism $\mathrm m\colon \mathrm i\longrightarrow \mathrm j\in \mathrm D$ functor $\mathrm m^*\colon  \Psi_{\mathrm j} \longrightarrow \Psi_{\mathrm j}$ commutes with colimits we have following canonical morphism:
\begin{equation}
\mathrm {can}_{!},\mathrm{can}_*\colon \clim_{\mathrm D^{\circ} }\Psi\longrightarrow\lim_{\mathrm D^{\circ} }\Psi
\end{equation}
Note, that if we have a bifibration $\Phi:=(\Psi,\Theta),$ where $\Psi$ is a fibration and $\Theta$ is cofibration over $\mathrm D,$ such that for every $\mathrm i\in \mathrm D$ fibers of $\Phi$ are cocomplete, then we have following equivalence:
\begin{equation}
 \clim_{{\mathrm D^{\circ} }}\Psi\overset{\sim}{\longrightarrow} \lim_{\mathrm D } \Theta
\end{equation}
With every $2$-category $\mathrm C$ one can associate $2$-category $\mathrm C^{1-\circ}$ with reversed $1$-morphism, $2$-category $\mathrm C^{2-\circ}$ with reversed $2$-morphism and $2$-category $\mathrm C^{12-\circ}$ with reversed $1$-morphisms and $2$-morphisms.

\subsection{Cross functors} Here we are going to recollect some facts about cross functors, following Deligne and Voevodsky \cite{Voe}. Let $\mathrm D$ be a category with the class $\mathrm B$ of commutative squares in $\mathrm D$ and $\mathrm C$ is the $2$-category.

\begin{Def} Pair $(\mathrm H_{\star},\mathrm H_!),$ where $\mathrm H_{\star}$ and $\mathrm H_!$ are $2$-functors from $\mathrm D$ to $\mathrm C,$ such that $\mathrm H_{\star}(\mathrm X)=\mathrm H_!(\mathrm x):=\mathrm H(\mathrm x)$ for every $\mathrm x\in \mathrm D,$ is called \underline{lower $\mathrm e$-functor} if for each square $\mathrm Q$ in $\mathrm B$

\begin{diagram}[height=2em]
\bullet &   \rTo^{\mathrm f}   &                    \bullet            \\
\dTo^{\mathrm g} & &  \dTo_ {\mathrm p} \\
\bullet &   \rTo^{\mathrm h}   &                    \bullet \\
\end{diagram}
we have $2$-morphism:
\begin{equation}\label{76}
\mathrm e_{\mathrm Q}\colon \mathrm p_!\mathrm f_*\longrightarrow\mathrm h_*\mathrm g_!
\end{equation}
satisfying following axioms:

(i) Compatibility with vertical and horizontal compositions. For vertical it means that following solid is commutative:
\begin{diagram}
\bullet &         & \rTo^{\mathrm f''_*} &                   & \bullet            & & \\
\dTo^{\mathrm g'_!} &\rdTo(2,4)^{\mathrm h_!'\circ\mathrm g_!'} &  &  &  \dTo^{\mathrm g_!} &\rdTo(2,4)^{\mathrm h_!\circ\mathrm g_!} & \\
\bullet &  &  & \rTo^{\mathrm f'_*} & \bullet & &  \\
& \rdTo_{\mathrm h'_!} & & & & \rdTo_{\mathrm h_!} & \\
        &         & \bullet              &                   & \rTo^{\mathrm f_*} & & \bullet \\
\end{diagram}

(ii) If both vertical or both horizontal morphisms are identities then
the exchange morphism is an isomorphism.

\end{Def}

\begin{Def} Pair $(\mathrm H^{\star},\mathrm H^!),$ where $\mathrm H^{\star}$ and $\mathrm H^!$ are $2$-functors from $\mathrm D^{\circ}$ to $\mathrm C,$ is called \underline{upper $\mathrm e$-functor}, if $(\mathrm H^{\star},\mathrm H^!),$ is a lower $\mathrm e$-functor with values in $\mathrm C^{1-\circ}.$ Square $\mathrm Q$ gives $2$-morphism:
\begin{equation}\label{76}
\mathrm e_{\mathrm Q}\colon \mathrm f^*\mathrm p^!\longrightarrow\mathrm g^!\mathrm h^*
\end{equation}
\end{Def}

\begin{Def} Pair $(\mathrm H_{!},\mathrm H^{\star}),$ where $\mathrm H_{!}$ is functor from $\mathrm D$ to $\mathrm C$ and $\mathrm H^{\star}$ is $2$-functors from $\mathrm D^{\circ}$ to $\mathrm C,$ such that $\mathrm H_{!}(\mathrm X)=\mathrm H^{\star}(\mathrm x):=\mathrm H(\mathrm x)$ for every $\mathrm x\in \mathrm D,$ is called \underline{$\mathrm e^{\star}$-contradirectional functors} if for every $\mathrm Q$ we have a morphism:
\begin{equation}\label{76}
\mathrm e_{\mathrm Q}\colon \mathrm p_!\mathrm h^{*}\longrightarrow\mathrm g_!\mathrm f^*
\end{equation}
satisfying same axioms as previous functors.
\end{Def}

\begin{Def} Pair $(\mathrm H_{\star},\mathrm H^{!}),$ where $\mathrm H_{\star}$ is functor from $\mathrm D$ to $\mathrm C$ and $\mathrm H^{!}$ is $2$-functors from $\mathrm D^{\circ}$ to $\mathrm C,$ such that $\mathrm H_{\star}(\mathrm x)=\mathrm H^{!}(\mathrm x):=\mathrm H(\mathrm x)$ for every $\mathrm x\in \mathrm D,$ is called \underline{$\mathrm e^{!}$-contradirectional functors} if for every $\mathrm Q$ we have a morphism:
\begin{equation}\label{76}
\mathrm e_{\mathrm Q}\colon \mathrm g_*\mathrm f^{!}\longrightarrow\mathrm h^!\mathrm p_*
\end{equation}
satisfying same axioms as $\mathrm e^{\star}$-contradirectional functors.
\end{Def}
Now we can give main definition from \cite{Voe}:
\begin{Def}\label{cr.d.5} A \underline{cross-functor} from $\mathrm D$ to $\mathrm C,$ relative to class $\mathrm B,$ is the following collection of data:\par\medskip

(i) an upper $\mathrm e$-functor $(\mathrm H^{\star},\mathrm H^!),$ and a lower $\mathrm e$-functor $(\mathrm H_{\star},\mathrm H_!),$ from $\mathrm D$ to $\mathrm C,$ which are equal on objects, $\mathrm H_{\star}(\mathrm x)=\mathrm H^{\star}(\mathrm x)=\mathrm H_!(\mathrm x)=\mathrm H^{!}(\mathrm x),$ for very $\mathrm x\in \mathrm D.$ For a morphism $\mathrm f\colon \mathrm x\longrightarrow \mathrm y$ in $\mathrm D$ we will write $\mathrm f_*$ for $\mathrm H_{\star}(\mathrm f),$ $\mathrm f^*$ for $\mathrm H^{\star}(\mathrm f),$ $\mathrm f_!$ for $\mathrm H_{!}(\mathrm f),$ and $\mathrm f^!$ for $\mathrm H^!(\mathrm f),$\par\bigskip

(ii) For every $\mathrm f\colon \mathrm x\rightarrow \mathrm y$ we have standard adjunction between $\mathrm f_*$ and $\mathrm f^*$ and $\mathrm f_!$ and $\mathrm f^!,$ i.e. $\mathrm f_*$ is right adjoint to $\mathrm f^*$ and $\mathrm f^!$ is right adjoint to $\mathrm f_!.$\par\bigskip

Following axioms should hold\par\medskip
(a) Compatibility of adjunction with composition \par\bigskip

(b) Given a square in $\mathrm B$

\begin{diagram}[height=2em]
\bullet &   \rTo^{\mathrm f}   &                    \bullet            \\
\dTo^{\mathrm g} & &  \dTo_ {\mathrm p} \\
\bullet &   \rTo^{\mathrm h}   &                    \bullet \\
\end{diagram}
The corresponding exchange morphisms:
\begin{equation}\label{65}
 \mathrm p_!\circ\mathrm f_*\longrightarrow \mathrm h_*\circ\mathrm g_!,\quad  \mathrm f^*\circ\mathrm p^!\longrightarrow \mathrm g^!\circ\mathrm h^*
\end{equation}
transmute by into morphism:
\begin{equation}\label{56}
\mathrm p^*\circ\mathrm h_!\longrightarrow \mathrm g_!\circ\mathrm f^*,\quad  \mathrm g_!\circ\mathrm f^*\longrightarrow \mathrm h^*\circ\mathrm p_!
\end{equation}
\end{Def}

\subsection{Monoidal categories}

Here we are going to recall some facts about morphism of monoidal categories following \cite{May}.
Let $\mathrm M$ and $\mathrm M'$ be a pair of closed symmetric monoidal categories and let $\mathrm f^*\colon \mathrm M'\longrightarrow\mathrm M$ be a strong monoidal functor with right adjoint $\mathrm f_*\colon \mathrm M\longrightarrow \mathrm M'.$ We denote counit and unit of adjunction by $\epsilon\colon \mathrm f^*\circ\mathrm f_*\EuScript F \longrightarrow \EuScript F$ and $\eta\colon \EuScript F \longrightarrow\mathrm f_*\circ\mathrm f^*\EuScript F.$ We have following evident
\begin{Prop}\label{p4.2.1}
Adjuncts of natural morphisms
\begin{equation}\label{e4.2.1}
\mathrm f^*\circ(\mathrm f_*\EuScript F\otimes \mathrm f_*\EuScript G) \cong \mathrm f^*\circ\mathrm f_*\EuScript F\otimes\mathrm f^*\circ\mathrm f_*\EuScript G \xrightarrow{\epsilon\otimes\epsilon}\EuScript F\otimes \EuScript G\qquad \EuScript U_{\mathrm M'}\longrightarrow \mathrm f_*\EuScript U_{\mathrm M'}
\end{equation}
make $\mathrm f_*$ into lax monoidal functor.
\end{Prop}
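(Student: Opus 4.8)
The plan is to exhibit the two displayed arrows as the structure constraints of a lax monoidal functor and then to verify the coherence axioms by transposing them across the adjunction $\mathrm f^*\dashv\mathrm f_*$. Write $\varphi_{\EuScript A,\EuScript B}\colon \mathrm f^*\EuScript A\otimes\mathrm f^*\EuScript B\xrightarrow{\sim}\mathrm f^*(\EuScript A\otimes\EuScript B)$ and $\varphi_0\colon \EuScript U_{\mathrm M}\xrightarrow{\sim}\mathrm f^*\EuScript U_{\mathrm M'}$ for the invertible constraints witnessing that $\mathrm f^*$ is strong monoidal. I would define the lax constraint $\mu_{\EuScript F,\EuScript G}\colon \mathrm f_*\EuScript F\otimes\mathrm f_*\EuScript G\longrightarrow\mathrm f_*(\EuScript F\otimes\EuScript G)$ to be the adjunct under $\mathrm f^*\dashv\mathrm f_*$ of the composite $\mathrm f^*(\mathrm f_*\EuScript F\otimes\mathrm f_*\EuScript G)\xrightarrow{\varphi^{-1}}\mathrm f^*\mathrm f_*\EuScript F\otimes\mathrm f^*\mathrm f_*\EuScript G\xrightarrow{\epsilon\otimes\epsilon}\EuScript F\otimes\EuScript G$ appearing in the statement, and the unit constraint $\EuScript U_{\mathrm M'}\to\mathrm f_*\EuScript U_{\mathrm M}$ to be the adjunct of $\varphi_0^{-1}\colon\mathrm f^*\EuScript U_{\mathrm M'}\xrightarrow{\sim}\EuScript U_{\mathrm M}$. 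Conceptually these are precisely the \emph{mates} of the comonoidal (oplax) constraints $\varphi^{-1},\varphi_0^{-1}$ carried by $\mathrm f^*$.

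Naturality of $\mu$ in each variable is immediate from naturality of $\epsilon$, of $\varphi$, and of the adjunction bijection, so the first thing to record is that $\mu$ and the unit constraint are natural transformations. The content is the three coherence axioms. I would check each by passing to transposes: two morphisms $X\to\mathrm f_*Y$ agree iff their transposes $\mathrm f^*X\to Y$ agree, and transposition is computed by the standard recipe, namely pre-composition with $\mathrm f^*(h)$ transposes pre-composition with $h$, while post-composition with $\mathrm f_*(\beta)$ transposes (by naturality of $\epsilon$) to post-composition with $\beta$; in particular the transpose of $\mu_{\EuScript F,\EuScript G}$ is exactly $(\epsilon\otimes\epsilon)\circ\varphi^{-1}$. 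Applying this to the associativity square for $\mu$, both legs transpose to the two ways of traversing the associativity coherence diagram for the strong constraint $\varphi$ of $\mathrm f^*$ (post-composed with $\epsilon\otimes\epsilon\otimes\epsilon$ and the associator of $\mathrm M$), which agree precisely because $\mathrm f^*$ is a monoidal functor; the left and right unit axioms reduce in the same fashion to the unit axioms for $\varphi_0$.

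The cleanest way to organize this — and the way I would actually write it — is to invoke the mate calculus of doctrinal adjunction: transposition across an adjunction is a functorial operation on pasting diagrams, so it sends the comonoidal-coherence pastings valid for $\mathrm f^*$ to the lax-monoidal-coherence pastings required for $\mathrm f_*$ verbatim. Since $\mathrm f^*$ is strong, its inverse constraints $\varphi^{-1},\varphi_0^{-1}$ constitute a genuine oplax structure, whose mate is therefore a genuine lax structure on $\mathrm f_*$. The only real obstacle is bookkeeping: in the associativity axiom one must be disciplined about the order in which $\varphi$ and $\epsilon$ are applied to the nested tensor factors, inserting the triangle identities and the naturality of $\epsilon$ in the correct places. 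No idea beyond these formal manipulations is needed, which is why the statement is labelled evident.
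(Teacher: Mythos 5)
Your proposal is correct and is precisely the standard justification the paper has in mind when it declares the statement evident (and which its reference \cite{May} carries out): define the lax constraints as the mates of the oplax constraints $\varphi^{-1},\varphi_0^{-1}$ of the strong monoidal functor $\mathrm f^*$ and transfer the coherence axioms across the adjunction by doctrinal adjunction. Your transposes agree with the displayed morphisms in the statement (modulo the paper's typo $\mathrm f_*\EuScript U_{\mathrm M'}$ for $\mathrm f_*\EuScript U_{\mathrm M}$, which you silently correct), so nothing further is needed.
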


We have natural morphism:
\begin{equation}\label{e4.2.2}
\mathrm f^*\circ\underline{\mathrm{Hom}}(\EuScript F,\EuScript G)\otimes \mathrm f^*\EuScript K\cong \mathrm f^*\circ(\underline{\mathrm{Hom}}(\EuScript F,\EuScript G)\otimes\EuScript K)\xrightarrow{\mathrm f^*\mathrm{ev}} \mathrm f^*\EuScript G
\end{equation}
with corresponding adjoint morphism
\begin{equation}\label{e4.2.3}
\alpha\colon\mathrm f^*\circ\underline{\mathrm{Hom}}(\EuScript F,\EuScript G)\longrightarrow \underline{\mathrm{Hom}}(\mathrm f^*\EuScript F,\mathrm f^*\EuScript G)
\end{equation}
We also have obvious
\begin{Prop}\label{p4.2.2} Adjunct of morphism
\begin{equation}\label{e4.2.4}
\mathrm f^*\circ\underline{\mathrm{Hom}}(\EuScript F,\mathrm f_*\EuScript G)\xrightarrow{\alpha}\underline{\mathrm{Hom}}(\mathrm f^*\EuScript F,\mathrm f^*\circ\mathrm f_*\EuScript G)\xrightarrow{\mathrm{Hom}(\mathrm{id},\epsilon)}\underline{ \mathrm{Hom}}(\mathrm f^*\EuScript F,\EuScript G)
\end{equation}
is natural isomorphism:
\begin{equation}\label{e4.2.5}
\underline{ \mathrm{Hom}}(\EuScript F,\mathrm f_*\EuScript G)\cong \mathrm f_*\underline{ \mathrm{Hom}}(\mathrm f^*\EuScript F,\EuScript G)
\end{equation}
\end{Prop}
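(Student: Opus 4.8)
The plan is to establish the isomorphism \eqref{e4.2.5} by the Yoneda lemma in $\mathrm M'$, and only afterwards to check that the resulting isomorphism coincides with the adjunct of the composite \eqref{e4.2.4}. First I would fix an arbitrary test object $\EuScript K\in \mathrm M'$ and assemble a chain of natural bijections, using in turn the internal-hom adjunction in $\mathrm M'$, the $(\mathrm f^*,\mathrm f_*)$-adjunction, the strong monoidality of $\mathrm f^*$ (which gives $\mathrm f^*(\EuScript K\otimes \EuScript F)\cong \mathrm f^*\EuScript K\otimes \mathrm f^*\EuScript F$), the internal-hom adjunction in $\mathrm M$, and finally the $(\mathrm f^*,\mathrm f_*)$-adjunction once more:
\begin{align*}
\mathrm{Hom}_{\mathrm M'}(\EuScript K,\underline{\mathrm{Hom}}(\EuScript F,\mathrm f_*\EuScript G))
&\cong \mathrm{Hom}_{\mathrm M'}(\EuScript K\otimes \EuScript F,\mathrm f_*\EuScript G)\\
&\cong \mathrm{Hom}_{\mathrm M}(\mathrm f^*(\EuScript K\otimes \EuScript F),\EuScript G)\\
&\cong \mathrm{Hom}_{\mathrm M}(\mathrm f^*\EuScript K\otimes \mathrm f^*\EuScript F,\EuScript G)\\
&\cong \mathrm{Hom}_{\mathrm M}(\mathrm f^*\EuScript K,\underline{\mathrm{Hom}}(\mathrm f^*\EuScript F,\EuScript G))\\
&\cong \mathrm{Hom}_{\mathrm M'}(\EuScript K,\mathrm f_*\underline{\mathrm{Hom}}(\mathrm f^*\EuScript F,\EuScript G)).
\end{align*}
Each step is natural in $\EuScript K$, so by the Yoneda lemma the composite produces an isomorphism $\underline{\mathrm{Hom}}(\EuScript F,\mathrm f_*\EuScript G)\cong \mathrm f_*\underline{\mathrm{Hom}}(\mathrm f^*\EuScript F,\EuScript G)$, and naturality in $\EuScript F$ and $\EuScript G$ is inherited from the naturality of all constructions involved (tensor-hom adjunctions, counit $\epsilon$, and the structural isomorphisms of the strong monoidal $\mathrm f^*$).

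The main obstacle, and the step I would spend the most care on, is verifying that the abstract Yoneda isomorphism just produced is literally the adjunct of the explicit morphism \eqref{e4.2.4}, rather than merely some isomorphism between the same two objects. To do this I would trace the identity morphism $\mathrm{id}$ on $\underline{\mathrm{Hom}}(\EuScript F,\mathrm f_*\EuScript G)$ through the chain above and compare the output with the image of $\mathrm{id}$ under the composite $\mathrm{Hom}(\mathrm{id},\epsilon)\circ\alpha$, using the defining property of $\alpha$ as the adjoint of the evaluation map \eqref{e4.2.2} and the standard triangle identities relating $\epsilon$, $\eta$. Concretely, the third bijection (strong monoidality) is exactly where the map $\alpha$ enters when one untwists the internal evaluation, and the substitution of $\EuScript K$ by $\underline{\mathrm{Hom}}(\EuScript F,\mathrm f_*\EuScript G)$ together with the unit of the tensor-hom adjunction reproduces the clause $\mathrm{Hom}(\mathrm{id},\epsilon)$; this bookkeeping is routine but must be carried out to pin down the morphism.

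I expect the remainder to be formal. Since closedness of $\mathrm M$ and $\mathrm M'$ guarantees the internal-hom adjunctions used, and since $\mathrm f^*$ is assumed strong monoidal with right adjoint $\mathrm f_*$, no additional hypotheses are needed; the statement is a purely $2$-categorical consequence of these data, and in particular does not require $\mathrm f_*$ to be monoidal (its lax structure from Proposition \ref{p4.2.1} is not invoked here). I would therefore present the Yoneda chain as the substance of the argument and record the identification of the two morphisms as the verification that makes the isomorphism the canonical one, noting that this is the internal-hom counterpart of the projection-type isomorphism underlying \eqref{e4.2.3}.
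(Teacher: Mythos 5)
Your argument is correct: the paper offers no proof at all (it declares the statement ``obvious''), and your Yoneda chain of adjunctions, followed by the check that the resulting isomorphism is the adjunct of the explicit composite \eqref{e4.2.4}, is exactly the standard argument (it is the one given in \cite{May}) that the declaration implicitly refers to. Nothing is missing; your remark that the lax monoidal structure of $\mathrm f_*$ is not needed is also accurate.
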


We can also define another morphism:
\begin{equation}\label{e4.2.6}
\beta\colon \mathrm f_*\circ\underline{\mathrm{Hom}}(\EuScript F,\EuScript G)\xrightarrow{\mathrm f_*\mathrm{Hom}(\epsilon,\mathrm{id})}\mathrm f_*\circ\underline{\mathrm{Hom}}(\mathrm f^*\circ\mathrm f_*\EuScript F,\EuScript G)\cong\underline{\mathrm{Hom}}(\mathrm f_*\EuScript F,\mathrm f_*\EuScript G)
\end{equation}
\begin{Def}\label{d4.2.1} Let $\mathrm M$ and $\mathrm M'$ pair of closed monoidal categories with adjoint functors $(\mathrm f_*,\mathrm f^*).$ If natural morphism
\begin{equation}\label{e4.2.7}
\pi\colon \EuScript F\otimes \mathrm f_*\EuScript G \longrightarrow\mathrm f_*\circ\mathrm f^*\EuScript F\otimes \mathrm f_*\EuScript G\longrightarrow \mathrm f_*\circ(\mathrm f^*\EuScript F\otimes \EuScript G)
\end{equation}
is an isomorphism we say that \underline{projection formula} holds.
\end{Def}
Assume that in addition to adjoint functors $(\mathrm f_*,\mathrm f^*)$ between closed monoidal categories $\mathrm M$ and $\mathrm M'$ we have another pair of adjoint functors $\mathrm f_!\colon \mathrm M \longleftrightarrow\mathrm M'\colon \mathrm f^!,$ where $\mathrm f^!$ is right adjoint to $\mathrm f_!.$ We denote counit and unit of adjunction by $\sigma\colon \mathrm f_!\circ\mathrm f^!\EuScript F \longrightarrow \EuScript F$ and $\zeta\colon \EuScript F\longrightarrow\mathrm f^!\circ\mathrm f_!\EuScript F.$
We have evident
\begin{Prop}\label{p4.2.3} Suppose that we have isomorphism $\mathrm f^*\cong \mathrm f^!,$ then adjunct of natural morphisms
\begin{equation}\label{e4.2.8}
\EuScript F\otimes \EuScript G\xrightarrow{\zeta\otimes\zeta} \mathrm f^*\circ\mathrm f_!\EuScript F\otimes \mathrm f^*\circ\mathrm f_!\EuScript G\cong \mathrm f^*\circ(\mathrm f_!\EuScript F\otimes \mathrm f_!\EuScript G)   \qquad \mathrm f_!\EuScript U_{\mathrm M}\longrightarrow\EuScript U_{\mathrm M'}
\end{equation}
make $\mathrm f_!$ into op-lax monoidal functor.

\end{Prop}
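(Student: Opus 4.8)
The plan is to recognize this statement as the formal dual of Proposition \ref{p4.2.1}. There, $\mathrm f_*$, the right adjoint of the strong monoidal functor $\mathrm f^*$, is shown to be lax monoidal; here $\mathrm f_!$ is the \emph{left} adjoint of $\mathrm f^*$, so it should acquire an op-lax structure by the same doctrinal-adjunction mechanism. The first step is to use the hypothesis $\mathrm f^*\cong\mathrm f^!$ to transport the given adjunction $\mathrm f_!\dashv\mathrm f^!$ into an adjunction $\mathrm f_!\dashv\mathrm f^*$: its unit is obtained by composing $\zeta$ with the isomorphism $\mathrm f^!\circ\mathrm f_!\cong\mathrm f^*\circ\mathrm f_!$ induced by $\mathrm f^!\cong\mathrm f^*$, which is exactly the map $\EuScript F\to\mathrm f^*\circ\mathrm f_!\EuScript F$ appearing in the displayed composite. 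Thus $\mathrm f_!$ becomes the left adjoint of a strong monoidal functor.

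Next I would define the two op-lax structure maps as the adjuncts (mates) of the displayed morphisms under $\mathrm f_!\dashv\mathrm f^*$. The composite
\[
\EuScript F\otimes\EuScript G\xrightarrow{\zeta\otimes\zeta}\mathrm f^*\circ\mathrm f_!\EuScript F\otimes\mathrm f^*\circ\mathrm f_!\EuScript G\cong\mathrm f^*\circ(\mathrm f_!\EuScript F\otimes\mathrm f_!\EuScript G)
\]
lives in $\mathrm M$ with codomain of the form $\mathrm f^*(-)$, where the displayed isomorphism is strong monoidality of $\mathrm f^*$; its adjunct is the desired comultiplication $\nabla_{\EuScript F,\EuScript G}\colon\mathrm f_!(\EuScript F\otimes\EuScript G)\longrightarrow\mathrm f_!\EuScript F\otimes\mathrm f_!\EuScript G$. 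Likewise, the canonical isomorphism $\EuScript U_{\mathrm M}\cong\mathrm f^*\EuScript U_{\mathrm M'}$ supplied by strong monoidality has adjunct the counit $\mathrm f_!\EuScript U_{\mathrm M}\to\EuScript U_{\mathrm M'}$. The key observation is that these are precisely the mates of the inverse strong monoidal coherence isomorphisms of $\mathrm f^*$; this reformulation is what makes the coherence checks tractable.

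The remaining work is to verify the three op-lax axioms, namely coassociativity and the two counit laws. I would dispatch these by the mate calculus: since taking adjuncts is compatible with horizontal and vertical composition (whiskering) and with the triangle identities, each op-lax axiom for $\mathrm f_!$ is equivalent, after passing to adjuncts under $\mathrm f_!\dashv\mathrm f^*$, to the corresponding strong monoidal coherence axiom for $\mathrm f^*$, which holds by assumption. Concretely, the coassociativity pentagon for $\nabla$ translates into the associativity coherence of the monoidal structure isomorphism of $\mathrm f^*$, and the unit triangles translate into its unit coherences. This is the op-lax form of Kelly's doctrinal adjunction; I would either invoke it directly or spell out the single mate diagram for coassociativity, exactly dual to the argument establishing Proposition \ref{p4.2.1}.

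The main obstacle will be the coassociativity verification: showing that the two composites $\mathrm f_!(\EuScript F\otimes\EuScript G\otimes\EuScript H)\to\mathrm f_!\EuScript F\otimes\mathrm f_!\EuScript G\otimes\mathrm f_!\EuScript H$ coincide requires a careful mate computation through the associativity constraints of both $\mathrm M$ and $\mathrm M'$, tracking the interaction of $\zeta$ with the strong monoidal structure of $\mathrm f^*$. By contrast, the two counit laws are routine once the mate description of the structure maps is in place.
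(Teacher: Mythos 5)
Your proposal is correct and is exactly the argument the paper has in mind: the paper states this proposition as ``evident,'' offering no proof, precisely because it is the formal dual of Proposition \ref{p4.2.1} obtained by doctrinal adjunction once the hypothesis $\mathrm f^*\cong\mathrm f^!$ converts $\mathrm f_!\dashv\mathrm f^!$ into $\mathrm f_!\dashv\mathrm f^*$. Your mate-calculus verification of the op-lax coherences supplies the details the paper elides, following the same route as the lax-monoidal case in \cite{May}.
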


Consider following natural morphisms:
\begin{align}\label{a4.2.1(1)}
&\gamma\colon \mathrm f_*\circ\underline{\mathrm{Hom}}(\EuScript F,\mathrm f^!\EuScript G)\longrightarrow\underline{\mathrm{Hom}}(\mathrm f_*\EuScript F,\EuScript G)\\\label{a4.2.1(2)}
&\delta\colon \underline{\mathrm{Hom}}(\mathrm f^*\EuScript F,\mathrm f^!\EuScript G)\longrightarrow\mathrm f^!\circ\underline{\mathrm{Hom}}(\EuScript F,\EuScript G)\\\label{a4.2.1(3)}
&\hat{\pi}\colon \EuScript F\otimes \mathrm f_!\EuScript G \longrightarrow \mathrm f_!\circ(\mathrm f^*\EuScript F\otimes \EuScript G)
\end{align}
\begin{Prop}\label{p4.2.4}\label{p2.4} Suppose that we are given one of the maps  $\hat{\pi}, \gamma$ or $\delta$ then it determines all others. Moreover if one of these maps is natural isomorphism then so the others too.
\end{Prop}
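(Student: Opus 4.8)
The plan is to treat the three comparison maps as \emph{mates} of one another in the calculus of adjunctions, and then to transfer invertibility by the Yoneda lemma. Concretely, I would use that a morphism in $\mathrm M$ or $\mathrm M'$ is an isomorphism precisely when the induced natural transformation of representable $\mathrm{Hom}$-functors is a natural isomorphism; since every passage I use below---the unit/counit bijections of $\mathrm f_!\dashv\mathrm f^!$ and $\mathrm f^*\dashv\mathrm f_*$, and the tensor--hom adjunction isomorphisms internal to $\mathrm M$ and $\mathrm M'$---is a \emph{bijection} on $\mathrm{Hom}$-sets, the representing morphisms of the three maps are intertwined by isomorphisms, and invertibility of any one forces it for the others.

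First I would establish the cleanest leg, that $\hat\pi$ and $\delta$ are mutual mates. Fixing $\EuScript F\in\mathrm M'$ and $\EuScript G\in\mathrm M$ and testing against an arbitrary $\EuScript L\in\mathrm M'$, the adjunction $\mathrm f_!\dashv\mathrm f^!$ together with the tensor--hom adjunctions give natural bijections
\[
\mathrm{Hom}_{\mathrm M'}\bigl(\mathrm f_!(\mathrm f^*\EuScript F\otimes\EuScript G),\EuScript L\bigr)\cong\mathrm{Hom}_{\mathrm M}\bigl(\EuScript G,\underline{\mathrm{Hom}}(\mathrm f^*\EuScript F,\mathrm f^!\EuScript L)\bigr),\qquad\mathrm{Hom}_{\mathrm M'}\bigl(\EuScript F\otimes\mathrm f_!\EuScript G,\EuScript L\bigr)\cong\mathrm{Hom}_{\mathrm M}\bigl(\EuScript G,\mathrm f^!\underline{\mathrm{Hom}}(\EuScript F,\EuScript L)\bigr).
\]
Precomposition with $\hat\pi$ is then transported, naturally in $\EuScript G$, to precomposition with a single morphism $\underline{\mathrm{Hom}}(\mathrm f^*\EuScript F,\mathrm f^!\EuScript L)\to\mathrm f^!\underline{\mathrm{Hom}}(\EuScript F,\EuScript L)$, and one checks by unwinding the unit $\zeta$ and the evaluation that this morphism is exactly $\delta$ of \eqref{a4.2.1(2)} (with $\EuScript L$ renamed $\EuScript G$). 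By Yoneda, $\hat\pi$ is a natural isomorphism if and only if $\delta$ is; this already gives the equivalence of two of the three maps and shows how to reconstruct $\delta$ from $\hat\pi$ and conversely.

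It remains to bring $\gamma$ into the web, and here the second adjunction $\mathrm f^*\dashv\mathrm f_*$ and the strong-monoidal structure of $\mathrm f^*$ enter. Running the same bookkeeping for $\gamma$ (now with $\EuScript F\in\mathrm M$, $\EuScript G\in\mathrm M'$, tested against $\EuScript K\in\mathrm M'$) expresses $\gamma$ as the mate of a projection-type morphism $\EuScript K\otimes\mathrm f_*\EuScript F\to\mathrm f_!(\mathrm f^*\EuScript K\otimes\EuScript F)$; comparing this with the projection datum underlying $\hat\pi$ requires the isomorphism $\mathrm f^*\cong\mathrm f^!$ of Proposition~\ref{p4.2.3} (which aligns the $\mathrm f_*$-side with the $\mathrm f_!$-side) and the strong-monoidal unit constraint $\mathrm f^*\EuScript U_{\mathrm M'}\cong\EuScript U_{\mathrm M}$. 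Once $\gamma$ is identified as a mate of the same underlying transformation, the Yoneda argument of the first paragraph closes the circle $\hat\pi\rightsquigarrow\delta\rightsquigarrow\gamma\rightsquigarrow\hat\pi$.

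The formal part of this (the leg $\hat\pi\leftrightarrow\delta$) is routine once the bijections are written down. I expect the genuine obstacle to be the $\gamma$-leg: one must verify that the three Yoneda representatives are \emph{literally} the maps $\hat\pi,\gamma,\delta$ of \eqref{a4.2.1(1)}--\eqref{a4.2.1(3)}, with the correct directions and with the symmetry isomorphisms of the two monoidal structures and the units and counits $\epsilon,\eta,\sigma,\zeta$ inserted in precisely the right places, rather than merely some transformations of the correct variance. This coherence bookkeeping---and the careful use of the strong monoidality of $\mathrm f^*$ to reconcile the $\mathrm f_*$ and $\mathrm f_!$ sides appearing in $\gamma$---is where all the work lies; the transfer of invertibility is then automatic from Yoneda.
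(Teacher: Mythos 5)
Your overall strategy---realising the three maps as mates of one another via the adjunctions $\mathrm f_!\dashv\mathrm f^!$, $\mathrm f^*\dashv\mathrm f_*$ and tensor--hom, and transferring invertibility through the Yoneda lemma---is the right one; the paper itself gives no argument and simply cites \cite[Proposition~2.4]{May}, where the proof is exactly this mate calculus. Your first leg, identifying the Yoneda representative of precomposition with $\hat\pi$ as $\delta$, is correct as written.

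The $\gamma$-leg, however, contains a genuine gap. Taking $\gamma$ literally as displayed in \eqref{a4.2.1(1)}, with target $\underline{\mathrm{Hom}}(\mathrm f_*\EuScript F,\EuScript G)$, its Yoneda representative is, as you compute, a map $\EuScript K\otimes\mathrm f_*\EuScript F\to\mathrm f_!(\mathrm f^*\EuScript K\otimes\EuScript F)$, and to match this against $\hat\pi$ you must identify $\mathrm f_*\EuScript F$ with $\mathrm f_!\EuScript F$. The hypothesis $\mathrm f^*\cong\mathrm f^!$ that you import from Proposition~\ref{p4.2.3} does not achieve this: it merely makes $\mathrm f_!$ and $\mathrm f_*$ a left and a right adjoint of the same functor $\mathrm f^*$, which gives no comparison between them; what you would need is $\mathrm f_!\cong\mathrm f_*$, which is the hypothesis of Proposition~\ref{p4.2.6} and is not assumed in the present statement. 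The resolution is that the displayed target of $\gamma$ in \eqref{a4.2.1(1)} is a misprint: as in the cited source, and as the paper itself uses the map in \eqref{e4.2.13} (where $\gamma$ produces $\mathbf D_{\EuScript G}\mathrm f_!\EuScript F=\underline{\mathrm{Hom}}(\mathrm f_!\EuScript F,\EuScript G)$), it should read $\gamma\colon\mathrm f_*\underline{\mathrm{Hom}}(\EuScript F,\mathrm f^!\EuScript G)\to\underline{\mathrm{Hom}}(\mathrm f_!\EuScript F,\EuScript G)$. With that correction the chain of bijections $\mathrm{Hom}(\EuScript K,\mathrm f_*\underline{\mathrm{Hom}}(\EuScript F,\mathrm f^!\EuScript G))\cong\mathrm{Hom}(\mathrm f_!(\mathrm f^*\EuScript K\otimes\EuScript F),\EuScript G)$ and $\mathrm{Hom}(\EuScript K,\underline{\mathrm{Hom}}(\mathrm f_!\EuScript F,\EuScript G))\cong\mathrm{Hom}(\EuScript K\otimes\mathrm f_!\EuScript F,\EuScript G)$ exhibits $\gamma$ as a mate of $\hat\pi$ directly, with no extra hypothesis and no appeal to strong monoidality beyond what you already use, and your Yoneda argument then closes the circle.
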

\begin{proof} See \cite[Proposition~2.4]{May}.

\end{proof}
We can also consider maps \eqref{a4.2.1(1)}, \eqref{a4.2.1(2)} and \eqref{a4.2.1(3)} with reverse directions:
\begin{align}\label{a4.2.2(1)}
&\bar{\gamma}\colon \underline{\mathrm{Hom}}(\mathrm f_*\EuScript F,\EuScript G)\longrightarrow\mathrm f_*\circ\underline{\mathrm{Hom}}(\EuScript F,\mathrm f^!\EuScript G)\\\label{a4.2.2(2)}
&\bar{\delta}\colon \mathrm f^!\circ\underline{\mathrm{Hom}}(\EuScript F,\EuScript G)\longrightarrow\underline{\mathrm{Hom}}(\mathrm f^*\EuScript F,\mathrm f^!\EuScript G)\\\label{a4.2.2(3)}
&\bar{\pi}\colon \mathrm f_!\circ(\mathrm f^*\EuScript F\otimes \EuScript G)\longrightarrow\EuScript F\otimes \mathrm f_!\EuScript G
\end{align}
\begin{Prop}\label{p4.2.5} Suppose that we are given one of the maps  $\bar{\pi}, \bar{\gamma}$ or $\bar{\delta}$ then it determines all others. Moreover if one of these maps is natural isomorphism then so the others too.
\end{Prop}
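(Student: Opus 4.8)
The plan is to prove Proposition~\ref{p4.2.5} as the arrow-reversed analogue of Proposition~\ref{p4.2.4}: the three maps $\bar\pi$, $\bar\gamma$, $\bar\delta$ of \eqref{a4.2.2(1)}--\eqref{a4.2.2(3)} are mutually \emph{conjugate} (mates) under the calculus of mates for the three interlocking adjunctions $\mathrm f^*\dashv\mathrm f_*$, $\mathrm f_!\dashv\mathrm f^!$, and the tensor--hom adjunctions $-\otimes\EuScript X\dashv\underline{\mathrm{Hom}}(\EuScript X,-)$ in $\mathrm M$ and $\mathrm M'$. Since passing to a mate is a \emph{bijection} on the relevant sets of natural transformations, specifying any one of $\bar\pi,\bar\gamma,\bar\delta$ pins down the other two; and since a natural transformation is invertible iff the transformation of representable functors it induces is invertible, the Yoneda lemma will upgrade ``determines'' to ``is a natural isomorphism iff the others are.'' Concretely I would replace, in the construction of $\hat\pi,\gamma,\delta$, each unit/counit by the opposite composite so as to obtain the barred maps, and then check that the mate identities of \cite{May} survive this reversal verbatim.

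First I would establish the clean correspondence $\bar\pi\leftrightarrow\bar\delta$. Applying $\mathrm{Hom}_{\mathrm M'}(-,\EuScript H)$ to $\bar\pi\colon\mathrm f_!(\mathrm f^*\EuScript F\otimes\EuScript G)\to\EuScript F\otimes\mathrm f_!\EuScript G$ and transporting through $\mathrm f_!\dashv\mathrm f^!$ together with the two tensor--hom adjunctions identifies the induced map of hom-sets, naturally in $\EuScript G$ and $\EuScript H$, with postcomposition by $\bar\delta\colon\mathrm f^!\underline{\mathrm{Hom}}(\EuScript F,\EuScript H)\to\underline{\mathrm{Hom}}(\mathrm f^*\EuScript F,\mathrm f^!\EuScript H)$. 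By the Yoneda lemma this shows $\bar\pi$ and $\bar\delta$ determine one another, and that $\bar\pi$ is a natural isomorphism iff $\bar\delta$ is; as $\EuScript F,\EuScript G,\EuScript H$ range over all objects every instance of each map is reached, so the equivalence is global. This step is formal, and I expect no difficulty with it.

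The real work is the second correspondence, $\bar\delta\leftrightarrow\bar\gamma$, which unlike the first must couple \emph{both} adjunction pairs at once: $\bar\gamma\colon\underline{\mathrm{Hom}}(\mathrm f_*\EuScript F,\EuScript G)\to\mathrm f_*\underline{\mathrm{Hom}}(\EuScript F,\mathrm f^!\EuScript G)$ involves $\mathrm f_*$ and $\mathrm f^!$ simultaneously, so it cannot be read off from $\bar\pi$ by a single hom-set manipulation. Here I would map $\bar\gamma$ out by $\mathrm{Hom}_{\mathrm M'}(\EuScript K,-)$ and push through $\mathrm f^*\dashv\mathrm f_*$, then $\mathrm f_!\dashv\mathrm f^!$, together with tensor--hom in both $\mathrm M$ and $\mathrm M'$, arriving at a representable description that, after one insertion of the counit $\epsilon\colon\mathrm f^*\mathrm f_*\to\mathrm{id}$, matches the one obtained for $\bar\delta$. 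The main obstacle is precisely the bookkeeping of this composite: one must verify that the triangle identities for the two adjunctions make the resulting square of unit/counit maps commute, so that the correspondence is a genuine bijection and not merely a map in one direction. This is the analogue for the barred maps of the coherence verification underlying \cite[Proposition~2.4]{May}; I would discharge it with the string-diagram (mate) calculus, in which the triangle identities become the snake relations and the required commutativity is visible at a glance.

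Once both correspondences are in place the conclusion is immediate. Given any one of $\bar\pi,\bar\gamma,\bar\delta$, the chain $\bar\gamma\leftrightarrow\bar\delta\leftrightarrow\bar\pi$ recovers the remaining two, and invertibility propagates along the chain by the Yoneda argument of the preceding paragraphs, which yields both assertions of the proposition at once.
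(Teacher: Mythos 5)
Your argument is correct and is essentially the paper's own proof: the paper gives no argument beyond the citation \cite[Proposition~2.8]{May}, and the proof there is precisely the adjunction/Yoneda (mate) transposition you describe, with your first step ($\bar{\pi}\leftrightarrow\bar{\delta}$) worked out correctly. One caveat on your second step: with the formula \eqref{a4.2.2(1)} as literally written, the source of $\bar{\gamma}$ is $\underline{\mathrm{Hom}}(\mathrm f_*\EuScript F,\EuScript G)$, whereas the pure Yoneda transpose of $\bar{\pi}$ along $\mathrm f_!\dashv\mathrm f^!$ and the tensor--hom adjunctions lands on $\underline{\mathrm{Hom}}(\mathrm f_!\EuScript F,\EuScript G)$, so the two-way determination you assert is only a genuine bijection after invoking the identification $\mathrm f_!\cong\mathrm f_*$ of the Grothendieck context (as in Proposition \ref{p4.2.6}), or equivalently after reading $\mathrm f_!$ for $\mathrm f_*$ in \eqref{a4.2.2(1)}; your proposed ``insertion of the counit $\epsilon$'' does not by itself bridge that mismatch.
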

\begin{proof} See \cite[Proposition~2.8]{May}.

\end{proof}
\begin{Prop}\label{p4.2.6} Suppose that we have natural isomorphism $\mathrm f_!\cong \mathrm f_*.$ Then we can take $\hat{\pi}$ to be $\pi$ and $\gamma$ is a natural morphism
\begin{equation}\label{e4.2.9}
\mathrm f_*\circ\underline{ \mathrm {Hom}}(\EuScript F,\mathrm f^!\EuScript G)\xrightarrow{\beta}\underline{ \mathrm {Hom}}(\mathrm f_*\EuScript F,\mathrm f_*\circ\mathrm f^!\EuScript G)\xrightarrow{\mathrm{Hom}(\mathrm{id},\sigma)}\underline{ \mathrm {Hom}}(\mathrm f_*\EuScript F,\EuScript G)
\end{equation}
Morphism $\delta$ is adjunct of following natural morphism:
\begin{equation}\label{e4.2.10}
\mathrm f_*\circ\underline{ \mathrm {Hom}}(\mathrm f^*\EuScript F,\mathrm f^!\EuScript G)\cong\underline{ \mathrm {Hom}}(\EuScript F,\mathrm f_*\mathrm f^!\EuScript G)\xrightarrow{\mathrm{Hom}(\mathrm{id},\sigma)}\underline{ \mathrm {Hom}}(\EuScript F,\EuScript G)
\end{equation}
\end{Prop}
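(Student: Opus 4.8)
The plan is to leverage Proposition~\ref{p4.2.4}, which asserts that the three maps $\hat\pi$, $\gamma$, $\delta$ are mutually determined: once one of them is fixed, the other two are forced. Accordingly, I would not verify the three displayed descriptions independently, but instead fix the first map and then show that the passage of Proposition~\ref{p4.2.4} transports it to exactly the other two. Since the hypothesis $\mathrm f_!\cong\mathrm f_*$ is in force throughout, I will identify the counit $\sigma\colon\mathrm f_!\circ\mathrm f^!\to\mathrm{id}$ with a natural transformation $\mathrm f_*\circ\mathrm f^!\to\mathrm{id}$, which is precisely the $\sigma$ occurring in \eqref{e4.2.9} and \eqref{e4.2.10}.

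First I would treat the claim that $\hat\pi$ may be taken to be $\pi$. Under $\mathrm f_!\cong\mathrm f_*$ the source $\EuScript F\otimes\mathrm f_!\EuScript G$ and target $\mathrm f_!\circ(\mathrm f^*\EuScript F\otimes\EuScript G)$ of $\hat\pi$ in \eqref{a4.2.1(3)} become the source and target of the projection-formula map $\pi$ of \eqref{e4.2.7}. Now $\pi$ exists canonically, being built from the unit $\eta$ of $(\mathrm f^*,\mathrm f_*)$ and the lax monoidal structure of Proposition~\ref{p4.2.1}; since after the identification it lies in exactly the hom-set required of $\hat\pi$, we are free to set $\hat\pi:=\pi$, and Proposition~\ref{p4.2.4} then produces $\gamma$ and $\delta$. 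The only thing to check here is that this choice is legitimate data for Proposition~\ref{p4.2.4}, i.e. that $\pi$ has the correct variance and naturality, which is immediate.

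Next I would derive $\gamma$ from $\hat\pi=\pi$. Unwinding the equivalence of Proposition~\ref{p4.2.4}, the map $\gamma$ of \eqref{a4.2.1(1)} is obtained by an adjunction passage from an evaluation arrow assembled out of $\pi$ and the counit $\sigma$, and the goal is to identify the result with the composite \eqref{e4.2.9}, namely $\beta$ followed by $\underline{\mathrm{Hom}}(\mathrm{id},\sigma)$. The key input is the compatibility of the projection-formula map $\pi$ with the lax-Hom map $\beta$ of \eqref{e4.2.6}: pushing forward the internal evaluation $\underline{\mathrm{Hom}}(\EuScript F,\mathrm f^!\EuScript G)\otimes\EuScript F\to\mathrm f^!\EuScript G$ and comparing along $\pi$ reduces the statement to a triangle identity for $(\mathrm f_!,\mathrm f^!)$ combined with the naturality of $\beta$ and of the internal-hom adjunction in $\mathrm M'$.

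The map $\delta$ of \eqref{a4.2.1(2)} I would then obtain in the same fashion, either directly from $\hat\pi=\pi$ or from the $\gamma$ just computed. Its target is the $(\mathrm f_!,\mathrm f^!)$-adjunct of \eqref{e4.2.10}, which is the isomorphism $\mathrm f_*\circ\underline{\mathrm{Hom}}(\mathrm f^*\EuScript F,-)\cong\underline{\mathrm{Hom}}(\EuScript F,\mathrm f_*-)$ of Proposition~\ref{p4.2.2} composed with $\underline{\mathrm{Hom}}(\mathrm{id},\sigma)$; passing to adjuncts under $(\mathrm f_!,\mathrm f^!)=(\mathrm f_*,\mathrm f^!)$ converts the description of $\delta$ into a statement about internal-hom arrows that is matched against \eqref{e4.2.10} via Proposition~\ref{p4.2.2}. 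The hard part will be none of the individual steps but the bookkeeping: one must keep straight two external adjunctions together with two internal-hom adjunctions and verify that the coherence diagrams implicit in Proposition~\ref{p4.2.4} commute after the identification $\mathrm f_!\cong\mathrm f_*$. I expect no genuine obstruction beyond carefully tracking these adjunction isomorphisms and repeatedly invoking naturality of $\alpha$, $\beta$, $\pi$ and the triangle identities.
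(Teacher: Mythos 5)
Your proposal is correct and is essentially the argument behind the result: the paper itself gives no proof, deferring to \cite[Proposition~2.9]{May}, and your plan — setting $\hat{\pi}:=\pi$ under the identification $\mathrm f_!\cong\mathrm f_*$ and then unwinding the correspondence of Proposition~\ref{p4.2.4} to match the resulting $\gamma$ and $\delta$ against the composites \eqref{e4.2.9} and \eqref{e4.2.10} via the definition of $\beta$, the isomorphism of Proposition~\ref{p4.2.2}, naturality, and the triangle identities — is exactly the proof carried out in that reference. The remaining work is the diagram bookkeeping you describe, and there is no obstruction there.
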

\begin{proof} See \cite[Proposition~2.9]{May}.

\end{proof}
\begin{Def}\label{d4.2.2} \underline{Grothendieck context} is a pair of closed symmetric monoidal categories $\mathrm M$ and $\mathrm N$ and a triple of functors $(\mathrm f^*,\mathrm f_*,\mathrm f^!),$ $\mathrm f_*\colon \mathrm M \longrightarrow \mathrm N\colon \mathrm f^*,\mathrm f^!,$ where functor $\mathrm f^*$ is left adjoint to $\mathrm f_*$ and $\mathrm f^!$ is right adjoint to $\mathrm f_*.$ We also assume that $\mathrm f^*$ is a strong monoidal functor and projection formula holds.

\end{Def}

\begin{Def}\label{d4.2.4} For an object $\EuScript K\in \mathrm M$ we define object $\mathbf D_{\EuScript K}\EuScript F:=\underline{\mathrm {Hom}}(\EuScript E,\EuScript K),$ the \underline{$\EuScript K$-twisted dual of $\EuScript F.$} We call $\EuScript F$ a \underline{$\EuScript K$-reflexive} if we have isomorphism $\EuScript F \cong \mathbf D_{\EuScript K}\mathbf D_{\EuScript K}\EuScript F.$

\end{Def}

\begin{remark}\label{r4.2.1} Note then if $\EuScript K$ is dualizable then $\mathbf D_{\EuScript K}\EuScript F=\mathbf D_{\EuScript U}\EuScript F\otimes \EuScript K.$

\end{remark}
Let $\mathrm f\colon \mathrm M\longrightarrow \mathrm N$ be a Grothendieck context, if we set $\mathrm f^!\EuScript G=\EuScript K$ for an object $\EuScript G\in \mathrm N,$ then from isomorphisms $\delta$ and $\gamma$ we get:
\begin{equation}\label{e4.2.13}
\mathrm f_*\mathbf D_{\EuScript K}\EuScript F\cong \mathbf D_{\EuScript G}\mathrm f_!\EuScript F\quad \mathbf D_{\EuScript K}\mathrm f^*\EuScript Y\cong \mathrm f^!\mathbf D_{\EuScript G}\EuScript Y
\end{equation}

\begin{Def}\label{d4.2.5} A \underline{dualizing object} for a full subcategory $\mathrm M_0$ of $\mathrm M$ is an object $\EuScript K\in \mathrm M_0$ such that if $\EuScript F\in \mathrm M_0,$ then $\mathbf D_{\EuScript K}\EuScript F$ is in $\mathrm M_0$ and $\EuScript F$ is $\EuScript K$-reflexive. Thus $\mathbf D_{\EuScript K}$ defines an auto-duality of the category $\mathrm M_0:$
\begin{equation}\label{e4.2.14}
 \mathbf D_{\EuScript K}\colon \mathrm M_0 \overset{\sim}{\longrightarrow} \mathrm M_0
\end{equation}
\end{Def}

\subsection{Grothendieck context} We assume that in category $\mathrm D$ colimits over diagram $\mathrm x\leftarrow \mathrm z \rightarrow \mathrm y$ exits for all $\mathrm x,$ $\mathrm y$ and $\mathrm z$ in $\mathrm D.$ For the class $\mathrm B$ of commutative squares in $\mathrm D$ we take class of pushouts in category $\mathrm D.$ We assume further that category $\mathrm D^{\circ}$ satisfies following generalized filtered condition:
\begin{Assum} For every $\mathrm x$ and $\mathrm y$ in $\mathrm D^{\circ}$ we have object $\mathrm z\in \mathrm D^{\circ}$ and arrows $\mathrm x\rightarrow \mathrm z\leftarrow \mathrm y,$ for every pair of parallel morphisms $\mathrm x\rightrightarrows \mathrm y$ we have an object $\mathrm w\in \mathrm D^{\circ},$ such that
$\mathrm x\rightarrow \mathrm w\leftarrow \mathrm y,$ and $\sigma\circ\mathrm j\circ\mathrm u=\mathrm j\circ\mathrm u,$ where $\sigma$ is an automorphism of $\mathrm w.$
\end{Assum}
Let $(\mathrm E,\mathrm A)$ be ringed $\mathrm D$-topos we give following:

\begin{Def}\label{d4.3.1} A \underline{Grothendieck cross functor} for the $\mathrm D$-topos $(\mathrm E,\mathrm A)$ is a cross functor $(\mathrm H_{\star},\mathrm H^{\star},\mathrm H_!,\mathrm H^!),$ such that bifibration which corresponds to $2$-functors $(\mathrm H_{\star},\mathrm H^{\star})$ is isomorphic to bifibration $\underline{\mathrm {Mod}}(\mathrm E,\mathrm A)\longrightarrow \mathrm D$ and moreover we have isomorphism of $!$-lower $\mathrm e$-functor and $\star$-lower $\mathrm e$-functor:
\begin{equation}\label{65}
\mathrm H_!\overset{\sim}{\longrightarrow} \mathrm H_{\star}
\end{equation}

\end{Def}

\begin{remark}\label{r4.4.1}

We specified to the setting of Grothendieck cross functors for several reasons. In order to make our definitions reasonable in general case we need to work in the setting of categories with enhancement. However, this is not the most important issue. In general, contravariant Verdier duality can not be expressed in terms of Verdier duality functors, which act between fibers, as it remarked in \cite{Gai1}. Also definition of dualizing object becomes much more involved \cite{May}.

\end{remark}
We have triangulated categories of sections $\mathrm D(\underline{\Gamma}({\mathrm H^{\star}}))$ and $\mathrm D(\underline{\Gamma}({\mathrm H^!})),$ with full triangulated subcategories $\mathrm D_{\mathrm {cocart}}(\underline{\Gamma}({\mathrm H^{\star}}))$ and $\mathrm D_{\mathrm {cart}}(\underline{\Gamma}({\mathrm H^{!}})),$ corresponding inclusion functor:
\begin{equation}
 \Xi^{\star} \colon\mathrm D_{\mathrm {cocart}}(\underline{\Gamma}({\mathrm H^{\star}}))\longrightarrow \mathrm D(\underline{\Gamma}({\mathrm H^{\star}})),
\end{equation}
and corresponding inclusion functor in the $!$-case:
\begin{equation}
 \Xi^{!} \colon \mathrm D_{\mathrm {cart}}(\underline{\Gamma}({\mathrm H^{!}}))\longrightarrow \mathrm D(\underline{\Gamma}({\mathrm H^{!}}))
\end{equation}
\begin{Def}\label{d4.3.2} Let $(\mathrm H^{\star!},\mathrm H_{!\star})$ be a pair of $2$-functor from $2$-category $\mathrm {cospan}(\mathrm D)$ to $\mathrm {Cat},$ such that we have isomorphisms:
\begin{equation}\label{98}
\mathrm i^*(\mathrm H^{\star!})\overset{\sim}{\longrightarrow} \mathrm H_!,\quad  \mathrm i^*(\mathrm H_{!\star})\overset{\sim}{\longrightarrow} \mathrm H^!,\quad \mathrm j^*(\mathrm H^{\star!})\overset{\sim}{\longrightarrow} \mathrm H^{\star},\quad \mathrm j^*(\mathrm H_{!\star})\overset{\sim}{\longrightarrow} \mathrm H_*.
\end{equation}
Pseudo functor $\mathrm H^{\star!}$ is called \underline{Mackey $\star!$-functor} associated with Grothendieck cross functor and pseudo functor $\mathrm H_{\star!}$ is called \underline{Mackey $!\star$-functor}.
\end{Def}

\begin{remark} One can develop our theory for cross functors with values in more general categories, see Subsection \ref{decat}.

\end{remark}

\begin{Prop}\label{p4.3.1} For every Grothendieck cross functor $(\mathrm H_{\star},\mathrm H^{\star},\mathrm H_!,\mathrm H^!),$ associated with $\mathrm D$-topos $(\mathrm E,\mathrm A)$ there exists unique Mackey pseudo-functors $(\mathrm H^{\star!},\mathrm H_{!\star}).$

\end{Prop}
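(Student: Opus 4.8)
The plan is to construct $\mathrm H^{\star!}$ explicitly (and then obtain $\mathrm H_{!\star}$ by a mirror argument), reading off its uniqueness from the canonical factorization of morphisms in $\mathrm {cospan}(\mathrm D)$. On objects I set $\mathrm H^{\star!}(\mathrm x):=\mathrm H(\mathrm x)$. A $1$-morphism $\mathrm x\to \mathrm y$ of $\mathrm {cospan}(\mathrm D)$ is a cospan $\mathrm x\xrightarrow{\mathrm f}\mathrm w\xleftarrow{\mathrm g}\mathrm y$, and in $\mathrm {cospan}(\mathrm D)$ it factors canonically as $\mathrm j(\mathrm g^{\circ})\circ \mathrm i(\mathrm f)$, where $\mathrm i(\mathrm f)$ is $\mathrm x\xrightarrow{\mathrm f}\mathrm w\xleftarrow{\mathrm{id}}\mathrm w$ and $\mathrm j(\mathrm g^{\circ})$ is $\mathrm w\xrightarrow{\mathrm{id}}\mathrm w\xleftarrow{\mathrm g}\mathrm y$. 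The constraints $\mathrm i^*(\mathrm H^{\star!})\overset{\sim}{\longrightarrow}\mathrm H_!$ and $\mathrm j^*(\mathrm H^{\star!})\overset{\sim}{\longrightarrow}\mathrm H^{\star}$ of Definition \ref{d4.3.2} therefore force $\mathrm H^{\star!}(\mathrm f,\mathrm w,\mathrm g)\cong \mathrm g^*\circ \mathrm f_!$; taking this as the definition fixes the value on every $1$-morphism and already yields uniqueness of the underlying assignment.

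Next I would supply the coherence ($2$-cell) data. For two composable cospans $\mathrm x\xrightarrow{\mathrm f_1}\mathrm w_1\xleftarrow{\mathrm g_1}\mathrm y$ and $\mathrm y\xrightarrow{\mathrm f_2}\mathrm w_2\xleftarrow{\mathrm g_2}\mathrm z$ the composite is formed by the pushout $\mathrm p=\mathrm w_1\sqcup_{\mathrm y}\mathrm w_2$, which exists by the standing hypothesis that colimits over spans exist in $\mathrm D$, and the resulting square lies in the class $\mathrm B$ of pushout squares. The compositor isomorphism $\mathrm g_2^*\mathrm f_{2!}\,\mathrm g_1^*\mathrm f_{1!}\cong (\mathrm f_2'\mathrm g_2)^*(\mathrm g_1'\mathrm f_1)_!$ is then induced by the exchange $2$-morphism \eqref{56} of the cross functor attached to this pushout square, which is an isomorphism precisely because the cross functor is taken relative to $\mathrm B$ (base-change property); concretely it identifies the middle factor $\mathrm f_{2!}\,\mathrm g_1^*$ with $(\mathrm f_2')^*(\mathrm g_1')_!$. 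The unit $2$-cells come from the identity cospans, where axiom (ii) guarantees the relevant exchange morphism is invertible.

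The main work, and the step I expect to be the real obstacle, is verifying the pseudo-functor coherence: the associativity pentagon and the unit laws. Associativity unwinds to the statement that, for a triple of composable cospans, the two bracketings produce the same iterated pushout (up to the canonical comparison guaranteed by the generalized filtered Assumption on $\mathrm D^{\circ}$) and that the two resulting composite base-change isomorphisms agree. This is exactly what the cross-functor compatibility of the exchange morphisms with vertical and horizontal composition (axiom (i)) is designed to deliver: one glues the two pushout squares along their common edge and invokes that compatibility to identify the pasted exchange $2$-cell with the one attached to the composite square. The argument is a lengthy but essentially formal diagram chase over composites of squares in $\mathrm B$, and it is here that one must be careful that all intermediate pushouts are coherently identified.

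Finally, $\mathrm H_{!\star}$ is produced by the mirror construction. Its value on a cospan is dictated by $\mathrm i^*(\mathrm H_{!\star})\overset{\sim}{\longrightarrow}\mathrm H^!$ and $\mathrm j^*(\mathrm H_{!\star})\overset{\sim}{\longrightarrow}\mathrm H_*$, namely $\mathrm f^!\circ \mathrm g_*$, which is pseudo-functorial contravariantly in the cospan, i.e. on $\mathrm {cospan}(\mathrm D)^{\circ}\cong \mathrm {cospan}(\mathrm D)$; the compositor is supplied by the complementary exchange isomorphism of the cross functor relating $\mathrm g_*$ and $\mathrm f^!$ across a pushout square, whose invertibility again rests on the base-change property together with the adjunction compatibility (a) and the isomorphism $\mathrm H_!\overset{\sim}{\longrightarrow}\mathrm H_{\star}$ built into the Grothendieck cross functor. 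Uniqueness follows from the identical factorization argument. Hence both Mackey pseudo-functors exist and are unique up to canonical isomorphism, as claimed.
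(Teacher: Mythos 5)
Your construction is correct in outline but takes a genuinely different route from the paper, which in fact offers no direct argument: the proof of Proposition \ref{p4.3.1} is a pointer to Proposition \ref{p5.4.1}, itself declared ``Evident'', where the Mackey $\star!$-functor is exhibited concretely as the total category $\mathrm H^{\star!}_{\mathrm {top}}$ of pairs $(\mathrm X,\EuScript K)$ with morphisms $(\varphi,\mathrm f)$, $\varphi\colon \mathrm f_{\mathrm r!}\mathrm f_{\mathrm l}^*\EuScript K_1\rightarrow\EuScript K_2$, and composition defined ``by taking fibered products and base change theorem''. In other words the paper works on the Grothendieck-construction side and lets all cocycle conditions be absorbed into associativity of composition in that total category, whereas you build the pseudo-functor directly from the factorization of a cospan as $\mathrm j(\mathrm g^{\circ})\circ\mathrm i(\mathrm f)$, with the exchange $2$-cells of Definition \ref{cr.d.5}(b) as compositors. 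Your version is more informative because it isolates which axiom is used where; the paper's version hides the same verifications inside the word ``Evident''. Two points should be made explicit in your write-up. First, your compositor is invertible only if the exchange morphisms attached to squares of the class $\mathrm B$ are isomorphisms; Definition \ref{cr.d.5} as written guarantees invertibility only when two parallel edges are identities, so you are implicitly strengthening the definition to include base change as an \emph{isomorphism} for pushout squares. This is clearly the intended reading (it is what makes the topological example work, and what the introduction calls the ``base change property''), but it is a hypothesis, not a consequence of the stated axioms. Second, your uniqueness argument pins down the value of $\mathrm H^{\star!}$ on every $1$-morphism, but a pseudo-functor is determined only once its compositors are; to conclude uniqueness you still need to observe that the compositor for a mixed composite $\mathrm j(\mathrm g^{\circ})\circ\mathrm i(\mathrm f)$ is forced by compatibility with the identifications $\mathrm i^*(\mathrm H^{\star!})\cong\mathrm H_!$ and $\mathrm j^*(\mathrm H^{\star!})\cong\mathrm H^{\star}$ together with axiom (i), and that all remaining compositors are generated from these. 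Neither point invalidates your construction, and both are equally unaddressed in the paper.
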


\begin{proof} See proofs of Proposition \ref{p5.4.1}.

\end{proof}

\begin{Def}\label{d4.3.3} Let $(\mathrm H_{\star},\mathrm H^{\star},\mathrm H_!,\mathrm H^!)$ be a Grothendieck cross functor associated with ringed $\mathrm D$-topos $(\mathrm E,\mathrm A)$. We define pair of adjoint triangulated functors by the rule $\mathbb V_{\star\mapsto!}:=\mathrm i^*\circ\underline{\mathrm R}(\mathrm j_*)$ and $\mathbb V_{!\mapsto\star}:=\mathrm j^*\circ \underline{{\mathrm {L}}}(\mathrm i_!):$
\begin{equation}\label{e4.3.1}
\mathbb V_{\star\mapsto!}\colon \mathrm D(\underline{\Gamma}(\mathrm H^{\star}))\longleftrightarrow\mathrm D(\underline{\Gamma}(\mathrm H^{!}))\colon \mathbb V_{!\mapsto\star}.
\end{equation}
These functors are called \underline{covariant Verdier duality}.
\end{Def}
\begin{remark} Verdier duality functors are well defined as derived functors. Indeed functor $\mathbb V_{!\mapsto\star}$ is well defined: filtered colimits in modules over topos are exact. Functor $\mathbb V_{\star\mapsto !}$ is well defined, since for every morphism $\mathrm m\colon \mathrm i\rightarrow \mathrm j$ functor $\mathrm f_*:=\mathrm m^*$ takes injective objects to injective and thus $\mathrm f^!$ takes injective to injective.

\end{remark}
Sometimes we denote morphisms in $\mathrm {cospan}(\mathrm D)$ by $\mathrm h:=(\mathrm h_{\mathrm l},\mathrm k,\mathrm h_{\mathrm r})\colon \mathrm i\rightarrow \mathrm j.$ For morphism $\mathrm h$ corresponding functor between fibers will be denoted by $\mathrm h^!_{*}$ in the case of $!\star$-Mackey functor and by $\mathrm h^{*}_{!}$ in the case of $\star!$-Mackey functor.
\begin{Prop}\label{p4.3.2}  Let $(\mathrm H_{\star},\mathrm H^{\star},\mathrm H_!,\mathrm H^!)$ be a Grothendieck cross functor, such that for every $\mathrm m\colon \mathrm i\rightarrow \mathrm j$ in $\mathrm D,$ morphism $(\mathrm m_*,\mathrm m^*)$ is closed, then covariant Verdier duality functor $\mathbb V_{\star\mapsto!}$ (resp. $\mathbb V_{!\mapsto\star}$) takes values in subcategories with cartesian (resp. cocartesian) cohomology:
\begin{equation}\label{e4.3.2}
\mathbb V_{\star\mapsto!}\colon \mathrm D_{\mathrm {cocart}}(\underline{\Gamma}(\mathrm {H}^{\star}))\longleftrightarrow\mathrm D_{\mathrm {cart}}(\underline{\Gamma}(\mathrm{H}^{!}))\colon \mathbb V_{!\mapsto\star}.
\end{equation}
\end{Prop}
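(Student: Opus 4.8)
The plan is to prove the statement for $\mathbb V_{\star\mapsto!}$ in detail and obtain the assertion for $\mathbb V_{!\mapsto\star}$ by the formally dual argument, exploiting that the two functors are adjoint (this adjointness is already built into Definition \ref{d4.3.3}: since $\mathrm i_!\dashv\mathrm i^*$ and $\mathrm j^*\dashv\mathrm j_*$ one gets $\mathbb V_{!\mapsto\star}\dashv\mathbb V_{\star\mapsto!}$). Throughout I would use the consequences of the closedness hypothesis: if every morphism of topoi $(\mathrm m_*,\mathrm m^*)$ is closed then each $\mathrm f_*=\mathrm m^*$ is exact and coincides, under the Grothendieck cross functor identification $\mathrm H_!\overset{\sim}{\longrightarrow}\mathrm H_{\star}$, with $\mathrm f_!$; consequently one has the full adjoint string $\mathrm f^*\dashv\mathrm f_*\cong\mathrm f_!\dashv\mathrm f^!$. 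Since, as recorded in the remark following Definition \ref{d4.3.3}, $\mathrm f_*$ preserves injectives (whence $\mathrm f^!$ preserves injectives) and filtered colimits of modules are exact, the derived functors $\underline{\mathrm R}(\mathrm j_*)$ and $\underline{\mathrm L}(\mathrm i_!)$ are computed by injective, respectively adapted, resolutions, and it suffices to check the conclusion on the cohomology objects.

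First I would fix a morphism $\mathrm f\colon \mathrm x\rightarrow \mathrm y$ in $\mathrm D$ and reduce cartesianness of $\mathbb V_{\star\mapsto!}(\EuScript F)$ along $\mathrm f$ to an explicit comparison map. Writing $T:=\underline{\mathrm R}(\mathrm j_*)\EuScript F$ for the section of the Mackey $\star!$-functor $\mathrm H^{\star!}$ over $\mathrm {cospan}(\mathrm D)$ furnished by Proposition \ref{p4.3.1}, the object $\mathbb V_{\star\mapsto!}(\EuScript F)=\mathrm i^*T$ is the underlying section of $\mathrm i^*\mathrm H^{\star!}\cong \mathrm H_!$, and cartesianness with respect to $\mathrm H^!$ amounts to the transition map $T_{\mathrm x}\longrightarrow \mathrm f^!T_{\mathrm y}$, adjoint to the cofibration structure map $\mathrm f_!T_{\mathrm x}\to T_{\mathrm y}$, being an isomorphism on cohomology. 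To analyze $T_{\mathrm x}$ and $T_{\mathrm y}$ I would use the pointwise formula for the right Kan extension along $\mathrm j$ together with the canonical factorization of a cospan $\mathrm h=(\mathrm x\xrightarrow{\mathrm a}\mathrm w\xleftarrow{\mathrm b}\mathrm y)$ as $\mathrm j(\mathrm b^{\circ})\circ\mathrm i(\mathrm a)$; the generalized filtered Assumption is exactly what guarantees that the indexing comma categories are cofiltered enough for these limits to exist and to be computed by a cofinal sub-diagram, so that the comparison map is identified with a base-change exchange morphism of the cross functor attached to the relevant pushout square $\mathrm Q$ in $\mathrm D$.

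The main obstacle is to show that this exchange morphism is an isomorphism after passing to cohomology. Here I would invoke the axioms of Definition \ref{cr.d.5}: for a general square the exchange morphism is only a map, but the closedness hypothesis upgrades it to an isomorphism, since for a closed morphism one has $\mathrm f_!\cong\mathrm f_*$ and the corresponding (proper-type) base change holds. This is the genuine content of the closedness assumption and is precisely where it is indispensable. Combining this with the hypothesis that $\EuScript F$ has cocartesian cohomology, i.e. $\mathrm f_*\mathrm H^{\mathrm n}(\EuScript F)_{\mathrm x}\overset{\sim}{\longrightarrow}\mathrm H^{\mathrm n}(\EuScript F)_{\mathrm y}$ for all $\mathrm f$, collapses the Kan-extension limits to a single term and renders the comparison $\mathrm H^{\mathrm n}(T)_{\mathrm x}\to \mathrm f^!\mathrm H^{\mathrm n}(T)_{\mathrm y}$ an isomorphism; hence $\mathbb V_{\star\mapsto!}(\EuScript F)\in \mathrm D_{\mathrm {cart}}(\underline{\Gamma}(\mathrm H^!))$.

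Finally, for $\mathbb V_{!\mapsto\star}=\mathrm j^*\circ\underline{\mathrm L}(\mathrm i_!)$ I would run the formally dual argument with the left Kan extension $\mathrm i_!$ and the dual exchange morphism of the cross functor, now using exactness of filtered colimits in place of preservation of injectives; starting from a complex with cartesian cohomology (with respect to $\mathrm H^!$), the same base-change isomorphism, read in the opposite direction, yields cocartesian cohomology (with respect to $\mathrm H_{\star}$). The adjunction $\mathbb V_{!\mapsto\star}\dashv\mathbb V_{\star\mapsto!}$ then descends to the stated pair of functors between $\mathrm D_{\mathrm {cocart}}(\underline{\Gamma}(\mathrm H^{\star}))$ and $\mathrm D_{\mathrm {cart}}(\underline{\Gamma}(\mathrm H^!))$, which is the asserted conclusion \eqref{e4.3.2}.
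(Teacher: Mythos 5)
Your skeleton matches the paper's: compute $\mathbb V_{\star\mapsto!}(\EuScript F)_{\mathrm x}$ as the limit over the coslice category of cospans out of $\mathrm x$, and compare it with $\mathrm f^!$ applied to the corresponding limit at $\mathrm y$. But the two steps that carry the actual content are not the ones the paper uses, and as written they do not work. First, the claim that cocartesianness of the cohomology of $\EuScript F$ ``collapses the Kan-extension limits to a single term'' is unjustified: the indexing category is $\mathrm x/\mathrm{cospan}(\mathrm D)$, and cocartesianness of a section says nothing about cofinality of a one-object subcategory in it (in the motivating example of sheaves on a diagram of spaces, the value of $\mathbb V_{\star\mapsto!}$ at a point remains a genuine limit). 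The paper instead compares the two limits term by term: every cospan $\mathrm h=(\mathrm h_{\mathrm l},\mathrm v,\mathrm h_{\mathrm r})\colon\mathrm x\rightarrow\mathrm w$ induces a cospan $\mathrm h'\colon\mathrm y\rightarrow\mathrm w$ through $\mathrm m^{\circ}$, so the diagram underlying the first limit maps into the one underlying the second, and one then identifies corresponding terms.

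Second, that identification of terms is exactly where closedness is indispensable, and it enters through \emph{full faithfulness} of $\mathrm f_*$ --- the defining property of an embedding of topoi --- not through a proper-base-change exchange for a pushout square. The morphism one must invert is the unit $\mathrm h_{\mathrm l*}\mathrm h_{\mathrm r}^*\EuScript F_{\mathrm w}\longrightarrow\mathrm f^!\mathrm f_!\,\mathrm h_{\mathrm l*}\mathrm h_{\mathrm r}^*\EuScript F_{\mathrm w}$, and it is an isomorphism precisely because $\mathrm f_!\cong\mathrm f_*$ is fully faithful; this identifies the term of the first limit indexed by $\mathrm h$ with $\mathrm f^!$ of the term of the second limit indexed by $\mathrm h'$, after which $\mathrm f^!$ is passed through the limit (it is a right adjoint). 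Your appeal to ``$\mathrm f_!\cong\mathrm f_*$ plus proper base change'' concerns an exchange map of the form $\mathrm p_!\mathrm f_*\rightarrow\mathrm h_*\mathrm g_!$ attached to a square in $\mathrm B$ and does not by itself yield the required isomorphism $\mathrm{id}\rightarrow\mathrm f^!\mathrm f_!$. Replacing these two steps --- term-by-term comparison of the limits, and full faithfulness of the closed pushforward --- recovers the paper's argument; the rest of your setup (reduction to cohomology via preservation of injectives, and the dual argument for $\mathbb V_{!\mapsto\star}$ using exactness of filtered colimits) is consistent with the paper.
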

\begin{proof} Let us consider case of functor $\mathbb V_{\star\mapsto!},$ case of functor $\mathbb V_{!\mapsto\star}$ can be treated analogically. Let $\mathrm m^{\circ}\colon \mathrm i\rightarrow \mathrm j$ be a morphism in $\mathrm D^{\circ}$ and $\EuScript K\in\mathrm D_{\mathrm {cocart}}(\underline{\Gamma}(\mathrm {E}),\mathrm A),$ then by definition we have the morphism:
\begin{equation}\label{e4.3.3}
\hlim_{{\mathrm p\colon\mathrm i\rightarrow\mathrm w}}\underline{\mathrm R}(\mathrm p^!_*)\EuScript K_{\mathrm w}\longrightarrow \underline{\mathrm R}^*(\mathrm f^!)\circ\hlim_{{\mathrm l\colon\mathrm j\rightarrow\mathrm w'}}\underline{\mathrm R}(\mathrm l^!_*)\EuScript K_{\mathrm w'}
\end{equation}
Which is induced by morphism of corresponding abelian functors. Thus it is enough to show that diagram, underlying first limit is contained in diagram, underlying the second one. Let $\mathrm h=(\mathrm h_{\mathrm l},\mathrm v,\mathrm h_{\mathrm r})\colon\mathrm i\rightarrow\mathrm x$ be any morphism in $\mathrm{cospan}(\mathrm D),$ then we also have morphism $\mathrm h'=(\mathrm h_{\mathrm l}',\mathrm v,\mathrm h_{\mathrm r})\colon \mathrm j\rightarrow\mathrm x,$ with property $\mathrm h_{\mathrm r}'=\mathrm m^{\circ}\circ\mathrm h_{\mathrm l}.$ Then we have adjunction morphism $\mathrm h_{\mathrm l*}\circ\mathrm h_{\mathrm r}\longrightarrow\mathrm m^{\circ*}\circ\mathrm m^{\circ}_*\circ\mathrm h_{\mathrm l*}\circ\mathrm h_{\mathrm r}^*,$ which is isomorphism, since $\mathrm m^{\circ}_*=\mathrm f_*$ is fully faithful.
\end{proof}

\begin{Def}\label{d4.3.4} Let $\mathrm H^{\star\star}$ be a pseudo functor from $\mathrm {cospan}(\mathrm D)$ to $\mathrm {Cat}$ such that:
 \begin{equation}\label{98}
 \mathrm j^*(\mathrm E^{\star\star})\overset{\sim}{\longrightarrow} \mathrm H^{\star\circ},\quad \mathrm i^*(\mathrm E^{\star\star})\overset{\sim}{\longrightarrow} \mathrm H^{\star}
\end{equation}
We call such pseudo functor \underline{Mackey $\star\star$-functor} associated with Grothendieck cross functor $(\mathrm H_{\star},\mathrm H^{\star},\mathrm H_!,\mathrm H^!).$
\end{Def}
Assume, that for every morphism $\mathrm m\colon \mathrm i\rightarrow \mathrm j$ in category the $\mathrm D,$ functor $\mathrm m_*=\mathrm f^*\colon {\mathrm {Mod}}(\mathrm E_{\mathrm i},\mathrm A_{\mathrm i })\longrightarrow {\mathrm {Mod}}(\mathrm E_{\mathrm j},\mathrm A_{\mathrm j })$ commutes with limits and flat. We define functor
\begin{equation}\label{e4.3.4}
\Xi_{\star}\colon \mathrm D(\underline{\Gamma}(\mathrm {H}^{\star}))\longrightarrow \mathrm D(\underline{\Gamma}(\mathrm {H}^{\star})),
\end{equation}
by the rule $\Xi_{\star}=\mathrm j^*\circ\underline{\mathrm {R}}(\mathrm j_*)$
\begin{Def}\label{d4.3.5} Let $\mathrm H^{!!}$ be a pseudo-functor from category $\mathrm {cospan}(\mathrm D)$ such that:
\begin{equation}\label{nn}
\mathrm i^*(\mathrm H^{!!})\overset{\sim}{\longrightarrow} \mathrm H^{!\circ},\quad \mathrm j(\mathrm H^{!!})\overset{\sim}{\longrightarrow} \mathrm E^{!}.
\end{equation}
We call such pseudo functor \underline{Mackey $!!$-functor} associated with Grothendieck cross functor $(\mathrm H_{\star},\mathrm H^{\star},\mathrm H_!,\mathrm H^!).$
\end{Def}
Assume, that for every morphism $\mathrm m\colon \mathrm i\rightarrow \mathrm j$ in the category $\mathrm D^{\circ},$ functor $\mathrm m^*=\mathrm f^!\colon {\mathrm {Mod}}(\mathrm E_{\mathrm i},\mathrm A_{\mathrm i })\longrightarrow {\mathrm {Mod}}(\mathrm E_{\mathrm j},\mathrm A_{\mathrm j })$ commutes with colimits. We define functor
\begin{equation}\label{e4.3.5}
\Xi_!\colon \mathrm D(\underline{\Gamma}(\mathrm {H}^{!}))\longrightarrow \mathrm D(\underline{\Gamma}(\mathrm {H}^{!})),
\end{equation}
by the rule $\Xi_!=\mathrm i^*\circ\underline{\mathrm {L}}(\mathrm i_!).$
\begin{remark}\label{r4.3.1} We need to mention what we understand by functor $\Xi_!.$ Usually functor $\mathrm f^!$ is only left exact and since then functor $\mathrm g_*\circ\mathrm f^!$ is also left exact and fibrations $\mathrm H^{!!}$ and $\mathrm H^{!\circ}$ are not abelian. Using some enhancement (For example see \cite[Subsection~7.4.]{BD2}, \cite[Subsection~1.6.]{DG}), \cite{Lu1}) we can define derived version of this fibrations and $\mathrm i^*\circ\underline{\mathrm {L}}(\mathrm i_!)$ are just composition of homotopy Kan extension and restriction functor. In the same way one can remove flatness restriction in the definition of functor $\Xi_{\star}.$
 In some situations it is also useful to consider functors from opposite fibrations:
\begin{equation}\label{}
\Psi_{\star}\colon \mathrm D(\underline{\Gamma}(\mathrm {H}^{\star\circ}))\longrightarrow \mathrm D_{\mathrm {cocart}}(\underline{\Gamma}(\mathrm {H}^{\star}))
\end{equation}
and
\begin{equation}\label{}
\Psi_!\colon \mathrm D(\underline{\Gamma}(\mathrm {H}^{!\circ}))\longrightarrow \mathrm D_{\mathrm {cart}}(\underline{\Gamma}(\mathrm {H}^{!})).
\end{equation}
\end{remark}
\begin{Prop}\label{p4.3.3} For every Grothendieck cross functor $(\mathrm H_{\star},\mathrm H^{\star},\mathrm H_!,\mathrm H^!)$ over $\mathrm D$-topos $\mathrm (\mathrm E,\mathrm A)$ there exist a unique pair of Mackey pseudo-functors $(\mathrm H^{\star\star},\mathrm H^{!!}).$
\end{Prop}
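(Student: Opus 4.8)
The plan is to construct $\mathrm H^{\star\star}$ and $\mathrm H^{!!}$ by exactly the procedure that produces the pair $(\mathrm H^{\star!},\mathrm H_{!\star})$ in Proposition \ref{p4.3.1}, thereby reducing both existence and uniqueness to the general gluing statement of Proposition \ref{p5.4.1}. The key structural fact is that every morphism $\mathrm h=(\mathrm h_{\mathrm l},\mathrm k,\mathrm h_{\mathrm r})\colon\mathrm i\rightarrow\mathrm j$ in $\mathrm{cospan}(\mathrm D)$ factors canonically as $\mathrm h=\mathrm j(\mathrm h_{\mathrm r}^{\circ})\circ\mathrm i(\mathrm h_{\mathrm l})$, i.e. as an $\mathrm i$-arrow followed by a $\mathrm j$-arrow. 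First I would fix the values on objects by $\mathrm H^{\star\star}(\mathrm x)=\mathrm H^{!!}(\mathrm x)=\mathrm H(\mathrm x)$ (objects of $\mathrm{cospan}(\mathrm D)$ being those of $\mathrm D$), and define the value on a cospan $\mathrm h$ to be the composite of the leg functors dictated by the two restriction constraints: concretely one glues, along the two legs of $\mathrm h$, the $\star$-functors of the cross functor so that restriction along $\mathrm i$ and along $\mathrm j$ reproduces $\mathrm H^{\star}$ and $\mathrm H^{\star\circ}$ as required by Definition \ref{d4.3.4}, and dually one glues the $!$-functors so that the restrictions reproduce $\mathrm H^{!\circ}$ and $\mathrm H^{!}$ as required by Definition \ref{d4.3.5}.

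Next I would promote this assignment to a genuine pseudo-functor. The associativity and unit coherence data are supplied by the exchange $2$-morphisms of the cross functor: for a pushout square $\mathrm Q\in\mathrm B$ the comparison $\mathrm H^{\star\star}(\mathrm h'\circ\mathrm h)\Rightarrow\mathrm H^{\star\star}(\mathrm h')\circ\mathrm H^{\star\star}(\mathrm h)$ is precisely the base-change morphism attached to $\mathrm Q$, and compatibility with vertical and horizontal composition (axiom (i) in the definition of a lower $\mathrm e$-functor) yields the pentagon and unit identities. The essential input is that these exchange morphisms are genuine \emph{isomorphisms} on the squares of $\mathrm B$; this is where the Grothendieck-cross-functor hypotheses enter, since by Definition \ref{d4.3.1} the $(\mathrm H_{\star},\mathrm H^{\star})$-part is the bifibration $\underline{\mathrm{Mod}}(\mathrm E,\mathrm A)\longrightarrow\mathrm D$ and the isomorphism $\mathrm H_!\cong\mathrm H_{\star}$ supplies the Beck--Chevalley identifications needed for the gluing.

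For uniqueness I would argue that any two Mackey pseudo-functors with the prescribed restrictions must coincide. They agree on every $\mathrm i$-arrow (by $\mathrm i^{*}(-)\simeq\mathrm H^{\star}$, resp. $\mathrm H^{!\circ}$) and on every $\mathrm j$-arrow (by $\mathrm j^{*}(-)\simeq\mathrm H^{\star\circ}$, resp. $\mathrm H^{!}$); since every cospan is the composite $\mathrm j(\mathrm h_{\mathrm r}^{\circ})\circ\mathrm i(\mathrm h_{\mathrm l})$, the pseudo-functoriality isomorphisms then identify their values on $\mathrm h$, and the coherence isomorphisms are pinned down by the same canonical factorization. This is the verbatim analog of the uniqueness half of Proposition \ref{p4.3.1}.

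The step I expect to be the main obstacle is the $!!$-case. There $\mathrm f^{!}$ is only left exact, so the naive fibrations attached to $\mathrm H^{!!}$ and $\mathrm H^{!\circ}$ are not abelian and the exchange morphisms are not literally invertible at the level of abelian categories. Following Remark \ref{r4.3.1}, I would pass to the enhanced (derived) version of $\underline{\mathrm{Mod}}(\mathrm E,\mathrm A)$, where the homotopy Kan extensions $\underline{\mathrm R}(\mathrm j_{*})$ and $\underline{\mathrm L}(\mathrm i_{!})$ exist and the base-change maps become equivalences; the closedness of the morphisms $(\mathrm m_{*},\mathrm m^{*})$ exploited in Proposition \ref{p4.3.2} is what forces the relevant pushout squares to satisfy Beck--Chevalley. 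Carrying out the construction at the enhanced level and then restricting back recovers the stated pseudo-functors, establishing both existence and uniqueness.
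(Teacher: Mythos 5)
Your construction is essentially the paper's: the proof of Proposition \ref{p4.3.3} is a pointer to Proposition \ref{p4.3.1} and thence to Proposition \ref{p5.4.1}, where the Mackey pseudo-functors are built exactly as you describe --- the value on a cospan obtained by gluing the functors attached to its two legs via the canonical factorization through $\mathrm i$- and $\mathrm j$-arrows, with composition and coherence supplied by pushouts in $\mathrm D$ and the base-change (exchange) isomorphisms, and uniqueness forced by that same factorization. You in fact supply more detail than the paper (whose proof chain terminates in ``Evident''), and you correctly isolate the two points that carry the real content: the exchange $2$-morphisms must be invertible on the squares of $\mathrm B$ for the compositors to be isomorphisms, and the $!!$-case requires the derived/enhanced setting of Remark \ref{r4.3.1} because $\mathrm f^!$ is only left exact.
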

\begin{proof} See proof of Proposition \ref{p4.3.1}.

\end{proof}
\begin{Prop}\label{p4.3.4} Let $(\mathrm H_{\star},\mathrm H^{\star},\mathrm H_!,\mathrm H^!)$ be a Grothendieck cross functor over ringed $\mathrm D$-topos $(\mathrm E,\mathrm A).$ Functor $\Xi_{\star}$ takes values in subcategory $\mathrm D_{\mathrm{cocart}}(\underline{\Gamma}(\mathrm {H}^{\star})).$ Moreover if for every $\mathrm i\in \mathrm D$ limits over coslice category $\mathrm i/\mathrm {cospan}(\mathrm D)$ are exact, then triangulated subcategory:
\begin{equation}\label{}
\Xi^{\star}\colon\mathrm D_{\mathrm{cocart}}(\underline{\Gamma}(\mathrm {H}^{\star}))\longrightarrow\mathrm D(\underline{\Gamma}(\mathrm {H}^{\star}))
\end{equation}
is a right admissible with a right adjoint functor $\Xi_{\star}.$ Triangulated subcategory:
\begin{equation}\label{}
\Xi^!\colon\mathrm D_{\mathrm{cart}}(\underline{\Gamma}(\mathrm {H}^{!}))\longrightarrow\mathrm D(\underline{\Gamma}(\mathrm {H}^{!}))
\end{equation}
is left admissible with left adjoint functor $\Xi_!.$
\end{Prop}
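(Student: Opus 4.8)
The plan is to establish everything for $\Xi_{\star}$ and to deduce the statements for $\Xi_{!}$ by duality. Passing from the cross functor $(\mathrm H_{\star},\mathrm H^{\star},\mathrm H_!,\mathrm H^!)$ to its reversal and replacing the Mackey $\star\star$-functor of Definition \ref{d4.3.4} by the Mackey $!!$-functor of Definition \ref{d4.3.5} (both exist by Proposition \ref{p4.3.3}) turns every right Kan extension $\underline{\mathrm R}(\mathrm j_*)$ and every homotopy limit over a coslice $\mathrm i/\mathrm{cospan}(\mathrm D)$ into a left Kan extension $\underline{\mathrm L}(\mathrm i_!)$ and a homotopy colimit over a slice $\mathrm{cospan}(\mathrm D)/\mathrm i$, interchanges ``cocartesian'' with ``cartesian'' and ``right admissible with right adjoint'' with ``left admissible with left adjoint''; the exactness of filtered colimits of modules over a topos plays for $\Xi_{!}$ the role that the exactness hypothesis plays for $\Xi_{\star}$. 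Hence it suffices to analyse $\Xi_{\star}=\mathrm j^*\circ\underline{\mathrm R}(\mathrm j_*)$. First I would record, exactly as in \eqref{e4.3.3}, the pointwise formula
\begin{equation*}
(\Xi_{\star}\EuScript F)_{\mathrm i}\;\simeq\;\hlim_{(\mathrm p\colon\mathrm i\rightarrow\mathrm w)\in\,\mathrm i/\mathrm{cospan}(\mathrm D)}\underline{\mathrm R}\,\mathrm H^{\star\star}(\mathrm p)\,\EuScript F_{\mathrm w},
\end{equation*}
the homotopy limit being taken over the coslice category and the transition functors being those of the Mackey $\star\star$-functor. That this derived functor is well defined follows as in the remark after Definition \ref{d4.3.3} together with Remark \ref{r4.3.1}, using that $\mathrm m_*=\mathrm f^*$ preserves limits and is flat.

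Next I would prove that $\Xi_{\star}\EuScript F$ is cocartesian, and this step needs no extra hypothesis. Fix $\mathrm m\colon\mathrm i\rightarrow\mathrm j$ in $\mathrm D$ and view it, through the embedding $\mathrm D\hookrightarrow\mathrm{cospan}(\mathrm D)$, as a morphism of $\mathrm{cospan}(\mathrm D)$. Composition with $\mathrm m$ sends $(\mathrm j\rightarrow\mathrm w)$ to $(\mathrm i\rightarrow\mathrm j\rightarrow\mathrm w)$, defining a functor $\mathrm j/\mathrm{cospan}(\mathrm D)\rightarrow\mathrm i/\mathrm{cospan}(\mathrm D)$; the generalized filtered condition of the Assumption makes the coslice categories filtered and this functor cofinal. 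Feeding this cofinality into the homotopy limit above and identifying the resulting comparison map by the base-change (exchange) isomorphisms built into $\mathrm H^{\star\star}$ (Definition \ref{cr.d.5}), one sees that the canonical map $\mathrm m_*(\Xi_{\star}\EuScript F)_{\mathrm i}\rightarrow(\Xi_{\star}\EuScript F)_{\mathrm j}$ is an isomorphism on cohomology. This is the same computation as in the proof of Proposition \ref{p4.3.2} around \eqref{e4.3.3}; the difference is that there the exchange morphism was made invertible by the closedness of $(\mathrm m_*,\mathrm m^*)$, whereas here the needed isomorphism is the very base change defining the $\star\star$-functor, so cocartesianness holds for every $\mathrm m$.

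Finally, under the hypothesis that limits over each $\mathrm i/\mathrm{cospan}(\mathrm D)$ are exact, I would deduce the admissibility. The inclusion $\Xi^{\star}$ is fully faithful, and the Kan-extension adjunction $\mathrm j^*\dashv\underline{\mathrm R}(\mathrm j_*)$ supplies a counit $\Xi_{\star}\EuScript G\rightarrow\EuScript G$; since $\Xi_{\star}\EuScript G$ is cocartesian by the previous step, this is a map $\Xi^{\star}\Xi_{\star}\EuScript G\rightarrow\EuScript G$, and the assertion is that it exhibits $\Xi_{\star}$ as right adjoint to $\Xi^{\star}$. For a cocartesian $X$ the transition functors of $\mathrm H^{\star\star}$ identify the coslice diagram computing $\Xi_{\star}X$ with an essentially constant one over the filtered category $\mathrm i/\mathrm{cospan}(\mathrm D)$, so the exactness hypothesis forces the higher derived limits to vanish and the counit $\Xi_{\star}X\rightarrow X$ to be an isomorphism; the triangle identities then reduce to those of $(\mathrm j^*,\underline{\mathrm R}(\mathrm j_*))$. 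The dual argument, with the Mackey $!!$-functor and homotopy colimits over $\mathrm{cospan}(\mathrm D)/\mathrm i$, shows $\Xi^{!}$ is left admissible with left adjoint $\Xi_{!}$.

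The main obstacle is this last step: controlling the derived (homotopy) limits. Showing that $\Xi_{\star}$ is genuinely idempotent onto the subcategory and that the counit is an equivalence requires the coslice homotopy limits to degenerate, and this is precisely what the exactness of limits over $\mathrm i/\mathrm{cospan}(\mathrm D)$ guarantees; without it one only obtains a lax comparison and the higher limit terms obstruct the adjunction. Verifying that the cofinality of $\mathrm j/\mathrm{cospan}(\mathrm D)\rightarrow\mathrm i/\mathrm{cospan}(\mathrm D)$ survives at the level of homotopy limits, and not merely of underived ones, is the technical heart, and is where the generalized filtered Assumption and the enhancement invoked in Remark \ref{r4.3.1} must be used with care.
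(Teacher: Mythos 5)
Your proposal is correct and follows essentially the same route as the paper, whose own proof is just the pointer ``analogical to Proposition \ref{p4.3.2}; adjunction follows from the assumption on $\mathrm D^{\circ}$'': you unfold exactly that, writing $(\Xi_{\star}\EuScript F)_{\mathrm i}$ as a homotopy limit over the coslice $\mathrm i/\mathrm{cospan}(\mathrm D)$, comparing coslices along $\mathrm m$ via base change as in \eqref{e4.3.3}, and using the generalized filtered Assumption together with exactness of the coslice limits to make the counit an isomorphism on cocartesian objects. Your added remark that the closedness hypothesis of Proposition \ref{p4.3.2} is not needed here, because the relevant exchange morphism for $\mathrm H^{\star\star}$ is invertible by construction, is a correct and welcome clarification rather than a deviation.
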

\begin{proof} Analogical to Proposition \ref{p4.3.2}. Adjunction follows from assumption on category $\mathrm D^{\circ}.$
\end{proof}
\begin{remark}\label{r4.3.2} Note that if ring object $\mathrm A$ is cocartesian we have isomorphism of functors $\underline{\mathrm R}^0(\Xi_{\star})\cong \Xi_*.$
\end{remark}
\begin{remark} Denote by $\EuScript S\mathrm p$ category of spectra, by definition it is stabilization of category of pointed topological spaces $\mathrm {Top}_*.$ That is category $\EuScript S\mathrm p$ is the category of sections of bifibration over $\mathbb N,$ whose fiber is given by category $\mathrm {Top}_*$ and functors between fibers are loop space object $\Omega$ and suspension functor $\Sigma.$ Category $\EuScript S\mathrm p$ has full subcategory $\Omega\EuScript S\mathrm p,$ of $\Omega$-spectra that is subcategory of $\Omega$-cartesian sections. Functors $\Xi_{\star}$ and $\Xi_!$ are analogous to functor which takes category of spectra $\EuScript S\mathrm p$ to to the category of $\Omega$-spectra $\Omega\EuScript S\mathrm p$ \cite{Bousf}:
\begin{equation}\label{}
\mathrm Q\colon \EuScript S\mathrm p\longrightarrow\Omega\EuScript S\mathrm p,\quad \mathrm Q(\mathrm X)^{\mathrm n}:=\hclim_{\mathrm i\in \mathbb N} \Omega^{\mathrm i}\mathrm X^{\mathrm n+\mathrm i}.
\end{equation}

\end{remark}
\begin{Def}\label{d4.3.6} Let $(\mathrm H_{\star},\mathrm H^{\star},\mathrm H_!,\mathrm H^!)$ be a Grothendieck cross functor associated with ringed $\mathrm D$-topos $(\mathrm E,\mathrm A).$ We define \underline{covariant Verdier duality functors}, which respects categories
$\mathrm D_{\mathrm {cocart}}(\underline{\Gamma}(\mathrm {H}^{\star}))$ and $\mathrm D_{\mathrm {cart}}(\underline{\Gamma}(\mathrm{H}^{!})):$
\begin{equation}\label{e4.3.6}
\mathbb V_{\star\mapsto!}^{\mathrm{cart}}\colon \mathrm D_{\mathrm {cocart}}(\underline{\Gamma}(\mathrm {H}^{\star}))\longleftrightarrow\mathrm D_{\mathrm {cart}}(\underline{\Gamma}(\mathrm{H}^{!}))\colon \mathbb V_{!\mapsto\star}^{\mathrm{cocart}}
\end{equation}
by the rules:
\begin{equation}\label{}
\mathbb V_{\star\mapsto!}^{\mathrm{cart}}:=\Xi_{!}\circ \mathbb V_{\star\mapsto!},\qquad\qquad\quad \mathbb V_{!\mapsto\star}^{\mathrm{cocart}}:=\Xi_{\star}\circ \mathbb V_{!\mapsto\star}.
\end{equation}
\end{Def}

Grothendieck cross functor $(\mathrm H_{\star},\mathrm H^{\star},\mathrm H_!,\mathrm H^!)$ is called constructible if we have stratification $\EuScript S_{\mathrm i}$ on $\mathrm E_{\mathrm i}$ for every $\mathrm i,$ such that $(\mathrm E,\mathrm A)$ is a stratified $\mathrm D$ topoi and functors $\Xi_{\star}$ and $\Xi_!$ and covariant Verder duality functors respect constructible subcategories:
\begin{equation}\label{e4.3.7}
\mathbb V_{\star\mapsto!}^{\mathrm{cart}}\colon \mathrm D_{\mathrm{cocart},\mathrm {c}}(\underline{\Gamma}(\mathrm {H}^{\star}))\longleftrightarrow\mathrm D_{\mathrm{cart},\mathrm {c}}(\underline{\Gamma}(\mathrm{H}^{!}))\colon \mathbb V_{!\mapsto\star}^{\mathrm{cocart}}
\end{equation}

\begin{Def}\label{d4.3.8} Let $(\mathrm H_{\star},\mathrm H^{\star},\mathrm H_!,\mathrm H^!)$ be a Grothendieck cross functor\footnote{What we really need here is pseudo-functors with values in category of monoidal categories}, such that for every $\mathrm m\colon\mathrm i\rightarrow \mathrm j\in \mathrm D$ morphism $\mathrm f=(\mathrm f_*,\mathrm f^*,\mathrm f^!)\colon\mathrm D(\mathrm E_{\mathrm i},\mathrm A_{\mathrm i})\longrightarrow\mathrm D(\mathrm E_{\mathrm j},\mathrm A_{\mathrm j}),$ is a Grothendieck context and we have an object $\mathrm w_{\mathrm D}\in \mathrm D_{\mathrm {cart}}(\underline{\Gamma}(\mathrm {E}^{!}),\mathrm A),$ then we call $(\mathrm H_{\star},\mathrm H^{\star},\mathrm H_!,\mathrm H^!,\mathrm w_{\mathrm D})$ Grothendieck cross functor with \underline{dualizing object $\mathrm w_{\mathrm D}$}.
\end{Def}
Grothendieck cross functor with dualizing object $(\mathrm H_{\star},\mathrm H^{\star},\mathrm H_!,\mathrm H^!,\mathrm w_{\mathrm D})$ is called constructible if $(\mathrm H_{\star},\mathrm H^{\star},\mathrm H_!,\mathrm H^!)$ is a constructible Grothendieck cross functor and objects $\mathrm w_{\mathrm i}$ are dualizing objects for the category $\mathrm D_{\mathrm c}(\underline{\Gamma}(\mathrm {E}_{\mathrm i}),\mathrm A),$ where $\mathrm i\in \mathrm D.$
\begin{Def}\label{d4.3.10} Let $(\mathrm H_{\star},\mathrm H^{\star},\mathrm H_!,\mathrm H^!,\mathrm w_{\mathrm D})$ be a Grothendieck cross functor with dualizing object, then we have triangulated functors called \underline{duality}
\begin{equation}\label{e4.3.8}
\mathbf D_{!\mapsto \star}\colon \mathrm D(\underline{\Gamma}(\mathrm {H}^{!}))\longleftrightarrow\mathrm D(\underline{\Gamma}(\mathrm{H}^{\star}))^{\circ}\colon \mathbf D_{\star\mapsto!},
\end{equation}
defined for every $\mathrm i\in \mathrm D$ by duality functors $\mathbb D_{\mathrm w_{\mathrm i}}.$
\end{Def}
\begin{Prop}\label{p4.3.5} Duality functor $\mathbf D_{\star\mapsto!}$ respects subcategories with cocartesian and cartesian cohomology:
\begin{equation}\label{e4.3.9}
\mathbf D_{\star\mapsto !}\colon \mathrm D_{\mathrm{cocart}}(\underline{\Gamma}(\mathrm {H}^{\star}))\longrightarrow\mathrm D_{\mathrm{cart}}(\underline{\Gamma}(\mathrm{H}^{!}))^{\circ},
\end{equation}
moreover if Grothendieck cross functor was constructible then both duality functors \eqref{e4.3.8} respects subcategories with cocartesian and cartesian cohomology:
\begin{equation}\label{e4.3.10}
\mathbf D_{\star\mapsto!}\colon \mathrm D_{\mathrm{cocart},\mathrm c}(\underline{\Gamma}(\mathrm {H}^{\star}))\longleftrightarrow\mathrm D_{\mathrm{cart},\mathrm c}(\underline{\Gamma}(\mathrm{H}^{!}))^{\circ}\colon \mathbf D_{!\mapsto\star},
\end{equation}
and induce mutual inverse equivalence.
\end{Prop}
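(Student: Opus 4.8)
The plan is to reduce everything to a fibrewise computation and then to control the transition maps of sections by the canonical isomorphisms of \eqref{e4.2.13}. By Definition \ref{d4.3.10} the functors $\mathbf D_{\star\mapsto!}$ and $\mathbf D_{!\mapsto\star}$ are computed objectwise over $\mathrm D$ by the twisted duals $\mathbb D_{\mathrm w_{\mathrm i}}=\mathbf D_{\mathrm w_{\mathrm i}}=\underline{\mathrm{Hom}}(-,\mathrm w_{\mathrm i})$, so the whole statement splits into (a) checking that $\{\mathbf D_{\mathrm w_{\mathrm i}}\EuScript F_{\mathrm i}\}_{\mathrm i}$ is again a section of $\mathrm H^{!}$ of the required variance, (b) checking that the cocartesian condition on $\EuScript F$ is transported to the cartesian condition on its dual, and (c) upgrading this to an equivalence in the constructible case. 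First I would fix a morphism $\mathrm m\colon\mathrm i\rightarrow\mathrm j$ in $\mathrm D$; by Definition \ref{d4.3.8} the induced triple is a Grothendieck context, so Proposition \ref{p4.2.4} guarantees that $\gamma$ and $\delta$ are isomorphisms and hence that both identities of \eqref{e4.2.13} are available.

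The decisive input is that $\mathrm w_{\mathrm D}$ is cartesian in $\mathrm H^{!}$, i.e.\ $\mathrm w_{\mathrm i}\cong\mathrm m^{!}\mathrm w_{\mathrm j}$. Substituting $\EuScript K=\mathrm w_{\mathrm i}=\mathrm m^{!}\mathrm w_{\mathrm j}$ and $\EuScript G=\mathrm w_{\mathrm j}$ into \eqref{e4.2.13}, and using the isomorphism $\mathrm H_{!}\cong\mathrm H_{\star}$ of Definition \ref{d4.3.1} to identify $\mathrm m_{!}$ with $\mathrm m_{*}$, the two isomorphisms become
\[
\mathrm m_{*}\,\mathbf D_{\mathrm w_{\mathrm i}}\EuScript F_{\mathrm i}\;\cong\;\mathbf D_{\mathrm w_{\mathrm j}}\,\mathrm m_{*}\EuScript F_{\mathrm i},\qquad \mathrm m^{!}\,\mathbf D_{\mathrm w_{\mathrm j}}\EuScript F_{\mathrm j}\;\cong\;\mathbf D_{\mathrm w_{\mathrm i}}\,\mathrm m^{*}\EuScript F_{\mathrm j}.
\]
The second isomorphism is the one that does the work: it exhibits $\mathbf D_{\mathrm w}$ as intertwining the $\star$-inverse image $\mathrm m^{*}$ with the $!$-inverse image $\mathrm m^{!}$, so that, since $\mathbf D_{\mathrm w_{\mathrm i}}$ is contravariant, the gluing data expressing cocartesianness of $\EuScript F$ are carried to the gluing data expressing cartesianness of $\mathbf D_{\star\mapsto!}\EuScript F$ in $\mathrm H^{!}$. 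This is the same transmutation of exchange $2$-morphisms recorded in Definition \ref{cr.d.5}, now applied to the dualizing twist; the bookkeeping is organised exactly as in the proof of Proposition \ref{p4.3.2}, the closedness/fully-faithfulness input used there being replaced here by the cartesianness of $\mathrm w_{\mathrm D}$. This yields the first assertion.

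For the "moreover" part I would invoke constructibility, which by definition forces each $\mathrm w_{\mathrm i}$ to be a dualizing object for $\mathrm D_{\mathrm c}(\underline{\Gamma}(\mathrm E_{\mathrm i}),\mathrm A)$ in the sense of Definition \ref{d4.2.5}. Fibrewise this provides reflexivity and the auto-duality \eqref{e4.2.14} of each constructible fibre. Running the computation of the previous paragraph symmetrically for $\mathbf D_{!\mapsto\star}$ shows that it carries $\mathrm D_{\mathrm{cart},\mathrm c}(\underline{\Gamma}(\mathrm H^{!}))^{\circ}$ back into $\mathrm D_{\mathrm{cocart},\mathrm c}(\underline{\Gamma}(\mathrm H^{\star}))$, and the fibrewise reflexivity isomorphisms then assemble into natural isomorphisms $\mathbf D_{!\mapsto\star}\circ\mathbf D_{\star\mapsto!}\cong\mathrm{id}$ and $\mathbf D_{\star\mapsto!}\circ\mathbf D_{!\mapsto\star}\cong\mathrm{id}$, which is the claimed mutually inverse equivalence.

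The step I expect to be genuinely delicate, rather than mere bookkeeping, is the globalization of the fibrewise data. One must verify that the isomorphisms \eqref{e4.2.13} are natural with respect to composition of morphisms in $\mathrm D$, i.e.\ satisfy the cocycle/coherence constraint, so that the pointwise dualities actually glue to functors on sections and the reflexivity isomorphisms are compatible along the whole diagram. As flagged in Remark \ref{r4.3.1} and Remark \ref{r4.4.1}, the functors $\mathrm m^{!}$ are in general only left exact, so $\mathbf D_{\star\mapsto!}$ and $\mathbf D_{!\mapsto\star}$ must be treated as derived functors and this coherence has to be carried out in a suitable enhancement; the exactness hypotheses feeding Proposition \ref{p4.3.4} are exactly what keep the duals inside the cocartesian, resp.\ cartesian, constructible subcategories throughout.
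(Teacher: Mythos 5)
Your proposal is correct and follows essentially the same route as the paper, whose entire proof is the one-line observation that the statement follows from the isomorphisms \eqref{e4.2.13}; you simply unpack that appeal by substituting $\EuScript K=\mathrm w_{\mathrm i}\cong\mathrm m^{!}\mathrm w_{\mathrm j}$, $\EuScript G=\mathrm w_{\mathrm j}$ and tracking how the resulting intertwining of $\mathrm m^{*}$ and $\mathrm m^{!}$ transports cocartesian gluing data to cartesian gluing data, with fibrewise reflexivity of the dualizing objects giving the equivalence in the constructible case. The coherence and derived-functor caveats you flag at the end are genuine but are left implicit in the paper as well.
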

\begin{proof} Statement obviously follows from isomorphisms \eqref{e4.2.13}.

\end{proof}

\begin{Cor}\label{c4.3.1} Let $(\mathrm H_{\star},\mathrm H^{\star},\mathrm H_!,\mathrm H^!,\mathrm w_{\mathrm D})$ be a constructible Grothendieck cross functor with dualizing object. Then following diagram commute:
\begin{equation}\label{e4.3.11}
\begin{diagram}[height=3em,width=4em]
\mathrm D_{\mathrm c}(\underline{\Gamma}(\mathrm {H}^{!})) & &   \rTo^{\Xi_!}   & &                   \mathrm D_{\mathrm{cart},{\mathrm c}}(\underline{\Gamma}(\mathrm {H}^{!}))           \\
\dTo^{\mathbf D_{!\mapsto\star}} & & & &  \dTo_ {\mathbf D_{!\mapsto\star}} \\
\mathrm D_{{\mathrm c}}(\underline{\Gamma}(\mathrm {H}^{\star}))^{\circ} & &   \rTo^{\Xi_{\star}}   & &                 \mathrm D_{\mathrm {cocart},\mathrm c}(\underline{\Gamma}(\mathrm {H}^{\star}))^{\circ} \\
\end{diagram}
\end{equation}
\end{Cor}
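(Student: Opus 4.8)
The plan is to deduce the commutativity not by comparing the two composites by hand, but by observing that $\mathbf D_{!\mapsto\star}$ is a contravariant equivalence interchanging the two reflective structures underlying $\Xi_!$ and $\Xi_\star$. First I would record that, because the cross functor is constructible with dualizing object, each $\mathrm w_{\mathrm i}$ is a dualizing object for $\mathrm D_{\mathrm c}(\underline{\Gamma}(\mathrm E_{\mathrm i}),\mathrm A)$, so by Definition \ref{d4.2.5} the fiberwise twisted dual $\mathbf D_{\mathrm w_{\mathrm i}}$ is an auto-duality of the fiber. The isomorphisms \eqref{e4.2.13}, which show that duality interchanges $\mathrm f^!$ with $\mathrm f^*$ and $\mathrm f_!$ with $\mathrm f_*$, force these fiberwise auto-dualities to assemble into a contravariant equivalence
\begin{equation*}
\mathbf D_{!\mapsto\star}\colon \mathrm D_{\mathrm c}(\underline{\Gamma}(\mathrm H^!))\overset{\sim}{\longrightarrow}\mathrm D_{\mathrm c}(\underline{\Gamma}(\mathrm H^{\star}))^{\circ}
\end{equation*}
of the full section categories, which by Proposition \ref{p4.3.5} carries $\mathrm D_{\mathrm{cart},\mathrm c}(\underline{\Gamma}(\mathrm H^!))$ onto $\mathrm D_{\mathrm{cocart},\mathrm c}(\underline{\Gamma}(\mathrm H^{\star}))^{\circ}$ compatibly with the inclusions $\Xi^!$ and $\Xi^{\star}$, i.e. $\mathbf D_{!\mapsto\star}\circ\Xi^!\cong\Xi^{\star}\circ\mathbf D_{!\mapsto\star}$ on the cartesian subcategory.

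Next I would invoke the adjunctions from Proposition \ref{p4.3.4}: $\Xi_!$ is left adjoint to the inclusion $\Xi^!$, while $\Xi_\star$ is right adjoint to the inclusion $\Xi^{\star}$. Transporting the adjunction $\Xi_!\dashv\Xi^!$ through the equivalence $\mathbf D_{!\mapsto\star}$ yields, inside $\mathrm D_{\mathrm c}(\underline{\Gamma}(\mathrm H^{\star}))^{\circ}$, an adjunction whose right leg is the inclusion $\Xi^{\star}$ read in the opposite category. A covariant equivalence preserves adjunctions, whereas passage to opposite categories reverses them; combining the two, the transported copy of $\Xi_!$ is forced to be exactly the right adjoint of $\Xi^{\star}$, namely $\Xi_\star$. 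By uniqueness of adjoints this produces the natural isomorphism $\mathbf D_{!\mapsto\star}\circ\Xi_!\cong\Xi_\star\circ\mathbf D_{!\mapsto\star}$, which is precisely the commutativity of diagram \eqref{e4.3.11}.

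The step deserving genuine care, and the main obstacle, is the assembly in the first paragraph: one must check that the fiberwise dualities $\mathbf D_{\mathrm w_{\mathrm i}}$ are compatible with the transition functors of the two fibrations $\mathrm H^!$ and $\mathrm H^{\star}$, so that $\mathbf D_{!\mapsto\star}$ is a genuine equivalence of section categories rather than a merely fiberwise statement. This is exactly what \eqref{e4.2.13} supplies, but it has to be upgraded to the level of complexes and of the Kan-extension data defining $\Xi_!=\mathrm i^*\circ\underline{\mathrm L}(\mathrm i_!)$ and $\Xi_\star=\mathrm j^*\circ\underline{\mathrm R}(\mathrm j_*)$; as flagged in Remark \ref{r4.3.1}, this is where an enhancement is needed, so that $\underline{\mathrm L}(\mathrm i_!)$ and $\underline{\mathrm R}(\mathrm j_*)$ are honest homotopy Kan extensions and $\mathbf D_{!\mapsto\star}$ interchanges them. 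Once this compatibility is secured, the intertwining with the inclusions is immediate from Proposition \ref{p4.3.5} and the adjunction transport above is purely formal.

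As a cross-check one could instead argue pointwise, writing $\Xi_!$ and $\Xi_\star$ as the explicit homotopy colimit and limit over the comma categories of $\mathrm{cospan}(\mathrm D)$ and using that the contravariant $\mathbf D_{!\mapsto\star}$ sends the colimit computing $\Xi_!$ to the limit computing $\Xi_\star$, with \eqref{e4.2.13} matching the structure functors of $\mathrm H^{!!}$ against those of $\mathrm H^{\star\star}$ term by term; but the adjunction argument bypasses these computations entirely and isolates the only real input as the global compatibility of the dualizing objects.
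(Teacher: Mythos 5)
Your argument is correct, but it is not the paper's argument. The paper's proof is the one you relegate to a ``cross-check'' in your final paragraph: since $\Xi_!=\mathrm i^*\circ\underline{\mathrm L}(\mathrm i_!)$ is built from a (homotopy) colimit and $\Xi_\star=\mathrm j^*\circ\underline{\mathrm R}(\mathrm j_*)$ from a limit, and $\mathbf D_{!\mapsto\star}$ is an inner hom into the dualizing object, the duality sends the colimit diagram computing $\Xi_!$ to the limit diagram computing $\Xi_\star$, with the isomorphisms \eqref{e4.2.13} matching the transition functors term by term. Your main route instead isolates the formal content: $\mathbf D_{!\mapsto\star}$ is a contravariant equivalence of the constructible section categories (because the $\mathrm w_{\mathrm i}$ are fiberwise dualizing in the sense of Definition \ref{d4.2.5} and \eqref{e4.2.13} makes the fiberwise dualities assemble) which intertwines the inclusions $\Xi^!$ and $\Xi^\star$ by Proposition \ref{p4.3.5}; then, since $\Xi_!$ is left adjoint to $\Xi^!$ and $\Xi_\star$ is right adjoint to $\Xi^\star$ (Proposition \ref{p4.3.4}), and passage to the opposite category converts the one adjunction into the other, uniqueness of adjoints gives $\mathbf D_{!\mapsto\star}\circ\Xi_!\cong\Xi_\star\circ\mathbf D_{!\mapsto\star}$. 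What your approach buys is that the only non-formal input is the global equivalence statement together with its compatibility with the two inclusions, so you never have to compare the Kan-extension diagrams over the comma categories of $\mathrm{cospan}(\mathrm D)$; what the paper's approach buys is that it does not require $\mathbf D_{!\mapsto\star}$ to be an equivalence on all of $\mathrm D_{\mathrm c}(\underline{\Gamma}(\mathrm H^!))$ (only the exchange isomorphisms \eqref{e4.2.13} and continuity of the inner hom), so it would survive in settings where reflexivity fails outside the cartesian/cocartesian subcategories. You are also right to flag that both routes ultimately rest on upgrading the fiberwise statements to coherent statements about derived section categories, which is the enhancement issue of Remark \ref{r4.3.1}; the paper is no more explicit about this than you are.
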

\begin{proof} Follows from the fact that inner hom functor takes colimits to limits and isomorphisms \eqref{e4.2.13}.

\end{proof}
\begin{Prop}\label{p4.3.6} Let $(\mathrm H_{\star},\mathrm H^{\star},\mathrm H_!,\mathrm H^!,\mathrm w_{\mathrm D})$ be a Grothendieck cross functor with dualizing object. Then following diagram commute:
\begin{equation}\label{e4.3.12}
\begin{diagram}[height=3em,width=4em]
\mathrm D_{\mathrm c}(\underline{\Gamma}(\mathrm {H}^{!})) & &   \rTo^{\mathbb V_{!\mapsto\star}}   & &                    \mathrm D_{\mathrm c}(\underline{\Gamma}(\mathrm {H}^{\star}))           \\
\dTo^{\mathbf D_{!\mapsto\star}} & & & &  \dTo_ {\mathbf D_{\star\mapsto!}} \\
\mathrm D_{\mathrm c}(\underline{\Gamma}(\mathrm {H}^{\star}))^{\circ} & &   \rTo^{\mathbb V_{\star\mapsto!}}   & &                 \mathrm D_{\mathrm c}(\underline{\Gamma}(\mathrm {H}^{!}))^{\circ} \\
\end{diagram}
\end{equation}
\end{Prop}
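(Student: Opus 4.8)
The plan is to prove commutativity of \eqref{e4.3.12} pointwise over $\mathrm D$, reducing it to the fiberwise isomorphisms \eqref{e4.2.13} together with the fact, already exploited in Corollary \ref{c4.3.1}, that the inner $\underline{\mathrm{Hom}}$ sends colimits in its first argument to limits. By Definition \ref{d4.3.3} the clockwise composite is $\mathbf D_{\star\mapsto!}\circ\mathbb V_{!\mapsto\star}=\mathbf D_{\star\mapsto!}\circ\mathrm j^*\circ\underline{\mathrm L}(\mathrm i_!)$ and the counter-clockwise one is $\mathbb V_{\star\mapsto!}\circ\mathbf D_{!\mapsto\star}=\mathrm i^*\circ\underline{\mathrm R}(\mathrm j_*)\circ\mathbf D_{!\mapsto\star}$, while by Definition \ref{d4.3.10} each $\mathbf D_{\bullet}$ acts in the fiber over $\mathrm i$ as the twisted dual $\mathbf D_{\mathrm w_{\mathrm i}}=\underline{\mathrm{Hom}}(-,\mathrm w_{\mathrm i})$. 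Since the two duality functors and the two restrictions are computed fiberwise, I would first write out both sides at a fixed $\mathrm i\in\mathrm D$ using the pointwise Kan-extension formulas of the type displayed in \eqref{e4.3.3} in the proof of Proposition \ref{p4.3.2}: $\mathbb V_{!\mapsto\star}(\EuScript K)_{\mathrm i}$ is a homotopy colimit of the $!\star$-Mackey transitions $\mathrm b_*\mathrm a^!$ applied to the $\EuScript K_{\mathrm w}$, indexed by the comma category of $\mathrm{cospan}(\mathrm D)$ coming from $\mathrm i_!$, whereas $\mathbb V_{\star\mapsto!}(-)_{\mathrm i}$ is a homotopy limit of the $\star!$-transitions $\mathrm b_!\mathrm a^*$ indexed by the coslice category coming from $\mathrm j_*$.

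The two remaining ingredients are then an integrand comparison and an index comparison. For the integrands, I would apply $\mathbf D_{\mathrm w_{\mathrm i}}$ to the homotopy colimit defining $\mathbb V_{!\mapsto\star}(\EuScript K)_{\mathrm i}$ and move it inside as a homotopy limit (inner hom converts colimits to limits); on each term the isomorphisms \eqref{e4.2.13}, in the forms $\mathbf D\circ\mathrm f_!\cong\mathrm f_*\circ\mathbf D$ and $\mathbf D\circ\mathrm f^*\cong\mathrm f^!\circ\mathbf D$ together with their inverses (available once $\mathbf D$ is involutive), and with the dualizing objects related by $\mathrm w_{\mathrm i}\cong\mathrm f^!\mathrm w_{\mathrm w}$ along the legs of the roof, identify $\mathbf D_{\mathrm w_{\mathrm i}}(\mathrm b_*\mathrm a^!\EuScript K_{\mathrm w})$ with $\mathrm b_!\mathrm a^*(\mathbf D_{\mathrm w_{\mathrm w}}\EuScript K_{\mathrm w})$, i.e. with the integrand of the counter-clockwise side evaluated on $\mathbf D_{!\mapsto\star}\EuScript K$. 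Here the restriction of the diagram to the constructible subcategories $\mathrm D_{\mathrm c}$ is essential: it guarantees that the objects $\mathrm w_{\mathrm i}$ are dualizing, so that each $\mathbf D_{\mathrm w_{\mathrm i}}$ is an involution and the comparison maps built from \eqref{e4.2.13} are genuine isomorphisms rather than one-sided morphisms.

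For the index comparison I would use the leg-swap involution of the correspondence category $\mathrm{cospan}(\mathrm D)$, which exchanges the embeddings $\mathrm i$ and $\mathrm j$ and hence identifies the comma category indexing the $\mathrm i_!$-colimit with the opposite of the coslice category indexing the $\mathrm j_*$-limit; under this identification the term-by-term isomorphisms of the previous paragraph assemble into an isomorphism of the two homotopy limits, proving commutativity at $\mathrm i$ and, by naturality in $\mathrm i$, globally. The existence of the relevant diagrams and the cofinality needed for the comparison to be an equivalence are precisely what the standing hypotheses supply, namely the existence of the pushout colimits $\mathrm x\leftarrow\mathrm z\rightarrow\mathrm y$ and the generalized filtered Assumption on $\mathrm D^{\circ}$, together with the base-change property attached to the class $\mathrm B$ of pushout squares.

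The hard part will be exactly this reconciliation of the two different embeddings: fiberwise duality, taken naively, exchanges $\underline{\mathrm L}(\mathrm i_!)$ with $\underline{\mathrm R}(\mathrm i_*)$ along the \emph{same} embedding $\mathrm i$, whereas the target functor $\mathbb V_{\star\mapsto!}$ is built from $\underline{\mathrm R}(\mathrm j_*)$ along $\mathrm j$. Showing that the subsequent restrictions $\mathrm j^*$ and $\mathrm i^*$ convert this apparent discrepancy into an honest isomorphism — that is, that the slice-indexed and coslice-indexed homotopy limits coincide after the leg-swap — is the genuine content, and it is where the base-change property of the cross functor (Definition \ref{cr.d.5}(b)) and the filtered Assumption must be used carefully. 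Once this is established the square closes and the two composites are naturally isomorphic.
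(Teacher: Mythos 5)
Your argument is essentially the paper's own proof: the paper disposes of Proposition \ref{p4.3.6} with the single observation that the inner hom functor takes colimits to limits together with the isomorphisms \eqref{e4.2.13}, which is exactly the termwise dualization of the Kan-extension colimit into the Kan-extension limit that you carry out. Your expansion — the pointwise formulas, the identification of $\mathbf D_{\mathrm w_{\mathrm i}}(\mathrm b_*\mathrm a^!\EuScript K_{\mathrm w})$ with $\mathrm b_!\mathrm a^*(\mathbf D_{\mathrm w_{\mathrm w}}\EuScript K_{\mathrm w})$, and the leg-swap comparison of the indexing categories — is a correct and considerably more careful rendering of the same idea.
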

\begin{proof} Follows from fact that inner hom functor takes colimits to limits and isomorphisms \eqref{e4.2.13}.

\end{proof}
\begin{remark}\label{r4.3.2} Suppose that we have constructible Grothendieck cross functor over $\mathrm D.$ Using Corollary \ref{c4.3.1} we can define functor $\mathrm {\Xi}_{\star}$ by the rule $\mathrm {\Xi}_{\star}:=\mathbf D_{\mathrm D^{\circ}}\circ \mathrm {\Xi}_!\circ \mathbf D_{\mathrm D}.$

\end{remark}

\begin{Def}\label{d4.3.11} Let $(\mathrm H_{\star},\mathrm H^{\star},\mathrm H_!,\mathrm H^!,\mathrm w_{\mathrm D})$ be a constructible Grothendieck cross functor associated with ringed $\mathrm D$-topos $(\mathrm E,\mathrm A).$ Then we have triangulated functors
\begin{equation}\label{e4.3.13}
\mathbb D_{(\mathrm E,\mathrm A)}\colon\mathrm D_{\mathrm {cocart},\mathrm c}(\underline{\Gamma}(\mathrm {E}),\mathrm A) \longrightarrow   \mathrm D_{\mathrm {cocart},\mathrm c}(\underline{\Gamma}(\mathrm {E}),\mathrm A)^{\circ}
\end{equation}
defined as $\mathbb D_{(\mathrm E,\mathrm A)}:=\mathbf D_{!\mapsto\star}\circ\mathbb V_{\star\mapsto!}^{\mathrm{cocart}},$ and triangulated functor
\begin{equation}\label{e4.3.14}
\mathbb D_{\mathrm H^!}\colon\mathrm D_{\mathrm {cart},\mathrm c}(\underline{\Gamma}(\mathrm {H}^{!})) \longrightarrow   \mathrm D_{\mathrm {cart},\mathrm c}(\underline{\Gamma}(\mathrm {H}^{!}))^{\circ}
\end{equation}
defined as $\mathbb D_{\mathrm H^!}:=\mathbf D_{\star\mapsto!}\circ\mathbb V_{!\mapsto\star}^{\mathrm{cart}}.$ Functors $\mathbb D_{(\mathrm E,\mathrm A)}$ and $\mathbb D_{\mathrm H^!}$ are called \underline{Verdier duality} for Grothendieck cross functor with dualizing object $(\mathrm H_{\star},\mathrm H^{\star},\mathrm H_!,\mathrm H^!,\mathrm w_{\mathrm D}).$
\end{Def}
\begin{remark} It is also useful to consider so called \underline{full Verdier duality functor}:
\begin{equation}\label{}
\mathbb V_{!\mapsto\star}^{\mathrm{full}}\colon \clim_{{\mathrm D^{\circ}}}\mathrm H^!\longrightarrow \lim_{{\mathrm D}}\mathrm H^{\star}
\end{equation}
This functor is defined as composition of canonical functor $\mathrm {can}$ and covariant Verdier duality $\mathbb V_{!\mapsto \star}^{\mathrm{cocart}}.$

\end{remark}

\subsection{$\mathrm t$-structure}\label{tst} Let $(\mathrm E,\mathrm A)$ be the ringed $\mathrm D$-topos with corresponding Grothendieck cross functor $(\mathrm H_{\star},\mathrm H^{\star},\mathrm H_!,\mathrm H^!).$ Suppose that for every $\mathrm i\in \mathrm D$ we have a $\mathrm t$-structure $\mathrm D^{\leq 0}_{\mathrm i}$ on category $\mathrm D(\mathrm E_{\mathrm i},\mathrm A_{\mathrm i}),$ with corresponding truncation functors denoted by $\sigma^{\mathrm i}_{\leq0},$ such that for every $\mathrm m\colon \mathrm i\rightarrow \mathrm j\in \mathrm D$ functor $\mathrm f^!=\mathrm m^{\circ*}$ is left $\mathrm t$-exact and functor $\mathrm f^*:=\mathrm m_*$ is right $\mathrm t$-exact.
\begin{Lemma}\label{tl1} Category $\mathrm D(\underline{\Gamma}(\mathrm H_!))$ is naturally $\mathrm t$-category with corresponding $\mathrm t$-structure:
\begin{equation}\label{}
\mathrm D^{\leq 0}(\underline{\Gamma}(\mathrm H_!)):=\{\EuScript K\in \mathrm (\underline{\Gamma}(\mathrm H_!))\,|\, \EuScript K_{\mathrm i}\in \mathrm D^{\leq 0}_{\mathrm i}\}
\end{equation}
\end{Lemma}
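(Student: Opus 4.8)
The plan is to define the candidate aisles componentwise and to reduce every axiom of a t-structure to the corresponding fiberwise statement through the exact, jointly conservative evaluation functors $\mathrm e_{\mathrm i}^*$; the only genuine work is to promote the fiberwise truncations to a single functor on the derived category of sections, and this is exactly where the t-exactness hypotheses are spent. First I would record the abelian setup: by Definition \ref{d4.3.1} and the identification of the bifibration of $(\mathrm H_{\star},\mathrm H^{\star})$ with $\underline{\mathrm {Mod}}(\mathrm E,\mathrm A)$, the category $\underline{\Gamma}(\mathrm H_!)$ is the abelian category of sections $\mathrm{Mod}(\underline{\Gamma}(\mathrm E),\mathrm A)$, each evaluation $\mathrm e_{\mathrm i}^*\colon\underline{\Gamma}(\mathrm H_!)\to\mathrm{Mod}(\mathrm E_{\mathrm i},\mathrm A_{\mathrm i})$ is exact, and the family $\{\mathrm e_{\mathrm i}^*\}_{\mathrm i\in\mathrm D}$ is jointly conservative, a section all of whose fibers vanish being zero. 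Hence the induced triangulated functors on $\mathrm D(\underline{\Gamma}(\mathrm H_!))$, still written $\mathrm e_{\mathrm i}^*$, satisfy $\mathrm H^{\mathrm n}(\EuScript K)_{\mathrm i}=\mathrm H^{\mathrm n}(\EuScript K_{\mathrm i})$ and remain jointly conservative. I set $\mathrm D^{\geq 0}(\underline{\Gamma}(\mathrm H_!)):=\{\EuScript K\mid \EuScript K_{\mathrm i}\in\mathrm D^{\geq 0}_{\mathrm i}\ \forall\,\mathrm i\}$ dually to the stated $\mathrm D^{\leq 0}$. Stability under the shifts $[\pm 1]$ is then immediate from the fiberwise t-structures, and $\mathrm D^{\leq 0}\cap\mathrm D^{\geq 1}=0$ follows from joint conservativity.

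The heart of the argument is to build a functor $\sigma_{\leq 0}\colon\mathrm D(\underline{\Gamma}(\mathrm H_!))\to\mathrm D(\underline{\Gamma}(\mathrm H_!))$ with a natural transformation $\sigma_{\leq 0}\to\mathrm{id}$ whose fiber over $\mathrm i$ is $\sigma^{\mathrm i}_{\leq 0}\EuScript K_{\mathrm i}\to\EuScript K_{\mathrm i}$. For a section $\EuScript K$ the transition datum along $\mathrm m\colon\mathrm i\to\mathrm j$ is a map $\mathrm f^*\EuScript K_{\mathrm i}\to\EuScript K_{\mathrm j}$, where $\mathrm f^*=\mathrm m_*$ is the cofibration transition functor of $\mathrm H_!\cong\mathrm H_{\star}$. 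Since $\mathrm f^*$ is right t-exact we have $\mathrm f^*(\mathrm D^{\leq 0}_{\mathrm i})\subseteq\mathrm D^{\leq 0}_{\mathrm j}$, so the truncation map $\sigma^{\mathrm j}_{\leq 0}\bigl(\mathrm f^*\sigma^{\mathrm i}_{\leq 0}\EuScript K_{\mathrm i}\bigr)\to\mathrm f^*\sigma^{\mathrm i}_{\leq 0}\EuScript K_{\mathrm i}$ is invertible, and the composite
\[
\mathrm f^*\sigma^{\mathrm i}_{\leq 0}\EuScript K_{\mathrm i}\xrightarrow{\ \sim\ }\sigma^{\mathrm j}_{\leq 0}\bigl(\mathrm f^*\sigma^{\mathrm i}_{\leq 0}\EuScript K_{\mathrm i}\bigr)\xrightarrow{\ \ }\sigma^{\mathrm j}_{\leq 0}\EuScript K_{\mathrm j}
\]
(the first arrow being the inverse of that map, the second being $\sigma^{\mathrm j}_{\leq 0}$ applied to $\mathrm f^*\sigma^{\mathrm i}_{\leq 0}\EuScript K_{\mathrm i}\to\mathrm f^*\EuScript K_{\mathrm i}\to\EuScript K_{\mathrm j}$) supplies the transition map of $\sigma_{\leq 0}\EuScript K$. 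The cocycle identities and functoriality then follow from those of $\EuScript K$ together with the uniqueness in the universal property of $\sigma^{\mathrm j}_{\leq 0}$. I would define the upper truncation by the cone, $\sigma_{\geq 1}\EuScript K:=\mathrm{cone}(\sigma_{\leq 0}\EuScript K\to\EuScript K)$, computed in $\mathrm D(\underline{\Gamma}(\mathrm H_!))$; applying the triangulated functors $\mathrm e_{\mathrm i}^*$ identifies its fibers with $\sigma^{\mathrm i}_{\geq 1}\EuScript K_{\mathrm i}\in\mathrm D^{\geq 1}_{\mathrm i}$, so $\sigma_{\geq 1}\EuScript K\in\mathrm D^{\geq 1}$ and the defining triangle is the required truncation triangle. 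Symmetrically, the left t-exactness of $\mathrm f^!$ gives a direct construction of $\sigma_{\geq 1}$ through the fibration description of sections, which one may use instead of the cone to keep both truncations manifestly functorial.

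Finally, orthogonality $\mathrm{Hom}(\EuScript K,\EuScript L)=0$ for $\EuScript K\in\mathrm D^{\leq 0}$ and $\EuScript L\in\mathrm D^{\geq 1}$ is formal once $\sigma_{\leq 0}$ is functorial: by joint conservativity $\sigma_{\leq 0}\EuScript L=0$, while $\sigma_{\leq 0}\EuScript K\to\EuScript K$ is an isomorphism; naturality of $\sigma_{\leq 0}\to\mathrm{id}$ applied to $f\colon\EuScript K\to\EuScript L$ then factors $f$ through $\sigma_{\leq 0}\EuScript L=0$, forcing $f=0$. Combined with the truncation triangles and the shift stability, this yields all the axioms. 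The main obstacle I expect is precisely the middle step — verifying that the componentwise truncation is well defined as a functor on the derived category of sections and that the structure maps assemble coherently; this is where the right t-exactness of $\mathrm f^*$ (and dually the left t-exactness of $\mathrm f^!$) is indispensable, and where, for nonstandard fiber t-structures, one should work with the enhancement alluded to in Remark \ref{r4.3.1} so that $\sigma_{\leq 0}$ is a genuine functor rather than an object-by-object assignment.
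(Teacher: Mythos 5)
Your proposal is correct and follows the same route as the paper: the paper likewise defines the aisle componentwise and obtains the truncation functor $\sigma_{\leq 0}$ by applying the fiberwise truncations $\sigma^{\mathrm i}_{\leq 0}$, observing that this is well defined on sections precisely because the transition functors of the cofibration are right $\mathrm t$-exact. Your write-up just makes explicit the steps the paper leaves implicit (the assembly of the transition maps for $\sigma_{\leq 0}\EuScript K$, the cone construction of $\sigma_{\geq 1}$, and orthogonality via joint conservativity of the evaluations), including the genuine subtlety of functoriality on the derived category of sections that the paper's two-line argument glosses over.
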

\begin{proof} It easy to see that $\mathrm D^{\leq 0}(\underline{\Gamma}(\mathrm H_!))[1]\subset \mathrm D^{\leq 0}(\underline{\Gamma}(\mathrm H_!)).$ To prove that above subcategory defines $\mathrm t$-structure we need to introduce truncation functors. We define truncation functor $\sigma_{\leq 0}$ by componentwise truncation functors $\sigma^{\mathrm i}_{\leq0}.$ This truncation functor is well defined, since for every $\mathrm m\colon \mathrm i \rightarrow \mathrm j$ functor $\mathrm f_!=\mathrm m^*$ is right $\mathrm t$-exact.

\end{proof}
Suppose that for every $\mathrm i\in \mathrm D$ category $\mathrm D^{\leq 0}_{\mathrm i}$ (resp. $\mathrm D^{\geq 0}_{\mathrm i}$) contains homotopy limits (colimits) and homotopy limits and colimits are exact.
\begin{Def} We introduce $\mathrm t$-structures on categories $\mathrm D_{\mathrm {cocart}}(\underline{\Gamma}(\mathrm E),\mathrm A)$ and $\mathrm D_{\mathrm {cart}}(\underline{\Gamma}(\mathrm H^!))$ by following rules:
\begin{equation}\label{}
\mathrm D^{\leq 0}_{\mathrm {cocart}}(\underline{\Gamma}(\mathrm E),\mathrm A):=\{\EuScript K\in \mathrm D_{\mathrm {cocart}}(\underline{\Gamma}(\mathrm E),\mathrm A)\,|\, \EuScript K_{\mathrm i}\in \mathrm D^{\leqq 0}_{\mathrm i}\}
\end{equation}
\begin{equation}\label{}
\mathrm D^{\geq 0}_{\mathrm {cart}}(\underline{\Gamma}(\mathrm H^!)):=\{\EuScript K\in \mathrm D_{\mathrm {cart}}(\underline{\Gamma}(\mathrm H^!))\,|\, \EuScript K_{\mathrm i}\in \mathrm D^{\geq 0}_{\mathrm i}\}
\end{equation}
with corresponding truncation functors:
\begin{equation}\label{}
\tau_{\leq 0}\colon \mathrm D_{\mathrm {cocart}}(\underline{\Gamma}(\mathrm E),\mathrm A)\longrightarrow\mathrm D^{\leq 0}_{\mathrm {cocart}}(\underline{\Gamma}(\mathrm E),\mathrm A),\quad \tau_{\leq 0}:=\Xi_{\star}\circ\sigma_{\leq 0}\circ\Xi^{\star}
\end{equation}
and
\begin{equation}\label{}
\tau_{\geq 0}\colon \mathrm D_{\mathrm {cart}}(\underline{\Gamma}(\mathrm H^!))\longrightarrow\mathrm D^{\geq 0}_{\mathrm {cart}}(\underline{\Gamma}(\mathrm H^!)),\quad \tau_{\geq 0}:=\Xi_{!}\circ\sigma_{\geq 0}\circ\Xi^{!}.
\end{equation}
\end{Def}
\begin{Prop}\label{pts} Categories $\mathrm D_{\mathrm {cocart}}(\underline{\Gamma}(\mathrm E),\mathrm A)$ and $\mathrm D_{\mathrm {cart}}(\underline{\Gamma}(\mathrm H^!))$ with above data are indeed $\mathrm t$-categories.

\end{Prop}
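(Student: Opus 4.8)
The plan is to verify the three $\mathrm t$-structure axioms for each of the two subcategories by reducing everything to the componentwise $\mathrm t$-structure on the ambient category $\mathrm D(\underline{\Gamma}(\mathrm H_!))=\mathrm D(\underline{\Gamma}(\mathrm H^{\star}))$ furnished by Lemma \ref{tl1}, together with the adjunctions of Proposition \ref{p4.3.4}. Write $\sigma_{\leq 0},\sigma_{\geq 0}$ for the ambient truncations. I would treat $\mathrm D_{\mathrm {cocart}}(\underline{\Gamma}(\mathrm E),\mathrm A)$ in detail and obtain the statement for $\mathrm D_{\mathrm {cart}}(\underline{\Gamma}(\mathrm H^!))$ by the dual argument, in which the coreflection $\Xi^{\star}\dashv\Xi_{\star}$ is replaced by the reflection $\Xi_!\dashv\Xi^!$ and homotopy limits by homotopy colimits. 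By Proposition \ref{p4.3.4} the inclusion $\Xi^{\star}$ is right admissible, so $\Xi_{\star}$ is triangulated and the counit $\Xi_{\star}\circ\Xi^{\star}\overset{\sim}{\longrightarrow}\mathrm{id}$ is an isomorphism; symmetrically $\Xi_!\circ\Xi^!\overset{\sim}{\longrightarrow}\mathrm{id}$. Here I abbreviate the coaisle as $\mathrm D^{\geq 1}_{\mathrm {cocart}}:=\mathrm D_{\mathrm {cocart}}\cap\mathrm D^{\geq 1}(\underline{\Gamma}(\mathrm H_!))$.

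First I would dispatch the two formal axioms. Stability under shift, $\mathrm D^{\leq 0}_{\mathrm {cocart}}[1]\subset \mathrm D^{\leq 0}_{\mathrm {cocart}}$, is immediate from $\mathrm D^{\leq 0}[1]\subset\mathrm D^{\leq 0}$ in the ambient category and the fact that $\mathrm D_{\mathrm {cocart}}$ is a triangulated subcategory. For orthogonality, given $\EuScript K\in\mathrm D^{\leq 0}_{\mathrm {cocart}}$ and $\EuScript L\in\mathrm D^{\geq 1}_{\mathrm {cocart}}$, fullness of $\Xi^{\star}$ identifies $\mathrm{Hom}(\EuScript K,\EuScript L)$ with $\mathrm{Hom}(\Xi^{\star}\EuScript K,\Xi^{\star}\EuScript L)$ in $\mathrm D(\underline{\Gamma}(\mathrm H_!))$, where $\Xi^{\star}\EuScript K\in\mathrm D^{\leq 0}$ and $\Xi^{\star}\EuScript L\in\mathrm D^{\geq 1}$ by construction of the aisles; this group vanishes by ambient orthogonality.

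The substance is the truncation triangle. For $\EuScript K\in\mathrm D_{\mathrm {cocart}}$ I would apply the triangulated functor $\Xi_{\star}$ to the ambient triangle $\sigma_{\leq 0}\Xi^{\star}\EuScript K\to\Xi^{\star}\EuScript K\to\sigma_{\geq 1}\Xi^{\star}\EuScript K\to{}$, and use the counit isomorphism $\Xi_{\star}\Xi^{\star}\EuScript K\cong\EuScript K$ to produce a distinguished triangle
\begin{equation*}
\tau_{\leq 0}\EuScript K\longrightarrow \EuScript K\longrightarrow \Xi_{\star}\sigma_{\geq 1}\Xi^{\star}\EuScript K\longrightarrow{}
\end{equation*}
with $\tau_{\leq 0}=\Xi_{\star}\circ\sigma_{\leq 0}\circ\Xi^{\star}$ exactly as in the definition preceding the proposition, and I would set $\tau_{\geq 1}:=\Xi_{\star}\circ\sigma_{\geq 1}\circ\Xi^{\star}$. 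That this is a genuine truncation triangle requires $\tau_{\leq 0}\EuScript K\in\mathrm D^{\leq 0}_{\mathrm {cocart}}$ and $\tau_{\geq 1}\EuScript K\in\mathrm D^{\geq 1}_{\mathrm {cocart}}$; since $\sigma_{\leq 0}\Xi^{\star}\EuScript K\in\mathrm D^{\leq 0}$ and $\sigma_{\geq 1}\Xi^{\star}\EuScript K\in\mathrm D^{\geq 1}$, and since $\Xi_{\star}$ lands in $\mathrm D_{\mathrm {cocart}}$ by Proposition \ref{p4.3.4}, this reduces entirely to the claim that $\Xi_{\star}$ is $\mathrm t$-exact for the ambient componentwise $\mathrm t$-structure. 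Together with the two formal axioms this $\mathrm t$-exactness gives the proposition.

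This $\mathrm t$-exactness is the main obstacle, and it is precisely where the standing hypotheses on the fibers are used. The point is that $\Xi_{\star}=\mathrm j^*\circ\underline{\mathrm R}(\mathrm j_*)$ is computed fiberwise, as in \eqref{e4.3.3}, by a homotopy limit $\hlim$ over the coslice categories $\mathrm i/\mathrm {cospan}(\mathrm D)$ of the transport functors of the associated Mackey $\star\star$-functor; those transports are assembled from $\mathrm f^{*}=\mathrm m_*$ (right $\mathrm t$-exact) and its adjoint $\mathrm m^*$ (left $\mathrm t$-exact), so the sole remaining difficulty is that the homotopy limit preserve \emph{both} halves of the $\mathrm t$-structure. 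A homotopy limit is only left $\mathrm t$-exact in general, so preservation of $\mathrm D^{\leq 0}$ is exactly what fails without extra input; it is supplied by the hypothesis that in each fiber the categories $\mathrm D^{\leq 0}_{\mathrm i}$ and $\mathrm D^{\geq 0}_{\mathrm i}$ contain the relevant homotopy limits and that these homotopy limits are exact. Hence $\Xi_{\star}$ preserves $\mathrm D^{\leq 0}$ and $\mathrm D^{\geq 1}$, completing the cocartesian case. The dual argument, using that $\Xi_!=\mathrm i^*\circ\underline{\mathrm L}(\mathrm i_!)$ is a fiberwise homotopy colimit and invoking the exactness of homotopy colimits, shows $\Xi_!$ is $\mathrm t$-exact and yields the triangle $\tau_{\leq -1}\EuScript K\to\EuScript K\to\tau_{\geq 0}\EuScript K\to{}$ with $\tau_{\geq 0}=\Xi_!\circ\sigma_{\geq 0}\circ\Xi^!$, establishing the $\mathrm t$-category structure on $\mathrm D_{\mathrm {cart}}(\underline{\Gamma}(\mathrm H^!))$.
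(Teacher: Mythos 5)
Your argument is correct and is essentially an expanded version of the paper's own (very terse) proof, which invokes exactly the same three ingredients: the adjunction $\Xi^{\star}\dashv\Xi_{\star}$ from Proposition \ref{p4.3.4}, the componentwise $\mathrm t$-structure of Lemma \ref{tl1}, and exactness of limits to make $\Xi_{\star}$ $\mathrm t$-exact. The only nitpick is terminological: since $\Xi^{\star}$ is the fully faithful left adjoint, the isomorphism $\Xi_{\star}\circ\Xi^{\star}\cong\mathrm{id}$ you use is the unit of the adjunction (inverted), not the counit, but this does not affect the argument.
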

\begin{proof} Let us stick to the case of category $\mathrm D_{\mathrm {cocart}}(\underline{\Gamma}(\mathrm E),\mathrm A),$ another case can be proved analogically. It easy to see  $\mathrm D^{\leq 0}_{\mathrm {cocart}}(\underline{\Gamma}(\mathrm E),\mathrm A)[1]\subset \mathrm D^{\leq 0}_{\mathrm {cocart}}(\underline{\Gamma}(\mathrm E),\mathrm A).$ The rest follows from adjunction property of functor $\Xi_{\star}$ and Lemma \ref{tl1} and exactness of limits.

\end{proof}

\begin{remark} Examples of above $\mathrm t$-structures are given by $\mathrm t$-structure on $\star$-crystals and perverse $\mathrm t$-structure on $!$-sheaves (Remark \ref{rps}).

\end{remark}

\subsection{Grothendieck operations}\label{sixtop} Let $(\mathrm E,\mathrm A)$ be the the ringed $\mathrm D$-topos with associated Grothendieck cross functor. Category $\mathrm D_{\mathrm {cocart}}(\mathrm E,\mathrm A)$ is the closed monoidal category with underlying tensor product:
\begin{equation}\label{}
\otimes^{\star} \colon \mathrm D_{\mathrm {cocart}}(\mathrm E,\mathrm A)\times \mathrm D_{\mathrm {cocart}}(\mathrm E,\mathrm A) \longrightarrow \mathrm D_{\mathrm {cocart}}(\mathrm E,\mathrm A),
\end{equation}
defined by the rule:
\begin{equation}\label{}
(\EuScript K\otimes^{\star} \EuScript F)_{\mathrm i}:= \EuScript K_{\mathrm i}\otimes^{\mathrm L} \EuScript F_{\mathrm i},\qquad \mathrm i\in\mathrm D
\end{equation}
and inner hom functor:
\begin{equation}\label{}
\underline{\mathrm {Hom}}^{\star}\colon \mathrm D_{\mathrm {cocart}}(\mathrm E,\mathrm A)^{\circ}\times \mathrm D_{\mathrm {cocart}}(\mathrm E,\mathrm A)\longrightarrow \mathrm D_{\mathrm {cocart}}(\mathrm E,\mathrm A),
\end{equation}
defined as composition:
\begin{equation}\label{innerh}
\underline{\mathrm {Hom}}^{\star}:=\Xi_{\star}\circ\underline{\mathrm {Hom}}^{\mathrm {naive}},\quad \underline{\mathrm {Hom}}^{\mathrm {naive}}(\EuScript E,\EuScript G)_{\mathrm i}:=\underline{\mathrm {Hom}}_{\mathrm D(\mathrm E_{\mathrm i},\mathrm A_{\mathrm i})}(\EuScript E_{\mathrm i},\EuScript G_{\mathrm i})
\end{equation}
Let $\mathrm f\colon (\underline{\Gamma}(\mathrm E),\mathrm A) \longrightarrow (\underline{\Gamma}(\mathrm E'),\mathrm A)$ be a morphism of topoi, then we have morphism between categories $ \mathrm D_{\mathrm {cocart}}(\mathrm E,\mathrm A)$ and $ \mathrm D_{\mathrm {cocart}}(\mathrm E',\mathrm A')$ given by pair of adjoint triangulated functors:
\begin{equation}\label{}
\mathrm f_{\star} \colon\mathrm D_{\mathrm {cocart}}(\mathrm E,\mathrm A)  \longleftrightarrow \mathrm D_{\mathrm {cocart}}(\mathrm E',\mathrm A')\colon\mathrm f^{\star},
\end{equation}
where functor $\mathrm f_{\star}$ is defined as composition $\mathrm f_{\star}=\Xi_{\star}\circ\underline{\mathrm R}^*(\mathrm f_{*})$ and $\mathrm f^{\star}=\underline{\mathrm L}^*(\mathrm f^*).$ We can also define $!$-operations:
\begin{equation}\label{}
\mathrm f_! \colon\mathrm D_{\mathrm {cocart}}(\mathrm E,\mathrm A)  \longleftrightarrow \mathrm D_{\mathrm {cocart}}(\mathrm E',\mathrm A')\colon\mathrm f^!
\end{equation}
By the rule:
\begin{equation}
\mathrm f_{!}:=\mathbb D_{(\mathrm E',\mathrm A')}\circ\mathrm f_{\star}\circ\mathbb D_{(\mathrm E,\mathrm A)},\qquad \mathrm f^{!}:=\mathbb D_{(\mathrm E,\mathrm A)}\circ\mathrm f^{\star}\circ\mathbb D_{(\mathrm E',\mathrm A')}.
\end{equation}

\subsection{Digression: norm maps}\label{decat} 

By \textit{Picard groupoid} we understand symmetric monoidal category $\EuScript P$, with underlying tensor product $+_{\EuScript P}\colon \EuScript P\times \EuScript P\longrightarrow \EuScript P,$ such that $\EuScript P$ is the groupoids and for every object $\mathrm a\in \EuScript P$ functor $\mathrm x\mapsto \mathrm a+_{\EuScript P}\mathrm x$ defines autoequivalence of $\EuScript P.$ Unit object will be denote by $0_{\EuScript P},$ it follows from axioms that for every object $\mathrm x\in \EuScript P,$ there exists unique inverse object $-\mathrm x.$ We say that Picard groupoid $\EuScript P$ is \textit{strictly commutative}, if $\EuScript P$ is strictly commutative as monoidal category. We defined \textit{homotopy groups of Picard groupoid} as follows: $\pi_0(\EuScript P)$ is group of isomorphism classes of objects in $\EuScript P$ and $\pi_1:=\mathrm {Aut}_{\EuScript P}(0).$

By $\mathrm {Pic}$ we denote $2$-category of Picard groupoids, that is the $2$-category with objects Picard groupoid and $1$-morphisms are given by tensor functors and $2$-morphisms are given by natural transformations of monoidal functors. We have $2$-subcategory of strict Picard groupoids, denoted by $\mathrm {Pic}_{\mathrm{stric}}$ and $2$-subcategory of \textit{discrete Picard groupoids} denoted by $\mathrm {Pic}_{\mathrm {disc}}.$ Corresponding homotopy category of Picard groupoids will be denote by $\mathrm {Pic}^{\flat}.$ We have subcategories of strict and discreet Picard groupoids: $\mathrm {Pic}^{\flat}_{\mathrm{stric}}$ and $\mathrm {Pic}^{\flat}_{\mathrm {disc}}$ (these categories are defined as categories with objects given by strict Picard groupoids and morphisms are equivalence classes of monoidal functors).

Denote by $\mathrm D^{[-1,0]}(\mathrm {Ab})$ derived category of length $2$ complexes. We have functor:
\begin{equation}\label{}
\mathrm {ch}\colon \mathrm {D}^{[-1,0]}(\mathrm {Ab})\longrightarrow\mathrm {Pic}^{\flat}_{\mathrm{stric}},
\end{equation}
which takes complex $\mathrm A^{-1}\rightarrow \mathrm A^0$ of abelian groups and associates with it strict Picard groupoid $\EuScript P$ generated by elements of $\mathrm A^0,$ $\mathrm {Ob}(\EuScript P):=\mathrm A^0,$ with tensor structure $+_{\EuScript P}$ given by addition operation in $\mathrm A^0$ and unit object given by zero in $\mathrm A^{0}.$ For every $\mathrm c,\mathrm b\in \EuScript G$ set of morphisms defines as follows:
\begin{equation}\label{}
\mathrm {Hom}_{\EuScript P}(\mathrm c,\mathrm b):=\{\mathrm f\in \mathrm A^{-1}\,|\,\mathrm d(\mathrm f)=\mathrm b-\mathrm c\}.
\end{equation}
We have following classical result:

\begin{Lemma}\label{dlp1} Functor $\mathrm {ch}$ defines equivalence of categories.

\end{Lemma}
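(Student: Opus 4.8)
The plan is to show that $\mathrm{ch}$ is essentially surjective and fully faithful, after first recording its effect on the basic invariants. For a complex $K = (\mathrm A^{-1} \xrightarrow{\mathrm d} \mathrm A^0)$ the construction gives at once
\[
\pi_0(\mathrm{ch}(K)) = \mathrm A^0/\mathrm d(\mathrm A^{-1}) = \mathrm H^0(K), \qquad \pi_1(\mathrm{ch}(K)) = \mathrm{Aut}_{\mathrm{ch}(K)}(0) = \ker(\mathrm d) = \mathrm H^{-1}(K),
\]
since a morphism $0 \to 0$ is precisely an $\mathrm f \in \mathrm A^{-1}$ with $\mathrm d(\mathrm f) = 0$, and two objects $a, a'$ are isomorphic iff $a' - a \in \mathrm{im}(\mathrm d)$. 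In particular $\mathrm{ch}$ carries quasi-isomorphisms to equivalences of Picard groupoids, so it is well defined on the homotopy categories and I may freely replace objects by quasi-isomorphic representatives.

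For full faithfulness I would compare the two morphism groups by means of parallel exact sequences. On the derived side, the hyperext spectral sequence for complexes concentrated in degrees $[-1,0]$ over $\mathbb Z$ (which has global dimension $1$, so $\mathrm{Ext}^{\geq 2}$ vanishes and no $d_2$ differential survives) degenerates into the short exact sequence
\[
0 \longrightarrow \mathrm{Ext}^1(\mathrm H^0 K, \mathrm H^{-1} L) \longrightarrow \mathrm{Hom}_{\mathrm D^{[-1,0]}}(K, L) \longrightarrow \mathrm{Hom}(\mathrm H^0 K, \mathrm H^0 L) \oplus \mathrm{Hom}(\mathrm H^{-1} K, \mathrm H^{-1} L) \longrightarrow 0.
\]
On the Picard side, isomorphism classes of strict monoidal functors $\EuScript P \longrightarrow \EuScript Q$ sit in the analogous sequence with $\pi_0,\pi_1$ in place of $\mathrm H^0,\mathrm H^{-1}$: the kernel of the evident map to $\mathrm{Hom}(\pi_0\EuScript P,\pi_0\EuScript Q) \oplus \mathrm{Hom}(\pi_1\EuScript P,\pi_1\EuScript Q)$ is the group $\mathrm{Ext}^1(\pi_0\EuScript P,\pi_1\EuScript Q)$ of monoidal natural transformations, and here strict commutativity forces the compatibility constraint with the $k$-invariants to be vacuous, so this map is surjective. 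Unwinding the definition of $\mathrm{ch}$ shows it induces a morphism between these two sequences which is the identity on all four outer terms, whence the five lemma yields a bijection on morphism groups.

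The substantive step is essential surjectivity, and this is where I expect the genuine obstacle. Given a strict Picard groupoid $\EuScript P$, I would choose a free presentation $0 \to R \to F \to \pi_0(\EuScript P) \to 0$ and, using that a subgroup of a free abelian group is free, form the split complex $K_{\EuScript P} = (\pi_1(\EuScript P) \oplus R \xrightarrow{(0,\,\iota)} F)$, whose cohomology recovers $\pi_0(\EuScript P)$ and $\pi_1(\EuScript P)$; over $\mathbb Z$ there is no further extension datum, since every object of $\mathrm D^{[-1,0]}(\mathrm{Ab})$ splits as $\mathrm H^0 \oplus \mathrm H^{-1}[1]$. The real content is then to produce an equivalence $\EuScript P \simeq \mathrm{ch}(K_{\EuScript P})$. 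The delicate point is that a general symmetric categorical group carries, beyond $\pi_0$, $\pi_1$ and a class in $\mathrm{Ext}^1(\pi_0,\pi_1)$, a quadratic ($k$-invariant) obstruction measuring the failure of the symmetry constraint $\sigma_{x,x}$ to be the identity — exactly the datum that cannot be captured by a two-term complex of abelian groups. The hard part of the proof is to verify that strict commutativity of $\EuScript P$ forces this quadratic obstruction to vanish, so that the braiding is governed by honest homomorphisms and $\EuScript P$ is equivalent to the standard model $\mathrm{ch}(K_{\EuScript P})$. Once strictness is seen to kill the $k$-invariant, essential surjectivity follows, and together with full faithfulness this shows that $\mathrm{ch}$ is an equivalence of categories.
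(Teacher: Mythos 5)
The paper gives no argument for this lemma: it simply cites Deligne's Expos\'e XVIII of SGA~4, so any honest proof is necessarily ``a different route''. Your skeleton --- compute $\pi_0(\mathrm{ch}(K))=\mathrm H^0(K)$ and $\pi_1(\mathrm{ch}(K))=\mathrm H^{-1}(K)$, deduce that quasi-isomorphisms go to equivalences, prove full faithfulness by a five-lemma comparison of two short exact sequences, and prove essential surjectivity by exhibiting a standard model --- is exactly the shape of Deligne's argument, and the derived-category half of it (the truncation triangle, the vanishing of $\mathrm{Ext}^2$ over $\mathbb Z$, the splitting $K\simeq \mathrm H^0K\oplus \mathrm H^{-1}K[1]$) is correct as written.

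There are, however, two genuine gaps. First, the Picard-side exact sequence is asserted rather than proved, and it carries essentially all of the content of the lemma: exactness in the middle, the identification of the kernel with $\mathrm{Ext}^1(\pi_0\EuScript P,\pi_1\EuScript Q)$, and above all surjectivity onto $\mathrm{Hom}(\pi_0\EuScript P,\pi_0\EuScript Q)\oplus\mathrm{Hom}(\pi_1\EuScript P,\pi_1\EuScript Q)$ are precisely the classification statements the lemma encodes. Invoking them makes the five-lemma step circular unless you establish them by hand, e.g.\ by choosing a set-theoretic section of $\mathrm{Ob}\,\EuScript P\to\pi_0\EuScript P$ and analysing the resulting cocycle data for a monoidal functor. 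Second, in the essential surjectivity step you have mislocated the difficulty: the vanishing of the quadratic invariant $x\mapsto\sigma_{x,x}$ is not something that needs to be ``verified'' --- it is literally the definition of strict commutativity, so that step costs nothing. What is actually missing is the construction of a symmetric monoidal equivalence between $\mathrm{ch}(K_{\EuScript P})$ and $\EuScript P$: you must lift the chosen free presentation $F\twoheadrightarrow\pi_0(\EuScript P)$ to a family of objects of $\EuScript P$ indexed by a basis of $F$, extend it additively to a monoidal functor out of the discrete Picard groupoid on $F$ (this is where freeness of $F$ and strictness of $\EuScript P$ are used to make the chosen additivity isomorphisms coherent), deal with the relations $R$ by a compatible choice of trivialisations valued in $\pi_1(\EuScript P)$, and check that the resulting functor induces isomorphisms on $\pi_0$ and $\pi_1$. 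Until that functor is written down no comparison between $\EuScript P$ and the standard model exists, and essential surjectivity --- the substantive half of the lemma, as you yourself note --- is not established.
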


\begin{proof} See \cite[Expos\'{e} $\mathrm {XVIII}$]{SGA4(3)}.

\end{proof}
\begin{Ex} Under above equivalence category abelian groups $\mathrm {Ab}$ i.e. subcategory of complexes which live in zero degree corresponds to subcategory of discrete Picard groupoids $\mathrm {Pic}_{\mathrm {disc}}^{\flat}.$
\end{Ex}
\begin{remark} In \cite[Expos\'{e} $\mathrm {XVIII}$]{SGA4(3)} Deligne actually proves equivalence between $2$-category $\mathrm {Pic}$ and $2$-category $\mathrm C^{[-1,0]}(\mathrm {Ab}),$

\end{remark}
 Let $\mathrm M$ be a \textit{Mackey functor} \cite{Lin}\footnote{What we call the Mackey functor in \cite{Lin} is called the P-functor} from category $\mathrm D$ to the category $\mathrm {Ab}$ of abelian groups:
\begin{equation}\label{}
\mathrm M\colon \mathrm {span}(\mathrm D)\longrightarrow\mathrm {Ab}
\end{equation}
Category of all Mackey functors from the category $\mathrm D$ to the category $\mathrm {Ab}$ will be denoted by $\mathrm {mk}_{\mathrm D}(\mathrm {Ab}):=\underline{\mathrm {Hom}}(\mathrm {span}(\mathrm D),\mathrm {Ab}).$

 Denote by $\mathrm {mk}^{\star!}_{\mathrm D^{\circ}}$ category, whose objects are Mackey $\star!$-pseudo-functors and morphism are equivalence classes of pseudo-natural transformations.
\begin{Prop} Functor $\mathrm {ch}$ induces functor:
\begin{equation}\label{e4.4.2}
\mathrm {ch}\colon\mathrm {mk}_{\mathrm D}(\mathrm {Ab})\longrightarrow\mathrm  {mk}^{\star!}_{\mathrm D^{\circ}},
\end{equation}
which takes Mackey functor $\mathrm M$ and associates to it Mackey $\star!$-pseudo-functors $\mathrm H^{\star!}$ over $\mathrm D^{\circ},$ with values in subcategory of discrete Picard groupoids (pr\'{e}champ de Picard).
\end{Prop}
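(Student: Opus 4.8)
The plan is to realize the induced functor as post-composition with $\mathrm{ch}$, after a harmless identification of indexing categories. First I would record the bookkeeping fact that, by the convention $\mathrm{cospan}(\mathrm C):=\mathrm{span}(\mathrm C^{\circ})$, one has $\mathrm{cospan}(\mathrm D^{\circ})=\mathrm{span}((\mathrm D^{\circ})^{\circ})=\mathrm{span}(\mathrm D)$. Thus a Mackey $\star!$-pseudo-functor over $\mathrm D^{\circ}$ is, by Definition~\ref{d4.3.2}, precisely a pseudo-functor $\mathrm{span}(\mathrm D)\longrightarrow\mathrm{Pic}$, while a Mackey functor $\mathrm M\in\mathrm{mk}_{\mathrm D}(\mathrm{Ab})$ is an ordinary functor $\mathrm{span}(\mathrm D)\longrightarrow\mathrm{Ab}$; the two live over the same category. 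By the Example above, $\mathrm{ch}$ restricts to an equivalence $\mathrm{Ab}\overset{\sim}{\longrightarrow}\mathrm{Pic}^{\flat}_{\mathrm{disc}}$ onto discrete Picard groupoids, and by Deligne's $2$-categorical refinement recorded in the Remark above this lifts to a strict $2$-functor $\iota\colon\mathrm{Ab}\longrightarrow\mathrm{Pic}$ landing in $\mathrm{Pic}_{\mathrm{disc}}$, where $\mathrm{Ab}$ is viewed as a $2$-category with only identity $2$-cells (indeed, for a complex concentrated in degree $0$ the prescription $\mathrm{Hom}_{\EuScript P}(\mathrm c,\mathrm b)=\{\mathrm f\in 0\mid \mathrm d(\mathrm f)=\mathrm b-\mathrm c\}$ leaves only identities).

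Then I would simply set $\mathrm H^{\star!}:=\iota\circ\mathrm M$. Since $\mathrm M$ is an honest $1$-functor on $\mathrm{span}(\mathrm D)$ (whose morphisms are isomorphism classes of roofs) and $\iota$ is strict, the composite is a strict pseudo-functor $\mathrm{cospan}(\mathrm D^{\circ})=\mathrm{span}(\mathrm D)\longrightarrow\mathrm{Pic}$ all of whose values lie in $\mathrm{Pic}_{\mathrm{disc}}$. Its restrictions along the embeddings $\mathrm i\colon\mathrm D^{\circ}\hookrightarrow\mathrm{cospan}(\mathrm D^{\circ})$ and $\mathrm j\colon\mathrm D\hookrightarrow\mathrm{cospan}(\mathrm D^{\circ})$ are $\iota$ applied to the covariant and contravariant legs of $\mathrm M$, i.e. to the pair $(\mathrm i^{*}\mathrm M,\mathrm j^{*}\mathrm M)$ of \cite{Lin}, so they supply the two functorialities demanded in Definition~\ref{d4.3.2}. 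The base-change axiom is then automatic: it is nothing other than compatibility of $\mathrm M$ with composition of correspondences in $\mathrm{span}(\mathrm D)$ (composition being encoded by pullback), which is exactly Lindner's reformulation of a Mackey functor as a base-change-compatible pair. Hence $\mathrm H^{\star!}$ is a genuine Mackey $\star!$-pseudo-functor valued in discrete Picard groupoids.

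On morphisms I would send a morphism of Mackey functors, i.e. a natural transformation $\mathrm M\Rightarrow\mathrm M'$, to the pseudo-natural transformation $\iota\circ\mathrm M\Rightarrow\iota\circ\mathrm M'$ obtained by whiskering with $\iota$, and pass to equivalence classes as required by the definition of $\mathrm{mk}^{\star!}_{\mathrm D^{\circ}}$; preservation of identities and composites up to the canonical cells coming from $\mathrm{ch}$ yields functoriality of $\mathrm{ch}\colon\mathrm{mk}_{\mathrm D}(\mathrm{Ab})\longrightarrow\mathrm{mk}^{\star!}_{\mathrm D^{\circ}}$. The one genuinely delicate point, and the main obstacle, is coherence: because $\mathrm{ch}$ is only an equivalence rather than an identity, one must verify that the associativity and unit constraints of the assembled pseudo-functor and the naturality squares of the transformations agree with the constraints furnished by Deligne's equivalence. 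This is where I would spend the effort, but it is resolved cheaply by discreteness—every object in the image has $\pi_{1}=\mathrm{Aut}_{\EuScript P}(0)=0$, so all the relevant $2$-cells are uniquely determined and every coherence diagram commutes on the nose.
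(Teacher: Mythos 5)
Your construction—post-composing the Mackey functor $\mathrm M\colon\mathrm{span}(\mathrm D)\to\mathrm{Ab}$ with the inclusion $\mathrm{Ab}\hookrightarrow\mathrm{Pic}$ furnished by $\mathrm{ch}$, identifying $\mathrm{cospan}(\mathrm D^{\circ})$ with $\mathrm{span}(\mathrm D)$, and using discreteness to trivialize all coherence—is exactly the argument the paper's proof delegates to the citation of Deligne's Expos\'e $\mathrm{XVIII}$, so you are taking essentially the same route, just spelled out in full. Your observation that base change comes for free from Lindner's reformulation and that $\pi_1=0$ kills every coherence obstruction is correct and is the substantive content the paper leaves implicit.
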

\begin{proof} See \cite[Expos\'{e} $\mathrm {XVIII}$]{SGA4(3)}.

\end{proof}
Denote by $\mathrm {Rig}_{\mathrm {cocart}}$ category with objects cocartesian rigid symmetric monoidal categories and equivalence classes of monoidal functor between them. We have functor:
\begin{equation}\label{}
 \mathrm {grp}\colon\mathrm {Rig}_{\mathrm {cocart}}\longrightarrow  \mathrm {Pic}^{\flat}
\end{equation}
which takes underlying groupoid $\mathrm C^{\times}$ of rigid symmetric monoidal category $\mathrm C.$ We have obvious:
\begin{Prop}
Functor $\mathrm {grp}$ admits a left adjoint:
\begin{equation}\label{}
\mathrm {comp}\colon \mathrm {Pic} \longleftrightarrow \mathrm {Rig}_{\mathrm {cocart}}
\end{equation}
called completion functor.
\end{Prop}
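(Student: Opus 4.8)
The plan is to construct $\mathrm{comp}$ explicitly as a free cocompletion and to read off the adjunction from its universal property. Given a Picard groupoid $\EuScript P$, I would set $\mathrm{comp}(\EuScript P)$ to be the free cocomplete symmetric monoidal category generated by $\EuScript P$, realised as the presheaf category $\underline{\mathrm{Hom}}(\EuScript P^{\circ},\mathrm{Set})$ equipped with the Day convolution product induced by $+_{\EuScript P}$. The Yoneda embedding $y\colon \EuScript P\longrightarrow \mathrm{comp}(\EuScript P)$ is then strong monoidal, and since every object of $\EuScript P$ is invertible and strong monoidal functors preserve invertibility, each $y(\mathrm a)$ is an invertible, hence dualizable (rigid), object of $\mathrm{comp}(\EuScript P)$. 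Thus $\mathrm{comp}(\EuScript P)$ is generated under colimits by rigid objects and lies in $\mathrm{Rig}_{\mathrm{cocart}}$, while $y$ furnishes the unit $\EuScript P\longrightarrow \mathrm{grp}(\mathrm{comp}(\EuScript P))$ of the putative adjunction.

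The key step is to verify the universal property. First I would invoke the standard Day convolution property: for any $\mathrm C$ in $\mathrm{Rig}_{\mathrm{cocart}}$, restriction along $y$ gives an equivalence between colimit-preserving strong monoidal functors $\mathrm{comp}(\EuScript P)\longrightarrow \mathrm C$ and strong monoidal functors $\EuScript P\longrightarrow \mathrm C$. Second, because every object of $\EuScript P$ is invertible, any such strong monoidal functor $\EuScript P\longrightarrow \mathrm C$ factors through the subgroupoid $\mathrm C^{\times}=\mathrm{grp}(\mathrm C)$ of invertible objects; conversely any morphism of Picard groupoids $\EuScript P\longrightarrow \mathrm{grp}(\mathrm C)$ is in particular a strong monoidal functor into $\mathrm C$. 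Composing these two identifications yields the desired natural equivalence of morphism categories, which after passing to equivalence classes gives the adjunction $\mathrm{comp}\dashv \mathrm{grp}$.

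I expect the main obstacle to be bookkeeping rather than genuine difficulty, which is why the statement is labelled obvious. The two delicate points are, first, checking compatibility with the passage to the homotopy category $\mathrm{Pic}^{\flat}$ and with the quotient by equivalence classes of monoidal functors used to define $\mathrm{Rig}_{\mathrm{cocart}}$, so that the $2$-categorical naturality descends correctly; and second, pinning down the precise meaning of \emph{cocartesian rigid} so that $\mathrm{comp}(\EuScript P)$ genuinely satisfies it. If one prefers to avoid the explicit model, the same conclusion follows formally from the adjoint functor theorem: $\mathrm{grp}$, being the passage to invertible objects, preserves all limits, and both $\mathrm{Pic}$ and $\mathrm{Rig}_{\mathrm{cocart}}$ are locally presentable, so a left adjoint exists automatically.
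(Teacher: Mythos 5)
The paper offers no argument for this statement (it is introduced with ``We have obvious:''), so your proposal has to stand entirely on its own, and it has one genuine gap: the category you construct is not rigid. Dualizable objects are not closed under colimits, so the fact that $\underline{\mathrm{Hom}}(\EuScript P^{\circ},\mathrm{Set})$ with Day convolution is generated under colimits by the invertible (hence dualizable) representables does not make its general objects dualizable. The simplest counterexample is the trivial Picard groupoid: its presheaf category is $\mathrm{Set}$ and the Day convolution is the cartesian product, for which the only dualizable object is the singleton. So your candidate $\mathrm{comp}(\EuScript P)$ does not land in $\mathrm{Rig}_{\mathrm{cocart}}$ under the standard meaning of rigid, and the Yoneda unit $\EuScript P\rightarrow \mathrm{grp}(\mathrm{comp}(\EuScript P))$ is a map into the wrong category. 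To repair this you must either cut down to the full subcategory of dualizable objects (which destroys cocompleteness, so the Day universal property no longer applies verbatim) or use a completion that stays rigid --- for instance a free completion under finite biproducts, where a finite sum $\oplus_{\mathrm i}\,\mathrm a_{\mathrm i}$ of invertible objects does acquire the dual $\oplus_{\mathrm i}\,(-\mathrm a_{\mathrm i})$ via the diagonal evaluation and zero off-diagonal components. Which of these is intended depends on the (undefined) meaning of ``cocartesian rigid'' here, but as written your construction verifies the adjunction against the wrong right-hand category.

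Your fallback via the adjoint functor theorem does not close the gap either: $\mathrm{Pic}^{\flat}$ and $\mathrm{Rig}_{\mathrm{cocart}}$ are defined with \emph{equivalence classes} of monoidal functors as morphisms, i.e.\ they are homotopy categories, and such quotients are essentially never locally presentable (they typically fail to admit limits at all), so the presentability hypothesis you invoke is unavailable at this level; one would have to work $2$-categorically before passing to equivalence classes. The part of your argument worth keeping is the second identification: since every object of $\EuScript P$ is invertible and strong monoidal functors preserve invertibility, strong monoidal functors $\EuScript P\rightarrow\mathrm C$ coincide with morphisms of Picard groupoids $\EuScript P\rightarrow\mathrm C^{\times}$. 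That step is correct and is exactly what reduces the adjunction to producing the right notion of free rigid cocartesian category on $\EuScript P$.
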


Assume that category $\mathrm D$ is finite and let $\mathrm H^{\star!}$ be a Mackey functor, with values in discreet Picard groupoids. By $\mathrm H^{\star!}_+$ we denote corresponding completion of Mackey functor. Since fibers of pseudo-functor $\mathrm H^{\star!}_+$ have finite colimits we can apply categorical Verdier duality construction:
\begin{equation}\label{e4.4.4}
\mathbb V_{!\mapsto\star}^{\mathrm {full}}\colon \clim_{{\mathrm D^{\circ}}}\mathrm j^*\mathrm H^{\star!}_+\longrightarrow \lim_{{\mathrm D}}\mathrm i^*\mathrm H^{\star!}_+
\end{equation}
We have obvious:
\begin{Lemma} Categories $\clim_{{\mathrm D^{\circ}}}\mathrm j^*\mathrm H^{\star!}_+$ and $ \lim_{{\mathrm D}}\mathrm i^*\mathrm H^{\star!}_+$ are cocartesian rigid symmetric categories.

\end{Lemma}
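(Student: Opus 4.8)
The plan is to reduce both assertions to the single structural fact that $\mathrm H^{\star!}_+ = \mathrm{comp}\circ \mathrm H^{\star!}$, where $\mathrm H^{\star!}$ lands in the discrete Picard groupoids and $\mathrm{comp}\colon \mathrm{Pic}\to \mathrm{Rig}_{\mathrm{cocart}}$ is the completion functor left adjoint to $\mathrm{grp}$. In particular every transition functor of $\mathrm j^*\mathrm H^{\star!}_+$ and of $\mathrm i^*\mathrm H^{\star!}_+$ is the image under $\mathrm{comp}$ of a morphism of Picard groupoids (a tensor functor), hence by the definition of $\mathrm{Rig}_{\mathrm{cocart}}$ a strong symmetric monoidal, colimit-preserving functor. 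This is the feature that makes the pointwise constructions below legitimate.

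For the colimit $\clim_{\mathrm D^{\circ}}\mathrm j^*\mathrm H^{\star!}_+$ I would argue via the adjunction directly. Rewriting the diagram as $\mathrm{comp}\circ(\mathrm j^*\mathrm H^{\star!})$ and using that $\mathrm{comp}$, being a left adjoint, preserves colimits, the colimit is identified with $\mathrm{comp}\bigl(\clim_{\mathrm D^{\circ}}\mathrm j^*\mathrm H^{\star!}\bigr)$. The inner colimit is taken in $\mathrm{Pic}$, so it is again a Picard groupoid, and applying $\mathrm{comp}$ produces by construction an object of $\mathrm{Rig}_{\mathrm{cocart}}$, i.e. a cocartesian rigid symmetric monoidal category. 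The only point needing verification is that this colimit, formed in $\mathrm{Rig}_{\mathrm{cocart}}$, has the same underlying category as the colimit computed in $\mathrm{Cat}$ via the localization description recalled at the beginning of this section; I would check this by transporting the universal symmetric monoidal and rigid structure along the comparison functor, which is available precisely because all transition functors are strong monoidal.

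For the limit $\lim_{\mathrm D}\mathrm i^*\mathrm H^{\star!}_+$ I would use the description of the limit as the category $\underline{\Gamma}_{\mathrm{cart}}$ of cartesian sections. All three pieces of structure are then built pointwise. The symmetric monoidal product of two cartesian sections is cartesian because each transition functor is strong monoidal; the dual of a cartesian section is again cartesian because a strong monoidal functor preserves duals, which gives rigidity; and finite colimits of cartesian sections, computed pointwise, remain cartesian because each transition functor preserves colimits — here it is essential that, under the identification $\mathrm i^*(\mathrm H^{\star!})\cong \mathrm H_!$ of Definition \ref{d4.3.2}, these are the completed $\mathrm H_!$-functors and hence left adjoints. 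Finiteness of $\mathrm D$ ensures that the relevant colimits are finite and that the pointwise data assemble into genuine objects and sections.

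The main obstacle will be the \emph{cocartesian} clause in both cases rather than the monoidal or rigid clauses, which are formal. For the limit it reduces to preservation of finite colimits by the transition functors, which is exactly the input that the $\mathrm H_!$-direction completes to a cocontinuous functor; for the colimit it is the compatibility of the localization description of $\clim_{\mathrm D^{\circ}}$ with the monoidal and finite-colimit structure, i.e. the identification of the $\mathrm{Cat}$-colimit with $\mathrm{comp}$ of the $\mathrm{Pic}$-colimit. Once these two compatibilities are in place, membership in $\mathrm{Rig}_{\mathrm{cocart}}$ follows immediately.
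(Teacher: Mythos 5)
Your proposal is correct, and for the limit half it coincides with the paper's own (one-sentence) argument: everything is built fiberwise on cartesian sections, the only input being that the transition functors, as completions of morphisms of Picard groupoids, are strong symmetric monoidal and preserve the relevant finite colimits. For the colimit half you take a genuinely different route. The paper simply asserts that the tensor product "can be defined fiberwise, since $\mathrm m_!$ is a Picard functor," i.e.\ it equips the localization $\Psi[\mathrm S^{-1}]$ directly with a descended monoidal structure; you instead factor the diagram as $\mathrm{comp}\circ(\mathrm j^*\mathrm H^{\star!})$ and use that $\mathrm{comp}$, being a left adjoint, preserves colimits, so the answer is manifestly $\mathrm{comp}$ of a Picard groupoid and lies in $\mathrm{Rig}_{\mathrm{cocart}}$ by construction. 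This is conceptually cleaner, but it concentrates the entire difficulty in the comparison you flag at the end: the colimit in the statement is the one computed in $\mathrm{Cat}$ via the localization description, and identifying its underlying category with that of the colimit taken in $\mathrm{Pic}$ (then completed) is not automatic — it is essentially the same descent-of-structure statement the paper asserts by fiat, and it is precisely where the generalized filtered hypothesis on $\mathrm D^{\circ}$ and the finiteness of $\mathrm D$ actually enter. So the two arguments are of comparable strength; yours isolates the real obstruction more honestly, while the paper's fiberwise formulation handles both the colimit and the limit uniformly in one stroke.
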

\begin{proof} Tensor product can be defined fiberwise, since for every morphism $\mathrm m$ in $\mathrm D^{\circ}$ functor $\mathrm m_!$ is the Picard functor.

\end{proof}
By previous Lemma Verdier duality functor $\mathbb V_{!\mapsto\star}^{\mathrm {full}}$ induces morphism:
\begin{equation}\label{e4.4.5}
 {}^1\mathbb V_{!\mapsto\star}^{\mathrm {full}}\colon \clim_{{\mathrm D^{\circ}}} \mathrm j^*\mathrm M\longrightarrow\lim_{{\mathrm D}} \mathrm i^*\mathrm M,
\end{equation}
which we call \textit{Verdier duality map}. We have following baby example: 
\begin{Ex}\label{ex.4.4.1} Let $\mathrm G$ be a finite group and $\mathrm X=\mathrm K(\mathrm G,\mathrm 1).$ We denote by $\Pi_{1}(\mathrm X)$ fundamental groupoid of $\mathrm X.$ We have well known correspondence between category $\mathrm {Rep}(\mathrm G)$ of representations of $\mathrm G$ and category of local systems of abelian groups on $\mathrm X:$
\begin{equation}\label{e4.4.6}
\mathrm{LC}(\mathrm K(\mathrm G,\mathrm 1),\mathrm{Ab})\overset{\sim}{\longrightarrow} \mathrm {Rep}(\mathrm G)
\end{equation}
Let $\mathrm M$ be a representation of $\mathrm F$ and $\EuScript L_{\mathrm M}\in \mathrm{LC}(\mathrm K(\mathrm G,\mathrm 1))$ be the corresponding local system, then we can associate with it Mackey functor $\mathrm i_*(\EuScript L_{\mathrm M}).$ Categorical Verdier duality construction supplies us with the morphism:
\begin{equation}\label{e4.4.7}
{}^1\mathbb V_{!\mapsto\star}^{\mathrm {full}}\colon \clim_{{\Pi_1(\mathrm X)}} \EuScript L_{\mathrm M}\longrightarrow\lim_{{\Pi_1(\mathrm X)}} \EuScript L_{\mathrm M}
\end{equation}
which is can be rewritten as:
\begin{equation}\label{e4.4.8}
{}^1\mathbb V_{!\mapsto\star}^{\mathrm {full}}\colon \mathrm M_{\mathrm G}\longrightarrow\mathrm M^{\mathrm G},\quad \mathrm v\mapsto \sum_{\mathrm g\in \mathrm G}\mathrm g\mathrm v
\end{equation}
where space $\mathrm M_{\mathrm G}$ is the space of coinvariants and $\mathrm M^{\mathrm G}$ is the space of invariants of $\mathrm G.$ Note, that in this example, Verdier duality $\mathbb V_{\mathrm D,\mathrm{full}}^1$ is actually isomorphic to canonical functor $\mathrm {can}$, since our base category $\mathrm G$ is groupoid.
\end{Ex}
It is natural try to extend above construction to spectral Mackey functors. To do it we need to switch our setting to the world of $\infty$-groupoids.
\begin{remark} This remark does not contain rigorous statements or proofs and should be viewed as a sketch. We have stable category of spectra $\EuScript S\mathrm p,$ which is monoidal category with smash product $\wedge$ and unit given by sphere spectrum $\mathbb S.$ \textit{Homotopy groups} of spectra $\mathrm X$ are defined as follows:
\begin{equation}\label{}
\pi_{\mathrm i}(\mathrm X):=\clim_{\mathrm i\in \mathbb N} \pi_{\mathrm i+\mathrm j}(\mathrm X_{\mathrm j}).
\end{equation}
Denote by $\EuScript S\mathrm p^{\geq 0}$ subcategory of \textit{connective spectra}. It is objects are given by spectra $\mathrm X\in \EuScript S\mathrm p,$ such that $\pi_{\mathrm i}(\mathrm X)=0$, for all $\mathrm i<0.$ Subcategory $\EuScript S\mathrm p^{\geq 0}$ with corresponding truncation functor:
\begin{equation}\label{}
\tau_{\geq 0}\colon  \EuScript S\mathrm p\longrightarrow \EuScript S\mathrm p^{{\geq 0}},
\end{equation}
defines $\mathrm t$-structure on $\EuScript S\mathrm p,$ with heart given by category of abelian groups $\mathrm {Ab}.$ We say that spectra $\mathrm X\in \EuScript S\mathrm p^{\geq 0}$ has a \textit{stable homotopy type $\mathrm n$} if all homotopy groups vanishes:
\begin{equation}\label{}
\pi_{\mathrm i}(\mathrm X)=0\qquad \mathrm i> \mathrm n.
\end{equation}

Recall following correspondence which is due to Grothendieck (see \cite[p.114 ]{PS} and \cite{Dr}).
\begin{Const}\label{const1} Functor $\Pi_{\infty}$ takes topological space $\mathrm X$ to corresponding fundamental $\infty$-groupoid $\Pi_{\infty}(\mathrm X)$ and functor $\mathrm B$ takes $\infty$-groupoid $\EuScript G$ to classifying space $\mathrm B\EuScript G.$ This functors induces functors between categories of connective spectra and category Picard $\infty$-groupoids $\infty-\mathrm {Pic}.$ Moreover these functors take stable $\mathrm n$-types to Picard $\mathrm n$-groupoids and vice versa. These functors are adjoint and induce equivalence of homotopy categories:
\begin{equation}\label{87}
\Pi_{\infty}\colon\EuScript S\mathrm p^{{\geq 0}}\longleftrightarrow\infty\text{-}\mathrm {Pic}\colon \mathrm B
\end{equation}
\end{Const}
\begin{remark} This correspondence is a stable analog of \textit{Grothendieck homotopy hypothesis} \cite{PS}
\end{remark}

Denote by $\mathrm {C}^{\leq 0}(\mathrm {Ab})$ category of cochain complexes of abelian groups, which live in non positive degree. And by $\mathrm {Ab}^{\Delta}:=\underline{\mathrm {Hom}}(\Delta,\mathrm {Ab})$ we denoted category of simplicial abelian groups. We can consider $\mathrm {C}^{\leq 0}(\mathrm {Ab})$ as subcategory of $\EuScript S\mathrm p^{\geq 0}$ via following constructions:
\begin{Prop} Normalization functor:
\begin{equation}\label{65}
\mathrm {Norm}\colon\mathrm {Ab}^{\Delta}\longrightarrow \mathrm {C}^{\leq 0}(\mathrm {Ab}
\end{equation}
induces equivalence of homotopy categories.
\end{Prop}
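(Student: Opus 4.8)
The plan is to deduce the statement from the classical Dold--Kan correspondence, which in its strong form asserts an equivalence not merely of homotopy categories but of the underlying abelian categories $\mathrm{Ab}^{\Delta}\simeq \mathrm{C}^{\leq 0}(\mathrm{Ab})$ (see \cite[Expos\'{e} $\mathrm{XVIII}$]{SGA4(3)}); the homotopy-categorical statement then follows formally. First I would recall the explicit form of the functor: for a simplicial abelian group $\mathrm A$ one sets $\mathrm{Norm}(\mathrm A)_{\mathrm n}:=\bigcap_{\mathrm i<\mathrm n}\ker(\mathrm d_{\mathrm i})\subseteq \mathrm A_{\mathrm n}$, placed in cohomological degree $-\mathrm n$, with differential induced by $(-1)^{\mathrm n}\mathrm d_{\mathrm n}$. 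The basic structural input, which I would establish first, is the Dold--Kan decomposition
\begin{equation*}
\mathrm A_{\mathrm n}\cong \bigoplus_{[\mathrm n]\twoheadrightarrow[\mathrm k]}\mathrm{Norm}(\mathrm A)_{\mathrm k},
\end{equation*}
the sum running over surjective monotone maps in $\Delta$. This exhibits the Moore complex $\mathrm C(\mathrm A)$ (with $\mathrm C(\mathrm A)_{\mathrm n}=\mathrm A_{\mathrm n}$ and $\partial=\sum(-1)^{\mathrm i}\mathrm d_{\mathrm i}$) as a direct sum $\mathrm C(\mathrm A)=\mathrm{Norm}(\mathrm A)\oplus \mathrm D(\mathrm A)$ of the normalized complex and the degenerate subcomplex, so that the inclusion $\mathrm{Norm}(\mathrm A)\hookrightarrow \mathrm C(\mathrm A)$ is a chain homotopy equivalence; in particular $\mathrm{Norm}$ is additive and exact.

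Second, I would produce a quasi-inverse $\Gamma$ guided by the same decomposition: to a complex $\mathrm K$ one assigns $\Gamma(\mathrm K)_{\mathrm n}:=\bigoplus_{[\mathrm n]\twoheadrightarrow[\mathrm k]}\mathrm K_{\mathrm k}$, with faces and degeneracies dictated by the composition and epi--mono factorization of monotone maps in $\Delta$. The content is to check that $\mathrm{Norm}\circ\Gamma\cong\mathrm{id}$ and $\Gamma\circ\mathrm{Norm}\cong\mathrm{id}$. This is the technical heart and the step I expect to be the main obstacle, as it is a bookkeeping argument over the surjections and injections of $\Delta$ together with the simplicial identities; everything after it is formal.

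Finally, I would pass to homotopy categories. The weak equivalences on $\mathrm{Ab}^{\Delta}$ are the maps inducing isomorphisms on simplicial homotopy groups, while those on $\mathrm{C}^{\leq 0}(\mathrm{Ab})$ are the quasi-isomorphisms. Since $\pi_{\mathrm n}(\mathrm A)\cong \mathrm H_{\mathrm n}(\mathrm{Norm}(\mathrm A))$ by construction, the functor $\mathrm{Norm}$ carries the first class of weak equivalences bijectively onto the second. An equivalence of categories matching the two classes of weak equivalences descends to an equivalence of the corresponding localizations, which is precisely the asserted equivalence of homotopy categories.
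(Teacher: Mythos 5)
Your proposal is a correct outline of the classical Dold--Kan argument: the decomposition $\mathrm A_{\mathrm n}\cong\bigoplus_{[\mathrm n]\twoheadrightarrow[\mathrm k]}\mathrm{Norm}(\mathrm A)_{\mathrm k}$, the explicit quasi-inverse $\Gamma$, and the identification $\pi_{\mathrm n}(\mathrm A)\cong\mathrm H_{\mathrm n}(\mathrm{Norm}(\mathrm A))$ which lets the equivalence descend to localizations. The paper offers no proof of this proposition at all --- it simply names the statement the Dold--Kan equivalence and moves on (the surrounding remark is explicitly declared to be a non-rigorous sketch) --- so there is nothing to compare against; your write-up supplies the standard argument, and the only point worth tightening is that "carries the weak equivalences bijectively onto the quasi-isomorphisms" should be phrased as "preserves and reflects weak equivalences," which is what the descent to homotopy categories actually uses.
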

This construction is called \textit{Dold-Kan equivalence}. For any simplicial abelian group $\mathrm A$ one can associate another abelian simplicial group $\mathrm {BA},$ called \textit{classifying group of $\mathrm A.$} Thus, to a simplicial abelian group $\mathrm A$ we can associate spectra by following rule:
\begin{equation}\label{76}
\mathrm {BA}_{\mathrm n}:=\Omega\underbrace{\mathrm B\circ \cdots\circ\mathrm {B}}_{\mathrm n}\mathrm {A}
\end{equation}
\begin{Cor} Category  $\mathrm {C}^{\leq 0}(\mathrm {Ab})$ of cochain complexes of abelian groups, which live in non positive degree can be considered as full subcategory of $\EuScript S\mathrm p^{\geq 0}.$

\end{Cor}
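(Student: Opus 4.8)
The plan is to realize the desired embedding as the composite of the three constructions just assembled. First I would invoke the preceding Dold--Kan equivalence to replace a cochain complex $\mathrm A^{\bullet}$ concentrated in non-positive degrees by the simplicial abelian group $\mathrm{Norm}^{-1}(\mathrm A^{\bullet})$, so that up to homotopy the category $\mathrm C^{\leq 0}(\mathrm{Ab})$ is identified with $\mathrm{Ab}^{\Delta}$. Next I would feed this simplicial abelian group into the iterated classifying-group construction $\mathrm A\mapsto\mathrm{BA}$ with $\mathrm{BA}_{\mathrm n}=\Omega\,\mathrm B\circ\cdots\circ\mathrm B\,\mathrm A$; since each classifying group is a delooping, this is by construction an $\Omega$-spectrum and hence an object of $\EuScript S\mathrm p^{\geq 0}$. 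Composing, every complex in $\mathrm C^{\leq 0}(\mathrm{Ab})$ is sent functorially to a connective spectrum, and by Construction \ref{const1} this is compatible with the passage to Picard $\infty$-groupoids.

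Second, I would check that the functor matches the invariants on both sides. The homotopy groups of $\mathrm{BA}$ are to be computed as the cohomology groups of $\mathrm A^{\bullet}$, equivalently as the homotopy groups of the simplicial abelian group; consequently the functor carries the natural $\mathrm t$-structure on $\mathrm C^{\leq 0}(\mathrm{Ab})$ to the connective $\mathrm t$-structure on $\EuScript S\mathrm p^{\geq 0}$ whose heart is $\mathrm{Ab}$. In particular complexes concentrated in degree zero land in the heart, recovering the inclusion of $\mathrm{Ab}$ recorded earlier, and a complex with cohomology spread over degrees $0,-1,\dots,-\mathrm n$ produces a spectrum of stable homotopy type $\mathrm n$. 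This bookkeeping also pins down the essential image: it consists exactly of the spectra obtained by iterated delooping of simplicial abelian groups, that is, the generalized Eilenberg--MacLane (equivalently $\mathrm H\mathbb{Z}$-module) spectra, so no claim of essential surjectivity onto all of $\EuScript S\mathrm p^{\geq 0}$ is being made.

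The hard part will be full faithfulness, namely that for two complexes the maps between the associated spectra are precisely the chain-homotopy classes of maps of complexes. For complexes concentrated in a single degree this reduces to the standard computation that the degree-zero stable maps $[\mathrm H\mathrm A,\mathrm H\mathrm B]$ agree with $\mathrm{Hom}_{\mathrm{Ab}}(\mathrm A,\mathrm B)$. The genuine obstacle appears when the cohomology is spread over several degrees: the naive spectrum-level mapping set can acquire extra elements coming from stable cohomology operations, so the literal phrase \emph{full subcategory} must be read as full faithfulness into the category of $\mathrm H\mathbb{Z}$-modules rather than into spectra with their ambient maps. I would therefore carry this out by identifying the correct mapping object with the derived $\underline{\mathrm{Hom}}$ of complexes of $\mathbb{Z}$-modules, computed in $\mathrm H\mathbb{Z}$-modules, and by using the equivalence of Construction \ref{const1} together with Lemma \ref{dlp1} to match both sides with the appropriate truncated Picard $\infty$-groupoids, where the homotopy-coherent maps are governed exactly by the chain-level data; this is the step where the coherence, and not merely the object-level correspondence, must be established.
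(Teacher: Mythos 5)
Your construction of the embedding coincides with the paper's: the Corollary carries no proof of its own and is meant to follow from exactly the composite you describe, namely the inverse of the Dold--Kan normalization followed by the iterated classifying-group delooping $\mathrm{BA}_{\mathrm n}:=\Omega\,\mathrm B\circ\cdots\circ\mathrm B\,\mathrm A$, all inside a remark that the author explicitly flags as a non-rigorous sketch. Where you genuinely depart from (and improve on) the paper is your final paragraph on fullness. You are right that the statement, read literally, fails: once the cohomology of the complexes is spread over several degrees, the spectrum-level mapping sets acquire exotic stable cohomology operations. Concretely, $\mathbb Z/2$ placed in degree $0$ and $\mathbb Z/2$ placed in degree $-2$ both lie in $\mathrm C^{\leq 0}(\mathrm{Ab})$, the derived $\mathrm{Hom}$ between them vanishes because $\mathbb Z$ has global dimension one, and yet $\mathrm{Sq}^2$ gives a nonzero map $\mathrm H\mathbb Z/2\to\Sigma^2\mathrm H\mathbb Z/2$ of connective spectra. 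So the functor is faithful (the comparison map is split injective via the unit $\mathbb S\to\mathrm H\mathbb Z$) but not full into $\EuScript S\mathrm p^{\geq 0}$, and your proposal to read ``full subcategory'' as fullness into connective $\mathrm H\mathbb Z$-modules, with mapping objects computed by the derived $\underline{\mathrm{Hom}}$ of complexes, is the correct repair. The paper buys brevity by ignoring this point; your version buys a true statement at the cost of shrinking the ambient category, and the coherence step you defer to the stable homotopy hypothesis is exactly where the remaining work would lie if one wanted to make the whole remark precise.
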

\begin{remark} Note that under above inclusion functor, truncation functors on complexes $\tau_{\mathrm i}$ correspond to truncation functors on spectra.

\end{remark}
\begin{Cor} Under stable homotopy hypothesis category $\mathrm {C}^{\leq 0}(\mathrm {Ab})$ corresponds to subcategory of strict Picard $\infty$-groupoids:
\begin{equation}\label{8}
\mathrm {C}^{\leq 0}(\mathrm {Ab})\longleftrightarrow\infty-\mathrm {Pic}_{\mathrm{strict}}
\end{equation}
\end{Cor}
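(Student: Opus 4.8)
The plan is to identify the essential image of the full embedding $\iota\colon\mathrm{C}^{\leq 0}(\mathrm{Ab})\hookrightarrow \EuScript S\mathrm p^{\geq 0}$ furnished by the preceding corollary with the subcategory $\infty\text{-}\mathrm{Pic}_{\mathrm{strict}}$ under the equivalence $\Pi_{\infty}\colon\EuScript S\mathrm p^{\geq 0}\xrightarrow{\sim}\infty\text{-}\mathrm{Pic}$ of Construction \ref{const1}. The composite $\Pi_{\infty}\circ\iota\colon \mathrm{C}^{\leq 0}(\mathrm{Ab})\to\infty\text{-}\mathrm{Pic}$ is fully faithful, being a composite of a full embedding with an equivalence, so the entire content is the computation of its image. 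This is the $\infty$-categorical upgrade of Lemma \ref{dlp1}, which already records the one-truncated case $\mathrm D^{[-1,0]}(\mathrm{Ab})\simeq\mathrm{Pic}^{\flat}_{\mathrm{strict}}$.

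First I would check that $\Pi_{\infty}\circ\iota$ lands in the strict objects. By the Dold--Kan proposition an object of $\mathrm{C}^{\leq 0}(\mathrm{Ab})$ is modeled by a simplicial abelian group $\mathrm A$, and the iterated classifying-group construction $\mathrm{BA}_{\mathrm n}$ used to build $\iota$ stays inside simplicial abelian groups, since the classifying group of a simplicial abelian group is again one. The Picard $\infty$-groupoid underlying a simplicial abelian group inherits its symmetry constraint from the strictly commutative addition, so that constraint is the identity; hence $\Pi_{\infty}\circ\iota$ factors through $\infty\text{-}\mathrm{Pic}_{\mathrm{strict}}$.

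Conversely I would show that every strict Picard $\infty$-groupoid lies in the image. By the definition of strictness the symmetry constraint is trivial, so such an object is a grouplike strictly commutative monoid in spaces, which is precisely the datum of a simplicial abelian group. Applying $\mathrm{Norm}$ and invoking the Dold--Kan proposition produces an object of $\mathrm{C}^{\leq 0}(\mathrm{Ab})$; the remark that the truncation functors $\tau_{\mathrm i}$ on complexes match those on spectra ensures this passage is compatible with the Postnikov filtration, so $\Pi_{\infty}\circ\iota$ recovers the given strict Picard $\infty$-groupoid. Combining the two inclusions yields the asserted correspondence $\mathrm{C}^{\leq 0}(\mathrm{Ab})\leftrightarrow\infty\text{-}\mathrm{Pic}_{\mathrm{strict}}$.

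The hard part will be making rigorous the implication ``strict symmetry constraint $\Rightarrow$ simplicial abelian group'', namely that a grouplike strictly commutative monoid in the homotopy theory of spaces is rigidly modeled by a simplicial abelian group. Unlike a general Picard $\infty$-groupoid, which cannot be strictified, here strictness is part of the hypothesis, and the claim amounts to the forgetful functor from simplicial abelian groups to strict Picard $\infty$-groupoids being an equivalence. This is the step where the stable homotopy hypothesis of Construction \ref{const1} must be combined with Deligne's one-truncated computation of Lemma \ref{dlp1} in order to control all Postnikov stages simultaneously.
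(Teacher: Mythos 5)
The paper offers no proof of this corollary: it is asserted without argument, inside a remark that the author explicitly prefaces with ``this remark does not contain rigorous statements or proofs and should be viewed as a sketch.'' So there is no argument of the paper's to compare yours against; your outline supplies strictly more than the source does, and the route you take --- compose the embedding $\mathrm{C}^{\leq 0}(\mathrm{Ab})\hookrightarrow \EuScript S\mathrm p^{\geq 0}$ from the preceding corollary with the equivalence of Construction \ref{const1} and compute the essential image --- is clearly the one the author intends, since the surrounding text sets up exactly those two ingredients and the following remark recovers Lemma \ref{dlp1} as the one-truncated case.

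That said, your proof is a sketch with a genuine open step, and you are right to flag it: everything reduces to the claim that a grouplike strictly commutative monoid in spaces is modeled by a simplicial abelian group, and neither you nor the paper establishes this. Two concrete difficulties sit there. First, the paper never defines $\infty\text{-}\mathrm{Pic}_{\mathrm{strict}}$, so the target of the comparison is not pinned down; if it is \emph{defined} as the image of simplicial abelian groups the corollary is a tautology, while if strictness is meant as a condition on the symmetry constraint of a homotopy-coherent Picard $\infty$-groupoid, then identifying such objects with simplicial abelian groups is precisely the nontrivial content (essentially the identification of strictly commutative grouplike objects with connective $\mathrm H\mathbb Z$-modules). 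Second, in your converse direction the phrase ``grouplike strictly commutative monoid \dots is precisely the datum of a simplicial abelian group'' needs care: grouplike means $\pi_0$ is a group, which does not make the monoid levelwise a group on the nose, only after a group completion that one must check is an equivalence. Neither point invalidates your strategy, but the last paragraph of your proposal is where the entire mathematical weight lies, and as written it records the difficulty rather than resolving it.
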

\begin{remark} Lemma \ref{dlp1} can be considered as special case of this equivalence. Subcategory $\mathrm C^{[-1,0]}(\mathrm {Ab})$ of complexes $\mathrm A^{-1}\rightarrow \mathrm A^0$ can be considered as subcategory of stable $1$-types and therefore corresponds to subcategory of strict Picard groupoids $\mathrm {Pic}_{\mathrm{strict}}^{\flat}.$
\end{remark}
\begin{Ex} Abelian group $\mathrm A$ correspond to \textit{Eilenberg-Maclane spectrum} $\mathrm {HA}$ and corresponding Picard groupoids is discrete Picard groupoid.
\end{Ex}
Let $\mathrm M$ be a spectral Mackey functor, over category $\mathrm D,$ which takes values in connective spectra $\EuScript S\mathrm p^{\geq 0}.$ Then existence of morphism
\begin{equation}\label{}
{}^1\mathbb V_{!\mapsto \star}^{\mathrm {full}}\colon\hclim_{{\mathrm D^{\circ}}}\mathrm i^*\mathrm M\longrightarrow \hlim_{{\mathrm D}}\mathrm j^*\mathrm M
\end{equation}
is equivalent to existence of morphism:
\begin{equation}\label{}
{}^1\mathbb V_{!\mapsto \star}^{\mathrm {full}}\colon\hclim_{{\mathrm D^{\circ}}}\mathrm i^*\mathrm M\longrightarrow \tau_{\geq 0}\mathrm j^*\hlim_{{\mathrm D}}\mathrm M
\end{equation}
due to adjunction of $\tau_{\geq 0}.$ Thus we can apply Construction \ref{const1} and then use categorical Verdier duality construction to obtain desired morphism. We have following homotopy baby example:
\begin{Ex} Let $\mathrm G$ be a finite group and $\mathrm M$ is the representation of $\mathrm G.$ We can associate with it spectral Mackey functor on Kan complex of $\mathrm G.$ Thus categorical Verdier duality supplies us with the morphism:
\begin{equation}\label{}
{}^1\mathbb V_{!\mapsto \star}^{\mathrm {full}}\colon\hclim_{{\mathrm G}}\EuScript L_{\mathrm M}\longrightarrow \hlim_{{\mathrm G}}\EuScript L_{\mathrm M},
\end{equation}
which is given by norm morphism from previous example.
\end{Ex}

\begin{remark} We can also try to extend this construction to the setting of general spectra. Category of spectra $\EuScript S\mathrm p$ is equivalent to stabilization of category of connective spectra $\EuScript S\mathrm p^{\geq 0}.$ That means that we have $\infty$-bifibration $\mathrm {Stab}(\EuScript S\mathrm p^{\geq 0})$ over $\mathbb N,$ with fibers given by category $\EuScript S\mathrm p^{\geq 0}$ and functors between fibers are given by loop space object $\Omega$ and suspension functor $\Sigma.$ Under this equivalence spectrum $\mathrm E\in \EuScript S\mathrm p$ goes to the sequence $\{\mathrm E_{\mathrm i}\}_{\mathrm i\in \mathbb N}:$
\begin{equation}\label{}
\mathrm f\colon \Sigma\mathrm E_{\mathrm i} \longrightarrow\mathrm E_{\mathrm i+1},\qquad \mathrm E_{\mathrm i}:=\Sigma\circ\tau_{\geq-1}\mathrm E.
\end{equation}
Functor in inverse direction is given by attaching to the sequence $\{\mathrm E_{\mathrm i}\}_{\mathrm i\in \mathbb N}$ following homotopy colimit:
\begin{equation}\label{}
\{\mathrm E_{\mathrm i}\}_{\mathrm i\in \mathbb N}\longmapsto \hclim_{\mathrm i\in \mathbb N} \Sigma^{\mathrm i} \mathrm E_{\mathrm i}
\end{equation}
Therefore if we want to construct Verdier duality morphism between Mackey functors with values in $\EuScript S\mathrm p$ it is enough to construct family of morphisms between connective spectra, compatible with functor $\Omega.$ In order to be compatible, we impose condition that $\mathrm D$ is finite category.
\end{remark}

\end{remark}

\section{Sheaves on Diagrams}

\subsection{Notations} We denote by $\mathrm {Top}$ category with objects locally compact, locally completely paracompact topological spaces of finite cohomological dimension, morphisms given by continues maps of topological spaces. It is a monoidal category with underlying cartesian tensor product. With an object $\mathrm X\in \mathrm {Top}$ we associate topos $\mathrm {Sh}(\mathrm X)$ of sheaves of sets on $\mathrm X.$ Let $\underline{\mathrm A}\in \mathrm {Sh}(\mathrm X)$ be a constant ring object, associated with ring $\mathrm A$ (we further assume that $\mathrm A$ is the field). Then we denote by $\mathrm {Sh}_{\mathrm A}(\mathrm X)$ abelian category of sheaves of $\underline{\mathrm A}$-modules. Let $\mathrm f\colon\mathrm X \longrightarrow \mathrm Y$ be a morphism in $\mathrm {Top},$ we have a morphism of topoi, which is given by pair of adjoint functors:
$\mathrm f_*\colon \mathrm {Sh}(\mathrm X)\longleftrightarrow \mathrm {Sh}(\mathrm Y)\colon\mathrm f^*.$ When we restrict these functors to subcategories of sheaves of $\underline{\mathrm A}$-modules functor $\mathrm f^*$ is exact and functor $\mathrm f_*$ is left exact and moreover functor $\mathrm f_*$ commutes with colimits. We have associated derived functors:
\begin{equation}\label{}
\underline{\mathrm {R}}(\mathrm f_*)\colon \mathrm {D}(\mathrm X,\mathrm A)\longleftrightarrow \mathrm {D}(\mathrm Y,\mathrm A)\colon \underline{\mathrm {L}}(\mathrm f^*),
\end{equation}
where $\mathrm D(\mathrm X,\mathrm A)$ is derived category of category $\mathrm {Sh}_{\mathrm A}(\mathrm X).$ We also have pair of adjoint $!$-functors
\begin{equation}\label{}
\underline{\mathrm R}(\mathrm f_!)\colon \mathrm {D}(\mathrm X,\mathrm A)\longrightarrow \mathrm {D}(\mathrm Y,\mathrm A)\colon\mathrm f^!
\end{equation}
Functor $\mathrm f^!$ commutes with filtered homotopy colimits.
We have unital tensor product $\otimes:$
\begin{equation}\label{}
\otimes\colon  \mathrm {D}(\mathrm X,\mathrm A)\times  \mathrm {D}(\mathrm X,\mathrm A) \longrightarrow  \mathrm {D}(\mathrm X,\mathrm A)
\end{equation}
and inner hom functor $\underline{\mathrm {R}\mathrm {Hom}}:$
\begin{equation}\label{}
\underline{\mathrm {R}\mathrm {Hom}}\colon  \mathrm {D}(\mathrm X,\mathrm A)^{\circ}\times  \mathrm {D}(\mathrm X,\mathrm A) \longrightarrow  \mathrm {D}(\mathrm X,\mathrm A).
\end{equation}
These operations make $\mathrm {D}(\mathrm X,\mathrm A)$ into closed monoidal category. Functor $\mathrm f^*$ is monoidal and moreover for closed embedding $\mathrm i\colon \mathrm Z\hookrightarrow \mathrm X$ functor $\mathrm i_*$ is monoidal. For every $\mathrm X\in \mathrm {Top}$ we have distinguished morphism $\mathrm a_{\mathrm X}\colon \mathrm X\longrightarrow \mathrm {pt},$ where $\mathrm{pt}$ is a point. We denote by $\mathrm w_{\mathrm X}:=\mathrm a_{\mathrm X}^!\mathrm A$ dualizing complex in $\mathrm D(\mathrm X,\mathrm A).$ We define Verdier duality functor:
 \begin{equation}\label{e5.1.1}
 \mathbb D\colon \mathrm D(\mathrm X,\mathrm A)\longrightarrow \mathrm D(\mathrm X,\mathrm A)^{\circ},\qquad \EuScript K\longmapsto \mathrm {\underline{RHom}}(\EuScript K, \mathrm w_{\mathrm X}).
 \end{equation}
We have biduality morphism $\mathrm {id}_{\mathrm D(\mathrm X,\mathrm A)}\longrightarrow \mathbb D\circ \mathbb D.$ These functors satisfy natural compatibility listed in ~\cite{KS} and \cite{Spa}.

 For the closed embedding $\mathrm i\colon\mathrm Z\hookrightarrow \mathrm X$ of topological spaces we have isomorphism of functors $\mathrm i_*\cong \mathrm i_!$ and triangulated functor $\mathrm i^!$ is isomorphic to right derived functor of functor which takes sheaf to subsheaf which sections have support in $\mathrm Z.$ Functor $\mathrm i^!$ commutes with homotopy colimits. Category $\mathrm D(\mathrm X,\mathrm A)$ carries another unital monoidal structure denoted by $\otimes^!$ and defined by the rule: $\EuScript K\otimes^!\EuScript G:=\Delta^!\EuScript K\boxtimes \EuScript G,$ where $\Delta\colon \mathrm X\hookrightarrow \mathrm X\times \mathrm X.$ This tensor product is called $!$-tensor product, unit is given by $\mathrm w_{\mathrm X}.$ Verdier duality functor takes tensor product $\otimes^!$ to ordinary tensor product $\otimes.$ Note that for a proper morphism $\mathrm f$ functor $\mathrm f^!$ is monoidal with respect to $!$-tensor product and for closed embedding $\mathrm i$ functor $\mathrm i_*$ is also monoidal with respect to $\otimes^!.$

Let $(\mathrm X,\mathrm S)$ be a pair, where $\mathrm X\in \mathrm {Top}$ and $\mathrm S$ is the stratification on $\mathrm X.$ We denote by $\mathrm D_{\mathrm c}(\mathrm X,\mathrm S),$ full triangulated subcategory of $\mathrm D(\mathrm X,\mathrm A),$ whose object are complexes with $\mathrm  S$-constructible cohomology, objects of latter category are called cohomological $\mathrm  S$-constructible sheaves. Category $\mathrm D_{\mathrm c}(\mathrm X,\mathrm  S)$ is also closed monoidal. Let $\mathrm f\colon (\mathrm X,\mathrm  S) \longrightarrow (\mathrm Y,\mathrm  S')$ be a stratified map in $\mathrm {Top},$ then functors $(\underline{\mathrm L}(\mathrm f^*),\underline{\mathrm R}(\mathrm f_*))$ and $(\underline{\mathrm R}(\mathrm f_!),\mathrm f^!)$ preserve constructible subcategory. Verdier duality functor $\mathbb D$ also preserves this subcategory and moreover defines antiequivalence.

\subsection{$\star$-sheaves} We denote by $\textbf{Sh}_{\mathrm {top}}$ category whose objects are pairs $(\mathrm X,\EuScript K),$ where $\mathrm X\in \mathrm {Top}$ is a topological space and $\EuScript K$ is a sheaf of sets on $\mathrm X.$ Morphisms between $(\mathrm X_1,\EuScript K_1)$ and $(\mathrm X_2,\EuScript K_2)$ are given by pairs $(\mathrm h,\mathrm H),$ where $\mathrm h\colon \mathrm X_2\rightarrow \mathrm X_1$ is map of topological spaces and $\mathrm H\colon \EuScript K_2\rightarrow \mathrm h^*\EuScript K_1$ is a morphism of sheaves. This category is monoidal with underlying tensor product $(\mathrm X_1,\EuScript K_1)\otimes (\mathrm X_2,\EuScript K_2):=(\mathrm X_1\times\mathrm X_2,\EuScript K_1\boxtimes \EuScript K_2).$ We have $\mathrm {Top}^{\circ}$-topos $\textbf{Sh}_{\mathrm {top}},$ whose fiber over topological space $\mathrm X$ is the category $\mathrm {Sh}(\mathrm X).$ For the diagram $\mathrm X_{\EuScript J}\colon \EuScript J\longrightarrow \mathrm{Top},$ whose morphisms are closed embeddings $\mathrm X_{\mathrm i}\longrightarrow \mathrm X_{\mathrm j}$ for every $\mathrm m\colon\mathrm i\rightarrow \mathrm j\in \EuScript J$ we associate category $\textbf{Sh}_{\mathrm {top}}^{\EuScript J}:=\EuScript J^{\circ}\times_{\mathrm {Top}^{\circ}} \textbf{Sh}_{\mathrm {top}},$ which will be a $\EuScript J^{\circ}$-topos. We assume that $\mathrm X_{\EuScript J}$ sends fiber products in $\EuScript J$ to fiber products in $\mathrm {Top}.$ We have corresponding total topos $\underline{\Gamma}(\textbf{Sh}_{\mathrm {top}}^{\EuScript J}).$ Let us fix constant ring object $\mathrm A$ in $\textbf{Sh}_{\mathrm {top}}^{\EuScript J}$, thus we have ringed $\EuScript J^{\circ}$-topos $(\textbf{Sh}_{\mathrm {top}}^{\EuScript J},\mathrm A).$

\begin{Def}\label{d5.2.1}
\underline{Category of $\star$-sheaves} $\mathrm {Sh}^{\star}(\mathrm X_{\EuScript J})$ on the diagram $\mathrm X_{\EuScript J}$ is the category of modules ${\mathrm{Mod}}(\underline{\Gamma}(\textbf{Sh}_{\mathrm {top}}^{\EuScript J}),\mathrm A).$
\end{Def}
Category $\mathrm {Sh}^{\star}(\mathrm X_{\EuScript J})$ is an abelian category and we have corresponding derived category $\mathrm D(\underline{\Gamma}(\mathbf {Sh}_{\mathrm {top}}^{\EuScript J},\mathrm A)).$ We have obvious:
\begin{Lemma}
Abelian bifibration $\underline{\mathrm{Mod}}((\textbf{Sh}_{\mathrm {top}}^{\EuScript J}),\mathrm A)\longrightarrow \EuScript J^{\circ}$ defines a pair of pseudo-functors $(\mathrm H_{\star}^{{\EuScript J}},\mathrm H^{\star}_{{\EuScript J}}),$\footnote{When it wont lead to misunderstanding we suppress $\mathrm X_{\EuScript J}$ from notation} with the structure of contradirectional $\mathrm e^{\star}$-functor.
\end{Lemma}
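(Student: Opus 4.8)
The plan is to obtain both pseudo-functors from the bifibration by the Grothendieck construction, and then to produce the exchange $2$-morphisms as the Beck--Chevalley mates of the coherence isomorphisms carried by that bifibration. Since we are only asked for the $\mathrm e^{\star}$-contradirectional structure (existence and coherence of the exchange, together with invertibility in the degenerate case), essentially no geometric input beyond the fiberwise adjunctions $(\iota^{*},\iota_{*})$ is required.

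First I would invoke the equivalence between (co)fibrations and pseudo-functors recalled in the Preliminaries, following \cite{SGA1}. As $\underline{\mathrm{Mod}}((\mathbf{Sh}_{\mathrm{top}}^{\EuScript J}),\mathrm A)\to\EuScript J^{\circ}$ is an abelian bifibration, it is simultaneously a fibration and a cofibration over $\EuScript J^{\circ}$. Straightening the fibration yields a pseudo-functor sending $\mathrm i$ to the fiber $\mathrm{Mod}(\mathrm X_{\mathrm i},\mathrm A_{\mathrm i})$ and a morphism $\mathrm m\colon\mathrm i\to\mathrm j$ of $\EuScript J$, i.e. a closed embedding $\iota\colon\mathrm X_{\mathrm i}\hookrightarrow\mathrm X_{\mathrm j}$, to the direct image $\iota_{*}$ on module categories; this is $\mathrm H_{\star}^{\EuScript J}$. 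Straightening the cofibration yields $\mathrm H^{\star}_{\EuScript J}$, sending $\mathrm m$ to the inverse image $\iota^{*}$. The two pseudo-functors agree on objects, as the definition demands, and by the construction of $\mathbf{Sh}_{\mathrm{top}}$ as a $\mathrm{Top}^{\circ}$-topos one has $\iota^{*}\dashv\iota_{*}$ in each fiber, i.e. $\mathrm H^{\star}_{\EuScript J}(\mathrm m)$ is left adjoint to $\mathrm H_{\star}^{\EuScript J}(\mathrm m)$.

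Next I would write down the exchange morphism. For a square $\mathrm Q$ in $\EuScript J$ with legs $\mathrm f,\mathrm g,\mathrm h,\mathrm p$ as in the definition, I would take $\mathrm e_{\mathrm Q}$ to be the base-change (mate) transformation between $\mathrm h^{*}\mathrm p_{\star}$ and $\mathrm g_{\star}\mathrm f^{*}$: insert the unit of $\mathrm g^{*}\dashv\mathrm g_{\star}$, rewrite $\mathrm g^{*}\mathrm h^{*}\cong(\mathrm h\mathrm g)^{*}=(\mathrm p\mathrm f)^{*}\cong\mathrm f^{*}\mathrm p^{*}$ using the pseudo-functoriality constraints together with commutativity of $\mathrm Q$, and contract with the counit of $\mathrm p^{*}\dashv\mathrm p_{\star}$. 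This is precisely the canonical comparison attached to the bifibration, so no arbitrary choices intervene.

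Finally I would check the two axioms. Axiom (ii) is immediate: if both horizontal (resp. both vertical) legs are identities, then the square collapses and $\mathrm e_{\mathrm Q}$ reduces, through the structural isomorphisms and a triangle identity, to an identity $2$-cell, hence is invertible. The substance is axiom (i), compatibility with horizontal and vertical pasting of squares, which I would settle by the standard mate-calculus pasting lemma, verifying that the mate of a pasted square equals the pasting of the mates; this is a diagram chase driven by naturality of the units and counits and by the coherence of the two pseudo-functors. I expect this coherence bookkeeping to be the only genuine work, the existence of the pseudo-functors and the degenerate-square invertibility being formal. I would also remark that neither the closed-embedding hypothesis nor the preservation of fibre products by $\mathrm X_{\EuScript J}$ is needed for this statement; these enter only later, to promote the exchange $\mathrm e_{\mathrm Q}$ from a mere morphism to an isomorphism via proper base change.
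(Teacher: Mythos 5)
Your proposal is correct and supplies exactly the standard argument that the paper compresses into the single word ``Evident'': straightening the two legs of the bifibration gives the pair $(\mathrm H_{\star}^{\EuScript J},\mathrm H^{\star}_{\EuScript J})$ with the fiberwise adjunctions $\iota^{*}\dashv\iota_{*}$, the exchange $2$-cell is the Beck--Chevalley mate $\mathrm h^{*}\mathrm p_{\star}\to\mathrm g_{\star}\mathrm f^{*}$, and the two axioms follow from the triangle identities and the pasting lemma for mates. Your closing observation --- that the closed-embedding and fiber-product hypotheses are not needed here but only later, to upgrade the exchange to an isomorphism --- is also accurate.
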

\begin{proof} Evident.

\end{proof}
We denote by $\mathrm {Sh}^{\star}_{\mathrm {adm}}(\mathrm X_{\EuScript J})$ subcategory of $\mathrm {Sh}^{\star}(\mathrm X_{\EuScript J})$ cocartesian sections of bifibration $\underline{\mathrm{Mod}}((\textbf{Sh}_{\mathrm {top}}^{\EuScript J}),\mathrm A).$
\begin{Def}\label{d5.2.2}
We denote by $\mathrm D_{\mathrm {cocart}}(\underline{\Gamma}({\mathrm H}^{\star}_{\EuScript J}))$ full triangulated subcategory of $\mathrm D(\underline{\Gamma}(\mathrm H^{\star}_{\EuScript J})),$ whose cohomology are in category $\mathrm {Sh}^{\star}_{\mathrm {adm}}(\mathrm X_{\EuScript J}).$ Objects of the category $\mathrm D_{\mathrm {cocart}}(\underline{\Gamma}({\mathrm H}^{\star}_{\EuScript J}))$ are called \underline{admissible $\star$-sheaves} on diagram $\mathrm X_{\EuScript J}.$
\end{Def}
Note, that heart of $\mathrm t$-category $\mathrm D_{\mathrm {cocart}}(\underline{\Gamma}({\mathrm H}^{\star}_{\EuScript J}))$ (with respect to the standard $\mathrm t$-structure) is equivalent to $\mathrm {Sh}^{\star}_{\mathrm {adm}}(\mathrm X_{\EuScript J}).$ Suppose that for every $\mathrm i\in \EuScript J$ we have stratification $\EuScript S_{\mathrm i}$ on $\mathrm X_{\mathrm i}$ and for every morphism
 $\mathrm m\colon\mathrm i\rightarrow \mathrm j\in \EuScript J$ we have stratified map $\mathrm X_{\mathrm i}\longrightarrow \mathrm X_{\mathrm j}.$ Then we have subcategories of cohomological constructible $\star$-sheaves $\mathrm D_{\mathrm c}(\underline{\Gamma}({\mathrm H}^{\star}_{{\EuScript J}}))$ and $\mathrm D_{\mathrm {cocart},\mathrm c}(\underline{\Gamma}({\mathrm H}^{\star}_{{\EuScript J}})).$

\subsection{$!$-sheaves}
Let us consider following category. It is objects are pairs $(\mathrm X,\EuScript K)$ where $\mathrm X\in \mathrm {Top}$ is the topological space and $\EuScript K$ is the sheaf of $\mathrm A$-modules on $\mathrm X.$ Morphisms are given by following data. A morphism from $(\mathrm X_1,\EuScript K_1)$ to $(\mathrm X_2,\EuScript K_2)$ is pair $(\mathrm f,\mathrm F),$ where $\mathrm f\colon \mathrm X_1\rightarrow \mathrm X_2$ is a morphism of topological spaces and $\mathrm F\colon \mathrm f_!\EuScript K_1\rightarrow \EuScript K_2$ is a morphism of sheaves. We denote this category by $\mathrm H_{!}^{\mathrm {top}}.$ This category will be monoidal with tensor product $(\mathrm X_1,\EuScript K_1)\otimes (\mathrm X_2,\EuScript K_2):=(\mathrm X_1\times\mathrm X_2,\EuScript K_1\boxtimes \EuScript K_2).$ We have cofibered category $\mathrm H_{!}^{\mathrm {top}}$ over $\mathrm{Top}.$ Fiber over $\mathrm X$ is given by category $\mathrm {Sh}_{\mathrm A}(\mathrm X).$ Let $\mathrm X_{\EuScript J}$ be a diagram of topological spaces, we associate with it cofibered category $\mathrm H_{!}^{{\EuScript J}}:= \EuScript J^{}\times_{\mathrm{Top}} \mathrm H_{!}^{\mathrm {top}}$ over $\EuScript J.$
\begin{Def}\label{d5.3.1}
\underline{Category of $!$-sheaves} $\mathrm {Sh}^!(\mathrm X_{\EuScript J})$ on diagram $\mathrm X_{\EuScript J}$ is the total category of cofibration  $\mathrm H_{!}^{{\EuScript J}}\longrightarrow \EuScript J.$
\end{Def}
Category $\mathrm {Sh}^!(\mathrm X_{\EuScript J})$ is an abelian and we have corresponding derived category $\mathrm D(\underline{\Gamma}(\mathrm H_{!}^{ {\EuScript J}})).$ We have a bifibration over $\EuScript J$ denoted by $(\mathrm H^{\EuScript J}_!,\mathrm H_{\EuScript J}^!):$
\begin{Lemma} We have a pair of pseudo-functors $(\mathrm H^{\EuScript J}_!,\mathrm H_{\EuScript J}^!),$ which defines the contradirectional $\mathrm e^{!}$-functor.

\end{Lemma}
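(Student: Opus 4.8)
The plan is to exhibit $\mathrm H_!^{\EuScript J}\to\EuScript J$ as a bifibration and to read off the $\mathrm e^!$-structure from its canonical exchange datum, exactly parallel to the corresponding $\star$-sheaf lemma. First I would check that $\mathrm H_!^{\mathrm{top}}\to\mathrm{Top}$ is a cofibration whose cocartesian transport is $\mathrm f_!$: the cocartesian lift of $\mathrm f\colon\mathrm X_1\to\mathrm X_2$ at $(\mathrm X_1,\EuScript K_1)$ is $(\mathrm f,\mathrm{id})\colon(\mathrm X_1,\EuScript K_1)\to(\mathrm X_2,\mathrm f_!\EuScript K_1)$, and its universal property is precisely the factorization of an arbitrary defining morphism $\mathrm f_!\EuScript K_1\to\EuScript K_2$. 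Functoriality of the transport is the pseudofunctoriality $(\mathrm g\mathrm f)_!\cong\mathrm g_!\mathrm f_!$. Since the structure maps of $\mathrm X_{\EuScript J}$ are closed embeddings, each $\mathrm f_!=\mathrm f_*$ is exact, so these identifications already hold at the abelian level with no derived correction. Pulling back along $\mathrm X_{\EuScript J}$ as in Lemma \ref{l3.2.1} then yields the cofibration $\mathrm H_!^{\EuScript J}\to\EuScript J$ and hence the covariant pseudo-functor $\mathrm H_!^{\EuScript J}$.

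Next I would produce the fibration leg. For a closed embedding $\mathrm i$ the functor $\mathrm i_*=\mathrm i_!$ admits a right adjoint $\mathrm i^!$ at the abelian level, namely the subsheaf of sections supported on the closed subspace, as recorded in the Notations; and the fibers $\mathrm{Sh}_{\mathrm A}(\mathrm X)$ are complete and cocomplete abelian categories. By the standard criterion that a cofibration all of whose transport functors admit fiberwise right adjoints is simultaneously a fibration \cite{SGA1}, the category $\mathrm H_!^{\EuScript J}\to\EuScript J$ is a bifibration, and its fibration leg is the contravariant pseudo-functor $\mathrm H_{\EuScript J}^!$ with transport $\mathrm f^!$. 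This gives the two pseudo-functors of the statement, equal on objects and with $\mathrm f_!$ left adjoint to $\mathrm f^!$ on morphisms.

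It then remains to equip the pair with the contradirectional exchange datum. For a square $\mathrm Q$ in the class $\mathrm B$, i.e. the image under $\mathrm X_{\EuScript J}$ of a fiber-product square in $\EuScript J$, I would take $\mathrm e_{\mathrm Q}$ to be the canonical mate obtained by pasting the unit of $(\mathrm f_!,\mathrm f^!)$ against the counit of the opposite adjunction through the pseudofunctorial identifications of the square; concretely this is the base-change comparison of $\mathrm g_!\mathrm f^!$ with $\mathrm h^!\mathrm p_!$. The two axioms are then formal: compatibility with horizontal and vertical composition follows from the coherence of the pseudofunctorial isomorphisms together with the usual mate calculus, and invertibility of $\mathrm e_{\mathrm Q}$ when a pair of opposite edges are identities is immediate from the triangle identities, since the mate across an identity adjunction square is an identity.

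The genuinely non-formal input is confined to the bifibration step: one must verify that restricting to closed embeddings really does keep everything at the abelian level, so that $\mathrm f_!=\mathrm f_*$ is exact and $\mathrm f^!$ exists as an honest abelian right adjoint, rather than a structure available only after passing to derived categories. This is where the properness hypothesis on $\mathrm X_{\EuScript J}$ is essential, and it is the one point I would check with care; everything downstream is a diagram chase in the mate calculus. Should one further want $\mathrm e_{\mathrm Q}$ to be an isomorphism on all of $\mathrm B$ rather than only the definitional minimum, that extra assertion is proper base change for the fiber-product squares, which for closed embeddings is classical \cite{SGA4(2)}. For the structural claim of the lemma, however, only the mate and its two coherence axioms are required, so I expect the argument to be as short as the \emph{evident} proof of the $\star$-sheaf analogue.
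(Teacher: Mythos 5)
Your argument is correct and is exactly the unpacking of what the paper leaves as ``Evident'': the cofibration leg of $\mathrm H_!^{\EuScript J}$ with transport $\mathrm f_!$, the fibration leg via the fiberwise right adjoints $\mathrm f^!$ available at the abelian level because the structure maps are closed embeddings, and the exchange morphism $\mathrm g_!\mathrm f^!\to\mathrm h^!\mathrm p_!$ as the canonical mate. Your identification of the closed-embedding hypothesis (so that $\mathrm f_!=\mathrm f_*$ is exact and $\mathrm i^!$ exists without deriving) as the only non-formal input is the right emphasis.
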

\begin{proof} Evident.

\end{proof}
We have subcategory $\mathrm {Sh}^!_{\mathrm {adm}}(\mathrm X_{\EuScript J})$ of cartesian sections of fibration $\mathrm H_{\EuScript J}^!\longrightarrow\EuScript J.$
\begin{Def}\label{d5.3.2}
We denote by $\mathrm D_{\mathrm {cart}}(\underline{\Gamma}({\mathrm H}^{!}_{{\EuScript J}}))$ full triangulated subcategory of $\mathrm D(\underline{\Gamma}(\mathrm H^{!}_{{\EuScript J}})),$ whose cohomology are in category $\mathrm {Sh}^{!}_{\mathrm {adm}}(\mathrm X_{\EuScript J}).$ Objects of the category $\mathrm D_{\mathrm {cart}}(\underline{\Gamma}({\mathrm H}^{!}_{{\EuScript J}}))$ are called \underline{admissible $!$-sheaves} on diagram $\mathrm X_{\EuScript J}.$
\end{Def}
Like in the case of $\star$-sheaves we can also define subcategories of cohomological constructible $!$-sheaves $\mathrm D_{\mathrm c}(\underline{\Gamma}({\mathrm H}^{!}_{{\EuScript J}}))$ and $\mathrm D_{\mathrm {cart},\mathrm c}(\underline{\Gamma}({\mathrm H}^{!}_{{\EuScript J}})).$
\begin{remark}\label{rps} Unlike to the case of admissible $\star$-sheaves, category $\mathrm D_{\mathrm {cart},\mathrm c}(\underline{\Gamma}({\mathrm H}^{!}_{{\EuScript J}}))$ does not carry trivial $\mathrm t$-structure, however if diagram $\mathrm X_{\EuScript J}$ is equipped with perversity function, then by Proposition \ref{pts} category of admissible $!$-sheaves can be equipped with perverse $\mathrm t$-structure (which can also be defined via Verdier duality functor $\mathbb V_{!\mapsto \star}$), with corresponding heart given by so called category of perverse $!$-sheaves $\mathrm {Perv}^!(\mathrm X_{\EuScript J},\mathrm p).$ For example one can consider diagram $\mathbb A^{\mathbb R}_{\EuScript S^{\emptyset}}$ (Example \ref{rrs}, see also Remark \ref{rc}). Categories of $!$-sheaves and $\star$-sheaves are canonically equivalent and carries two natural perverse $\mathrm t$-structures $\mathrm p_{\mathrm {min}}$ and $\mathrm p_{\mathrm {min}}.$ Note that category of factorizable sheaves, which live in the heart $\mathrm {Perv}^!(\mathbb A^{\mathbb R}_{\EuScript S^{\emptyset}},\mathrm p_{\mathrm {min}})$ is equivalent to the category of $\mathbb N$-graded associative algebras.

\end{remark}
\subsection{$!\star$-sheaves}

Let us define following category, objects of this category are pairs $(\mathrm X,\EuScript K),$ where $\mathrm X\in\mathrm {Top}$ is the topological space and $\EuScript K$ is the sheaf of $\mathrm A$-modules on $\mathrm X.$ Morphism between objects $(\mathrm X_1,\EuScript K_1)$ and $(\mathrm X_2,\EuScript K_2)$ is defined as pair $(\varphi,\mathrm f),$ where map $\mathrm f$ is a morphism in $\mathrm{span}(\mathrm{Top})$ which is given by triple $\mathrm f:=(\mathrm f_{\mathrm l},\mathrm Z,\mathrm f_{\mathrm r})$ and $\varphi$ is a morphism of sheaves: $\mathrm f_{\mathrm r!}\mathrm f_{\mathrm l}^*\EuScript K_1\rightarrow \EuScript K_2.$ Composition is defined by taking fibered products and base change theorem. We denote this category by $\mathrm H_{\mathrm {top}}^{\star!}.$ With the diagram $\mathrm X_{\EuScript J}\colon \EuScript J\longrightarrow \mathrm {Top},$ we can associate diagram $\mathrm {cospan}(\mathrm X_{\EuScript J})\colon\mathrm{cospan}(\EuScript J^{\circ})\longrightarrow\mathrm{span}(\mathrm {Top}).$ We have cofibered category $\mathrm H_{\EuScript J}^{\star!}:=\mathrm {cospan}(\EuScript J^{\circ}) \times_{\mathrm{span}(\mathrm {Top})} \mathrm{H}^{\star!}_{\mathrm {top}}$ over $\mathrm {cospan}(\EuScript J^{\circ}).$
\begin{Def}\label{d5.4.1}
\underline{Category of $!\star$-sheaves} $\mathrm {Sh}^{!\star}(\mathrm X_{\EuScript J})$ on the diagram $\mathrm X_{\EuScript J}$ is the total category of cofibration $\mathrm H_{{\star!}}^{\EuScript J}\longrightarrow \mathrm{cospan}({\EuScript J}^{\circ}).$
\end{Def}
This category is also abelian and derived category will be denoted by $\mathrm D_{}(\underline{\Gamma}({\mathrm H}_{\star!}^{{\EuScript J}})).$ Note that we actually have a bifibration $(\mathrm H_{{\EuScript J}}^{\star!},\mathrm H^{{\EuScript J}}_{!\star})$ over $\mathrm {cospan}({\EuScript J}^{\circ}).$

\begin{Prop}\label{p5.4.1} Quadruple $(\mathrm H_{\star}^{{\EuScript J}},\mathrm H_{{\EuScript J}}^{\star},\mathrm H_{!}^{{\EuScript J}},\mathrm H_{\mathrm {\EuScript J}}^!)$ is the Grothendieck cross functor, associated with ringed $\EuScript J^{\circ}$-topos $(\mathbf {Sh}^{\EuScript J}_{\mathrm {top}},\mathrm A)).$ Corresponding Mackey $\star!$-functor is pseudo-functor $\mathrm H_{{{\EuScript J}}}^{\star!}$ and Mackey $!\star$-functor is $\mathrm H_{!\star}^{ {\EuScript J}}.$
\end{Prop}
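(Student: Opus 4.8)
The plan is to check the three requirements of Definition~\ref{d4.3.1} in turn, deducing everything from the classical six-operation calculus on $\mathrm D(\mathrm X,\mathrm A)$ recalled in Subsection~4.1, and then to realize the two Mackey pseudo-functors as those attached by the Grothendieck construction to the cospan (co)fibrations of Definition~\ref{d5.4.1}. On objects all four pseudo-functors send $\mathrm X_{\mathrm i}$ to the same category $\mathrm D(\mathrm X_{\mathrm i},\mathrm A)$, so agreement on objects is immediate. For a morphism $\mathrm m\colon \mathrm i\rightarrow \mathrm j$ of $\EuScript J$, which by hypothesis underlies a closed embedding $\mathrm X_{\mathrm i}\hookrightarrow \mathrm X_{\mathrm j}$, the four structure functors are the usual $\mathrm f^*,\mathrm f_*,\mathrm f_!,\mathrm f^!$, carrying the standard adjunctions $\mathrm f^*\dashv \mathrm f_*$ and $\mathrm f_!\dashv \mathrm f^!$; these give requirement (ii) of Definition~\ref{cr.d.5}, while compatibility of adjunction with composition (axiom (a)) is plain functoriality. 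Since moreover every morphism of $\EuScript J$ is a closed embedding, the isomorphism $\mathrm i_*\cong \mathrm i_!$ recalled in Subsection~4.1 assembles into an isomorphism of lower $\mathrm e$-functors $\mathrm H_!^{\EuScript J}\overset{\sim}{\longrightarrow}\mathrm H_{\star}^{\EuScript J}$, which is the isomorphism demanded in Definition~\ref{d4.3.1}.

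Next I would construct the exchange $2$-morphisms and verify the $\mathrm e$-functor axioms. The class $\mathrm B$ consists of pushouts in $\mathrm D=\EuScript J^{\circ}$, that is, of fiber-product squares in $\EuScript J$; by the standing hypothesis that $\mathrm X_{\EuScript J}$ carries fiber products in $\EuScript J$ to fiber products in $\mathrm {Top}$, each such square maps to a cartesian square in $\mathrm {Top}$. The unit--counit calculus of the four adjunctions produces the exchange transformations $\mathrm e_{\mathrm Q}\colon \mathrm p_!\mathrm f_*\rightarrow \mathrm h_*\mathrm g_!$ and $\mathrm e_{\mathrm Q}\colon \mathrm f^*\mathrm p^!\rightarrow \mathrm g^!\mathrm h^*$; because the legs in question are closed embeddings, hence proper, the base-change theorem for sheaves (see~\cite{KS},~\cite{Spa}) makes these transformations isomorphisms on all of $\mathrm B$, which is exactly the base-change property in axiom (b) and which in particular forces the degeneracy clause (ii). Compatibility of $\mathrm e_{\mathrm Q}$ with horizontal and vertical pasting is the coherence of base change along composable cartesian squares. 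The remaining content of axiom (b) --- that the four exchange maps correspond to one another under the adjunctions --- is the standard identity, valid in $\mathrm D(\mathrm X,\mathrm A)$, that the mate of the $(\mathrm f^*,\mathrm f_*)$ base-change map equals the $(\mathrm f_!,\mathrm f^!)$ one. Finally, since $\mathrm H_{\star}^{\EuScript J}$ and $\mathrm H^{\star}_{\EuScript J}$ were defined (Definition~\ref{d5.2.1}) as the pseudo-functors of the bifibration $\underline{\mathrm {Mod}}(\mathbf {Sh}^{\EuScript J}_{\mathrm {top}},\mathrm A)\rightarrow \EuScript J^{\circ}$, the first requirement of Definition~\ref{d4.3.1} holds tautologically, so the quadruple is a Grothendieck cross functor.

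For the Mackey functors I would invoke the cofibration $\mathrm H^{\star!}_{\EuScript J}=\mathrm {cospan}(\EuScript J^{\circ})\times_{\mathrm {span}(\mathrm {Top})}\mathrm H^{\star!}_{\mathrm {top}}$ over $\mathrm {cospan}(\EuScript J^{\circ})$ of Definition~\ref{d5.4.1}, together with its cartesian dual $\mathrm H^{\EuScript J}_{!\star}$. By the Grothendieck construction these give pseudo-functors $\mathrm H^{\star!}_{\EuScript J},\mathrm H^{\EuScript J}_{!\star}\colon \mathrm {cospan}(\EuScript J^{\circ})\rightarrow \mathrm {Cat}$ whose value on a roof with legs $\mathrm f_{\mathrm l},\mathrm f_{\mathrm r}$ is $\mathrm f_{\mathrm r!}\circ\mathrm f_{\mathrm l}^{*}$, respectively $\mathrm f_{\mathrm l*}\circ\mathrm f_{\mathrm r}^{!}$, matching the notation $\mathrm h^{*}_{!}$ and $\mathrm h^{!}_{*}$. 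Restricting along $\mathrm i\colon \mathrm D\hookrightarrow \mathrm {cospan}(\mathrm D)$ sends $\mathrm f$ to a roof whose left leg is the identity, so $\mathrm i^*\mathrm H^{\star!}_{\EuScript J}$ recovers $\mathrm f\mapsto \mathrm f_!$, i.e. $\mathrm H_!^{\EuScript J}$; restricting along $\mathrm j\colon \mathrm D^{\circ}\hookrightarrow \mathrm {cospan}(\mathrm D)$ gives roofs with right leg the identity, so $\mathrm j^*\mathrm H^{\star!}_{\EuScript J}$ recovers $\mathrm f\mapsto \mathrm f^*$, i.e. $\mathrm H^{\star}_{\EuScript J}$; the symmetric computation handles $\mathrm H^{\EuScript J}_{!\star}$. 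These are precisely the restriction isomorphisms of Definition~\ref{d4.3.2}, identifying the constructed pseudo-functors as the Mackey $\star!$- and $!\star$-functors and at the same time supplying the existence half of Proposition~\ref{p4.3.1}.

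I expect the principal obstacle to be proving that these assignments on $\mathrm {cospan}(\EuScript J^{\circ})$ are genuinely pseudo-functorial, i.e. that composition of correspondences is respected. Composing two roofs requires forming a fiber product in $\mathrm {Top}$, and the comparison $\mathrm f_{\mathrm r!}\mathrm f_{\mathrm l}^{*}\circ \mathrm g_{\mathrm r!}\mathrm g_{\mathrm l}^{*}\cong (\mathrm g\circ\mathrm f)_{\mathrm r!}(\mathrm g\circ\mathrm f)_{\mathrm l}^{*}$ is an instance of the base-change isomorphism established in the second step; the associativity coherence of these comparisons then reduces to coherence of base change over iterated fiber products, which is the delicate bookkeeping. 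The hypotheses that $\mathrm X_{\EuScript J}$ preserves fiber products and that all transition maps are closed embeddings --- so that proper base change applies and $\mathrm f_!\cong \mathrm f_*$ --- are exactly what make the composition well defined; uniqueness of the Mackey extension is then pinned down by the generalized filtered Assumption on $\mathrm D^{\circ}$, which controls the cartesian/cocartesian gluing data.
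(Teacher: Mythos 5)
Your proposal is correct, and it is essentially the only reasonable reading of the paper's argument: the paper's own proof of this proposition is the single word ``Evident,'' so what you have written is precisely the unwinding that the author leaves to the reader. The three points you isolate are exactly the ones the hypotheses of Section~4 are engineered to deliver: the transition maps being closed embeddings gives $\mathrm f_*\cong\mathrm f_!$ and hence the isomorphism $\mathrm H_!^{\EuScript J}\overset{\sim}{\to}\mathrm H_{\star}^{\EuScript J}$ required by Definition~\ref{d4.3.1}; the requirement that $\mathrm X_{\EuScript J}$ carry fiber products of $\EuScript J$ to fiber products of $\mathrm{Top}$ turns the class $\mathrm B$ of pushouts in $\EuScript J^{\circ}$ into cartesian squares of closed embeddings, where proper base change supplies the exchange $2$-morphisms and their coherence; and the cospan (co)fibrations of Definition~\ref{d5.4.1} are the Grothendieck-construction avatars of the two Mackey pseudo-functors, with the restriction isomorphisms of Definition~\ref{d4.3.2} read off from roofs with an identity leg. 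Two small remarks: your claim that the exchange maps are isomorphisms on all of $\mathrm B$ is stronger than what Definition~\ref{cr.d.5} demands (only the degenerate squares of clause (ii) need be isomorphisms), but it is true in this geometric setting and costs nothing; and your closing appeal to the generalized filtered Assumption for uniqueness of the Mackey extension is the one step that remains heuristic --- uniqueness really comes from the fact that a pseudo-functor on $\mathrm{cospan}(\EuScript J^{\circ})$ restricting to the given data on $\EuScript J^{\circ}$ and $\EuScript J$ is forced on every roof by the base-change structure --- but you correctly flag this, together with the associativity coherence of iterated base change, as the genuine content hidden behind ``Evident.''
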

\begin{proof} Evident.

\end{proof}
We have associated covariant Verdier duality functors:
\begin{equation}\label{e5.4.1}
\mathbb V_{\star\mapsto!}\colon \mathrm D_{\mathrm{cocart}}(\underline{\Gamma}(\mathrm H^{\star}_{{\EuScript J}}))\longleftrightarrow\mathrm  D_{\mathrm{cart}}(\underline{\Gamma}(\mathrm H^{!}_{{\EuScript J}}))\colon \mathbb V_{!\mapsto\star},
\end{equation}
Let $\mathrm w_{{\EuScript J^{\circ}}}\in \mathrm  D_{\mathrm{cart}}(\mathrm H^{!}_{{\EuScript J}})$ be an object defined as dualizing complex $\mathrm w_{\mathrm i}\in \mathrm D(\mathrm X_{\mathrm i},\mathrm A)$ for every $\mathrm i\in \EuScript J,$ which we call $!$-dualizing objects on diagram $\mathrm X_{\EuScript J}.$ We have evident corollary from Proposition \ref{p5.4.1}:
\begin{Cor}\label{p5.4.2} $(\mathrm H_{\star}^{{\EuScript J}},\mathrm H_{{\EuScript J}}^{\star},\mathrm H_{!}^{{\EuScript J}},\mathrm H_{\mathrm {\EuScript J}}^!,\mathrm w_{\EuScript J^{\circ}})$ is Grothendieck cross functor with dualizing object.
\end{Cor}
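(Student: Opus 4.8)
The plan is to unwind Definition~\ref{d4.3.8}: by Proposition~\ref{p5.4.1} the quadruple $(\mathrm H_{\star}^{\EuScript J},\mathrm H_{\EuScript J}^{\star},\mathrm H_{!}^{\EuScript J},\mathrm H_{\EuScript J}^{!})$ is already a Grothendieck cross functor, so only two additional pieces of data must be supplied. First, I must exhibit for each morphism $\mathrm m\colon\mathrm i\rightarrow\mathrm j$ of $\EuScript J$ a Grothendieck context structure (Definition~\ref{d4.2.2}) on the associated triple of functors between the fibers $\mathrm D(\mathrm X_{\mathrm i},\mathrm A)$ and $\mathrm D(\mathrm X_{\mathrm j},\mathrm A)$. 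Second, I must produce a single object $\mathrm w_{\EuScript J^{\circ}}\in\mathrm D_{\mathrm {cart}}(\underline{\Gamma}(\mathrm H^{!}_{\EuScript J}))$ built from the fibrewise dualizing complexes.

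For the first point, recall that each morphism $\mathrm m\colon\mathrm i\rightarrow\mathrm j$ in $\EuScript J$ is realised by a closed embedding $\mathrm g\colon\mathrm X_{\mathrm i}\hookrightarrow\mathrm X_{\mathrm j}$. I would check the four clauses of Definition~\ref{d4.2.2} directly. The two fibres $\mathrm D(\mathrm X_{\mathrm i},\mathrm A)$ and $\mathrm D(\mathrm X_{\mathrm j},\mathrm A)$ are closed symmetric monoidal, as recorded in the notations of this section. The adjunctions $\mathrm g^{*}\dashv\mathrm g_{*}$ and $\mathrm g_{*}\dashv\mathrm g^{!}$ are the standard ones, the latter following from the isomorphism $\mathrm g_{*}\cong\mathrm g_{!}$ valid for a closed embedding together with the adjunction $\mathrm g_{!}\dashv\mathrm g^{!}$. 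Strong monoidality of $\mathrm g^{*}$ is also recorded in the notations, and the projection formula $\EuScript F\otimes\mathrm g_{*}\EuScript G\cong\mathrm g_{*}(\mathrm g^{*}\EuScript F\otimes\EuScript G)$ is the usual one for a closed (hence proper) morphism, again transported across $\mathrm g_{*}\cong\mathrm g_{!}$. Thus each $\mathrm f=(\mathrm f_{*},\mathrm f^{*},\mathrm f^{!})$ is a Grothendieck context.

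For the second point, set $\mathrm w_{\mathrm i}:=\mathrm a_{\mathrm X_{\mathrm i}}^{!}\mathrm A$, the dualizing complex of $\mathrm X_{\mathrm i}$, where $\mathrm a_{\mathrm X_{\mathrm i}}\colon\mathrm X_{\mathrm i}\rightarrow\mathrm {pt}$ is the terminal map. I claim the family $\{\mathrm w_{\mathrm i}\}_{\mathrm i\in\EuScript J}$ assembles into a cartesian section of $\mathrm H^{!}_{\EuScript J}$, which is precisely an object of $\mathrm D_{\mathrm {cart}}(\underline{\Gamma}(\mathrm H^{!}_{\EuScript J}))$ (Definition~\ref{d5.3.2}). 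Cartesianity amounts to the transition maps $\mathrm g^{!}\mathrm w_{\mathrm j}\rightarrow\mathrm w_{\mathrm i}$ being isomorphisms; since $\mathrm a_{\mathrm X_{\mathrm i}}=\mathrm a_{\mathrm X_{\mathrm j}}\circ\mathrm g$, the functoriality $(\mathrm a_{\mathrm X_{\mathrm j}}\circ\mathrm g)^{!}\cong\mathrm g^{!}\circ\mathrm a_{\mathrm X_{\mathrm j}}^{!}$ of the $!$-pullback gives $\mathrm w_{\mathrm i}=\mathrm a_{\mathrm X_{\mathrm i}}^{!}\mathrm A\cong\mathrm g^{!}\mathrm a_{\mathrm X_{\mathrm j}}^{!}\mathrm A=\mathrm g^{!}\mathrm w_{\mathrm j}$, exactly the required isomorphism.

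The verifications above are all standard sheaf theory, which is why the statement is labelled a corollary; the only genuine point of care is the cartesianity of $\mathrm w_{\EuScript J^{\circ}}$, i.e. the compatibility of the dualizing complexes under $!$-restriction along the structure embeddings, for which the pseudo-functoriality of $(-)^{!}$ must be invoked coherently rather than object by object. I would also note, following the footnote to Definition~\ref{d4.3.8}, that the functors here should strictly be read as taking values in monoidal categories; since each fibre $\mathrm D(\mathrm X_{\mathrm i},\mathrm A)$ carries its closed monoidal structure and these are preserved by the transition functors in the relevant variance, this refinement is automatic in the present geometric setting and requires no extra argument.
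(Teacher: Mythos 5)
Your proposal is correct and fills in exactly the verification the paper leaves implicit by declaring the corollary ``evident'': the Grothendieck-context condition of Definition~\ref{d4.2.2} holds fibrewise because every structure map of the diagram is a closed embedding (so $\mathrm g_{*}\cong\mathrm g_{!}$, giving the adjunction $\mathrm g_{*}\dashv\mathrm g^{!}$ and the projection formula), and the family $\mathrm w_{\mathrm i}=\mathrm a_{\mathrm X_{\mathrm i}}^{!}\mathrm A$ is a cartesian section of $\mathrm H^{!}_{\EuScript J}$ via $\mathrm g^{!}\mathrm a_{\mathrm X_{\mathrm j}}^{!}\cong\mathrm a_{\mathrm X_{\mathrm i}}^{!}$. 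This is the same reasoning the paper intends, just spelled out.
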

If we have stratification $\EuScript S$ on diagram $\mathrm X_{\EuScript J},$ then we have constructible Grothendieck cross functor with corresponding duality functors:
\begin{equation}\label{e5.4.2}
\mathbf D_{\star\mapsto!}\colon \mathrm D_{\mathrm{cocart},\mathrm c}(\underline{\Gamma}(\mathrm H^{\star}_{{\EuScript J}}))\longleftrightarrow\mathrm  D_{\mathrm{cart},\mathrm c}(\underline{\Gamma}(\mathrm H^{!}_{{\EuScript J}}))^{\circ}\colon \mathbf D_{!\mapsto\star},
\end{equation}
and contravariant Verdier duality functor for $!$-sheaves:
\begin{equation}\label{e5.4.4}
\mathbb  D_{\mathrm H^!_{{\EuScript J}}}\colon \mathrm D_{\mathrm{cart},\mathrm c}(\underline{\Gamma}(\mathrm H^{!}_{{\EuScript J}}))\longrightarrow\mathrm  D_{\mathrm{cart},\mathrm c}(\underline{\Gamma}(\mathrm H^{!}_{{\EuScript J}}))^{\circ}
\end{equation}
\begin{remark}\label{rc} Let $\mathrm X\in \mathrm {Top}$ be a topological space with the filtration:
\begin{equation}\label{}
\mathrm X_1\subset\mathrm X_2\subset\dots\subset \mathrm X_{\mathrm n}=\mathrm X,
\end{equation}
where $\mathrm X_{\mathrm i}$ are closed subspaces. We can consider space $\mathrm X$ with such filtration as diagram from finite category $[\mathrm n]\in \mathrm {Cat}.$ With every object $\EuScript K\in \mathrm D^{\mathrm b}(\mathrm X,\mathrm A)$ we can associate cohomological Postnikov system \cite{BBD}, which plays role of corresponding $\star$-sheaf and homological Postnikov system, which plays role of $!$-sheaf. Then contravariant Verdier duality functor is identity and functor $\mathbb  D_{\mathrm H^!_{{[\mathrm n]}}}$
is given by usual Verdier duality, which interchanges these two Postnikov systems.

We can consider space $\mathrm X$ as stratified space $(\mathrm X,\mathrm S).$ Suppose that stratified spaces $(\mathrm X,\mathrm S)$ is equipped with perversity function $\mathrm p\colon\mathrm S\longrightarrow \mathbb Z.$ Construction from Proposition \ref{pts}  in this case defines usual perverse $\mathrm t$-structure, whose heart is an abelian category of $\mathrm p$-perverse sheaves $\mathrm {Perv}(\mathrm X,\mathrm S).$
\end{remark}
In the same spirit we can define pseudo-functors $\mathrm H_{!!}^{\EuScript J}$ and $\mathrm H^{\star\star}_{\EuScript J}$ with corresponding functors $\Xi_{\star}$ and $\Xi_!.$
\subsection{Operations on $!$-sheaves}

Let $\mathrm f\colon \mathrm X_{\EuScript J}\longrightarrow \mathrm Y_{\EuScript J},$ be a morphism of diagrams, then we have a morphism between $\mathrm D_{\mathrm{cart},\mathrm c}(\underline{\Gamma}(\mathrm H^!_{\mathrm X_{\EuScript J}}))$ and $\mathrm D_{\mathrm{cart},\mathrm c}(\underline{\Gamma}(\mathrm H^!_{\mathrm Y_{\EuScript J}})),$ given by the pair of adjoint $!$-functors:
 \begin{equation}\label{}
\mathrm f_{!}\colon \mathrm D_{\mathrm{cart},\mathrm c}(\underline{\Gamma}(\mathrm H^!_{\mathrm X_{\EuScript J}}))\longleftrightarrow \mathrm D_{\mathrm{cart},\mathrm c}(\underline{\Gamma}(\mathrm H^!_{\mathrm Y_{\EuScript J}}))\colon \mathrm f^{!},
 \end{equation}
where functor $\mathrm f_{!}$ is defined as composition of the componentwise $!$-pushforward functor $\underline{\mathrm R}(\mathrm f_{\mathrm i!})\colon \mathrm {D}_{}(\mathrm X_{\mathrm i},\mathrm A)\rightarrow \mathrm {D}_{}(\mathrm Y_{\mathrm i},\mathrm A),$ $\mathrm i\in \EuScript J$ and functor $\Xi_!.$ Functor $\mathrm f^{!}$ is defined as componentwise $!$-pullback functor $\mathrm f^!(\EuScript K)_{\mathrm i}:=\mathrm f^!_{\mathrm i}(\EuScript K_{\mathrm i}),$ where $\mathrm f^!_{\mathrm i}\colon \mathrm D(\mathrm Y_{\mathrm i},\mathrm A)\longrightarrow \mathrm D(\mathrm X_{\mathrm i},\mathrm A)$.

For a morphism of diagrams $\mathrm f\colon \mathrm X_{\EuScript J}\longrightarrow \mathrm Y_{\EuScript J}$ we can also define $\star$-operations for $!$-sheaves:
\begin{equation}\label{e5.5.2}
\mathrm f_{\star}\colon \mathrm D_{\mathrm{cart},\mathrm c}(\underline{\Gamma}(\mathrm H^!_{\mathrm X_{\EuScript J}}))\longleftrightarrow \mathrm D_{\mathrm{cart},\mathrm c}(\underline{\Gamma}(\mathrm H^!_{\mathrm Y_{\EuScript J}}))\colon \mathrm f^{\star}
\end{equation}
by the rule:
\begin{equation}\label{}
\mathrm f_{\star}:=\mathbb V_{\star\mapsto!}\circ \Xi_{\star}\circ\underline{\mathrm R}(\mathrm f_{*})\circ \mathbb V_{!\mapsto\star}\qquad\mathrm f_{}^{\star}:=\mathbb V_{\star\mapsto!}\circ \underline{\mathrm L}(\mathrm f^*)\circ \mathbb V_{!\mapsto\star}.
\end{equation}
\begin{Lemma}\label{l5.5.1} On constructible subcategory we have isomorphisms of functors:
\begin{equation}\label{e5.5.3}
\mathrm f_{\star}\overset{\sim}{\longrightarrow} \mathbb D_{\mathrm H^!_{\mathrm Y_{\EuScript J}}}\circ\mathrm f_!\circ\mathbb D_{\mathrm H^!_{\mathrm X_{\EuScript J}}},\qquad \mathrm f^{\star}\overset{\sim}{\longrightarrow} \mathbb D_{\mathrm H^!_{\mathrm Y_{\EuScript J}}}\circ\mathrm f^!\circ\mathbb D_{\mathrm H^!_{\mathrm X_{\EuScript J}}}.
\end{equation}

\end{Lemma}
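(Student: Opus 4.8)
The plan is to treat the identity for $\mathrm f_{\star}$ in full and to obtain the identity for $\mathrm f^{\star}$ by the parallel argument, using the second compatibility of \eqref{e4.2.13} in place of the first. Everything happens on the constructible subcategories, where by Proposition \ref{p4.3.5} the fibrewise duality functors $\mathbf D_{\star\mapsto!}$ and $\mathbf D_{!\mapsto\star}$ are mutually inverse (anti)equivalences, and where by Proposition \ref{p4.3.2} the pair $\mathbb V_{\star\mapsto!},\mathbb V_{!\mapsto\star}$ restricts to an adjoint pair of equivalences between $\mathrm D_{\mathrm{cocart},\mathrm c}(\underline{\Gamma}(\mathrm H^{\star}_{\EuScript J}))$ and $\mathrm D_{\mathrm{cart},\mathrm c}(\underline{\Gamma}(\mathrm H^{!}_{\EuScript J}))$. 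First I would unfold the three relevant definitions: the defining formula $\mathrm f_{\star}=\mathbb V_{\star\mapsto!}\circ\Xi_{\star}\circ\underline{\mathrm R}(\mathrm f_{*})\circ\mathbb V_{!\mapsto\star}$ for the $\star$-operation on $!$-sheaves, the componentwise description of the $!$-pushforward as $\Xi_{!}$ composed with the componentwise $\underline{\mathrm R}(\mathrm f_{\mathrm i!})$, and the formula $\mathbb D_{\mathrm H^{!}}=\mathbf D_{\star\mapsto!}\circ\mathbb V_{!\mapsto\star}$ from Definition \ref{d4.3.11}.

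The heart of the argument is that the contravariant duality conjugates the componentwise $\star$- and $!$-pushforwards into one another. Since by Definition \ref{d4.3.10} the functors $\mathbf D_{\star\mapsto!}$ and $\mathbf D_{!\mapsto\star}$ are computed fibrewise by the twisted duals $\mathbb D_{\mathrm w_{\mathrm i}}$, and since $\underline{\mathrm R}(\mathrm f_{*})$ and the componentwise $!$-pushforward act fibrewise by $\underline{\mathrm R}(\mathrm f_{\mathrm i*})$ and $\underline{\mathrm R}(\mathrm f_{\mathrm i!})$, the fibrewise isomorphism $\mathrm f_{*}\mathbf D_{\EuScript K}\cong\mathbf D_{\EuScript G}\mathrm f_{!}$ of \eqref{e4.2.13} assembles into a global isomorphism $\underline{\mathrm R}(\mathrm f_{*})\circ\mathbf D\cong\mathbf D\circ\underline{\mathrm R}(\mathrm f_{!})$. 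I would then substitute into the right-hand side $\mathbb D_{\mathrm H^{!}_{\mathrm Y_{\EuScript J}}}\circ\mathrm f_{!}\circ\mathbb D_{\mathrm H^{!}_{\mathrm X_{\EuScript J}}}$ and simplify in four moves: slide $\mathbf D$ past the componentwise pushforward using this global isomorphism; exchange $\Xi_{!}$ with $\Xi_{\star}$ under duality by Corollary \ref{c4.3.1}; exchange $\mathbb V_{!\mapsto\star}$ with $\mathbb V_{\star\mapsto!}$ under duality by the commuting square of Proposition \ref{p4.3.6}; and finally cancel the two remaining duality factors via $\mathbf D_{!\mapsto\star}\circ\mathbf D_{\star\mapsto!}\cong\mathrm{id}$ (Proposition \ref{p4.3.5}). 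Collecting the surviving terms should reproduce exactly $\mathbb V_{\star\mapsto!}\circ\Xi_{\star}\circ\underline{\mathrm R}(\mathrm f_{*})\circ\mathbb V_{!\mapsto\star}$, which is $\mathrm f_{\star}$.

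The main obstacle is bookkeeping rather than any genuinely new idea: one must track the passages to opposite categories that accompany each application of a contravariant functor ($\mathbf D$, $\mathbb D_{\mathrm H^{!}}$), and check at each step that the intermediate objects lie in the cartesian, respectively cocartesian, constructible subcategory on which the cited equivalences and the commuting squares of Proposition \ref{p4.3.6} and Corollary \ref{c4.3.1} are available. Concretely, I expect the care to go into verifying that $\mathbb V_{!\mapsto\star}$ indeed lands in $\mathrm D_{\mathrm{cocart},\mathrm c}$ (Proposition \ref{p4.3.2}), so that $\Xi_{\star}$ and $\mathbf D_{\star\mapsto!}$ are applied on their natural source, and that the fibrewise isomorphisms of \eqref{e4.2.13} are natural enough in $\mathrm i$ to glue over $\EuScript J^{\circ}$ — a gluing that is automatic here precisely because every functor involved apart from $\mathbb V$ and $\Xi$ is defined componentwise. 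Once these directional and cart/cocart compatibilities are pinned down, the chain of isomorphisms above yields the first isomorphism of the lemma, and the identical argument with $\underline{\mathrm L}(\mathrm f^{*})$, $\mathrm f^{!}$ and the second isomorphism $\mathbf D_{\EuScript K}\mathrm f^{*}\cong\mathrm f^{!}\mathbf D_{\EuScript G}$ of \eqref{e4.2.13} gives the second.
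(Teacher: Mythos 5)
Your proposal follows essentially the same route as the paper's own proof: unfold the definitions of $\mathrm f_{\star}$, $\mathrm f_{!}$ and $\mathbb D_{\mathrm H^!}$, conjugate the componentwise pushforwards through the fibrewise duality via \eqref{e4.2.13}, commute the duality past $\Xi$ and $\mathbb V$ using Corollary \ref{c4.3.1} and Proposition \ref{p4.3.6}, and cancel $\mathbf D_{!\mapsto\star}\circ\mathbf D_{\star\mapsto!}$ by biduality on the constructible subcategory. The only difference is that you make explicit the $\Xi_!$/$\Xi_{\star}$ exchange and the cart/cocart bookkeeping that the paper compresses into its first displayed isomorphism.
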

\begin{proof} By commutativity property \ref{e4.2.13} of $*$-operations and $!$-operations with duality we get:
\begin{equation}\label{}
\mathbb D_{\mathrm H^!_{\mathrm Y_{\EuScript J}}}\circ\mathrm f_!\circ\mathbb D_{\mathrm H^!_{\mathrm X_{\EuScript J}}}\overset{\sim}{\longrightarrow} \mathbb D_{\mathrm H^!_{\mathrm Y_{\EuScript J}}}\circ\mathbf D_{\star\mapsto!}\circ\Xi_{\star}\circ\underline{\mathrm R}(\mathrm f_{*}) \circ \mathbb V_{!\mapsto\star}
\end{equation}
Then by Proposition \ref{p4.3.6} we have:
\begin{equation}\label{}
\mathbb D_{\mathrm H^!_{\mathrm Y_{\EuScript J}}}\circ\mathbf D_{\star\mapsto!}\circ\Xi_{\star}\circ\underline{\mathrm R}(\mathrm f_{*}) \circ \mathbb V_{!\mapsto\star}\overset{\sim}{\longrightarrow}\mathbb V_{\star\mapsto!}\circ \mathbf D_{!\mapsto\star}\circ \mathbf D_{\star\mapsto !}\circ\Xi_{\star}\circ\underline{\mathrm R}(\mathrm f_{*}) \circ \mathbb V_{!\mapsto\star}.
\end{equation}
Thus since biduality morphism is equivalence we obtain desired result. Case of $\star$-pullback can be proved analogically.
\end{proof}

\begin{Ex} For diagrams $\mathrm X_{\EuScript J}$ with subdiagram $\mathrm j\colon\mathrm Z_{\EuScript J}\hookrightarrow\mathrm X_{\EuScript J}$ important example of $\star$-functors is given by hyperbolic restriction functor $\Phi_{\mathrm Z_{\EuScript J}}.$ It is defined as:
\begin{equation}\label{}
\Phi_{\mathrm Z_{\EuScript J}}:=\mathrm a_{\mathrm Z_{\EuScript J}\star}\circ\mathrm j^!\colon \mathrm D_{\mathrm{cart},\mathrm c}(\underline{\Gamma}(\mathrm H^!_{\mathrm X_{\EuScript J}}))\longrightarrow \mathrm D(\mathrm A),
\end{equation}
where $\mathrm a_{\mathrm Z_{\EuScript J}}\colon \mathrm Z_{\EuScript J}\longrightarrow \{\mathrm {pt}\},$ is the canonical morphism to the point. In the setting of Ran prestack of $\mathbb A^1$ hyperbolic restriction to the Ran prestack of $\mathbb A^{\mathbb R}$ plays important role in the construction of Hopf algebras from factorizable sheaves. In this case functor $\Phi_{\mathbb A^{\mathbb R}_{\EuScript S^{}}}$ obeys usual properties, such as commutativity with Verdier duality and exactness with respect to middle perverse $\mathrm t$-structure.

\end{Ex}
Additional operations are given by Kan extension functors:
\begin{Def} Let $\mathrm j\colon\EuScript I\longrightarrow\EuScript J$ be a functor. We define pair of adjoint functors:
\begin{equation}\label{}
\mathrm j_!\colon \mathrm D_{\mathrm{cart},\mathrm c}(\underline{\Gamma}(\mathrm H^!_{\mathrm X_{\EuScript I}}))\longrightarrow\mathrm D_{\mathrm{cart},\mathrm c}(\underline{\Gamma}(\mathrm H^!_{\mathrm X_{\EuScript J}}))\colon\mathrm j^*,
\end{equation}
where functor $\mathrm j^*$ is given by restriction along functor $\mathrm j$ and functor $\mathrm j_!$ is defined as composition $\Xi_!\circ \underline{\mathrm L}(\mathrm j_!).$
\end{Def}

Category $\mathrm D_{\mathrm{cart},\mathrm c}(\Gamma(\mathrm H^!_{\mathrm X_{\EuScript J}}))$ carries symmetric monoidal product denoted by:
\begin{equation}\label{tp1}
\otimes^!\colon \mathrm D_{\mathrm{cart},\mathrm c}(\underline{\Gamma}(\mathrm H^!_{\mathrm X_{\EuScript J}}))\times\mathrm D_{\mathrm{cart},\mathrm c}(\Gamma(\mathrm H^!_{\mathrm X_{\EuScript J}})) \longrightarrow \mathrm D_{\mathrm{cart},\mathrm c}(\underline{\Gamma}(\mathrm H^!_{\mathrm X_{\EuScript J}}))
\end{equation}
It is given by componentwise $!$-tensor product of sheaves. Note that this tensor product is unital and unit is given by $\mathrm w_{\mathrm X_{\EuScript J^{\circ}}}.$

\begin{remark} If category $\EuScript J$ is monoidal category, with tensor product $\ast\colon \EuScript J\times \EuScript J\longrightarrow \EuScript J$ then we can define additional symmetric tensor structure on $\mathrm  D_{\mathrm{cart}}(\mathrm H^!_{\EuScript J}),$ by Day convolution product:
\begin{equation}\label{}
\otimes^{\ast}\colon \mathrm  D_{\mathrm{cart}}(\underline{\Gamma}(\mathrm H^!_{\mathrm X_{\EuScript J}}))\times \mathrm  D_{\mathrm{cart}}(\mathrm H^!_{\mathrm X_{\EuScript J}})\longrightarrow\mathrm  D_{\mathrm{cart}}(\underline{\Gamma}(\mathrm H^!_{\mathrm X_{\EuScript J}}))
\end{equation}
Examples of such tensor products are given by tensor structures on the Ran prestack, introduced in \cite {BD}.
\end{remark}
Let $\EuScript J$ be a category, denote by $\mathrm i\colon \EuScript J\longrightarrow \EuScript J\times \EuScript J$ canonical diagonal embedding functor. Let $\mathrm X_{\EuScript J}\colon \EuScript J\longrightarrow\mathrm {Top}$ be a diagram. We have diagrams $\mathrm i_!\mathrm X_{\EuScript J},$ which is defined as Kan extension along morphism $\mathrm i.$ Denote by $\mathrm X_{\EuScript J}\times \mathrm X_{\EuScript J}\colon \EuScript J\times \EuScript J\longrightarrow \mathrm {Top}$ diagram, which is defined as composition of product diagram $\mathrm X_{\EuScript J}\times \mathrm X_{\EuScript J},$ with cartesian monoidal structure on $\mathrm {Top}.$ Then by universal property of colimits we have following morphism of diagrams:
\begin{equation}\label{}
\mathrm {diag}\colon \mathrm i_!\mathrm X_{\EuScript J}\longrightarrow \mathrm X_{\EuScript J}\times \mathrm X_{\EuScript J},
\end{equation}
Note that morphism $\mathrm {diag}$ is closed embedding. Contravariant Verdier duality functor for $!$-sheaves enjoys following properties:
\begin{Prop} $(\mathrm {i})$ Space of morphisms between $\EuScript F\boxtimes \EuScript K$ and $\mathrm {diag}_!\mathrm w_{\mathrm X_{\EuScript J}},$ is represented\footnote{This property is taken as definition in \cite{Gai1}} by contravariant Verdier duality functor:
\begin{equation}\label{vadjp}
\mathrm {Hom}_{\mathrm D_{\mathrm{cart},\mathrm c}(\mathrm H^!_{{\EuScript J\times \EuScript J}})}(\EuScript K\boxtimes\EuScript G,\mathrm {diag}_!\mathrm w_{\mathrm X_{\EuScript J}})=\mathrm {Hom}_{\mathrm D_{\mathrm{cart},\mathrm c}(\mathrm H^!_{{\EuScript J}})}(\EuScript K,\mathbb D_{\mathrm H^!_{\mathrm {\EuScript J}}}\EuScript G)
\end{equation}

$(\mathrm {ii})$ We have canonical biduality morphism:
\begin{equation}\label{}
\mathrm {Id}_{\mathrm D_{\mathrm{cart},\mathrm c}(\mathrm H^!_{{\EuScript J}})}\longrightarrow \mathbb D_{\mathrm H^!_{\mathrm {\EuScript J}}}\circ \mathbb D_{\mathrm H^!_{\mathrm {\EuScript J}}}
\end{equation}

\end{Prop}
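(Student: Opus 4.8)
The plan is to deduce (i) from the classical fiberwise Verdier-duality adjunction by reducing the base from $\EuScript J\times\EuScript J$ to $\EuScript J$ along the closed diagonal, and then to obtain (ii) as the unit of the self-duality that (i) produces.

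First I would record that $\mathrm{diag}\colon \mathrm i_!\mathrm X_{\EuScript J}\to \mathrm X_{\EuScript J}\times\mathrm X_{\EuScript J}$ is a closed embedding, so componentwise $\mathrm{diag}_{\mathrm i!}\cong\mathrm{diag}_{\mathrm i*}$ by the isomorphism $\mathrm i_*\cong\mathrm i_!$ recalled for closed embeddings. Since the componentwise $!$-pushforward along a closed embedding already preserves cartesian sections (the argument being the one used in Proposition \ref{p4.3.2}), the reflection $\Xi_!$ entering the definition of the diagram-level $\mathrm{diag}_!$ acts as the identity here, so $\mathrm{diag}_!\mathrm w_{\mathrm X_{\EuScript J}}$ is computed fiberwise by $\mathrm{diag}_{\mathrm i*}\mathrm w_{\mathrm X_{\mathrm i}}$ and is supported on the diagonal. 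Using $\mathrm{diag}_{\mathrm i!}=\mathrm{diag}_{\mathrm i*}$ together with the adjunction $(\mathrm{diag}_{\mathrm i}^{*},\mathrm{diag}_{\mathrm i*})$, the identification $\mathrm{diag}_{\mathrm i}^{*}(\EuScript K_{\mathrm i}\boxtimes\EuScript G_{\mathrm i})\cong\EuScript K_{\mathrm i}\otimes^{\mathrm L}\EuScript G_{\mathrm i}$, the classical tensor-hom adjunction and the formula $\mathbb D\EuScript G_{\mathrm i}=\underline{\mathrm{RHom}}(\EuScript G_{\mathrm i},\mathrm w_{\mathrm X_{\mathrm i}})$ give, fiber by fiber, a natural isomorphism
\[
\mathrm{Hom}(\EuScript K_{\mathrm i}\boxtimes\EuScript G_{\mathrm i},\mathrm{diag}_{\mathrm i!}\mathrm w_{\mathrm X_{\mathrm i}})\cong\mathrm{Hom}(\EuScript K_{\mathrm i},\mathbb D\EuScript G_{\mathrm i}),
\]
where the right-hand side is the fiberwise value of $\mathbb D_{\mathrm H^!_{\EuScript J}}$ by Definition \ref{d4.3.10} and Definition \ref{d4.3.11}.

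The step that needs the most care is assembling these fiberwise isomorphisms into an identity of Hom spaces in the section categories, since $\mathrm{Hom}$ in $\mathrm D_{\mathrm{cart},\mathrm c}$ is not a fiberwise Hom but an end over the base. Here I would use that $\boxtimes$ together with $\mathrm{diag}_!$ implements exactly the passage from the $\EuScript J\times\EuScript J$-indexed diagram $\mathrm X_{\EuScript J}\times\mathrm X_{\EuScript J}$ to the $\EuScript J$-indexed diagram $\mathrm X_{\EuScript J}$: because $\mathrm{diag}_!\mathrm w_{\mathrm X_{\EuScript J}}$ is supported on the diagonal, the end computing the left-hand Hom collapses to the end over $\EuScript J$ computing the right-hand Hom. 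Naturality of the fiberwise adjunction with respect to the transition functors is where the hypotheses that $\mathrm X_{\EuScript J}$ preserves fiber products and that $\mathrm f^!$ commutes with the relevant colimits enter, through the base-change compatibilities packaged in the isomorphisms \eqref{e4.2.13} and in Proposition \ref{p4.3.6}; this is the genuine obstacle, the rest being bookkeeping.

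Finally, for (ii) I would note that the right-hand side of \eqref{vadjp} is symmetric in $\EuScript K$ and $\EuScript G$: the swap involution $\tau$ of $\mathrm X_{\EuScript J}\times\mathrm X_{\EuScript J}$ fixes the diagonal, so $\tau^{*}\mathrm{diag}_!\mathrm w_{\mathrm X_{\EuScript J}}\cong\mathrm{diag}_!\mathrm w_{\mathrm X_{\EuScript J}}$ while $\tau^{*}(\EuScript K\boxtimes\EuScript G)\cong\EuScript G\boxtimes\EuScript K$. Hence \eqref{vadjp} yields a natural isomorphism $\mathrm{Hom}(\EuScript K,\mathbb D_{\mathrm H^!_{\EuScript J}}\EuScript G)\cong\mathrm{Hom}(\EuScript G,\mathbb D_{\mathrm H^!_{\EuScript J}}\EuScript K)$, which exhibits $\mathbb D_{\mathrm H^!_{\EuScript J}}$ as left adjoint to itself as a contravariant functor; the unit of this adjunction is the desired biduality morphism $\mathrm{Id}\to\mathbb D_{\mathrm H^!_{\EuScript J}}\circ\mathbb D_{\mathrm H^!_{\EuScript J}}$. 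Equivalently, it can be glued from the fiberwise biduality morphisms $\mathrm{id}\to\mathbb D\circ\mathbb D$ recalled earlier, which are compatible with the cartesian structure precisely because $\mathbb D_{\mathrm H^!_{\EuScript J}}$ restricts fiberwise to $\mathbb D$.
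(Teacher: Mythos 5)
There is a genuine gap, and it sits exactly at the point you yourself flag as ``the step that needs the most care.'' Your argument identifies the right-hand side of \eqref{vadjp} fiberwise with $\mathrm{Hom}(\EuScript K_{\mathrm i},\underline{\mathrm{RHom}}(\EuScript G_{\mathrm i},\mathrm w_{\mathrm X_{\mathrm i}}))$, i.e.\ you treat $\mathbb D_{\mathrm H^!_{\EuScript J}}$ as the componentwise duality $\mathbf D_{!\mapsto\star}$ of Definition \ref{d4.3.10}. But by Definition \ref{d4.3.11} the contravariant Verdier duality is the composite $\mathbb D_{\mathrm H^!}=\mathbf D_{\star\mapsto!}\circ\mathbb V^{\mathrm{cart}}_{!\mapsto\star}$, and the covariant factor $\mathbb V_{!\mapsto\star}=\mathrm j^*\circ\underline{\mathrm L}(\mathrm i_!)$ is a Kan extension over $\mathrm{cospan}(\EuScript J^{\circ})$ whose value at $\mathrm i$ is a colimit mixing all the fibers; it is not the identity (in the Ran-space Example \ref{rrs} it is the nontrivial passage from $!$-sheaves to $\star$-sheaves). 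So the fiberwise isomorphism you establish, correct as far as it goes, is the adjunction characterizing $\mathbf D_{!\mapsto\star}$, not $\mathbb D_{\mathrm H^!_{\EuScript J}}$.

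The second, related, problem is the claim that the end over $\EuScript J\times\EuScript J$ ``collapses to the end over $\EuScript J$'' because $\mathrm{diag}_!\mathrm w_{\mathrm X_{\EuScript J}}$ is supported on the diagonal. It is supported on the image of the Kan-extended diagram $\mathrm i_!\mathrm X_{\EuScript J}$ inside each $\mathrm X_{\mathrm i}\times\mathrm X_{\mathrm j}$, not on the diagonal of the index category: its component at $(\mathrm i,\mathrm j)$ with $\mathrm i\neq\mathrm j$ is a generally nonzero colimit over the comma category of the diagonal functor. These off-diagonal components are precisely what prevents the collapse; for fixed $\mathrm i$, the limit over $\mathrm j$ of the terms $\mathrm{Hom}(\EuScript K_{\mathrm i}\boxtimes\EuScript G_{\mathrm j},(\mathrm{diag}_!\mathrm w)_{\mathrm i,\mathrm j})$ is what assembles, after commuting a limit past $\underline{\mathrm{RHom}}(-,\mathrm w_{\mathrm i})$ into a colimit, into $\mathrm{Hom}(\EuScript K_{\mathrm i},\underline{\mathrm{RHom}}((\mathbb V_{!\mapsto\star}\EuScript G)_{\mathrm i},\mathrm w_{\mathrm i}))$ --- this is exactly where the covariant Verdier duality enters the formula, and it is the computation your proof omits. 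Your derivation of (ii) from the symmetry of \eqref{vadjp} in $\EuScript K$ and $\EuScript G$ is a clean way to obtain the biduality map once (i) is available, but the alternative of ``gluing the fiberwise biduality morphisms'' repeats the same misidentification of $\mathbb D_{\mathrm H^!_{\EuScript J}}$ with the componentwise duality. (The paper itself declares the Proposition ``Evident,'' so there is no written argument to compare with; the above is what an actual proof would have to supply.)
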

\begin{proof} Evident. 

\end{proof}

\begin{Ex}\label{rrs} Let $\EuScript S$ be a category of finite sets and surjective morphisms between them. Following \cite{BD} with the topological spaces $\mathrm X$ we associate diagram $\mathrm X_{\EuScript S}.$  Denote by $\EuScript S^{\emptyset}$ subcategory of finite sets and isomorphisms, with canonical inclusion functor $\mathrm {inc}\colon\EuScript S^{\emptyset}\longrightarrow \EuScript S.$ We have homotopy left Kan extension functors, which includes in the following commutative diagram:
\begin{equation}\label{e4.3.11}
\begin{diagram}[height=2.8em,width=5.3em]
                                                                   &                         & \mathrm D_{\mathrm{cart}}(\underline{\Gamma}(\mathrm H^!_{\EuScript S^{\emptyset}}))  &     & \\
                                                                   & \ldTo^{\mathrm {inc}_!^{\circ}} &                                                           &  \rdTo^{\mathrm {inc}_!} & \\
\mathrm D_{\mathrm{cart}}(\underline{\Gamma}(\mathrm H^!_{\EuScript S^{\circ}})) & & \rTo^{\mathbb V_{!\mapsto \star}} & & \mathrm D_{\mathrm{cocart}}(\underline{\Gamma}(\mathrm H^{\star}_{\EuScript S^{\circ}}))\\
\end{diagram}
\end{equation}
Functor $\mathrm {inc}^{\circ}_!$ induces equivalence between category $\mathrm D_{\mathrm{cart}}(\underline{\Gamma}(\mathrm H^!_{\EuScript S^{\emptyset}}))$ and subcategory of graded $!$-sheaves on Ran space $\mathrm D_{\mathrm{cart}}^{\mathrm {gr}}(\underline{\Gamma}(\mathrm H^!_{\EuScript S^{\circ}})),$ which consists of objects $\EuScript K\in \mathrm D_{\mathrm{cart}}(\underline{\Gamma}(\mathrm H^!_{\EuScript S^{\circ}}))$ such that
\begin{equation}\label{}
\EuScript K_{\mathrm I}\in \mathrm D^{\mathrm {gr}}(\mathrm X^{\mathrm I})\quad\mathrm {supp}(\mathrm {gr}_{\mathrm n}\EuScript K_{\mathrm I})\subset\mathrm X^{\mathrm n}\quad \mathrm n<|\mathrm I|
\end{equation}
Let us consider diagrams $\mathrm X_{\EuScript S}$ equipped with diagonal stratification, suppose that we have perversity function $\mathrm p$ on $\mathrm X_{\EuScript S}.$ Commutativity of above diagrams implies that functor $\mathrm {inc}^{\circ}_!$ is perverse $\mathrm t$-exact, thus we define category of graded $\mathrm p$-perverse $!$-sheaves on $\mathrm X_{\EuScript S}$ as image of functor $\mathrm {inc}^{\circ}_!:$
\begin{equation}\label{}
\mathrm {inc}^{\circ}_!\colon\mathrm {Perv}^!(\mathrm X_{\EuScript S^{\emptyset}},\mathrm p) \overset{\sim}{\longrightarrow} \mathrm {Perv}^{!\mathrm {gr}}_{\mathrm {cart}}(\mathrm X_{\EuScript S^{\circ}},\mathrm p)
\end{equation}
Under above equivalence category of graded $!$-sheaves on the Ran spaces of $\mathrm X$ can be equipped with the new tensor structure, which is defined by the $!$-tensor product \ref{tp1} on $\mathrm D_{\mathrm{cart}}(\underline{\Gamma}(\mathrm H^!_{\EuScript S^{\emptyset}})):$
\begin{equation}\label{}
\circ\colon \mathrm D_{\mathrm{cart}}^{\mathrm {gr}}(\underline{\Gamma}(\mathrm H^!_{\EuScript S^{\circ}}))\times \mathrm D_{\mathrm{cart}}^{\mathrm {gr}}(\underline{\Gamma}(\mathrm H^!_{\EuScript S^{\circ}}))\longrightarrow\mathrm D_{\mathrm{cart}}^{\mathrm {gr}}(\underline{\Gamma}(\mathrm H^!_{\EuScript S^{\circ}}))
\end{equation}
This tensor structure is unital and unit is given by $\mathrm {inc}_!(\mathrm w_{\EuScript S^{\emptyset}}).$ In the case when $\mathrm X=\mathbb A^{\mathbb R}$ and perversity function is given by minimal perversity function $\mathrm p_{\mathrm {min}}$ tensor product $\circ$ on the category $\mathrm {Perv}^{!\mathrm {gr}}_{\mathrm {cart}}(\mathrm X_{{\EuScript S}},\mathrm p_{\mathrm {min}})$ is exact and corresponds to the white product of quadratic algebras \cite{PP} and $\mathrm {inc}_!(\mathrm w_{\EuScript S^{\emptyset}})$ is the free associative algebra on one variable. Note that contravariant Verdier duality induces anti-equivalence of category of graded perverse $!$-sheaves:
\begin{equation}\label{}
\mathbb  D_{\mathrm H^!_{{{{\EuScript S^{\circ}}}}}}\colon \mathrm {Perv}^{!\mathrm {gr}}_{\mathrm {cart}}(\mathrm X_{{\EuScript S}},\mathrm p_{\mathrm {min}})\overset{\sim}{\longrightarrow} \mathrm {Perv}^{!\mathrm {gr}}_{\mathrm {cart}}(\mathrm X_{{\EuScript S}},\mathrm p_{\mathrm {max}})^{\circ}
\end{equation}
Thus we can also define another unital tensor structure on $\mathrm {Perv}^{!\mathrm {gr}}_{\mathrm {cart}}(\mathrm X_{{\EuScript S}},\mathrm p_{\mathrm {min}}):$
\begin{equation}\label{}
\EuScript K\bullet\EuScript E:=\mathbb  D_{\mathrm H^!_{{{{\EuScript S^{\circ}}}}}}^{-1}(\mathbb  D_{\mathrm H^!_{{{{\EuScript S^{\circ}}}}}}\EuScript K\cdot\mathbb  D_{\mathrm H^!_{{{{\EuScript S^{\circ}}}}}}\EuScript E)),
\end{equation}
Product $\cdot$ is defined analogically to $\circ,$ instead of $!$-product we use ordinary product of sheaves. In the case when $\mathrm X=\mathbb A^{\mathbb R}$ tensor product $\bullet$ on the category $\mathrm {Perv}^{!{\mathrm {gr}}}_{\mathrm {cart}}(\mathrm X_{{\EuScript S^{\circ}}},\mathrm p_{\mathrm {min}})$ corresponds to the black product of quadratic algebras and the unit is given by $\mathrm {inc}_!(\mathrm j_*\mathrm j^*\mathrm w_{{\EuScript S}^{\emptyset}}),$ where for every finite set $\mathrm I,$ sheaf $\mathrm j_*\mathrm j^* \mathrm w_{{\EuScript S}^{\emptyset}\mathrm I}$ is defined as $\mathrm j_{\mathrm I*} {\mathrm j^*}_{\mathrm I}\mathrm w_{\mathrm X^{\mathrm I}},$ and $\mathrm j_{\mathrm I}\colon \mathrm U\hookrightarrow \mathrm X^{\mathrm I}$ is open complement, transversal to the smallest diagonal $\mathrm X\subset \mathrm X^{\mathrm I}.$ Under correspondence between factorizable sheaves and graded algebras this sheaf corresponds to the algebra of dual numbers.
\end{Ex}

\begin{remark} Generally in the case of $!$-sheaves we does not have a notion of inner hom (compare with Example \ref{rrs}), but contravariant Verderer enjoys hom like adjunction property \ref{vadjp}. On the another hand in the case of $\star$-sheaves we have notion of inner hom $\mathrm {Hom}^{\star}$ (see \ref{innerh}). Reader can compare this with the case of $\EuScript D$-modules on the scheme.

\end{remark}

\newpage
\bibliographystyle{alphanum}
\bibliography{tt}

\end{document}